\titleformat{\subsection}[runin]{\normalfont\bfseries}{\thesubsection.}{.5em}{}[.]\titlespacing{\subsection}{0pt}{2ex plus .1ex minus .2ex}{.8em}
\titleformat{\subsubsection}[runin]{\normalfont\itshape}{\thesubsubsection.}{.3em}{}[.]\titlespacing{\subsubsection}{0pt}{1ex plus .1ex minus .2ex}{.5em}
\theoremstyle{theorem}
\numberwithin{equation}{section}
\newtheorem{theorem}{Theorem}[section]
\newtheorem{lemma}[theorem]{Lemma}
\newtheorem{proposition}[theorem]{Proposition}
\theoremstyle{remark}
\newtheorem{remark}[theorem]{Remark}
\newtheorem{defn}[theorem]{Definition}
\newcommand{\R}{\mathbb R}
\newcommand{\Z}{\mathbb Z}
\newcommand{\E}{\mathbb E}
\newcommand{\N}{\mathbb N}
\renewcommand{\P}{\mathbb P}
\newcommand{\lr}[4]{#3\xleftrightarrow[#1]{#2} #4}
     \newcommand{\nlr}[4]{#3\mathrel{\mathop{\centernot\longleftrightarrow}_{#1}^{#2}} #4}
\newcommand\footnoteref[1]{\protected@xdef\@thefnmark{\ref{#1}}\@footnotemark}
\title{{\textbf{\large{On the radius of Gaussian free field excursion clusters}}}}
\date{}
\begin{document}
\thispagestyle{empty}
\maketitle
\vspace{0.1cm}
\begin{center}
\vspace{-1.7cm}
Subhajit Goswami$^1$, Pierre-Fran\c cois Rodriguez$^2$ and Franco Severo$^{3}$

\end{center}
\vspace{0.1cm}
\begin{abstract}
\begin{minipage}{0.80\textwidth}
We consider the Gaussian free field $\varphi$ on $\mathbb{Z}^d$, for $d\geq 3$, and give sharp bounds on the probability that the radius of a finite cluster in the excursion set $\{\varphi \geq h \}$ exceeds a large value $N$ for any height $h\neq h_{*}$, where $h_{*}$ refers to the corresponding percolation critical parameter. 
In dimension~$3$, we prove that this probability 
is sub-exponential in $N$ and decays as $\exp\{-\frac{\pi}{6}(h-h_*)^2\frac{N}{\log N}\}$ as $N \to \infty$ to principal exponential order. When $d\geq 4$, we prove that these tails decay exponentially in~$N$. Our results extend to other quantities of interest, such as truncated two-point functions and the two-arms probability for annuli crossings at scale $N$.
\end{minipage}
\end{abstract}


\vspace{7cm}

\begin{flushleft}
\thispagestyle{empty}
\vspace{0.3cm}
\noindent\rule{5cm}{0.35pt} \hfill September 2022 \\[2em]

\begin{multicols}{2}
\small
$^1$School of Mathematics
\\Tata Institute of Fundamental Research \\ 1, Homi Bhabha Road \\ Colaba, Mumbai 400005, India. \\\url{goswami@math.tifr.res.in}\\[2em]

$^2$Imperial College London\\
Department of Mathematics\\
London SW7 2AZ \\
United Kingdom\\
\url{p.rodriguez@imperial.ac.uk}\columnbreak

\hfill$^3$Department of Mathematics\\
\hfill ETH Zurich\\
\hfill R\"amistrasse 101\\ 
\hfill 8092 Zurich, Switzerland.\\
\hfill\url{franco.severo@math.ethz.ch}\\[2em]

\end{multicols}
\end{flushleft}

\newpage

		\section{Introduction}\label{sec:intro}
	
This article investigates the percolative properties of excursion sets $\{\varphi \geq h\}$ of the Gaussian free field $\varphi$ on $\Z^d$ in dimensions $d \geq 3$, for varying height parameter $h \in \R$. This model, the rigorous study of which was initiated in \cite{BLM87}, and more recently re-instigated in \cite{RoS13}, serves as a benchmark example of a (non-planar) percolation model with strong, algebraically decaying correlations. One of its appealing features is the 
rich interplay with potential theory for the underlying random walk, which is beneficial to its 
study. A central role is thus played by electrostatic notions such as capacity, see 
e.g.~\cite{BDZ95},~\cite{deuschel1999entropic}, and more recently \cite{Sz15},~\cite{nitzschner2017solidification},~\cite{nitzschner2018},~\cite{chiarini2018entropic},~\cite{sznitman2018macroscopic},~\cite{chiarini2020disconnection}, which also feature prominently in the present work.

Our main focus concerns the radii of finite clusters in the excursion sets $\{\varphi \geq h\}$, which we set out to introduce. Under a suitable probability $\P$, the field $\varphi$ is the centered Gaussian field with covariance $\E[\varphi_x \varphi_y]=g(x,y)$, for $x,y \in \Z^d$, where $g(\cdot,\cdot)$ denotes the Green function of the simple random walk on $\Z^d$, see \eqref{eq:Green_U}. The critical parameter for percolation of the associated excursion sets $\{\varphi \geq h\} \coloneqq \{x \in \Z^d: \varphi_x \geq h \}$ is defined as
\begin{equation}
\label{eq:h_*}
h_*=h_*(d) \coloneqq \inf \{h \in \R: \lim_N \P[\lr{}{\varphi\geq h}{0}{\partial B_N}] =0 \},
\end{equation}
where, with hopefully obvious notation, the event in \eqref{eq:h_*} refers to a (nearest-neighbor) path in $\{\varphi \geq h\}$ connecting $0$ and $\partial B_N$, where $B_N=B_N(0)$, $B_N(x)\coloneqq \{ y \in \Z^d: |y-x|_{\infty} \leq N\}$ for all $x \in \Z^d$, $N \geq 1$, and $\partial K$ refers to the inner (vertex) boundary of a set $K \subset \Z^d$. It is known that $0< h_* < \infty$ for all $d \geq 3$, see \cite{BLM87}, \cite{RoS13}, \cite{DrePreRod}, and that the infinite cluster, when existing, is almost surely unique. Auxiliary parameters $\bar h$ and $h_{**}$ satisfying $\bar h \leq h_* \leq h_{**}$ were frequently used in the past,  respectively characterizing a phase of `well-behavedness' for the infinite cluster and a strongly subcritical regime, in which connectivities decay rapidly. Recently it was proved in \cite{DCGRS20} that
\begin{equation}
\label{eq:sharp}
\bar h = h_* = h_{**}.
\end{equation}
As a consequence, one knows the following: there exist $\Cl[c]{c:strexp}= \Cr{c:strexp}(d,h) >0$  and $ c=c(h,d) >0$ such that, for all $N \geq 1$
\begin{align}
\P[\lr{}{\varphi\geq h}{0}{\partial B_N}]&\leq e^{-c\,N^{\Cr{c:strexp}}}, \text{ if {$h>h_{*}(d)~(=h_{**}(d))$},} \label{eq:def_h**} \\[0.2em]
\P\left[ \text{LocUniq}(N,h)^c \right] &\leq e^{-c\,N^{\Cr{c:strexp}}}, \text{ if $h<h_*(d)~(=\bar{h}(d))$},		\label{eq:def_hbar}
\end{align}
where the `local uniqueness' event  in \eqref{eq:def_hbar} is defined as
\begin{equation}	
\label{eq:def_locuniq}	
 \text{LocUniq}(N,h) =\left\{	
 \begin{array}{l}	
\text{$\{\varphi \geq h\}$ has a \textit{unique} connected}\\
\text{component crossing $ B_{2N} \setminus B_N$}
\end{array}
 \right\}.
\end{equation}
	Here and in the sequel, a set $S\subset \Z^d$ is said to cross $V\setminus U$, for $U \subset V \subset \Z^d$ if $S$ has a connected component intersecting both $U$ and $\partial V$.	
	The estimate \eqref{eq:def_hbar} is inherited from the bounds for the `existence' and `uniqueness' events usually appearing in the definition of $\bar h$, see~e.g.~(1.10)--(1.11) in \cite{DPR18}, which assert that for all $h<\bar{h} \, (=h_*)$ and $N \geq 1$,
\begin{equation}
 \label{eq:EXISTUNIQUE}
\begin{split}
&\P\left[\begin{array}{c}  \text{there exists a connected component in }\\ \text{$\{\varphi \geq h \}\cap B_N$ with diameter at least  $N/5$}\end{array}\right] \geq 1- e^{-c\,N^{\Cr{c:strexp}}}  \text{ and }\\[0.3cm]
&\P \left[\begin{array}{c}\text{any two clusters in $\{\varphi\geq h\}\cap B_{N}$ having diameter at}\\ \text{least $N/10$ are connected to each other in $\{\varphi\geq h\}\cap B_{2N}$ } \end{array}\right] \geq 1- e^{-c\,N^{\Cr{c:strexp}}}, 
\end{split}
\end{equation}
where the diameter of a set is with respect to the sup-norm. Indeed, \eqref{eq:def_hbar} follows by straightforward gluing arguments, combining the events in \eqref{eq:EXISTUNIQUE} at a fixed number of scales commensurate with $N$. Stretched exponential bounds such as \eqref{eq:def_h**}, \eqref{eq:def_hbar} and \eqref{eq:EXISTUNIQUE} typically arise as a by-product of certain static renormalization methods, see e.g.~\cite{RoS13} regarding \eqref{eq:def_h**}, which exemplifies this phenomenon.
Little is otherwise known about the true order of decay for the probabilities in \eqref{eq:def_h**} and \eqref{eq:def_hbar}.
To date, the best available results are due to \cite{PR15}, \cite{PT12}, which {\em solely} concern the subcritical regime and yield that  \eqref{eq:def_h**}  holds with $\Cr{c:strexp}(d,h)=1$ if $d \geq 4$ and $h>h_*$, along with an upper bound for $\P[\lr{}{\varphi\geq h}{0}{\partial B_N}]$ of exponential order $N/(\log N)^{3+\varepsilon}$, for any $\varepsilon > 0$, when $d =3$ and $h>h_*$. 

Our findings address these matters. Our main results are most easily formulated in terms of a `truncated one-arm event'. We refer to the discussion following the statement of Theorem~\ref{thm:main_d>3} below regarding extensions of \eqref{eq:main_d=3} and \eqref{eq:main_d>3} to other quantities of interest. Upper bounds in the spirit of those obtained below for $h>h_*$ have also been derived in \cite{DiWi} for the so-called metric graph associated to $\Z^d$. Contrary to what is suggested in Section 1.3 of~\cite{DiWi}, the logarithmic factor in dimension three is \textit{not} an artefact. 

	\begin{theorem}\label{thm:main_d=3} 
		For $d=3$ and all $h \in \R$, 
		\begin{equation}\label{eq:main_d=3}
		\lim_{N\to\infty}\, \frac{\log N}{N} \log \P[\lr{}{\varphi\geq h}{0}{\partial B_N},\, \nlr{}{\varphi\geq h}{0}{\infty}] = -\frac{\pi}{6}(h-h_*)^2.
		\end{equation}
	\end{theorem}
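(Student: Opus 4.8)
The plan is to establish matching upper and lower bounds for the truncated one-arm probability in $d=3$. I'll think about both directions, with the constant $\frac{\pi}{6}$ emerging from a capacity/entropy calculation.

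For the **upper bound** (on the event probability, hence a lower bound on $-\frac{\log N}{N}\log \P[\cdots]$): the key heuristic is that on the event that $0$ connects to $\partial B_N$ but not to infinity, there must be a "surrounding" region where the field is atypically low — a barrier of the form $\{\varphi < h\}$ that the cluster of $0$ cannot cross, sitting at scale $N$. This is governed by $h_{**} = h_*$ and the local uniqueness estimate \eqref{eq:def_hbar}: for $h < h_*$, essentially the cluster of $0$ must be "trapped" and the obstruction forces $\varphi$ to be pushed down below $h_*$ on a set whose capacity is of order $N/\log N$ in $d=3$ (since the capacity of a box $B_N$ in $\Z^3$ is $\asymp N/\log N$... wait, actually $\mathrm{cap}(B_N) \asymp N^{d-2}$, so $\asymp N$ in $d=3$; the $\log N$ must instead come from the cost structure of the low-field barrier). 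More precisely, I would use a change-of-measure / entropic argument à la \cite{nitzschner2017solidification}, \cite{sznitman2018macroscopic}: shifting $\varphi$ by a deterministic harmonic-type profile that equals roughly $-(h_*-h)$ on the relevant region costs $\exp\{-\frac12 (h_*-h)^2 \mathrm{cap}(\cdot)\}$, and one optimizes the shape of the profile. For $h > h_*$, by duality (passing to $\{\varphi < h\}$, or directly using \eqref{eq:def_h**} combined with the structure of the truncated cluster), a symmetric argument applies with $h - h_*$ in place of $h_* - h$. The logarithmic correction and the precise constant $\frac{\pi}{6}$ should come from a careful variational problem: minimizing $\frac{1}{2}\int |\nabla \phi|^2$ over profiles $\phi$ on $\R^3$ interpolating appropriately, where the continuum capacity of the unit ball is $4\pi \cdot \frac{1}{?}$ — I would need to set up the correct boundary-value problem at scale $N$, and the factor $\frac{1}{\log N}$ should enter because the effective "width" of the barrier one can afford grows logarithmically, or because the relevant capacity is that of a spread-out set rather than a solid box.

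For the **lower bound** (constructing the event cheaply): I would exhibit an explicit strategy realizing the one-arm-but-not-infinity event at the claimed cost. Build a thin tube of sites from $0$ to $\partial B_N$ on which we condition $\varphi \geq h$ (or use a sprinkled/percolative construction to get an actual crossing path), then surround the cluster by a shell at scale $\sim N$ on which $\varphi$ is forced below $h_*$ (enough to disconnect from infinity, using $h_{**} = h_*$ so that a shell with $\varphi < h_* - \delta$ blocks percolation with high probability). The cost of the tube should be $e^{o(N/\log N)}$ — negligible — while the cost of the low-field shell is $\exp\{-(\frac{\pi}{6}+o(1))(h-h_*)^2 \frac{N}{\log N}\}$ by a Gaussian shift computation: the shell can be taken of thickness $\asymp \log N$ (to leverage the gap between $h_*$ and the sprinkling parameter), and the capacity of such a thickened sphere of radius $N$ and thickness $\log N$ in $\Z^3$ is $\asymp N/\log N$, with the sharp constant $\frac{\pi}{3}$ coming from the Newtonian capacity of the sphere (capacity of $\partial B_1 \subset \R^3$ being $4\pi$, and $\frac12 \cdot 4\pi / 2\pi$-type normalization giving $\frac{\pi}{6}$ after accounting for the $\frac12$ in the Gaussian density and the lattice Green's function normalization $g(0,0)$ vs. continuum).

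The **main obstacle** I anticipate is twofold: (i) pinning down the exact constant $\frac{\pi}{6}$, which requires solving the continuum variational problem with the right normalization and showing the discrete capacity of the optimal (logarithmically thickened) obstacle converges to it — the interplay between the lattice Green function asymptotics $g(x) \sim \frac{c_d}{|x|^{d-2}}$ and the $\log N$ scale is delicate; and (ii) the upper bound matching this constant, i.e., proving that the cheapest way to produce a truncated long arm is genuinely the "optimal barrier" and not some more clever geometric configuration — this demands a robust coarse-graining of the truncated cluster together with a near-optimal change of measure, presumably the content of the "Proposition~\ref{prop:coarse_paths}$'$" referenced in the preamble, and controlling the entropy of the coarse-grained object so that it contributes only $e^{o(N/\log N)}$. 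Handling $h > h_*$ versus $h < h_*$ may require genuinely different arguments (in the supercritical case one must also rule out that $0$ lies in a large finite cluster for "bulk" reasons rather than because of a surrounding barrier), though the symmetry $\varphi \leftrightarrow -\varphi$ and $h_*(d) + $ its dual should make the two constants coincide as $(h-h_*)^2$.
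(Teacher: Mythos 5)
There is a genuine gap, and it sits at the heart of where the constant $\frac{\pi}{6}$ and the factor $\frac{1}{\log N}$ come from. You attribute the rate $N/\log N$ to a \emph{surrounding shell at scale $N$}, claiming that ``the capacity of such a thickened sphere of radius $N$ and thickness $\log N$ in $\Z^3$ is $\asymp N/\log N$.'' This is false: capacity is monotone, and a sphere of radius $N$ already has capacity comparable to $\mathrm{cap}(B_N)\asymp N^{d-2}=N$ in $d=3$ (cf.~\eqref{eq:capball}); thickening only increases it. Consequently your lower-bound construction (force $\varphi<h_*$ on a shell at scale $\sim N$) costs $e^{-cN}$, which is far too expensive and cannot produce the claimed rate. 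The correct geometric object is one-dimensional: the crossing path itself, or a thin tube $T_N(N^\delta)$ around it, whose capacity is $\sim\frac{\pi}{3}\frac{N}{\log N}$ precisely because the Green function summed along a line of length $N$ in $\Z^3$ grows like $\frac{3}{\pi}\log N$ (Lemmas~\ref{lem:cap_tube} and~\ref{lem:cap_tube2}, and \eqref{eq:caplineintro}). The cost allocation is then the opposite of what you describe: for $h>h_*$ the dominant cost is creating the crossing, via an entropic tilt of size $h-h_*$ on a thin tube (cost $\exp\{-\frac12(h-h_*)^2\,\mathrm{cap}(T_N)(1+o(1))\}$, which is \emph{not} $e^{o(N/\log N)}$ as you assert), while disconnection is free; for $h<h_*$ the crossing is free and the dominant cost is the insulating interface in $\{\varphi<h\}$ hugging the path inside a tube of width $N^\delta$ — not a macroscopic shell — which is why the answer is $N/\log N$ rather than the $N$ one would pay for disconnecting a ball (cf.~the exponential-in-$N^{d-2}$ disconnection bound of \cite{Sz15}). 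The equality of the two rates is not obtained by any $\varphi\leftrightarrow-\varphi$ duality (none exists for level sets); it comes from the sharpness statement \eqref{eq:sharp}.

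For the upper bound your sketch is essentially a declaration that ``the optimal barrier is the cheapest strategy,'' backed by a change-of-measure heuristic at scale $N$; built on the same incorrect shell picture, it cannot yield the constant, and in any case the paper's upper bound is not a change-of-measure argument at all. What is actually needed (and what the paper does) is: a coarse-graining of any crossing path into $\asymp N/(KL)$ well-separated boxes at a polylogarithmic scale $L$ whose union retains capacity $\geq\lambda(K)(1-\rho)\,\mathrm{cap}(T_N)$ with controlled entropy (Proposition~\ref{prop:coarse_paths}); the decomposition $\varphi=\psi^z+\xi^z$ into local fields and harmonic averages, with the Gaussian bound of Lemma~\ref{lem:BTIS_Szn} converting the capacity lower bound into the cost $\exp\{-\frac12(h-h_*)^2\,\mathrm{cap}(T_N)(1+o(1))\}$ carried by the harmonic part, the local part being handled by an a-priori estimate and a bootstrap (Proposition~\ref{prop:bootstrap}); and, in the supercritical case, an additional non-monotone structure (the event $G_N$ with contact points and an insertion-tolerance device via the midpoint extension, Lemma~\ref{lem:truncated} and Proposition~\ref{lem:inclusion_fxn_goodevent}) to quantify the cost of avoiding the infinite cluster. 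You correctly anticipate that the upper bound is the hard part and that a coarse-graining with $e^{o(N/\log N)}$ entropy is required, but without identifying the line-segment capacity asymptotics and the local/harmonic decomposition, the proposal does not contain the mechanism that produces either the rate $N/\log N$ or the constant $\frac{\pi}{6}$.
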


In higher dimensions, we have the following:	

	\begin{theorem}\label{thm:main_d>3} 
		For every ${d\geq4}$ and $h \neq h_{*}$, there exist $C=C(d,h),\,c=c(d,h) \in (0, \infty)$ such that
		\begin{equation}\label{eq:main_d>3}
		e^{-CN} \leq \P[\lr{}{\varphi\geq h}{0}{\partial B_N},\, \nlr{}{\varphi\geq h}{0}{\infty}]\leq e^{-cN}.
		\end{equation}
	\end{theorem}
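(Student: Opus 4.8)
The plan is to establish the upper and lower bounds separately; the lower bound is essentially a one-scale construction while the upper bound requires a renormalization-free argument exploiting the transience of the walk in $d\geq 4$.

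\medskip
\noindent\textbf{Lower bound.}
Consider first the case $h<h_*=\bar h$. On the event $\mathrm{LocUniq}(N,h)$, which by \eqref{eq:def_hbar} has probability at least $1-e^{-cN^{\Cr{c:strexp}}}$, the cluster of $0$, if it reaches $\partial B_N$, is part of the (essentially unique) large crossing component, so to disconnect $0$ from $\infty$ inside an even larger box it suffices to force $\{\varphi\geq h\}$ to have a small isolated cluster near the origin. The cheapest way to do this is to lower the field on a macroscopic sphere: one asks that $\varphi_x < h$ for all $x$ in a thin spherical shell $\partial B_{N/2}$, thereby surrounding $0$ by a `moat' of the complementary set $\{\varphi<h\}$ while still letting the cluster of $0$ reach $\partial B_{N/2}$ (which happens with positive probability uniformly in $N$, e.g.\ by a standard FKG and finite-energy argument, or simply because a fixed small cluster of diameter $\gtrsim N/4$ has probability bounded below by $e^{-CN}$). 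The Gaussian cost of the shell event $\{\varphi_x<h,\ x\in \partial B_{N/2}\}$ is controlled by a change-of-measure (tilting) argument using the function $-h\,\mathbf 1$ supported on the shell: its Dirichlet energy is at most $C N^{d-1}\cdot N^{-2}\cdot N = C N^{d-2}$... this is too large, so instead one tilts by the equilibrium potential of $\partial B_{N/2}$ renormalized appropriately, whose energy is the capacity, of order $N^{d-2}$ — still too large in $d\geq 4$. The correct and simpler route is: directly lower bound $\P[\nlr{}{\varphi\geq h}{0}{\infty}]\geq \P[\varphi_0<h]\cdot(\text{positive correction})$ is wrong too; rather, use that $\{0 \text{ is in a finite cluster of } \{\varphi\ge h\}\}$ has positive probability for $h<h_*$, and then, conditionally on a further event of probability $e^{-CN}$ obtained by forcing a path of $\{\varphi\ge h\}$-sites from the boundary of that finite cluster... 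Cleanest: for $h<h_*$ take the event that there is a $\{\varphi\ge h\}$-path from $0$ to $\partial B_N$ of length $O(N)$, which costs $\ge e^{-CN}$ by a line-path finite-energy estimate, intersected with the event that this path's cluster is surrounded by a $\{\varphi<h\}$ shell at radius $2N$; conditionally on the first event the shell still costs only $e^{-CN}$ since in $d\geq 4$ a uniformly positive density portion of a codimension-one shell can be lowered at $O(N)$ Gaussian cost after subtracting the harmonic average — here one uses that the trace of $\varphi$ on a finite connected set $S$ has a density bounded below near any fixed configuration, with the log-density controlled by $\mathrm{cap}(S)$ times a constant, and one chooses $S$ to be an $O(N)$-size portion of the shell which already blocks $B_{2N}$ crossings together with $\mathrm{LocUniq}$. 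For $h>h_*$ the lower bound is immediate from \eqref{eq:def_h**} read in reverse: a single straight path of length $N$ of $\{\varphi\ge h\}$-sites has probability $\ge e^{-CN}$ by finite energy, and such a path automatically lies in a finite cluster with positive conditional probability when $h>h_*$.

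\medskip
\noindent\textbf{Upper bound.}
Fix $h\neq h_*$; by symmetry and \eqref{eq:sharp} it suffices to treat $h>h_*=h_{**}$ and $h<h_*=\bar h$ in turn. For $h>h_*$ the event $\lr{}{\varphi\ge h}{0}{\partial B_N}$ already has probability $\le e^{-cN^{\Cr{c:strexp}}}$ by \eqref{eq:def_h**}, so the claimed exponential bound is a strengthening of the exponent from $N^{\Cr{c:strexp}}$ to $N$. The strategy is to find, along the connecting path from $0$ to $\partial B_N$, a linear number of roughly independent `seeds' each contributing a multiplicative factor $<1$. Concretely, partition $B_N$ into $\sim N$ concentric dyadic-type annuli of bounded width $L_0$ (a large constant); on $\lr{}{\varphi\ge h}{0}{\partial B_N}$ the path crosses each of these $N/L_0$ annuli, and hence in each annulus there is a $\{\varphi\ge h\}$-crossing. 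Using the decomposition $\varphi = \psi + \mathbb E[\varphi\,|\,\mathcal F]$ where $\mathcal F$ is the sigma-algebra of the field outside a union of separated annuli, together with the fact that in $d\geq 4$ the Green function summed over a shell of radius $r$ and width $L_0$ decays like $L_0 r^{2-d}$, one gets that conditionally the crossing events in far-apart annuli are each bounded above by a constant $q=q(L_0,h)<1$ and are \emph{approximately independent}: the conditional covariance between sites in annuli at multiplicative distance $\geq 2$ is summably small, so a sprinkling/decoupling inequality (in the spirit of \cite{PR15}, adapting their exponential bound for $d\geq 4$) turns the product structure into $q^{cN}$. This yields $\P[\lr{}{\varphi\ge h}{0}{\partial B_N}]\le e^{-cN}$, and intersecting with $\nlr{}{}{0}{\infty}$ only makes it smaller. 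For $h<h_*$ one instead works with the truncated event directly: $\{\nlr{}{\varphi\ge h}{0}{\infty}\}$ together with reaching $\partial B_N$ forces, by \eqref{eq:EXISTUNIQUE}, that in each of $\sim N/L_0$ well-separated annuli around $B_N$ the unique giant crossing cluster is \emph{avoided} by the cluster of $0$, i.e.\ the cluster of $0$ fails to merge with the giant one across that annulus; each such `non-merging at scale $L_0$' has conditional probability bounded below $1$ by a uniqueness-type estimate, and the same decoupling as above multiplies these up to $e^{-cN}$.

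\medskip
\noindent\textbf{Main obstacle.}
The crux is the upper bound, and within it the decoupling step: one must show that the crossing (resp.\ non-merging) events in $\Theta(N)$ annuli of bounded width are jointly suppressed by a factor exponential in their number, despite the polynomially decaying correlations of $\varphi$. In $d\geq 4$ the saving grace is that $\sum_{x\in\text{shell}(r,L_0)} g(0,x) = O(L_0 r^{2-d})$ is summable in $r$, so the `conditional field' is a bounded perturbation — this is exactly why the answer is a clean $e^{-cN}$ with no logarithmic correction, in contrast to $d=3$ where $\sum g$ over a shell is $O(L_0/r)$, only summable after dividing by $\log N$. Making the approximate independence quantitative enough to beat the entropy of choosing which annuli the path crosses — and handling the conditioning carefully so that the bounded-constant-per-annulus estimate is genuinely uniform — is the technical heart of the argument; everything else (the gluing for the lower bound, reducing $h>h_*$ and $h<h_*$ to the two model events) is routine given \eqref{eq:def_h**}--\eqref{eq:EXISTUNIQUE}.
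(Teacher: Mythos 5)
Your lower bound for $h<h_*$ does not work as sketched. Any configuration in which the cluster of $0$ reaches $\partial B_N$ but is finite must contain a $*$-connected ``sheath'' of sites of $\{\varphi<h\}$ surrounding that cluster; if you try to place it on (a piece of) a shell at radius of order $N$, a blocking surface there has $\asymp N^{d-1}$ sites and capacity $\asymp N^{d-2}$, so forcing $\{\varphi<h\}$ on it costs at least $e^{-cN^{d-2}}\ll e^{-CN}$ for $d\ge 4$, and an ``$O(N)$-size portion'' of such a shell simply cannot block -- $\textnormal{LocUniq}$ does not help, since the issue is shielding the cluster of $0$ from the infinite cluster, not uniqueness of crossings. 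The only strategy priced at $e^{-CN}$ is to keep the cluster of $0$ thin (the segment $T_N$) and close its $O(N)$-site outer boundary $\partial_{\text{out}}T_N$; but that event is the intersection of an increasing and a decreasing event, so FKG gives no lower bound, and the ``line-path finite-energy estimate'' you invoke does not exist: the GFF has no uniform finite-energy property. This is precisely what the paper's proof of \eqref{eq:main_lb_dgeq4} is built to circumvent, via the midpoint extension \eqref{eq:LB1_dim4}--\eqref{eq:LB2_dim4}: conditionally on $|\hat\xi|\le 1$ on $T_N\cup\partial_{\text{out}}T_N$ (an event of probability $\ge e^{-CN}$ by the FKG inequality for $|\tilde\varphi|$), the i.i.d.\ layer $\hat\psi$ opens $T_N$ and closes $\partial_{\text{out}}T_N$ at constant cost per site, uniformly in $h$. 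Your proposal never identifies this obstacle, let alone resolves it.

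On the upper bound, the decoupling heuristic rests on an incorrect computation: the Green function summed over a shell of radius $r$ and width $L_0$ is $\asymp L_0 r^{d-1}\cdot r^{2-d}=L_0 r$ in \emph{every} dimension $d\ge 3$, not $O(L_0r^{2-d})$; the genuine $d=3$ versus $d\ge 4$ dichotomy is the capacity of one-dimensional sets, $\mathrm{cap}(T_N)\asymp N$ for $d\ge4$ versus $\asymp N/\log N$ for $d=3$, cf.~\eqref{eq:caplineintro}. The paper exploits this through the coarse-graining of Proposition~\ref{prop:coarse_paths} (a linear-in-$N$ capacity lower bound for the retained, well-separated $L$-boxes), the exact independence of the local fields $\psi^z$ across those boxes, and Lemma~\ref{lem:BTIS_Szn} for the harmonic averages; ``approximately independent with summably small conditional covariance'' is not a proof, and at best would re-derive \cite{PR15}, which covers only $h>h_*$. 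For $h<h_*$, the claim that the $\Theta(N)$ ``non-merging'' events each have conditional probability bounded away from $1$ and multiply up is exactly the hard part and is left unjustified: these events are not local functions of the field in disjoint annuli, the per-annulus estimate requires an insertion-tolerance property the GFF lacks (again supplied by the midpoint extension, via the set $\mathcal{M}$ in \eqref{eq:ubsuper1}), and turning the product into a bound requires the conditional exploration of Lemma~\ref{lem:truncated} together with the construction of ambient clusters with linearly many contact points, i.e.\ the bound on $\P[G_N^c]$ obtained by the bootstrap of Proposition~\ref{lem:inclusion_fxn_goodevent} initialized by the a-priori estimate of Lemma~\ref{lem:loc_amb_cluster1}. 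None of this machinery, or a substitute for it, appears in your sketch, so both the supercritical upper bound and the lower bound have genuine gaps.
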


Theorems~\ref{thm:main_d=3} and~\ref{thm:main_d>3} follow immediately by combining the results of Theorems~\ref{P:lb} and~\ref{P:ub} below, which separately deal with the corresponding 
lower and upper bounds, respectively. In fact, as asserted in these two theorems, \eqref{eq:main_d=3} and \eqref{eq:main_d>3} continue to hold when $h \le h_*$ if one replaces 
the event in question by $\text{LocUniq}(N,h)^c$, see \eqref{eq:def_locuniq}. 
We also have the similar asymptotic for a slightly modified version of the event 
$\text{LocUniq}(N,h)^c$, namely
\begin{equation}
\label{eq:intro2arm}
\lim_{N\to\infty}\, \frac{\log N}{N} \log \P[\text{2-arms}(N,h)] = -\frac{\pi}{6}(h-h_*)^2, \text{ if $h \le h_*$ and $d=3$},
\end{equation}
along with a statement similar to \eqref{eq:main_d>3} when $d\geq 4$, where `2-arms' refers to the existence of two disjoint crossing clusters of $\{ \varphi \geq h\} \cap( B_{2N} \setminus B_{N})$. 

Concerning the truncated two-point function, defined as 
\begin{equation}
\label{eq:intro2point}
\tau_h^{\text{tr}}(x,y)= \P[\lr{}{\varphi\geq h}{x}{y},\, \nlr{}{\varphi\geq h}{x}{\infty}],  \text{ for }x,y \in \Z^d \text{ and } h\in \R,
\end{equation}
one readily sees that this is symmetric in $x$ and $y$ and satisfies $\tau_h^{\text{tr}}(x,y)= \tau_h^{\text{tr}}(0,y-x)$ by translation invariance of the set $\{\varphi \geq h\}$. Our 
results readily imply (cf.~the proof of Theorem~\ref{P:lb} below) that the asymptotics \eqref{eq:main_d=3} and \eqref{eq:main_d>3} also hold for $\tau_h^{\text{tr}}(0, Ne_1)$, with $h \in 
\R$ and $h \neq h_*$ respectively, where $e_1$ denotes the unit vector in a coordinate 
direction of $\Z^d$. More generally, with $|\cdot|$ denoting the Euclidean distance, one may 
also expect that for arbitrary $x,y \in \Z^d$ and $h \in \R$,
	\begin{equation}
\label{eq:intro2point}
\lim_{|x-y|\to\infty}\, \frac{\log |x-y|}{|x-y|} \log \tau_h^{\text{tr}}(x,y) = -\frac{\pi}{6}(h-h_*)^2, \text{ when $d=3$.}
\end{equation}	
We refer to Remarks~\ref{remark:cap_tube} and~\ref{R:final},~2) below regarding the (technical) modifications to our argument needed to prove \eqref{eq:intro2point} and compelling evidence for its truthfulness. 
Finally, let us emphasize that, while \eqref{eq:sharp} leads to a form of Theorem~\ref{thm:main_d=3} indicating that $h_*$ is approached at the \textit{same} rate both as $h \searrow h_*$ and $h \nearrow h_*$, a version of our findings could be stated in terms of $\bar h$ and $h_{**}$ only, much as in \cite{Sz15}, \cite{nitzschner2018}, \cite{chiarini2018entropic}, thus yielding \eqref{eq:main_d=3} and \eqref{eq:main_d>3} upon applying \eqref{eq:sharp}. This is the sole place where \eqref{eq:sharp} is used.

Next, we compare the findings of Theorems~\ref{thm:main_d=3} and~\ref{thm:main_d>3} to related existing results. For Bernoulli percolation (i.e.~taking $\varphi$ to be a field of independent standard Gaussians, traditionally parametrized using $p= \P[\varphi_0 \geq h]$, or the analogue bond percolation problem), exponential decay of the radius function at a leading rate $e^{- N/\xi(p)}$ can be obtained for all $p \neq p_c$ \cite{Me86}, \cite{AB87}, \cite{10.2307/2244436}, \cite{GM90}, see also \cite[Sections 6.2 and 8.4]{Gr99}. Little is known apart from existence and qualitative properties of $\xi(\cdot)$ in intermediate dimensions. One can show polynomial lower bounds of the form $\xi(p) \geq |p-p_c|^{-c(d)}$ for $p< p_c$, see \cite[Section 8]{zbMATH07395544} and refs.~therein. The currently best available upper bounds~\cite{zbMATH07395544}, valid for all $p \neq p_c$, do not witness a polynomial scaling. One also has estimates similar to \eqref{eq:intro2point} for the two-point function $ \tau_p $, including improved Ornstein-Zernike asymptotics for $p<p_c$ \cite{zbMATH04131412, 10.1214/aop/1023481005} or $p$ close to $1$ \cite{zbMATH05996809}, but the relevant norm $\xi_p(x)\coloneqq -\lim_N N^{-1} \log \tau_p (0,Nx)$ in the leading order term is implicit. In particular, the possible isotropy of $\xi_p(\cdot)$ (near $p_c$),~cf.~\eqref{eq:intro2point}, see also \cite[Chap.~XII.5]{McCoyWu+2013} regarding the Ising model in dimension $d=2$ and \cite[Fig.~2.2]{ott2020asymptotics} for related simulations, is not clear. In this regard \eqref{eq:intro2point} hints at the rotational symmetry of a possible scaling limit at criticality. 

The situation improves dramatically in high dimensions or when $d=2$. Indeed, one knows by \cite{zbMATH04123029}, \cite{zbMATH04145109}, that $\xi(p) \sim |p-p_c|^{-\nu}$ with $\nu=\frac12$ as $p \nearrow p_c$ when $d$ is large enough.  As far as we understand, the case of $p \searrow p_c$, i.e.~the behavior of truncated observables, is completely open in high dimensions. For site percolation on the planar triangular lattice, $\nu=\frac43$ and $\xi_p(x)$ is independent of the direction of $x$ near $p_c$, see \cite{zbMATH01744230} and refs.~therein.

The percolation problem \eqref{eq:h_*} has a close cousin, involving the excursion sets of the free field $\tilde{\varphi}$ on the corresponding cable system $\widetilde{\Z}^d$, which can be understood as a purely discrete \textit{bond} percolation problem involving additional disorder on the edges: given $\varphi$, each edge $e=\{x,y\}$ is closed independently with probability $\exp\{-2(\varphi_x-h)_+ (\varphi_y-h)_+\}$, cf.~the discussion around \cite[(1.6)]{drewitz2021critical}; see also \cite{zbMATH06603570} or \cite[Section 2]{drewitz2021cluster} for further details. Variants of this observation have been recently used in several related contexts \cite{zbMATH06603570}, \cite{zbMATH06583846}, \cite{zbMATH07322630}.

For percolation of the excursion sets $\{\tilde{\varphi} \geq h \}$, one knows that the associated critical parameter $\tilde{h}_*$ corresponding to \eqref{eq:h_*} satisfies $\tilde{h}_*= 0$ in great generality \cite[Corollary 1.2]{drewitz2021cluster}, including all  transitive graphs (and in particular $\widetilde{\Z}^d$, $d \geq 3$). This is due to the behavior of the \textit{cluster capacity} observable, which was recently realized to play a central role in this context; to wit, this observable naturally emerges as part of the differential identities derived in \cite[Section 2]{drewitz2021critical} which correspond to the classical Margulis-Russo formula \cite[Section 2.4]{Gr99} for independent percolation. Essentially, the fact that $\tilde{h}_*=0$ on \textit{most} graphs is a direct consequence of such a formula and the fact that the capacity of clusters of $\{\tilde{\varphi}>0\}$ is finite on \textit{all} transient graphs \cite[Theorem 1.1,1)]{drewitz2021cluster}. Following \cite{drewitz2021critical}, \cite{drewitz2021cluster}, and somewhat in the spirit of Theorems~\ref{thm:main_d=3} and~\ref{thm:main_d>3} above, 
the leading exponential order for the tail of the corresponding capacity observable for $\varphi$ on $\Z^d$, $d \geq3$, was subsequently derived in \cite[Theorem 1.3]{panagiotis2021analyticity} for $h \neq h_*$, thus mirroring parts of \cite[Theorem 3.7]{drewitz2021cluster} for $\tilde{\varphi}$; see also \cite[(1.14)]{drewitz2021critical} for corresponding (near-)critical tails.

With regards to connectivity functions, the result of Theorem~\ref{thm:main_d=3} (as well as \eqref{eq:intro2point}) can be drastically improved when considering the percolation problem for $\{ \tilde\varphi \geq h \}$. Indeed, as recently shown in \cite[Theorem 1.4]{drewitz2021critical}, abbreviating by $\tilde{\tau}_h^{\text{tr}}(\cdot,\cdot)$ the quantity in \eqref{eq:intro2point} with $\tilde{\varphi}$ in place of $\varphi$ and $\xi(h)=|h-\tilde{h}_*|^{-2}=|h|^{-2}$, one has when $d=3$ that \begin{equation}
\label{eq:radius-scaling}
c'\tilde{\tau}_0^{\text{tr}}(0,x)  
 \exp \Big\{-  \frac{C|x|/ \xi(h)}{  \log \big(\textstyle\frac{|x|}{\xi(h)}\vee2\big)} \displaystyle \Big\} \leq \tilde{\tau}_h^{\text{tr}}(0,x) \leq \tilde{\tau}_0^{\text{tr}}(0,x) \exp \Big\{-  \frac{c|x|/ \xi(h)}{ \log (|x| \vee2)} \Big\},
\end{equation}
which is valid for all $h \in \R$ and $x \in \Z^3$, where the lower bound requires that $\frac{|x|}{\xi(h)} \geq (\log \xi(h))^{\delta}$ for some $\delta \in (0,1)$, along with analogous results for the radius observable of \eqref{eq:main_d=3}. In particular, the bounds \eqref{eq:radius-scaling} witness the onset of the leading exponential asymptotics of Theorem~\ref{thm:main_d=3} as occurring \textit{at scale} $\xi$, which justifies its use as a correlation length. Together with the scaling behavior of $\tilde{\tau}_0^{\text{tr}}(0,x)$, which follows readily from the Markov property, as observed in \cite{zbMATH06603570}, the bounds \eqref{eq:radius-scaling} almost display the full scaling behavior of $\tilde{\tau}$, and are sufficient to yield Fisher's scaling law, see \cite[Corollary 1.5]{drewitz2021cluster}; see also \cite{hutchcroft2021critical} regarding recent findings displaying analogous scaling to that of $\tilde{\tau}_0^{\text{tr}}$ for the critical two-point function of independent long-range percolation on the hierarchical lattice.

Noteworthily, whereas upper bounds will be the main difficulty in proving Theorems~\ref{thm:main_d=3} and \ref{thm:main_d>3} below, the lower bounds in \eqref{eq:radius-scaling} are far more difficult. These bounds require understanding connection mechanisms up to the critical scale $\xi$ and they rely on different ideas, which will not be discussed here; see \cite{drewitz2021critical}. On the other hand, the upper bound in \eqref{eq:radius-scaling} follows readily by comparison with the afore mentioned cluster capacity observable, see \cite[Section 4]{drewitz2021critical}; see also \cite[Theorem~4]{DiWi} for related if somewhat weaker bounds.

Results corresponding to Theorem~\ref{thm:main_d>3} for $\tilde{\varphi}$ in dimensions $d \geq 4$ follow by combining \cite[(1.26)]{drewitz2021cluster} for $\nu=d-2$ and an adaptation of the argument leading to \eqref{eq:main_lb_dgeq4} below to the cables. This produces \textit{both} Gaussian upper and lower bounds in $h$ when $|h|$ is at least of the order of local fluctuations, i.e.~$\text{var}(\varphi_0)^{1/2}$ or so, see also \cite[Remark 8.3]{drewitz2021critical}. However, currently available upper and lower bounds in the spirit of \eqref{eq:radius-scaling} for $d \geq 4$ are far from matching, cf.~\cite[Theorem 1.4 and Proposition 6.1]{drewitz2021critical} for known results. Improving on this, i.e.~obtaining the correct analogue of \eqref{eq:radius-scaling} in higher dimensions for $|h| \ll \text{var}(\varphi_0)^{1/2}$ along with the correct choice of $\xi$, is presently open; see also \cite[Theorem 1.7]{drewitz2021cluster} for some results in this direction when $d=4$ (corresponding to $\nu=2=\frac{\alpha}{2}$ in \cite{drewitz2021cluster}).

\medskip
We now highlight some ideas behind our proofs of Theorems~\ref{thm:main_d=3} and~\ref{thm:main_d>3}. One is immediately struck by the discrepancy in the strength of the above results. This is closely related to the fact that the random walk 
does not `see' one-dimensional sets (such as bounded off-critical percolation clusters) when $d\geq 4$. Our proofs witness this structural difference between the cases $d=3$ and $d \geq 4$ very clearly. To see this, first observe that (see Lemma~\ref{lem:cap_tube} for precise statements) as $N \to \infty$,
\begin{equation}
\label{eq:caplineintro}
\begin{array}{ll}
\displaystyle
\text{cap}\big(([0,N]\cap \mathbb{Z})\times\{0 \}^{d-1}\big) \sim \frac{\pi}{3}\frac{N}{\log N}, &\text{ when $d=3$, whereas}\\[1.0em]
\displaystyle
\text{cap}\big(([0,N]\cap \mathbb{Z})\times\{0 \}^{d-1}\big) \asymp N, & \text{ when $d\geq 4$.}
\end{array}
\end{equation}
Now, the coarse-graining described in more detail below (from which we eventually deduce the upper bounds in \eqref{eq:main_d=3}, \eqref{eq:main_d>3}), yields a sum of two terms for the probability in question. One of them corresponds to a truncated version of $\varphi$ (a local field, independent at large scales), for which a corresponding one-arm event decays exponentially in $N$, regardless of the dimension~$d$. The other term, which carries the long-range dependence, stems from the behavior of the harmonic field in a collection of well-separated boxes, and will turn out to behave in a manner proportional to $\text{cap}\big(([0,N]\cap \mathbb{Z})\times\{0 \}^{d-1}\big)$ to leading exponential order. In view of \eqref{eq:caplineintro}, this means that the harmonic term clearly dominates in dimension $3$, whereas the two terms live at the same exponential scale in dimension four and higher (and in fact the local term is typically larger).

The lower bounds derived in Section~\ref{sec:lower} further reflect this disparity. For $d=3$, in the subcritical regime, we use a change of measure argument in order to draw a finite path in $\{\varphi \geq h\}$ in a thin horizontal tube. The supercritical regime requires a more delicate treatment, as discussed below. Intuitively, the field shifts itself by the right amount in a suitable region as to make the event in question typical, cf.~Lemma~\ref{lem:entrop_lower} for a general result in this direction, which is of independent interest. The limit on the right-hand side of \eqref{eq:main_d=3} thereby emerges in the corresponding Radon-Nikodym derivative as half of the leading order pre-factor for the capacity of the shifted region, which is close to that of a line of length $N$, see \eqref{eq:cap_line_asymp}, times the square of the height gap. Similar arguments have  been 
used in the study of hard wall conditions for $\varphi$, see \cite{BDZ95}, and disconnection 
probabilities for supercritical excursion sets, see \cite{Sz15}. Importantly, the monotonicity 
of the events in question (common to these references) is absent for the one in 
\eqref{eq:main_d=3} when $h<h_*$, which requires that we `insulate' the path, i.e.~build an 
interface in $\{\varphi < h\}$ to shield it away from $\infty$. This makes the implementation 
of our lower bound strategy relatively involved in the supercritical regime and forces us to 
introduce Dirichlet boundary conditions to decorrelate constituents of opposite monotonicity. 
In sharp contrast, the lower bounds in \eqref{eq:main_d>3} follow by `FKG-type' arguments, 
which do not witness the critical parameter $h_*$ at all, see \eqref{eq:main_lb_dgeq4}.

Most of our work goes into proving the upper bounds required for Theorems~\ref{thm:main_d=3} and~\ref{thm:main_d>3}, summarized in Theorem~\ref{P:ub} below. A stepping stone
towards this is a certain coarse-graining scheme for paths, developed in Section~\ref{sec:coarsegrain} (see in particular Proposition~\ref{prop:coarse_paths} below), which we now briefly describe. Roughly speaking, for a path $\gamma$ of linear size $N$, the coarse-graining of $\gamma$, formalized in Definition~\ref{def:admissible}, only retains the trace of $\gamma$ in a system of `well-separated' boxes at scale $L  \ll N$. Importantly, the scheme walks the fine line of operating at a preferential entropic cost (parametrized by a function $\Gamma(\cdot)$, see \eqref{def:admissible3}, \eqref{eq:cg_gamma}), while retaining a sufficiently `large' piece of path when measured in terms of capacity. This latter property, ensured by Proposition~\ref{prop:coarse_paths}, see \eqref{eq:cg_capd=3}, is crucial for the precise estimates we aim at.

In the subcritical phase $h>h_*$, the above scheme is  used to cascade a connection event such as $\{\lr{}{\varphi\geq h}{0}{\partial B_N}\}$ from scale $N$ down to scale $L(\ll N)$. For each of the boxes at scale $L$ in the resulting collection, the occurrence of a crossing in that box is split into a similar event for a localized field with good decorrelation properties as the box is varied, and the occurrence of an atypical behavior for the corresponding harmonic average, see \eqref{eq:psibadsub}-\eqref{eq:bnd_psi_xi}. The leading-order contribution is thereby carried by the harmonic field in all but a small fraction of $L$-boxes, which we control by means of state-of-the-art estimates developed in \cite{Sz15}, cf.~Lemma~\ref{lem:BTIS_Szn} below. The strength of these estimates hinges on a suitable capacity lower bound for the underlying collection of boxes, which Proposition~\ref{prop:coarse_paths} provides. 

The resulting two-scale estimate for the one-arm event can then be applied iteratively, see Proposition~\ref{prop:bootstrap} below, to boost an a-priori bound such as \eqref{eq:def_h**} (but see Remarks~\ref{remark:bootstrap} and~\ref{remark:stronger_bootstrap} below to accommodate much weaker a-priori bounds) to the desired decay in a finite number of steps, if $L$ is carefully chosen as a function of $N$ (as will turn out, $L$ needs to grow poly-logarithmically in $N$).  In fact, two steps suffice if one starts from \eqref{eq:def_h**}.

The derivation of the desired upper bounds in the supercritical regime, see \eqref{eq:main_ubsupercrit_d=3} and \eqref{eq:main_ubsupercrit_d>4} in Theorem~\ref{P:ub}, is considerably more involved. When $h<h_*$, connections become typical and the cost displayed in \eqref{eq:main_d=3} and \eqref{eq:main_d>3} measures the difficulty to avoid the infinite cluster. Our approach revolves around an event $G_N$, see \eqref{def:goodevent}, ensuring, roughly speaking, that any macroscopic path at scale $N$ will have $a_N$ `contact points' in 
each of $b_N$ interfaces, all of which are connected to infinity in $\{\varphi \geq h\}$. These 
contact points are in fact local areas at a microscopic scale $L_0$ in which a certain 
insertion tolerance property holds (which the model does not possess as such due to the strength of the correlations), thus yielding a small i.i.d.~cost to avoid connecting to the 
infinite cluster. This property is conveniently defined in terms of a `mid-point' extension 
of $\varphi$ that was used in \cite{DPR18}, see \eqref{eq:LB1_dim4} and \eqref{eq:ubsuper1}. 
Incidentally, we also take advantage of this extension to deal with competing monotonicity 
properties of the path and the insulating interface when deriving the lower bounds for $d \geq 
4$ and $h<h_*$.

An upper bound on the key quantity $\P[G_{N}^c]$ is then derived using a bootstrapping scheme, see  Proposition~\ref{lem:inclusion_fxn_goodevent} below, which works roughly as follows. Starting from a certain (localized) good event $\mathcal{G}_z$ at base scale $L \gg L_0$, comprising a local uniqueness property at that scale and a number $a_L$ ($=1$ to begin with) of contact points to the ambient cluster for any large path, see Definition~\ref{def:goodevent2}, for which a suitable a-priori estimate is available (cf. Lemma~\ref{lem:loc_amb_cluster1}), the scheme does one of two things: i) in intermediate steps, it re-produces the same event $\mathcal{G}_z$ at larger scale $N$, improving on both its likelihood and the number $a_N$ of contact points (eventually we need $a_Nb_N$ to grow linearly with $N$ when $d \geq 4$ and sub-linearly but with $a_Nb_N \gg N/\log N$ when $d=3$); ii) in the final step, the scheme generates the target event $G_N$, creating multiple interfaces by stacking good boxes at scale $L$. In either case, the scheme witnesses this improvement on a certain event, see \eqref{def:good_box_event2}, defined in terms of the coarse-graining from Proposition~\ref{prop:coarse_paths}, and for which a dichotomy (involving local fields and harmonic averages) holds, see \eqref{eq:psibadsup}-\eqref{eq:EF_decomp1}. The proofs of the desired upper bounds then follow somewhat similarly as in the subcritical case.
\medskip	
	
We now briefly describe the organization of this article. Section~\ref{sec:preliminaries} gathers several preliminary results that will be used in subsequent sections. Section~\ref{sec:lower} proves the lower bounds corresponding to Theorems~\ref{thm:main_d=3} and~\ref{thm:main_d>3}, see Theorem~\ref{P:lb}. Section~\ref{sec:coarsegrain}
	 supplies the coarse-graining scheme for paths, see Proposition~\ref{prop:coarse_paths}, which will be instrumental in deriving the upper bounds. The proof differs depending on whether $d=3$ or $d \geq 4$, which are dealt with separately in Sections~\ref{sec:coarse3d} and~\ref{sec:coarse4d}. The desired upper bounds are then derived in Section~\ref{sec:upper}. The sub- and supercritial phases are considered separately in Sections~\ref{sec:upper_sub} and~\ref{sec:upper_sup}. 
	 
	 Our convention regarding constants is the following. Throughout, $c,c',C,C',\dots$ denote positive constants that may change from place to place. Numbered constants are defined the first time they appear and remain fixed thereafter. All constants may depend implicitly on the dimension $d$. Their dependence on other parameters will be made explicit.

	\section{Preliminaries and capacity estimates for tubes}\label{sec:preliminaries}

In this section, we gather several ingredients that will be used in the sequel. We first introduce some more notation and state a topological condition on paths yielding the existence of blocking interfaces, see Lemma~\ref{lem:blocking_layers} below. We proceed to recall certain aspects of potential theory for the random walk on $\Z^d$ and supply suitably precise capacity estimates for `tubular' sets, including `porous' versions thereof, see Lemmas~\ref{lem:cap_tube}--\ref{lem:cap_tube2} below. Finally, we discuss important properties of the free field $\varphi$, including a certain mid-point extension of~$\varphi$.

We consider $\Z^d$, $d\geq 3$, endowed with the usual nearest-neighbor graph structure. We write $x \sim y$ if $x$ and $y$ are neighbors, i.e. if $x,y \in \Z^d$ and $|x-y| =1$. We use $|\cdot|$ to denote the 
Euclidean and $|\cdot|_{\infty}$ the $\ell^{\infty}$-norm in $\Z^d$ as well as $d(\cdot, \cdot)$ and $d_\infty(\cdot, \cdot)$ to denote the corresponding distances between sets. Recall 
that $B_N(x)$ denotes the box of radius $N$ around $x$ with respect to $|\cdot|_{\infty}$, and 
let $B_N(U) \coloneqq \bigcup_{x\in U} B_N(x)$ for $U \subset \Z^d$. For $U \subset \Z^d$, $\partial U \coloneqq \{ x \in U: \exists y \notin U \text{ s.t. } y \sim x\}$
is the {\em inner (vertex) boundary} of $U$ and $U^c =\Z^d \setminus U$ is the complement of 
$U$ in $\Z^d$. We also define the outer boundary of a set $U \subset \Z^d$ as 
$\partial_{\text{out}} U = \partial(U^c)$. For $U, V \subset \mathbb{Z}^d$, we write $U \subset 
\subset V$ to indicate that $U$ has finitely many elements. A \textit{path} $\gamma$ 
in $\Z^d$ is a map $\gamma:\{ 0,\dots, k\} \to \Z^d$ for some integer $k \geq 0$ such that 
$|\gamma(i+1)-\gamma(i)|=1$ for all $0\leq i < k$. A \textit{$*$-path} is defined similarly, 
with $|\cdot|_{\infty}$ replacing $|\cdot|$. A ($*$-)connected set $U\subset \Z^d$ is a set 
such that any points $x,y \in U$ can be joined by a ($*$-)path whose range is contained in $U$. Throughout, we use the words \textit{connected component} and \textit{cluster} interchangeably to refer to maximal connected sets.

We now state a useful criterion for the existence of  `dual' surfaces separating two sets, which is interesting in its own right. In the sequel for any $U\subset \subset\Z^d$, let 
$U^c_{\infty}$ denote the (unique) connected component of $U^c$ having infinite cardinality, 
and define $\partial_{\text{ext}}U=  \partial(U_{\infty}^c)$, the exterior boundary of $U$. 
For any two finite sets $U_1, U_2 \subset \Z^d$, we say $U_1$ is {\em surrounded} by $U_2$, denoted as $U_1 \preceq U_2$, if $U_1$ is contained in some finite connected component of $\Z^d \setminus U_2$. Notice that the relation `$\preceq$' is in fact a partial order.
\begin{lemma}[Existence of blocking interfaces]
	\label{lem:blocking_layers}
	Let $V \subset \Z^d$ be a box and $U \subset V$. Also let $\Sigma \subset V \setminus U$ be such that any $*$-path between $U$ and $\partial V$ intersects $\Sigma$ in at least $k 
	\ge 1$ points. Then there exist $*$-connected subsets $O_1, \ldots, O_{k}$ of $\Sigma$ such that $U \preceq O_1 \preceq \ldots \preceq O_{k}$.
\end{lemma}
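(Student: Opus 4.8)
The plan is to realise the sets $O_j$ as the \emph{exterior boundaries} of a nested family of finite connected sets $A_0\subseteq A_1\subseteq\cdots\subseteq A_{k-1}$ interpolating between $U$ and $\partial V$, obtained by stripping off the layers of $\Sigma$ one at a time. I take $U\neq\emptyset$, and note that some connectedness assumption on the set being enclosed is genuinely needed here: if $U$ consisted of two far apart vertices, two disjoint microscopic $*$-rings around them would form an admissible $\Sigma$ with $k=1$ for which no $*$-connected $O_1\subseteq\Sigma$ satisfies $U\preceq O_1$. Accordingly I read the ``$S$'' in the conclusion as a connected subset of $U$ (so $S=U$ when $U$ is itself connected), and run the argument below with $S$ in place of $U$.

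First I would introduce, for $x\in V$, the least number of points of $\Sigma$ on a path in $V$ from $U$ to $x$,
\[
\ell(x)\coloneqq\min\bigl\{\,|\Range(\gamma)\cap\Sigma|\ :\ \gamma\text{ a path in }V\text{ from }U\text{ to }x\,\bigr\}
\]
(with $\min\emptyset=\infty$), and set $A_j\coloneqq\{x\in V:\ell(x)\le j\}$ for $0\le j\le k$. I would then record five elementary facts. (i) $U\subseteq A_0$, since $\Sigma\cap U=\emptyset$ makes $\ell$ vanish on $U$; in particular each $A_j$ is nonempty. (ii) $A_0\subseteq\cdots\subseteq A_k$, and each $A_j$ is finite, being a subset of the box $V$. (iii) Each $A_j$ is connected: a path in $V$ realising $\ell(x)\le j$ for some $x\in A_j$ has all of its vertices in $A_j$, hence joins $x$ to the connected set $U$ inside $A_j$. (iv) For $0\le j\le k-1$ one has $A_j\cap\partial V=\emptyset$ --- this is the only place where the hypothesis enters --- because any path in $V$ from $U$ to $\partial V$ is in particular a $*$-path and therefore meets $\Sigma$ in at least $k>j$ points; so $A_j$ is contained in the interior $V\setminus\partial V$ of the box. (v) For $0\le j\le k-1$, $\partial_{\text{out}}A_j\subseteq\Sigma\cap A_{j+1}$: if $y\sim x\in A_j$ with $y\notin A_j$, then $y\in V$ (as $x$ lies in the interior), and appending $y$ to a path in $V$ realising $\ell(x)\le j$ gives $\ell(y)\le\ell(x)\le j$ when $y\notin\Sigma$ and $\ell(y)\le\ell(x)+1\le j+1$ when $y\in\Sigma$; since $y\notin A_j$ forces $\ell(y)>j$, the former is impossible, so $y\in\Sigma$ and $y\in A_{j+1}$.

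Next I would set $O_j\coloneqq\partial_{\text{ext}}A_{j-1}$ for $j=1,\dots,k$. Write $\mathcal I_{j-1}$ for the infinite component of $\Z^d\setminus A_{j-1}$ (unique, as $A_{j-1}$ is finite and nonempty), so $O_j=\partial\mathcal I_{j-1}$. Since a vertex of $\mathcal I_{j-1}$ cannot be adjacent to a vertex in a finite component of $\Z^d\setminus A_{j-1}$, every vertex of $O_j$ is adjacent to $A_{j-1}$ itself, i.e.\ $O_j\subseteq\partial_{\text{out}}A_{j-1}$, and with (v) this yields $O_j\subseteq\Sigma\cap A_j$. Moreover $O_j$ is $*$-connected, by the classical boundary-connectivity fact --- the exterior boundary of a finite connected subset of $\Z^d$ is $*$-connected (Tim\'ar; see also Deuschel--Pisztora) --- applied to $A_{j-1}$. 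And $A_{j-1}\preceq O_j$: the finite set $\Z^d\setminus\mathcal I_{j-1}$ contains $A_{j-1}$, and in $\Z^d\setminus O_j$ there is no edge between $\Z^d\setminus\mathcal I_{j-1}$ and $\mathcal I_{j-1}\setminus O_j$, so $A_{j-1}$ lies in a finite connected component of $\Z^d\setminus O_j$. Combining, $U\subseteq A_0\preceq O_1$ gives $U\preceq O_1$, and for $1\le j\le k-1$ the inclusions $O_j\subseteq A_j\preceq O_{j+1}$ give $O_j\preceq O_{j+1}$ (a subset of a set surrounded by $X$ is itself surrounded by $X$; note moreover $O_j\cap O_{j+1}=\emptyset$, as $O_j\subseteq A_j$ while $O_{j+1}$ is disjoint from $A_j$). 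Thus $U\preceq O_1\preceq\cdots\preceq O_k$, as required.

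The one step that is not routine bookkeeping with the layer function $\ell$ is the boundary-connectivity input above --- that the exterior boundary of a finite connected subset of $\Z^d$ is simultaneously $*$-connected and separates the set from infinity --- which, although classical, has a genuinely combinatorial proof; I would simply cite it. The other point requiring care, as flagged at the outset, is the connectedness of the set being blocked off, which is genuinely necessary for the conclusion to hold.
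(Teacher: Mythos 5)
Your proof is correct, and it arrives at the interfaces by a somewhat different mechanism than the paper, although both arguments ultimately rest on the same classical input (the Deuschel--Pisztora/Tim\'ar fact that the exterior boundary of a finite connected set is $*$-connected). The paper proceeds by induction on $k$: it takes the $*$-connected component $\mathscr C_U^*$ of $U$ in $V\setminus\Sigma$, lets $O_1$ be its exterior boundary, and then re-applies the hypothesis with $k-1$, $\mathscr C_U^*\cup O_1$ and $\Sigma\setminus(\mathscr C_U^*\cup O_1)$, peeling off one interface per step. You instead build all $k$ layers at once, as the exterior boundaries of the sublevel sets $A_j$ of the minimal-crossing-count function $\ell$; the verifications (nestedness, connectedness of $A_j$, $A_j\cap\partial V=\emptyset$ for $j\le k-1$, $\partial_{\text{out}}A_j\subset\Sigma\cap A_{j+1}$, and $A_{j-1}\preceq\partial_{\text{ext}}A_{j-1}$) are all correct, and a pleasant side effect of your construction is that the boundary-connectivity lemma is applied to genuinely nearest-neighbour connected sets, which is its cleanest form, whereas the paper invokes it for a $*$-connected component. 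In effect your argument is a de-recursified version of the paper's induction, trading the repeated re-verification of the hypothesis for a single layer function; either route is fine, and neither is substantially longer.

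Your preliminary observation about connectedness is also well taken: the ``$S$'' in the conclusion is an artefact of the statement (it should be read as $U$), and your two-rings example correctly shows that without some connectedness of the enclosed set the conclusion can fail already for $k=1$; the paper's own proof tacitly uses this too (it speaks of \emph{the} $*$-connected component of $U$ in $V\setminus\Sigma$), and in the only place the lemma is applied, $U$ is a connected set of renormalized lattice points, so your reading matches the intended use. The only cosmetic points worth noting are the degenerate cases you already flagged ($U\neq\emptyset$, which guarantees each $O_j\neq\emptyset$ and hence that the relations $\preceq$ are meaningful), and that disjointness of consecutive $O_j$'s, which you record, in fact follows for all pairs from transitivity of $\preceq$.
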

\begin{proof}[Proof of Lemma~\ref{lem:blocking_layers}]
	An obvious consequence of the hypothesis of the lemma is that $U$ is not $*$-connected to $\partial V$ in $V \setminus \Sigma \supset U$. It then follows e.g.~by \cite[Lemma~2.1]{DeuschelPisztora96} that the 
	exterior 
	boundary of the  $*$-connected component $\mathscr C_{U}^*$ of $U$ in 
	$V \setminus \Sigma$ is itself $*$-connected, which we pick as $O_1$. Notice that $U \preceq 
	O_1$ and $O_1 \subset \Sigma$ by definition. Now observe that the hypothesis of the lemma still holds with $k-1$, $\mathscr C_{U}^* \cup O_1$ -- which is a $*$-connected set --  and $\Sigma \setminus \mathscr (C_{U}^* \cup O_1)$ substituting for $k$, $U$ and $\Sigma$ respectively. Thus, 
	by iterating the same argument $k$ times we deduce the lemma.
\end{proof}

We now review various aspects of potential theory on $\Z^d$ which will be used in the sequel.
We denote by $P_x$ the canonical law of the discrete-time (symmetric) simple random walk on 
$\Z^d$ starting at $x \in \Z^d$. We write $(X_n)_{n \geq 0}$ for the corresponding canonical 
process and $(\theta_n)_{n \geq 0}$ for the canonical time shifts. For $U \subset \Z^d$, we 
introduce the  following stopping times:  the entrance time $H_U \coloneqq \inf \{ n \ge 0 : 
X_n \in U\}$ in $U$, the exit time $T_U \coloneqq H_{\Z^d \setminus U}$ from $U$ and the hitting time $\widetilde{H}_U \coloneqq \inf \{ n \geq 1 : X_n \in U\}$ of $U$. We write
\begin{equation}
\label{eq:Green_U}
g_U(x,y) \coloneqq \sum_{n \geq 0} P_x[X_n =y, \, n < T_U], \quad \text{ for }x,y \in \Z^d
\end{equation}
for the Green function of the walk killed outside $U$. By \cite{La91},~Theorem 1.5.4, with $g=g_{\Z^d}$, one has the asymptotic formula
\begin{equation}
g(x)\coloneqq g(0,x) \sim  \Cl[c]{c:green} |x|^{2-d}, \qquad \text{as } |x| \to \infty,  \label{eq:Greenasympt}
\end{equation}
(where $\sim$ means that the ratio of both sides tends to $1$ in the given limit), for an explicit constant $\Cr{c:green}= \Cr{c:green}(d) \in (0,\infty)$ with $\Cr{c:green}(3)=\frac{3}{2\pi}$. For $K \subset\subset U \subset \Z^d$, 
we introduce the equilibrium measure of $K$ relative to $U$,
\begin{equation}
\label{eq:e_K_U}
e_{K,U}(x) \coloneqq P_x[ \widetilde{H}_K > T_U] 1_{x \in \partial K}
\end{equation}
and its total mass
\begin{equation}
\label{eq:cap_K_U} 
\text{cap}_{U}(K) \coloneqq \sum_x e_{K,U}(x),
\end{equation}
the capacity of $K$ (relative to $U$).  We will omit $U$ from all notation whenever $U=\Z^d$. One has the last-exit decomposition, see, e.g.~\cite[Lemma~2.1.1]{La91} for a proof, valid for all $K \subset\subset U\subset \Z^d$, 
\begin{equation}
\label{eq:lastexit}
P_x[H_K < T_U] = \sum_y g_U(x,y) e_{K,U}(y), \text{ for all } x \in \Z^d.
\end{equation}
Summing \eqref{eq:lastexit} over $x\in K$, one immediately sees that
	\begin{equation}\label{eq:cap_bound_Green}
	\frac{|K|}{\max_{x\in K}\sum_{y\in K} g_U(x,y)}\leq \mathrm{cap}_U(K)\leq \frac{|K|}{\min_{x\in K}\sum_{y\in K} g_U(x,y)}.
	\end{equation}
One also has the following sweeping identity (see for instance (1.12) of \cite{Sz15} when $U=\Z^d$): 
	\begin{equation}\label{eq:sweeping}
	e_{K,U}(y)=P_{e_{K',U}}[H_K<T_U, ~X_{H_K}=y], \text{ for every $K\subset K'\subset \subset U$ and $y\in \Z^d$,}
	\end{equation}
	where $P_{\mu} \coloneqq \sum_x \mu(x)P_x$ for any measure $\mu$ on $\Z^d$.
	Summing over $y$ in \eqref{eq:sweeping} gives
	\begin{equation}\label{eq:sweeping_2}
	\mathrm{cap}_U(K)=\mathrm{cap}_U(K')P_{\overline{e}_{K',U}}[H_K<T_U],
	\end{equation} 
	where $\overline{e}_{K',U}(\cdot)= {e}_{K',U}(\cdot)/\mathrm{cap}_U(K')$ is the normalized equilibrium measure.
	In particular, it follows immediately from \eqref{eq:sweeping_2} that  $\mathrm{cap}_U(K)$ 
	is increasing in $K$. 
	Note also that $\mathrm{cap}_U(K)$ is decreasing in $U$ for fixed $K$. We will also use the following 
	variational characterization of the capacity, see e.g.~\cite[(1.10)]{Sz15} and \cite[Prop.~1.9]{Sz12b} for the proof of a similar statement: for $K \subset\subset U \subset \Z^d$,
\begin{equation}
\label{eq:cap_var}
\text{cap}_{U}(K) = \frac{1}{\inf_{\nu} E_U(\nu)}, \text{ where }  E_U(\nu)=\sum_{x,y}\nu(x)g_U(x,y) \nu(y)
\end{equation}
and the infimum runs over all probability measures supported on $K$.

We now give precise bounds on the capacity of certain sets of interest. The capacity of a ball classically satisfies
\begin{equation}
\label{eq:capball}
cN^{d-2} \leq \mathrm{cap}(B_N) \leq CN^{d-2}, \text{ for all $N \geq 0$,}
\end{equation}
see, e.g.,~\cite[(2.16)]{La91}. We are typically going to work in certain (cylindrical) \textit{`tube domains'}, which we introduce now. Given $L\leq N$, the tube of length $N$ and width $L$, which we denote by  $T_N(L)$, 
is defined as the  $L$-neighborhood of the $N$-line segment $[0,N]\times\{0\}^{d-1}$. Formally,
	\begin{equation}\label{eq:def_TN}
	T_{N}(L):= ([-L,N+L]\cap \Z)\times ([-L,L]\cap\Z)^{d-1}.
	\end{equation}
	We abbreviate $T_N(0)=T_N$, which is a line of length $N $, and routinely omit the intersection with $ \Z$ from our notation below. We now derive certain 
	capacity estimates for tube domains which will be useful in the sequel. We start with the line. 
	
	\begin{lemma}[Capacity of lines] For $d=3$, one has
	\label{lem:cap_tube}
	\begin{align}
	&\label{eq:cap_line_asymp}
	\mathrm{cap}(T_N)\sim \frac{\pi}{3}\frac{N}{\log N},\quad \text{as } N \to \infty,
	\end{align}
	whereas for $d\geq 4$, there exists $\Cl[c]{c:captube}(d) \in (0,1)$  such that for all $N \geq 1$,
	\begin{align}
	&\label{eq:cap_line_asymp4d}
\Cr{c:captube}N  \leq	\mathrm{cap}(T_N) \leq \Cr{c:captube}^{-1} N.
	\end{align}
		\end{lemma}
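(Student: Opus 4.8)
The plan is to estimate $\mathrm{cap}(T_N)$ from above and below separately, exploiting the variational characterization \eqref{eq:cap_var} together with the Green function asymptotics \eqref{eq:Greenasympt}. Since $T_N$ is a line of $N+1$ vertices, one has $\mathrm{cap}(T_N) = \big(\inf_\nu E(\nu)\big)^{-1}$ with $E(\nu) = \sum_{x,y \in T_N} \nu(x) g(x,y) \nu(y)$, the infimum over probability measures on $T_N$. For the \emph{upper} bound on the capacity (equivalently, a lower bound on $\inf_\nu E(\nu)$), I would use the crude bound $g(x,y) \geq c\,(1+|x-y|)^{2-d}$ valid for all $x,y$ (a consequence of \eqref{eq:Greenasympt} and positivity of $g$ on the finite range), so that for any probability measure $\nu$ on $T_N \cong \{0,\dots,N\}$,
\begin{equation*}
E(\nu) \geq c \sum_{i,j=0}^N \nu_i \nu_j (1+|i-j|)^{2-d}.
\end{equation*}
When $d \geq 4$ the kernel $(1+|i-j|)^{2-d}$ is summable in one variable, $\sum_{j} (1+|i-j|)^{2-d} \leq C$, and a convexity/averaging argument (or simply testing against the uniform measure for the reverse inequality) gives $\inf_\nu E(\nu) \geq c/N$, hence $\mathrm{cap}(T_N) \leq CN$; the matching lower bound $\mathrm{cap}(T_N) \geq cN$ is immediate since $T_N$ contains $N+1$ disjoint singletons and capacity is superadditive, or directly from \eqref{eq:cap_bound_Green} with $\max_{x} \sum_{y \in T_N} g(x,y) \leq C$ (again using summability of $|i-j|^{2-d}$ for $d \geq 4$). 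This settles \eqref{eq:cap_line_asymp4d}.

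The delicate case is $d=3$, where one needs the sharp constant $\frac{\pi}{3}$ and where $g(x,y) \sim \frac{3}{2\pi}|x-y|^{-1}$ is \emph{not} summable — the logarithmic divergence of $\sum_{j=1}^N j^{-1} \sim \log N$ is exactly the source of the $\log N$ in the denominator. Here I would argue as follows. \emph{Upper bound on capacity:} for any probability measure $\nu$ on $\{0,\dots,N\}$, write $E(\nu) \geq \frac{3}{2\pi}\sum_{i\neq j} \nu_i\nu_j(|i-j|^{-1} - o(|i-j|^{-1}))$; the point is to show $\sum_{i,j}\nu_i\nu_j (1+|i-j|)^{-1} \geq (1-o(1))\frac{2\log N}{N}$ for every probability vector, i.e.\ that the uniform measure is asymptotically optimal for the discrete kernel $(1+|i-j|)^{-1}$ on $\{0,\dots,N\}$. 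This is a one-dimensional convex minimization; one clean route is to note that the uniform measure is stationary for this translation-"invariant-up-to-boundary" kernel after symmetrization, or to use the known fact that the continuum logarithmic-type energy $\int\int |s-t|^{-1}\,d\mu(s)d\mu(t)$ over probability measures $\mu$ on $[0,1]$ is minimized (to leading order in the relevant scaling) by Lebesgue measure with value $\sim 2\log N$ after discretizing at scale $1/N$ — giving $\inf_\nu E(\nu) \geq (1-o(1))\frac{3}{2\pi}\cdot\frac{2\log N}{N} = (1-o(1))\frac{3\log N}{\pi N}$, hence $\mathrm{cap}(T_N) \leq (1+o(1))\frac{\pi}{3}\frac{N}{\log N}$. \emph{Lower bound on capacity:} test \eqref{eq:cap_var} with $\nu$ the uniform measure on $\{0,\dots,N\}$; then $E(\nu) = \frac{1}{(N+1)^2}\sum_{i,j} g(i e_1, j e_1)$, and splitting the double sum according to whether $|i-j|$ is below or above a slowly growing threshold (to control the non-asymptotic regime of $g$), together with $\sum_{i,j:|i-j|\leq N} |i-j|^{-1} = (2+o(1))N\log N$, yields $E(\nu) \leq (1+o(1))\frac{3}{2\pi}\cdot\frac{2\log N}{N} = (1+o(1))\frac{3\log N}{\pi N}$, hence $\mathrm{cap}(T_N) \geq (1-o(1))\frac{\pi}{3}\frac{N}{\log N}$.

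The main obstacle is the sharp constant in $d=3$: one must show that the uniform measure is asymptotically \emph{optimal} (not just admissible) for the energy $E(\cdot)$ restricted to the line, i.e.\ the lower bound $\inf_\nu E(\nu) \geq (1-o(1))\frac{3\log N}{\pi N}$. The upper bound $\inf_\nu E(\nu) \leq (1+o(1))\frac{3\log N}{\pi N}$ is easy (test with uniform), and the lower bounds/upper bounds in $d\geq 4$ are routine; but the matching lower bound for the energy in $d=3$ requires a genuine argument. A robust way to get it is to compare with the continuum: embed $\{0,\dots,N\}$ into $[0,N]$, push any discrete $\nu$ forward to a measure $\tilde\mu$ on $[0,1]$ after rescaling by $N$, bound $g(ie_1,je_1)$ below by $\frac{3}{2\pi}(1-\varepsilon)N^{-1}|s-t|^{-1}$ for $|i-j|$ large while handling the diagonal block $|i-j| \leq \delta N$ separately (where one uses only $g \geq 0$, losing a term of order $\delta$-dependent size that is then sent to $0$), and invoke the classical fact that $\mu \mapsto \iint_{[0,1]^2} |s-t|^{-1} d\mu(s) d\mu(t)$ has infimum $+\infty$ over probability measures — so the relevant minimization is genuinely the \emph{truncated} kernel $\min(|s-t|^{-1}, N)$, whose minimal energy over probability measures on $[0,1]$ is $(2+o(1))\log N$, attained asymptotically by Lebesgue measure. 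Establishing this last continuum statement (minimal energy of the truncated $1/r$ kernel is $\sim 2\log N$) is the technical heart of the matter, and I expect it to be proved by a direct variational computation: the Euler–Lagrange equation forces the equilibrium potential to be constant on the support, and a short Fourier/convolution estimate shows the constant is $(2+o(1))\log N$ with the uniform measure nearly achieving it.
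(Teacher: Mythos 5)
Your easy halves are fine: testing the uniform measure in \eqref{eq:cap_var} gives the capacity lower bound in $d=3$ (this is essentially the same computation as the paper's, which uses \eqref{eq:cap_bound_Green} with $\sum_{y\in T_N}g(x,y)\le \tfrac3\pi\log N(1+o(1))$), and the $d\ge4$ bounds follow from \eqref{eq:cap_bound_Green} as you say. (One slip there: capacity is \emph{sub}additive, not superadditive, so the "disjoint singletons" argument for $\mathrm{cap}(T_N)\ge cN$ is wrong; your fallback via \eqref{eq:cap_bound_Green} is the correct route.) The genuine gap is in the $d=3$ upper bound, which you correctly identify as the heart of the matter but do not actually prove. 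Two problems. First, the reduction step is quantitatively broken as written: if you discard the block $|i-j|\le \delta N$ using only $g\ge 0$, you throw away essentially \emph{all} of the leading-order energy, since for any near-optimal measure the contribution of $1\le|i-j|\le\delta N$ is $(2-o(1))\tfrac{\log N}{N}$ while the contribution of $|i-j|>\delta N$ is only $O(\log(1/\delta)/N)$; the cut must be at a fixed (or $N^{o(1)}$) scale, not at $\delta N$, precisely so that the truncated kernel retains the logarithmic divergence. Second, the continuum statement you then invoke --- that the minimal energy of the truncated kernel $\min(|s-t|^{-1},N)$ on $[0,1]$ is $(2+o(1))\log N$, with Lebesgue measure asymptotically optimal --- is essentially equivalent to the lemma itself, and your plan for it ("Euler--Lagrange plus a short Fourier/convolution estimate") does not pin the constant: the natural Fourier/uncertainty-principle bound loses a multiplicative factor, and the Euler--Lagrange characterization only tells you the equilibrium potential is constant, not what the constant is to leading order. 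A correct completion needs an extra idea, e.g.\ showing the potential $\sum_{x\in T_N}g(x,y)$ is $\ge(1-\delta)\tfrac3\pi\log N$ uniformly for $y$ at distance $\ge N^{1-\delta}$ from the endpoints and handling the endpoint mass separately (measures charging the ends have much larger energy), or a subordination of $1/(1+|x|)$ to exponential kernels whose equilibrium energies are computable.

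For comparison, the paper sidesteps the energy-minimization problem entirely. Its lower bound is \eqref{eq:cap_bound_Green}; its upper bound trims $N^{1-\delta}$-neighborhoods of the two endpoints (bounding the equilibrium measure there by $1$, a total cost $O(N^{1-\delta})=o(N/\log N)$) and controls the bulk equilibrium mass by summing the last-exit decomposition \eqref{eq:lastexit} over $x\in T_N$, using that $\sum_{x\in T_N}g(x,y)\ge(1-\delta)\tfrac3\pi\log N$ for bulk $y$ by \eqref{eq:Greenasympt}; letting $\delta\to0$ gives \eqref{eq:cap_line_asymp}. This is exactly the "uniform near-constancy of the potential away from the endpoints" input your sketch is missing, delivered without any equilibrium-measure analysis. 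If you want to salvage your variational route, the same trimming plus a Cauchy--Schwarz comparison of an arbitrary $\nu$ against the uniform measure in the energy inner product would do it, but some such argument must be supplied --- as it stands, the sharp constant is asserted rather than proved.
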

	\begin{proof}
Using \eqref{eq:Greenasympt} with the precise value of $\Cr{c:green}(3)$, we obtain 
\begin{equation*}
\sum_{y \in T_N} g(x, y) \le 2 \sum_{y \in x + T_N} g(x, y) \sim \frac 3 \pi \log N, \text{ for all $x \in T_N$}.
\end{equation*}
Substituting this into \eqref{eq:cap_bound_Green} with the choice $K = T_N$ and $U = \Z^d$ yields the asserted lower bound in \eqref{eq:cap_line_asymp}. By a similar argument, using \eqref{eq:cap_bound_Green} and noting that the Green function is summable along one-dimensional sets when $d \ge 4$, one obtains \textit{both} upper and lower bound in \eqref{eq:cap_line_asymp4d}.

It remains to show the upper bound in \eqref{eq:cap_line_asymp}. For $\delta \in (0, 1)$, letting $$T_{N}^-= T_{N}^-(\delta)  \coloneqq T_N \setminus (([0, N^{1-\delta}] \cup [N - N^{1-\delta} , N])\times \{ 0\}^2),$$ one bounds the equilibrium measure by $1$ to obtain
\begin{equation}
\label{eq:cap_upper_bnd0}
\mathrm{cap}(T_N) \le 2(1+N^{1-\delta}) + \sum_{x \in T_{N}^-}e_{T_N}(x).
\end{equation}
To take care of the sum over $T_N^-$ on the right-hand side, one sums \eqref{eq:lastexit} for $K = T_N$ and $U = \Z^d$ over $x \in T_N$ and foregoes the terms with $y \in T_N \setminus  T_{N}^-$. Together with \eqref{eq:cap_upper_bnd0} this yields \begin{equation}
\label{eq:cap_upper_bnd}
\mathrm{cap}(T_N) \le  2(1+N^{1-\delta}) + \frac{N + 1}{\inf_{y \in T_{N}^-} \sum_{x \in T_N} g(x,y)}.
\end{equation}
Now by definition of $T_N^-$ and using \eqref{eq:Greenasympt} again we obtain for any $x \in T_{N}^-$,
\begin{equation*}
\sum_{y \in T_N} g(x, y) \ge 2 \sum_{y \in x + T_{N^{1-\delta}}} g(x, y) \sim (1 - \delta)\frac 3 \pi \log N.
\end{equation*}
Plugging this into \eqref{eq:cap_upper_bnd}, we get that $\limsup_{N \to \infty}\frac{{\rm cap}(T_N)}{\frac{\pi}{3}\frac{N}{\log N}} \le \frac{1}{1 - \delta}$, whereupon the upper bound in \eqref{eq:cap_line_asymp} follows by taking $\delta \to 0$.
	\end{proof}
\begin{remark}[Rotational invariance of asymptotic capacity for lines]
	\label{remark:cap_tube}
Let $\rm u\in \R^3$ with $|\rm u|=1$ be any unit vector. Then the asymptotic expression in \eqref{eq:cap_line_asymp} remains valid if one replaces $T_N$ 
by the line segment joining $0$ and $N{\rm u}$ discretized in the following manner. For any $x \in \R^3$, let $[x]$ denote a point in $\Z^3$ 
achieving the minimum distance between $x$ and $\Z^3$. Now let $T_{N, {\rm u}} \subset \Z^3$ 
consist of the points $[j\sqrt{3}{\rm u}]$ for all integers $j$ between $0$ and $\lceil N / 
\sqrt{3}\rceil$. Notice that it is always possible to choose the points in such a way that they 
are distinct. By this construction and the triangle inequality we have, for any $x, y \in \R^3$ such that $[x], [y] \in T_{N, \rm u}$,
\begin{equation*}
|x - y| - \sqrt{3} \le |[x] - [y]|	\le |x - y| + \sqrt{3}
\end{equation*}
and consequently $g(x, y) \sim g([x], [y])$ as $|x - y| \to \infty$. The asymptotics on the right-hand side of \eqref{eq:cap_line_asymp} 
now follow for ${\rm cap}(T_{N, {\rm u}})$ by the exact same arguments as in the proof of 
Lemma~\ref{lem:cap_tube}. Indeed, the additional $1 / \sqrt{3}$ factor appearing in the 
numerator in \eqref{eq:cap_bound_Green} owing to reduced cardinality compared to $T_N$ gets 
canceled by the $1 / \sqrt{3}$ factor appearing in the denominator because of the increased 
separation between successive points in $T_{N, {\rm u}}$. In fact, the asymptotics \eqref{eq:cap_line_asymp}  should hold for any `reasonable' discretisation of the line segment between $0$ and $N {\rm u}$.
	\end{remark}
We will need the following upper bound on the escape probability from a sufficiently dense 
subset of the line in order to derive capacity estimates for thicker tube regions. We will also 
use this result in Section~\ref{sec:coarsegrain} while proving Lemma~\ref{Claim:cap2}, which will involve porous versions of these sets (i.e.~containing holes).
\begin{lemma}[Visibility of (porous) lines, $d=3$]\label{lem:visibility}
	For all $N \geq 1$, $T \subset T_N$ and $x \in \Z^3$ such that $d_{\infty}(x,T)< N/10^3$, the following holds. If, for some $ \gamma >0$,
	\begin{equation}
	\label{eq:vis1}
	| B_r(x) \cap T| \geq \gamma |B_r(x) \cap T_N| , \text{ for all $r$ satisfying $2d_{\infty}(x,T)\leq r < N$},
	\end{equation}
	then for some $c(\gamma)>0$ and $C\in ( 1,\infty)$, one has
	\begin{equation}
	\label{eq:vis2}
	P_x[H_T =\infty] \leq  C\,\Big[\frac{ \log (1 + d_{\infty}(x,T))}{ \log N}\Big]^{c(\gamma)}.
	\end{equation}
\end{lemma}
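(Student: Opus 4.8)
\textbf{Proof plan for Lemma~\ref{lem:visibility}.}

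The plan is to estimate $P_x[H_T = \infty]$ via a capacity lower bound, using the identity $P_x[H_T < \infty] = \sum_{y} g(x,y) e_T(y)$ from \eqref{eq:lastexit} (with $U = \Z^3$) together with the variational characterization \eqref{eq:cap_var}. More concretely, since $\Z^3$ is transient, it suffices to produce a suitable probability measure $\nu$ supported on $T$ for which the Green energy $E(\nu) = \sum_{x,y} \nu(x) g(x,y)\nu(y)$ is controlled, because a pointwise bound on $P_x[H_T < \infty]$ follows once one knows that the harmonic measure / equilibrium measure of $T$ carries enough mass at the right distance from $x$. The cleanest route is: first prove a lower bound on $\mathrm{cap}(T \cap B_r(x))$ of order $r / \log r$ for each dyadic scale $r$ between $r_0 \coloneqq d_\infty(x,T)$ and $N$, then chain these over scales.

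First I would set $r_0 = d_\infty(x,T)$ and, for the single-scale estimate, fix a dyadic $r \in [r_0, N)$ and consider $T^{(r)} \coloneqq T \cap B_r(x)$, which by \eqref{eq:vis1} has at least $\gamma r$ points. Put the uniform probability measure $\nu_r$ on $T^{(r)}$. Its Green energy is $E(\nu_r) = |T^{(r)}|^{-2} \sum_{y,y' \in T^{(r)}} g(y,y')$, and since $T^{(r)} \subset T_N$ is a subset of a line, $\sum_{y' \in T^{(r)}} g(y,y') \le \sum_{y' \in T_{2r}(y)} g(y,y') \le C \log r$ for each $y$ by the computation already carried out in the proof of Lemma~\ref{lem:cap_tube} (one-dimensional sum of $g(z) \asymp |z|^{-1}$ over a segment of length $2r$). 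Hence $E(\nu_r) \le C \log r / |T^{(r)}| \le C \gamma^{-1} \log r / r$, and \eqref{eq:cap_var} gives $\mathrm{cap}(T^{(r)}) \ge c \gamma r / \log r$. Next, for any $y \in T^{(r)}$ one has $|x - y|_\infty \le r$, so $g(x,y) \ge \Cr{c:green} (C r)^{2-3} = c/r$ by \eqref{eq:Greenasympt} (valid for $r$ large; small $r$ is absorbed into the constant, using $d_\infty(x,T) \ge$ const. or handling the finitely many small cases directly). Summing \eqref{eq:lastexit} at the single set $K = T^{(r)}$ and bounding $g(x,y) \ge c/r$ from below against $e_{T^{(r)}}(y)$:
\begin{equation*}
P_x[H_{T^{(r)}} < \infty] \ge \frac{c}{r}\sum_y e_{T^{(r)}}(y) = \frac{c}{r}\,\mathrm{cap}(T^{(r)}) \ge \frac{c\gamma}{\log r},
\end{equation*}
i.e. $P_x[H_{T^{(r)}} = \infty] \le 1 - c\gamma/\log r$.

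The final step is to chain over dyadic scales $r_j = 2^j r_0$ for $j = 0, 1, \dots, J$ with $J \asymp \log_2(N/r_0)$, using the strong Markov property: the walk started at $x$, upon hitting $\partial B_{r_j}(x)$, has to cross the annulus $B_{r_{j+1}}(x) \setminus B_{r_j}(x)$ before escaping, and on that crossing it has a chance of order $1/\log r_{j+1} \gtrsim 1/\log N$ of hitting the portion of $T$ living in that annulus (this uses \eqref{eq:vis1} again, now to guarantee $|B_{r_{j+1}}(x) \cap T| \gtrsim r_{j+1}$, and the single-scale bound applied to the shifted/annular set, together with a Harnack-type uniformity in the entrance point on $\partial B_{r_j}(x)$). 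Missing $T$ entirely therefore has probability at most $\prod_{j} (1 - c\gamma/\log N) \le (1 - c\gamma/\log N)^{J} \le \exp(-c\gamma J/\log N) \le C(\gamma)\, J / \log N \asymp C(\gamma)\, \log(N/r_0)/\log N \le C(\gamma)\, \log(1 + d_\infty(x,T))^{-1}\cdots$— more precisely $\log(N/r_0)/\log N = 1 - \log r_0/\log N$, which is $\le 1$ but does not obviously give the claimed $\log(1+r_0)/\log N$ decay; so instead I would not multiply bounds naively but rather track $P_x[H_T = \infty]$ as a product of conditional non-hitting probabilities where at scale $r_j$ the non-hitting probability is $1 - c\gamma/\log r_j$, obtaining $\prod_{j=0}^{J}(1 - c\gamma/\log r_j)$, and then compare $\sum_j 1/\log r_j \gtrsim \sum_j 1/(j + \log_2 r_0) \gtrsim \log J - \log\log r_0 \gtrsim \log(\log N/\log r_0)$, which still does not immediately produce the stated form.

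The honest way to get exactly \eqref{eq:vis2} is to instead compare with the capacity computation directly at the top scale: apply \eqref{eq:lastexit} with $K = T \cap B_{N}(x)$ and split $\sum_y g(x,y) e_K(y)$ according to dyadic shells $|x-y|_\infty \in [r_j, r_{j+1})$, using the single-scale capacity lower bounds $\mathrm{cap}(T \cap B_{r_j}(x)) \gtrsim \gamma r_j/\log r_j$ together with the sweeping identity \eqref{eq:sweeping_2} to control how much equilibrium mass of $K$ sits in each shell. This yields $P_x[H_T < \infty] \ge c\gamma \sum_{j=j_0}^{J} \frac{1}{\log r_j}\cdot\frac{1}{j} \gtrsim c\gamma \frac{\log(N/r_0)}{\log N}$ once the telescoping/harmonic-sum bookkeeping is done carefully, giving $P_x[H_T = \infty] \le 1 - c\gamma\,\frac{\log(N/r_0)}{\log N} = \frac{\log N - \log(N/r_0)}{\log N} + (1-c\gamma)(\cdots) \le C(\gamma)\,\frac{\log(1 + r_0)}{\log N}$ as desired, after absorbing the $1-c\gamma$ defect. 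The main obstacle, and the step requiring the most care, is exactly this last bookkeeping: turning the collection of single-scale hitting estimates into the sharp logarithmic ratio $\log(1 + d_\infty(x,T))/\log N$ rather than a weaker bound like $1 - c/\log N$. This is where the $1/\log$ "thinness" of a line in $\Z^3$ — the same phenomenon responsible for \eqref{eq:cap_line_asymp} — must be exploited in a quantitative, scale-by-scale fashion, and where the hypothesis \eqref{eq:vis1} (density $\gamma r$ at \emph{every} scale $r \ge d_\infty(x,T)$) is used in full strength.
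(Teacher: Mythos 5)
Your single-scale estimate (density from \eqref{eq:vis1} plus a one-dimensional Green function sum, giving $\mathrm{cap}(T\cap B_r(x))\geq c\gamma\, r/\log r$ via \eqref{eq:cap_bound_Green} or \eqref{eq:cap_var}) is exactly the paper's starting point, and the multi-scale chaining you begin with is exactly the paper's route: it works with nested boxes $U_k=B_{10^k}(x)$, $k_0\leq k\leq k_1$ with $10^{k_0}\asymp d_\infty(x,T)$ and $10^{k_1}\asymp N$, shows that from any point of $\partial_{\mathrm{out}}U_k$ the walk hits $T\cap U_k$ before exiting $U_{k+1}$ with probability at least $c(\gamma)/k$ (using \eqref{eq:lastexit} with the \emph{killed} Green function and the comparison $g_{U_{k+1}}(z,z')\geq c\,g(z,z')$ for $z,z'\in U_k$, which replaces your vaguer ``Harnack-type uniformity''), and then iterates the strong Markov property to get $P_x[H_T=\infty]\leq\prod_{k_0<k\leq k_1}(1-c/k)$, which is precisely where the ratio $\log(1+d_\infty(x,T))/\log N$ comes from. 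So the calculation you abandoned as ``not obviously giving the claimed decay'' is in fact the intended conclusion of the argument; note also that on each excursion the correct target is $T\cap B_{r_j}(x)$ (the inner ball) hit before exiting $B_{r_{j+1}}(x)$ --- your formulation via ``the portion of $T$ living in that annulus'' is not supported by \eqref{eq:vis1}, which controls densities in balls only: $T$ being a subset of a line, whole dyadic annuli may contain no point of $T$ at all.

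The substitute you then offer as ``the honest way'' contains a genuine error and does not repair this. A single application of \eqref{eq:lastexit} at the top scale, however the equilibrium mass is redistributed over shells via \eqref{eq:sweeping_2}, can only produce a lower bound of the form $P_x[H_T<\infty]\geq c\gamma\,\frac{\log(N/r_0)}{\log N}$ with $c\gamma<1$ in general; the complementary bound is then $P_x[H_T=\infty]\leq(1-c\gamma)+c\gamma\,\frac{\log r_0}{\log N}$, which is bounded below by the constant $1-c\gamma$ and therefore cannot be ``absorbed'' into $C(\gamma)\frac{\log(1+r_0)}{\log N}$, a quantity tending to $0$. To force the non-hitting probability to be small you must accumulate many independent chances across scales, i.e.\ you must complete the excursion-by-excursion chaining; the one-shot capacity/last-exit computation at scale $N$ is structurally unable to yield \eqref{eq:vis2}. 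In short: keep your first approach, set it up with killed Green functions between consecutive boxes so that the strong Markov property applies cleanly, and conclude via the product $\prod_{k_0<k\leq k_1}(1-c/k)$ with $k_0\leq C\log d_\infty(x,T)$ and $k_1\geq c\log N$; discard the fallback.
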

\begin{proof}
Let $k_0, k_1$  be two integers with $k_0$ smallest so that $2d_{\infty}(x,T) \leq 
	20^{k_0}$ and $k_1$ largest such that $20^{k_1}\leq N$. Notice that $k_0=k_0(x)$ and that $k_1 > k_0$ when $N \geq C$, which we may assume. Consider the boxes $U_k \coloneqq B_{20^k}(x)$, for $k_0 \leq k\leq k_1$. By 
	\eqref{eq:vis1}, one knows that
	\begin{equation}
	\label{eq:capcompa24new}
	\big|U_k \cap T \big|  \geq \gamma |B_{20^k}(x) \cap T_N|  \geq c(\gamma) 20^k, \text{ for all $k_0 \leq k \leq k_1$}.
	\end{equation}
	Since $$\sum_{z' \in U_k \cap T} g(z,z') \le \sum_{z' \in U_k \cap T_N} g(z,z') \le Ck $$ uniformly in $z \in U_k 
	\cap T$ by \eqref{eq:Greenasympt}, it follows from 
	\eqref{eq:cap_bound_Green} and \eqref{eq:capcompa24new} that $\text{cap}(U_k \cap  
	T) \geq  c'(\gamma)20^k /k$ for all $k_0  \leq k \leq k_1$. Therefore, fixing $L \geq 1$ such that
\begin{equation}
\label{eq:vis2.1}
 2^{-1} \leq \Cr{c:green}^{-1} (g(x)|x|) \leq 2, \text{ if $|x|_{\infty} \geq L$}
\end{equation}	
it follows that for all $x$ such that $d_{\infty}(x,T) \geq L$, all	 $k_0 (= k_0(x)) \leq k < k_1$ and $y \in \partial_{\text{out}} U_k$,
	\begin{equation}
	\label{eq:capcompa25new}
	P_y\big[ H_{U_k \cap T} < 
	T_{U_{k+1}}\big]\stackrel{\eqref{eq:lastexit}}{\geq}  \inf_{z,z' \in U_k} 
	g_{U_{k+1}}(z,z')  \text{cap}(U_k \cap T) \geq c 20^{-k} 
	\frac{c'20^k}{k} \geq \frac{c}{k},
	\end{equation}
	for suitable $c,c'$ depending on $\gamma$.
	In obtaining \eqref{eq:capcompa25new}, we also used the fact that $g_{U_{k+1}}(z,z') \geq 
	cg(z,z')$ for $z,z' \in U_k$. Indeed, by the Markov property we have $g(z,z')= 
	g_{U_{k+1}}(z,z') + E_z[g(X_{T_{U_{k+1}}}, z')]$, and on the other hand, the definition of 
	$(U_k)_{k_0\leq k\leq k_1}$ readily implies that $|y - z'|\geq 5 (|z-z'| \vee L), $ for all 
	$y\in \partial U_{k+1}$, which together with \eqref{eq:vis2.1} gives $ 
	E_z[g(X_{T_{U_{k+1}}}, z')]\leq \frac45 g(z,z')$ .
	
	Now consider the process $\{ Z_k: k \geq 0\}$ on $\mathbb{N}\cup \{ \Delta \}$ defined by 
	$Z_0=0$ and for $k \geq 1$, conditionally on $Z_0,\dots Z_{k-1}$, 
	\begin{equation}
	\label{eq:capcompa26new}
	Z_{k} \coloneqq \begin{cases}
	k, & \text{ if $Z_{k-1} \neq \Delta$ and $ H_{U_{k-1} \cap T} \circ \theta_{T_{U_{k-1}}} > T_{U_k}$,}\\
	\Delta, & \text{otherwise}.
	\end{cases}
	\end{equation}
Using the strong Markov property we get that $Z_{\cdot}$ is a Markov chain under $P_x$ and \eqref{eq:capcompa25new} 
implies that $P_x[Z_{k}\neq \Delta | Z_{k-1} \neq \Delta] \leq 1-\frac{c(\gamma)}{k}$ for all $k_0< k 
\leq k_1$. It follows that for all $x$ with $d_{\infty}(x,T) \geq L$,
	\begin{equation}
	\label{eq:capcompa27new}
	\begin{split}
	P_x[H_{{T}}= \infty] &\stackrel{\eqref{eq:capcompa26new}}{\leq} P_x[Z_k \neq \Delta, k_0 \leq k \leq k_1] \\
	&\ \, \leq \prod_{k_0 < k \leq k_1} \left(1-\frac{c(\gamma)}{k}\right) 
	\leq \exp\Big\{-c(\gamma) \sum_{k_0 < k \leq k_1}\frac{1}{k}\Big\} \leq \Big(\frac{k_0}{k_1}\Big)^{c'(\gamma)},
	\end{split}
	\end{equation}
	which yields \eqref{eq:vis2} for such $x$, as $k_0 \leq C \log d_{\infty}(x,T)$ and $k_1 
	\geq c \log N$ (see above \eqref{eq:capcompa24new}) and we can assume $c'(\gamma) < 1$. To handle the case $d_{\infty}(x,T) \leq L$, the strong 
	Markov property at the time of first exit from $T_N(L)$, see \eqref{eq:def_TN}, with $L$ 
	given by \eqref{eq:vis2.1}, implies that $P_x[H_{{T}}= \infty] \leq \sup_y P_y[H_{{T}}= 
	\infty]$, with the supremum ranging over $y \in \partial_{\text{out}}T_N(L)$ and 
	\eqref{eq:capcompa24new} still follows from \eqref{eq:capcompa27new} as $d_{\infty}(y,T) \geq L$. This completes the proof.
\end{proof}

We now move on to the capacities of tubes whose width is a fractional power of their length. In the sequel, let
$T_{N}^{\delta}= T_N(N^\delta)$ for $\delta > 0$.
\begin{lemma}[Capacity of tubes, $d=3$]\label{lem:cap_tube2} Let $T_{N}^{\delta}= T_N(N^\delta)$ for $\delta > 0$ (cf.~\eqref{eq:def_TN}). There exist $\Cl{thick-tube} \in 
[1,\infty)$ and $\Cl[c]{thick-tube'} \in (0,1)$ such that for every $\delta\in(0,\frac12)$ and $N \geq C(\delta)$,
		\begin{align}
		&\mathrm{cap}(T_N^{\delta})\leq (1 + \Cr{thick-tube} \delta^{\Cr{thick-tube'}})\,\mathrm{cap}(T_N), \label{eq:cap_tube_d=3}\\[0.2em]
		&\label{eq:cap_tube_d=3bis}
		\mathrm{cap}_{T_{N}^{2\delta}}(T_N^{\delta})\leq \Cr{thick-tube}\delta^{-1}\,\mathrm{cap}(T_N^{\delta}).
		\end{align}
	\end{lemma}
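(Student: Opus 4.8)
\textbf{Proof strategy for Lemma~\ref{lem:cap_tube2}.}

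The plan is to prove both bounds by the same method used for lines in Lemma~\ref{lem:cap_tube}, namely by bounding the equilibrium measure crudely on a small portion of the tube and using the two-sided estimate \eqref{eq:cap_bound_Green} together with the last-exit decomposition \eqref{eq:lastexit} on the bulk, while now crucially using Lemma~\ref{lem:visibility} to control how much the random walk "sees" of the thin tube. For \eqref{eq:cap_tube_d=3}: write $T_N^\delta = T_N(N^\delta)$ and note $|T_N^\delta| \asymp N^{1+(d-1)\delta} = N^{1+2\delta}$ since $d=3$. Summing \eqref{eq:lastexit} with $K=T_N^\delta$, $U=\Z^d$ over $x\in K$ and foregoing the terms near the two ends (which contribute at most $C N^{\delta}\cdot N^{\delta\cdot 2}$-many points, hence a lower-order term after dividing, provided we keep the ends short, say of length $N^{1-\delta'}$ for suitable $\delta'$), we get $\mathrm{cap}(T_N^\delta)\le o(\mathrm{cap}(T_N)) + |T_N^{\delta,-}| / \inf_{y} \sum_{x\in T_N^\delta} g(x,y)$. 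For the key denominator, fix a point $y$ deep inside the tube; the sum $\sum_{x\in T_N^\delta} g(x,y)$ should be asymptotic to $\frac{3}{\pi}\log N$ times a factor close to $1$. Concretely, the contribution of $x$ at $|\cdot|_\infty$-distance between $r$ and $2r$ from $y$ inside the tube is $\asymp r^{-1}\cdot\#\{\text{such }x\}$; for $r \le N^\delta$ this counts $\asymp r^3$ points contributing $\asymp r^2$, summing to $O(N^{2\delta})=o(\log N)$; for $N^\delta \le r \le N$ the tube is essentially one-dimensional and contributes $\asymp r^{-1}\cdot r = 1$ per dyadic scale, so $\sum_x g(x,y) = \frac 3\pi \log N (1 + O(\delta))$. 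Dividing $|T_N^{\delta,-}| \le (1+o(1))N$ by this, while observing that the extra $N^{2\delta}$ bulk cross-sectional volume is reabsorbed because in \eqref{eq:cap_bound_Green} every one of those $N^{2\delta}$ columns sees the same $\frac 3\pi\log N$, gives $\mathrm{cap}(T_N^\delta) \le (1+C\delta)\frac{\pi}{3}\frac{N}{\log N} = (1+C\delta)(1+o(1))\mathrm{cap}(T_N)$ by \eqref{eq:cap_line_asymp}, which is \eqref{eq:cap_tube_d=3} up to adjusting the constant.

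For \eqref{eq:cap_tube_d=3bis}, I would use the variational/Green-function bound \eqref{eq:cap_bound_Green} relative to $U = T_{N,2}^\delta$ instead of $\Z^d$. The point is that $g_U(x,y)$ for $x,y$ in the inner tube $T_N^\delta$ is now a \emph{killed} Green function: a walk started near $y$ exits the slightly wider tube $T_{N,2}^\delta$ (of width $2N^\delta$) after roughly $N^{2\delta}$ steps, i.e.\ before it has had a chance to travel a macroscopic distance along the axis. Consequently $\sum_{x\in T_N^\delta} g_U(x,y) \asymp \sum_{r \le N^\delta} r^{-1}\cdot r^3 \asymp N^{2\delta}$ (the one-dimensional long-range tail is now cut off), uniformly in $y$ away from the ends; near the ends one still has an upper bound of the same order. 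Feeding this into the lower bound in \eqref{eq:cap_bound_Green} gives $\mathrm{cap}_U(T_N^\delta) \ge c |T_N^\delta| / N^{2\delta} \asymp c N \ge c' \delta \cdot \frac{N}{\log N}$ is in the wrong direction, so instead I use the \emph{upper} bound in \eqref{eq:cap_bound_Green}: $\mathrm{cap}_U(T_N^\delta) \le |T_N^\delta| / \min_{x} \sum_{y\in T_N^\delta} g_U(x,y)$, and the min is attained near the ends where, even there, $\sum_y g_U(x,y) \ge c N^{2\delta}$ (half-space version of the same computation). Thus $\mathrm{cap}_U(T_N^\delta) \le C N^{1+2\delta}/N^{2\delta} = C N \le C \delta^{-1}\cdot \frac{\pi}{3}\frac{N}{\log N}\cdot \frac{\log N}{\pi/3}\cdot\delta$ — the cleanest route is to simply compare to $\mathrm{cap}(T_N^\delta)$ directly: by \eqref{eq:cap_tube_d=3} and \eqref{eq:cap_line_asymp}, $\mathrm{cap}(T_N^\delta) \asymp N/\log N$, so it suffices to show $\mathrm{cap}_U(T_N^\delta) \le C\delta^{-1}\cdot N/\log N$, equivalently $\le C N / (\delta\log N)$; but we want an \emph{upper} bound on $\mathrm{cap}_U$ in terms of $\mathrm{cap}$, and since $\mathrm{cap}_U(T_N^\delta)$ is monotone decreasing in $U$, it is at most $\mathrm{cap}(T_N^\delta) \le C N/\log N \le C\delta^{-1}N/\log N$ — wait, this already gives the bound trivially. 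So the content of \eqref{eq:cap_tube_d=3bis} must really be that we do \emph{not} lose more than a $\delta^{-1}$ factor; the trivial monotonicity gives it with constant $1$. I will re-examine: the genuine statement is presumably needed in the other direction in applications, but as stated it follows from monotonicity $\mathrm{cap}_{T_{N,2}^\delta}(T_N^\delta)\le \mathrm{cap}(T_N^\delta)$ together with $\delta^{-1}\ge 1$. To be safe I would present the short argument via \eqref{eq:cap_bound_Green} relative to $T_{N,2}^\delta$, computing $\min_x\sum_{y\in T_N^\delta}g_{T_{N,2}^\delta}(x,y)\ge c\log N\cdot$(something): in fact on the wider tube the walk travels $\asymp N^{2\delta}$ before exiting, but $T_N^\delta$ has cross-section $N^{2\delta}$, so it sees $\asymp N^{2\delta}\cdot$? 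This requires care.

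The main obstacle I anticipate is exactly this last computation: getting the \emph{sharp} constant $1$ (and not merely $1+C\delta$ with an uncontrolled slack) in the leading term of $\sum_{y\in T_N^\delta}g(x,y)$ for \eqref{eq:cap_tube_d=3}, which forces one to be careful that the $O(\delta)$ error in the logarithm from the first $N^\delta$ scales genuinely is $O(\delta)$ and not, say, $O(\delta\log(1/\delta))$ or worse — this is where Lemma~\ref{lem:visibility} or a direct Green-function summation along the tube is needed, controlling the crossover scale $r\asymp N^\delta$ precisely. For \eqref{eq:cap_tube_d=3bis}, the subtlety is identifying the correct "effective length seen" by the killed walk inside $T_{N,2}^\delta$ and checking the estimate is uniform up to the ends; I expect this to reduce, via \eqref{eq:cap_bound_Green} and a dyadic decomposition of the killed Green function together with the escape estimate from Lemma~\ref{lem:visibility} applied at scales up to the killing scale $N^\delta$, to a bookkeeping exercise once the crossover is correctly located.
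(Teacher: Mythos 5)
Your proposal splits into two very different halves: the route you sketch for \eqref{eq:cap_tube_d=3} is a legitimate (if rough) alternative to the paper's argument, but your treatment of \eqref{eq:cap_tube_d=3bis} rests on a false claim, so the lemma is not proved.

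Concerning \eqref{eq:cap_tube_d=3}: the paper does not redo any Green-function summation. It first proves the escape estimate $P_x[H_{T_N}=\infty]\le C\delta$ for $x\in\partial T_N^{\delta}$ (the upper bound in \eqref{eq:scape_line}, a direct consequence of Lemma~\ref{lem:visibility}), and then gets \eqref{eq:cap_tube_d=3} in one line from the sweeping identity \eqref{eq:sweeping}, namely $\mathrm{cap}(T_N)=\mathrm{cap}(T_N^{\delta})\,P_{\overline{e}_{T_N^{\delta}}}[H_{T_N}<\infty]\ge (1-C\delta)\,\mathrm{cap}(T_N^{\delta})$. Your direct route (discard the ends, bound the equilibrium measure by $1$ there, apply \eqref{eq:lastexit} on the bulk, as in Lemma~\ref{lem:cap_tube}) can be made to work for small $\delta$, but your intermediate estimates are off as written: for $y$ in the bulk one has $\sum_{x\in T_N^{\delta}}g(x,y)\asymp N^{2\delta}\log N$, not $\tfrac{3}{\pi}\log N(1+O(\delta))$, so the cross-sectional factor $\asymp 4N^{2\delta}$ must be carried through and shown to cancel \emph{with constant one} against the same factor in $|T_N^{\delta}|$; moreover the discarded ends of length $N^{1-\delta'}$ contain $\asymp N^{1-\delta'+2\delta}$ points (not $C N^{\delta}\cdot N^{2\delta}$), which forces $2\delta<\delta'<1-\delta$ and hence restricts this argument to small $\delta$ (admittedly the only regime in which the estimate is used).

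Concerning \eqref{eq:cap_tube_d=3bis}, the proposal collapses at the step you call trivial. The monotonicity you invoke goes the wrong way: as stated right after \eqref{eq:sweeping_2}, $\mathrm{cap}_U(K)$ is \emph{decreasing} in $U$ (because $g_U\le g$), so $\mathrm{cap}_{T_{N,2}^{\delta}}(T_N^{\delta})\ge \mathrm{cap}(T_N^{\delta})$, and the inequality is emphatically not "monotonicity with constant $1$". Your own killed-Green-function computation shows the direction of your claim is wrong: since $\sum_{y\in T_N^{\delta}}g_{T_{N,2}^{\delta}}(x,y)\le E_x\bigl[T_{T_{N,2}^{\delta}}\bigr]\le CN^{2\delta}$ uniformly in $x$, the lower bound in \eqref{eq:cap_bound_Green} gives $\mathrm{cap}_{T_{N,2}^{\delta}}(T_N^{\delta})\ge c\,|T_N^{\delta}|\,N^{-2\delta}\ge cN$, which exceeds $\mathrm{cap}(T_N^{\delta})$ (of order $N/\log N$ by \eqref{eq:cap_tube_d=3} and \eqref{eq:cap_line_asymp}); so no argument resting on general monotonicity or on crude volume/exit-time bounds can deliver \eqref{eq:cap_tube_d=3bis}, and your sketch never arrives at a complete one. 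The paper's proof is of a different, genuinely probabilistic nature: it establishes the lower bound in \eqref{eq:scape_line}, $P_y[H_{T_N^{\delta}}=\infty]\ge c\delta$ for $y\in\partial T_{N,2}^{\delta}$, by projecting onto the two transverse coordinates and using gambler's-ruin type estimates (with a separate argument near the axial ends), and then decomposes $P_x[\widetilde H_{T_N^{\delta}}=\infty]$ at the first exit time of $T_{N,2}^{\delta}$ to obtain $e_{T_N^{\delta}}(x)\ge c\delta\, e_{T_N^{\delta},T_{N,2}^{\delta}}(x)$, which sums to \eqref{eq:cap_tube_d=3bis}. Neither of these two ingredients (the transverse escape bound, the first-exit decomposition) appears in your proposal; the part you defer as "a bookkeeping exercise" is precisely where the actual content of the lemma lies.
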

\begin{proof}
We claim that for every $x\in \partial T_N^{\delta}$ and $y\in \partial_{\text{out}} 
T_{N}^{2\delta}$, one has
\begin{align}
	\label{eq:scape_line} P_x[H_{T_N}=\infty]\leq C \delta^{c} \mbox{ and } P_y[H_{T_N^{\delta}}=\infty] \geq c\delta, \text{ for all $N\geq C(\delta)$.}
\end{align}
 Before proving \eqref{eq:scape_line} let us deduce the 
lemma from it. By the sweeping identity \eqref{eq:sweeping},
\begin{equation*}
	\mathrm{cap}(T_N)=P_{e_{T_N^{\delta}}}[H_{T_N}<\infty]=\mathrm{cap}(T_N^{\delta})\,P_{\overline{e}_{T_N^{\delta}}}[H_{T_N}<\infty],
\end{equation*}
and \eqref{eq:cap_tube_d=3} follows directly from the upper bound in \eqref{eq:scape_line}. Also, by decomposing on the first exit time of $ T_{N}^{2\delta}$, one finds that for all $x\in T_N^{\delta}$, 
\begin{equation*}
	P_x[\widetilde{H}_{T_N^{\delta}}=\infty]=
	\sum_{y \in \partial_{\text{out}}T_{N}^{2\delta}} P_x[\widetilde{H}_{T_N^\delta}> H_{( T_{N}^{2\delta})^c},~X_{ H_{( T_{N}^{2\delta})^c}}=y] \,P_y[H_{T_N^{\delta}}=\infty],
\end{equation*}
which combined with the lower bound in \eqref{eq:scape_line} implies that $e_{T_N^{\delta}}(x)\geq c\,\delta \,e_{T_N^{\delta}, T_{N}^{2\delta}}(x)$. Summing over $x\in T_N^{\delta}$ yields \eqref{eq:cap_tube_d=3bis}. We proceed to the proof of the bounds in \eqref{eq:scape_line}.

\medskip

\noindent{\em The upper bound in \eqref{eq:scape_line}.} For all $\delta \in (0, 1/2)$ the hypotheses of 
Lemma~\ref{lem:visibility} hold with $T = T_N$ and $\gamma=\frac1{10}$ for any $x \in \partial T_N^{\delta}$ and $N \geq C(\delta)$, whence the upper bound in \eqref{eq:scape_line} follows from \eqref{eq:vis2}.

\medskip

\noindent{\em The lower bound in \eqref{eq:scape_line}.} Below we will use $B_L'$ to denote the two-dimensional box $\{0\} \times[-L, L]^2$. First of all, notice that for any $y 
=(y_1, y_2, y_3) \in \partial_{\text{out}} T_{N}^{2\delta}$, either $(y_2, y_3) \in \partial B_{ \lfloor N^{2\delta}\rfloor + 1}'$ 
or $y \in \{-\lfloor N^{2\delta} \rfloor - 1, N + \lfloor  N^{2\delta}\rfloor + 1\} \times [- N^{2\delta},  N^{2\delta}]^{2}$. We deal with the former case first. To this end let us consider the projection $X'=(X'_n)_{n\ge 0}$ of 
$(X_n)_{n\ge 0}$ onto its last two coordinates, which has the law of a (lazy) simple random 
walk in $\Z^2$. Let $H_{U}'$ denote the entrance time in $U$ for $X'$ and abbreviate $H'_{{\rm in}} = H'_{B'_{N^\delta}}$ and $H'_{{\rm out}} = H'_{\partial B'_{10N}}$.
Applying Exercise~1.6.8 in \cite{La91}, we get
\begin{equation}
\label{eq:gambler_bnd}
P_{y'}[H'_{{\rm out}} < H'_{{\rm in}}] \ge \frac{\log (N^{2\delta}) - \log (N^{\delta}) - C}{\log (10N) - \log (N^{\delta}) + C} \ge c\delta
\end{equation}
whenever $N \geq C(\delta)$ and $|y'| \ge N^{2\delta}$. Notice that the errors are of constant order as opposed to $O(N^{-\delta})$ as in \cite{La91} since we are working with $\ell^{\infty}$-balls instead of $\ell^2$-balls. Now since 
$\{(x_2, x_3): x \in T_{N^\delta}\} \subset B'_{N^\delta}$, the inclusion
\begin{equation*}
\label{eq:hitting_decomp}
\{H'_{{\rm out}} < H'_{{\rm in}}, H_{T_{N}^{\delta}} \circ \theta_{H'_{{\rm out}} }= \infty\} \subset \{H_{T_{N}^{\delta}}= \infty\}
\end{equation*}
holds and consequently, by the strong Markov property, we have
\begin{equation}
	\label{eq:hitting_bnd}
	P_y[H_{T_N^{\delta}}=\infty] \ge P_{y'}[H'_{{\rm out}} < H'_{{\rm in}}] \,\inf_{z: \, d_{\infty}(z, T_{N}) \ge 10N} P_z[H_{T_{N}^{\delta}}  = \infty].
\end{equation}
However, by the last-exit decomposition \eqref{eq:lastexit} and \eqref{eq:Greenasympt},
\begin{equation*}
	P_z[H_{T_{N}^{\delta}}  < \infty] \le {\rm cap}(T_N^{\delta}) \max_{z' \in T_{N}^{\delta}}g(z, z') \le C \frac{{\rm cap}(T_N^{\delta})}{N}, \text{ when }d_{\infty}(z, T_{N}) \ge 10N.
\end{equation*}
In view of Lemma~\ref{lem:cap_tube} and \eqref{eq:cap_tube_d=3} which, let us recall, requires 
only the upper bound in \eqref{eq:scape_line}, the right-hand side in the previous display is 
bounded by $\tfrac{C}{\log N}$. Plugging this and the bound \eqref{eq:gambler_bnd} into 
\eqref{eq:hitting_bnd}, we deduce the lower bound in \eqref{eq:scape_line} in the case $(y_2, 
y_3) \in \partial B_{ N^{2\delta}}'$.

To deal with the case when $y \in \{- \lfloor N^{2\delta} \rfloor - 1, N + \lfloor N^{2\delta} \rfloor + 1\} \times [- N^{2\delta}, 
 N^{2\delta}]^{2}$ let us assume without loss of generality that $y_1 = -\lfloor N^{2\delta} \rfloor - 1$ and note that, by means of the strong Markov property and the previous case, it suffices to show that
\begin{equation}
	\label{eq:escape_left}
	P_y[H' < H_{T_{N}^{\delta}}] \ge c, \text { where } H'\coloneqq H'_{\partial B'_{ N^{2\delta}}}.
	\end{equation}
To this end, let $(X^1_n)_{n \ge 0}$ denote the projection of $(X_n)_{n \ge 0}$ onto its first coordinate and consider the event
\begin{equation*}
	G \coloneqq \{H_{-\lfloor5N^{2\delta}\rfloor}(X^1) < H_{-\lfloor N^\delta\rfloor}(X^1)\}
	\end{equation*}
which has a constant positive probability under $P_y$ by the standard gambler's ruin estimate (one could even afford to replace $\lfloor N^\delta\rfloor$ by $\frac12\lfloor N^{2\delta}\rfloor$ on the right-hand side). It follows from the definition of $T_N^{\delta}$ that $H_{T_{N}^{\delta}} > H_{-\lfloor5N^{2\delta}\rfloor}(X^1) \eqqcolon H^1$ on $G$ and hence
\begin{equation}
	\label{eq:reach_bndry_before}
G \cap \{H^1 \ge  H'\} \subset \{H' < H_{T_{N}^{\delta}}\}.
\end{equation}
On the other hand, we have $$G \cap \{H^1 <  H'\} \subset \{X_{H^1}^1 = -\lfloor5N^{2\delta}\rfloor, X'_{H^1} \in B'_{ N^{2\delta}}\}.$$
An implication of the condition on the right hand side above is that $B_{4N^{2\delta}}(X_{H^1}) \cap T_N^{\delta} = \emptyset$, whereas $B'_{ N^{2\delta}} \subset B'_{4N^{2\delta}}(X'_{H^1})$ for all $N \ge C(\delta)$.  Denoting $H \coloneqq  H'_{\partial B'_{4N^{2\delta}}(X_0')} \circ \theta_{ H^1}$, we therefore have
\begin{equation}
\label{eq:reach_bndry_after}
(G \cap \{H^1 <  H'\}) \cap \{X'_{H} \subset \partial B'_{4N^{2\delta}}(X'_{H^1})\} \subset \{H' < H_{T_{N}^{\delta}}\}.
\end{equation}
However, since all two-dimensional axial projections of $(X_n)_{n \ge 0}$ have the same law, we can deduce via a union bound that $P_x[X_{H_{\partial B_{4N^{2\delta}}(x)}}' \in \partial 
B'_{4N^{2\delta}}(x)] \ge c$ uniformly for all $x \in \Z^d$. Hence, in view of \eqref{eq:reach_bndry_after}, we get, applying the strong Markov property at time $H^1$,
\begin{equation*}
	P_y[H'  < H_{T_{N}^{\delta}} \, | \, G, H^1 <  H'] \ge c,
	\end{equation*}
with $y$ as in \eqref{eq:escape_left}. Combined with \eqref{eq:reach_bndry_before} and the fact that $P_y[G] \ge c$, this yields \eqref{eq:escape_left} and consequently the lower bound in \eqref{eq:scape_line} in this case.
\end{proof}

We conclude this section by reviewing some important features of Gaussian free fields. For $U \subset \Z^d$, we write $\P_U$ for the law of the centered Gaussian process with covariance $g_U(\cdot, \cdot)$, with $g_U$ as given by \eqref{eq:Green_U} (in particular $\P=\P_{\Z^d}$ following our above convention). Notice that under $\P_U$, the field $\varphi$ is almost surely $0$ on $\Z^d\setminus U$. 
For $U \subset \mathbb{Z}^d$, we further introduce the Gaussian fields (functions of $\varphi$)
	\begin{equation}\label{eq:decomp}
\xi^U_x\coloneqq  E_x\big[\varphi_{X_{T_{U}}}\big] =\sum_{y} P_x[X_{T_U}=y]\varphi_y, \quad  \psi^U_x \coloneqq \varphi_x- \xi_x^U, \quad  \text{ for }x \in \mathbb{Z}^d.
	\end{equation}
The field $\xi^U$ will be referred to as the harmonic average of $\varphi$ in $U$ and $\psi^U$ as the local field in $U$. 
Plainly, $\xi^U_x=\varphi_x$ (and therefore $\psi^U_x=0$) for all $x\in \Z^d\setminus U$. As in \cite[Lemma~1.2]{RoS13}, one observes that $\xi^U$ is independent of $\psi^U$ and that $(\psi^U_x)_{x\in \mathbb{Z}^d}$ has law $\mathbb{P}_{U}$ under $\mathbb{P}$.

	It will be convenient at times to consider a certain extension of the above setup. Let $\mathbb{M}^d$ denote the set of mid-points of the edges of $\Z^d$. We regard $\tilde{\Z}^d \coloneqq \Z^d\cup\mathbb{M}^d$ as the graph obtained from $\Z^d$ by splitting every edge of $\mathbb{Z}^d$ into two (and adding the corresponding mid-point to the vertex set).
Let $\tilde{X}= (\tilde{X}_n)_{n \geq 0}$ be the discrete-time random walk on $\tilde{\Z}^d$, which at each step jumps with uniform probability to one of its neighboring vertices in $\tilde{\Z}^d$. Let $\tilde{P}_{\tilde{x}}$ denote the canonical law of $ \tilde{X}$ with starting point $ \tilde{X}_0 = \tilde{x} \in \tilde{\Z}^d$. By suitable extension of $\mathbb{P}$, one defines a centered Gaussian field $\tilde{\varphi}=(\tilde{\varphi}_{ \tilde{x}})_{\tilde{x} \in \tilde{\Z}^d}$ such that 
 \begin{equation}
 \label{eq:phi_extend1}
 \mathbb{E}[\tilde{\varphi}_{ \tilde{x}}\tilde{\varphi}_{ \tilde{y}}]\coloneqq \frac12 \sum_{n \geq 0}\tilde{P}_{\tilde{x}}[\tilde{X}_n= \tilde{y}], \text{ for }\tilde{x}, \tilde{y} \in \tilde{\Z}^d.
 \end{equation}
Indeed, it follows from \eqref{eq:phi_extend1} and \eqref{eq:Green_U} that $\mathbb{E}[\tilde{\varphi}_{ {x}}\tilde{\varphi}_{ {y}}] =g(x,y)$ whenever $x,y \in \mathbb{Z}^d$, whence 
\begin{equation}
 \label{eq:phi_extend2}
\tilde{\varphi}\vert_{\Z^d}= \varphi. 
\end{equation}
The decomposition \eqref{eq:decomp} also extends and one obtains that
	\begin{equation}\label{eq:Markov_GFFtilde}
	\tilde{\varphi}=\tilde{\xi}^{V}+\tilde{\psi}^{ V},  \text{ for }   V \subset \tilde{\Z}^d, \text{ where $\tilde{\xi}^V_{\tilde{x}}:=\tilde{E}_{\tilde{x}}\big[\tilde{\varphi}_{\tilde{X}_{T_V}}\big]$};
	\end{equation}
here $\tilde{\xi}^{V}$ and $\tilde{\psi}^{ V}$ are independent Gaussian fields and 
\begin{equation}
 \label{eq:phi_extend3}
\E\big[\tilde{\psi}_{\tilde{x}}^{ V} \tilde{\psi}_{\tilde{y}}^{ V}\big]=\frac12 \sum_{n \geq 0}\tilde{P}_{\tilde{x}}[\tilde{X}_n= \tilde{y}, n < T_V], \text{ for }\tilde{x}, \tilde{y} \in \tilde{\Z}^d,
\end{equation} 
where with hopefully obvious notation, $T_V=T_V(\tilde{X})$ denotes the exit time of $\tilde{X}$ from $V$. The analogue of the restriction property \eqref{eq:phi_extend2} for the harmonic extension is then the following. For $U \subset \mathbb{Z}^d$, defining $\tilde{U}\coloneqq U \cup \{ \tilde{x} \in \tilde{\Z}^d : \exists x \in U \text{ s.t. } |x-\tilde{x}|= \frac12\}$, noting that $X_{T_{U}}$ under $P_x$ has the same law as $\tilde{X}_{T_{\tilde{U}}}$ under $\tilde{P}_x$ for any $x \in \Z^d$ and using \eqref{eq:phi_extend2}, one sees that
\begin{equation}
 \label{eq:phi_extend4}
\tilde{\xi}^{\tilde{U}}\vert_{\Z^d}= \xi^U. 
\end{equation}

We conclude with a particular instance of \eqref{eq:Markov_GFFtilde}, which will prove useful on several occasions. Let
\begin{equation}
	\label{eq:LB1_dim4}
	\tilde{\varphi}=\hat{\xi}+\hat{\psi}
	\end{equation}
	be the decomposition \eqref{eq:Markov_GFFtilde} corresponding to the choice $V\coloneqq \Z^d (\subset \tilde{\Z}^d
)$, i.e. $\hat{\xi}_{x} = \hat{\xi}_{{x}}^{\Z^d}=\frac{1}{2d}\sum_{\substack{m\in \mathbb{M}^d : m \sim x}} ~\tilde{\varphi}_m$ if ${x} \in \mathbb{Z}^d$ (where $\sim$ refers to neighbors in $\tilde{\mathbb Z}^d$), $\hat{\xi}_{m} = \hat{\xi}_{m}^{\Z^d}= \tilde{\varphi}_{m}$ if $m\in \mathbb{M}^d$, and by \eqref{eq:phi_extend3},
\begin{equation}
	\label{eq:LB2_dim4}
	(\hat{\psi}_x)_{x\in \Z^d} \text{ is a field of i.i.d.~centered Gaussian variables with variance $1/2$ each.}
	\end{equation}

	\section{Lower bounds}\label{sec:lower}
	The main result of this section is the following
	\begin{theorem}[Lower bounds] \label{P:lb} The following holds:
		\begin{enumerate}
			\item[i)] If $d=3$, then 
			\begin{align}
			&\text{for all $h\geq h_{*}$, }\liminf_{N\to\infty}\, \frac{\log N}{N} \log \P[\lr{}{\varphi\geq h}{0}{\partial B_N}
			] \geq -\frac{\pi}{6}(h-h_*)^2,  \label{eq:main_lbsubcrit_d=3} \\
			&\text{for all $h\leq h_*$, }\liminf_{N\to\infty}\, \frac{\log N}{N} \log \P[\lr{}{\varphi\geq h}{0}{\partial B_N},\, \nlr{}{\varphi\geq h}{0}{\infty}
			] \geq -\frac{\pi}{6}(h- h_{*})^2. \label{eq:main_lbsupercrit_d=3}
			\end{align}
			\item[ii)] If $d \geq 4$, then for all $h \in \R$,
			\begin{align}
			&\liminf_{N\to\infty}\, \frac{1}{N} \log \P[\lr{}{\varphi\geq h}{0}{\partial B_N},\, \nlr{}{\varphi\geq h}{0}{\infty}
			] > -\infty \label{eq:main_lb_dgeq4}.
			\end{align}
		\end{enumerate}
		Moreover, the bounds \eqref{eq:main_lbsupercrit_d=3} 
		and \eqref{eq:main_lb_dgeq4} also hold for the events $\textnormal{LocUniq}(N,h)^{c}$ and $\text{2-arms}(N,h)$ (see~\eqref{eq:def_locuniq} and \eqref{eq:intro2arm}) in place of $ 
		\displaystyle \{ \lr{}{\varphi\geq h}{0}{\partial B_N},\, \nlr{}{\varphi\geq 
			h}{0}{\infty}\}$.
	\end{theorem}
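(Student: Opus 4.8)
The plan is to prove the three lower bounds by exhibiting an explicit, sufficiently likely configuration of $\varphi$ that forces the desired event, and estimating its probability via a change-of-measure (Cameron–Martin) argument. In each case the idea is the same: shift the field upward along a thin tube $T_N^\delta$ (for $d=3$) or along the line $T_N$ (for $d\ge 4$) so that $\{\varphi\ge h\}$ contains a crossing path from $0$ to $\partial B_N$, while — when $h<h_*$ — simultaneously shifting the field downward on a thin shell $\partial_{\mathrm{ext}}$ of a slightly larger box (or on a family of $*$-connected layers provided by Lemma~\ref{lem:blocking_layers}) so as to build a blocking interface in $\{\varphi<h\}$ insulating the path from $\infty$. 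The Radon–Nikodym cost of a deterministic shift $\varphi\mapsto\varphi+f$ under $\P$ is $\exp\{-\tfrac12\mathcal E(f)+\langle \nabla f,\ldots\rangle\}$, and optimizing over shifts supported on a set $K$ that are $\ge h-h_*+o(1)$ there gives a leading cost $\exp\{-\tfrac12(h-h_*)^2\,\mathrm{cap}(K)(1+o(1))\}$; since $\mathrm{cap}(T_N^\delta)\le(1+C\delta)\mathrm{cap}(T_N)\sim(1+C\delta)\tfrac{\pi}{3}\tfrac{N}{\log N}$ by Lemmas~\ref{lem:cap_tube} and~\ref{lem:cap_tube2}, letting $\delta\to 0$ at the end produces exactly $-\tfrac{\pi}{6}(h-h_*)^2$ in the $\tfrac{\log N}{N}$-normalized limit.

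Concretely, for \eqref{eq:main_lbsubcrit_d=3} (the subcritical, monotone case) I would proceed as follows. Fix small $\delta>0$ and $\varepsilon>0$. Under $\P$, the event $E_N=\{\varphi\ge h_*+\varepsilon \text{ on a connecting path } 0\leftrightarrow\partial B_N \text{ inside } T_N^\delta\}$ has probability bounded below by a constant (independent of $N$): indeed $h_*+\varepsilon < h_{**}+\varepsilon$ is in the percolative regime, and one can use a coarse renormalization / the fact that $\{\varphi\ge h_*+\varepsilon\}$ percolates together with a finite-energy-type restriction inside the tube, or more simply invoke that the probability of crossing $T_N^\delta$ lengthwise in $\{\varphi\ge h_*+\varepsilon/2\}$ is bounded below — this is where I'd want to be careful, see below. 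Now apply the shift $f = (h-h_*-\varepsilon)^+ \cdot h_{T_N^\delta}$, where $h_K(x)=P_x[H_K<\infty]$ is the equilibrium potential of $K=T_N^\delta$, so that $f\ge h-h_*-\varepsilon$ on $T_N^\delta$ and hence $\varphi+f\ge h$ on the path whenever $\varphi\ge h_*+\varepsilon$ there; by the Cameron–Martin formula and $\mathcal E(h_K)=\mathrm{cap}(K)$,
\begin{equation*}
\P[\lr{}{\varphi\ge h}{0}{\partial B_N}]\ \ge\ \P[\varphi+f\in E_N']\ \ge\ e^{-\frac12(h-h_*-\varepsilon)^2\mathrm{cap}(T_N^\delta)}\,\P[E_N]\cdot e^{-o(N/\log N)},
\end{equation*}
where the $o(N/\log N)$ absorbs the Gaussian fluctuation term (controlled by a second-moment/Chebyshev argument on $\langle \varphi, (-\Delta)f\rangle$, whose variance is $\mathcal E(f)=O(N/\log N)$, hence fluctuations are $O(\sqrt{N/\log N})$). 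Taking $\tfrac{\log N}{N}\log$, then $N\to\infty$, then $\varepsilon\to 0$, then $\delta\to 0$, yields \eqref{eq:main_lbsubcrit_d=3}. For $d\ge 4$, \eqref{eq:main_lb_dgeq4}, no shift is needed at all: by the FKG inequality and the a priori bounds \eqref{eq:def_h**}/\eqref{eq:EXISTUNIQUE}, one lower-bounds $\P[\lr{}{\varphi\ge h}{0}{\partial B_N},\nlr{}{\varphi\ge h}{0}{\infty}]$ by the probability of a box-by-box "connect along a line, then surround by a moat of $\{\varphi<h\}$" event along $T_N$, each box contributing a constant factor, giving $e^{-CN}$; equivalently one can use the i.i.d.\ decomposition \eqref{eq:LB1_dim4}–\eqref{eq:LB2_dim4} to get an honest insertion tolerance at unit scale.

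The genuinely delicate case is \eqref{eq:main_lbsupercrit_d=3} (and the corresponding $d\ge 4$ and $\mathrm{LocUniq}$ statements), because the event $\{\nlr{}{\varphi\ge h}{0}{\infty}\}$ is decreasing while $\{\lr{}{\varphi\ge h}{0}{\partial B_N}\}$ is increasing, so FKG works against us and a single global shift cannot simultaneously help both. My plan here is to decorrelate the two constituents using Dirichlet boundary conditions: choose a thin $*$-connected shell $\mathcal S$ inside $T_N^{\delta'}\setminus T_N^\delta$ (width comparable to $N^{\delta}$, with $\delta<\delta'<1$), and work conditionally. Write $\varphi = \xi^{\mathcal S} + \psi^{\mathcal S}$ via \eqref{eq:decomp}, but more usefully split the construction into: (a) inside the tube $T_N^\delta$, use the local field to realize a crossing path of $\{\varphi\ge h\}$ after an upward shift supported on $T_N^\delta$ — cost $e^{-\frac12(h-h_*)^2\mathrm{cap}(T_N^\delta)(1+o(1))}$; (b) on the shell $\mathcal S$, apply a downward shift of magnitude $\gtrsim h_*-h$ so that $\{\varphi<h\}$ contains $\mathcal S$; by Lemma~\ref{lem:blocking_layers}, $\mathcal S$ being a layer blocking $T_N^\delta$ from $\partial B_N$ (and thus, after also capping the two ends of the tube with similar shells) yields a blocking interface in $\{\varphi<h\}$ surrounding $T_N^\delta$, which insulates the crossing cluster from $\infty$; (c) outside, nothing needs to be done. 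The cost of the downward shift on $\mathcal S$ is $\exp\{-O(\mathrm{cap}(\mathcal S))\}$, and since $\mathcal S$ is a thin tubular shell of length $N$ and width $N^{\delta}$, $\mathrm{cap}(\mathcal S)\le\mathrm{cap}(T_N^{\delta'})\le(1+C\delta')\tfrac{\pi}{3}\tfrac{N}{\log N}$ — comparable to, not dominating, the main term, so it will wash out as $\delta,\delta'\to0$ only if we are careful; the right move is to make the insulating shell's shift cost negligible by a different route, namely use that $\{\varphi<h\}$ already typically separates on most of a thin shell (since $h<h_{**}$ is in the strongly subcritical regime for $\{\varphi\ge h\}$, so $\{\varphi<h\}$ has an a priori positive-density of blocking structure), and only pay to fix a constant-density set of "bad" spots. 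The main obstacle is thus making steps (a) and (b) genuinely independent and controlling their interaction: this is exactly what the mid-point extension \eqref{eq:LB1_dim4} and the Dirichlet-conditioned fields are for, and I would follow the general entropy lemma (Lemma~\ref{lem:entrop_lower}, referenced in the introduction) to package the change of measure cleanly. Finally, for $\mathrm{LocUniq}(N,h)^c$: its complement forces two disjoint crossing clusters of the annulus $B_{2N}\setminus B_N$, so it suffices to produce \emph{two} parallel tubes, each carrying an independent crossing, and insulate appropriately; the capacity of two disjoint lines of length $N$ is still $\sim\tfrac{\pi}{3}\tfrac{N}{\log N}$ to leading order (the union's capacity is dominated by that of $T_{2N}$, up to the $\delta$-corrections), so the same constant $-\tfrac{\pi}{6}(h-h_*)^2$ emerges, which also gives \eqref{eq:intro2arm} when combined with the disconnection bound of \cite[Theorem 5.5]{Sz15}.
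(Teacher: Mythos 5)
Your argument for \eqref{eq:main_lbsubcrit_d=3} starts on the wrong side of criticality. The base event $E_N$ (a lengthwise crossing of $T_N^\delta$ in $\{\varphi\ge h_*+\varepsilon\}$) does \emph{not} have probability bounded below by a constant: for any level strictly above $h_*$ the excursion set is non-percolative, and by \eqref{eq:def_h**} (and, a posteriori, by the matching upper bound \eqref{eq:main_ubsubcrit_d=3}) the probability of such a crossing tends to $0$, in fact at the very exponential order $N/\log N$ you are trying to capture, so the step is circular; the justification ``$h_*+\varepsilon<h_{**}+\varepsilon$ is in the percolative regime'' confuses the two phases (by \eqref{eq:sharp}, $h_{**}=h_*$, and the percolative regime is $h<h_*$). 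The correct implementation --- and what the paper does --- is to take the base level $h'<h_*$, where the tube crossing is \emph{typical} (probability $\to 1$, by gluing the events in \eqref{eq:EXISTUNIQUE}/\eqref{eq:def_hbar} along a tube of polylogarithmic width), tilt by $h-h'$ at cost $\tfrac12(h-h')^2\mathrm{cap}(K_N)(1+o(1))$, and only at the end let $h'\uparrow h_*$; this is exactly Lemma~\ref{lem:entrop_lower} with $I=(-\infty,h_*)$. Note also that the entropy inequality \eqref{eq:gen.entrop.lower} requires the tilted probability to tend to $1$ (not merely be positive) in order to produce the sharp constant, which is a second reason the base event must be taken below $h_*$.

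For \eqref{eq:main_lbsupercrit_d=3} you have the two costs interchanged. When $h<h_*$ the connection inside the tube is typical, so the factor $e^{-\frac12(h-h_*)^2\mathrm{cap}(T_N^\delta)}$ you attach to step (a) is not where the price is paid; conversely, the insulation is \emph{not} cheap: the claim that ``$\{\varphi<h\}$ already typically separates on most of a thin shell'' is false for $h<h_*$ --- this is precisely the supercritical phase, where disconnection is atypical (decaying like $e^{-cN^{d-2}}$, \cite[Theorem 5.5]{Sz15}), and it is the insulation that carries the entire leading cost $\tfrac12(h_*-h)^2\mathrm{cap}$. Moreover your requirement $\mathcal S\subset\{\varphi<h\}$ (containment of a whole shell of polynomially many vertices) is far too strong: keeping the field below a fixed level on $\asymp N^{1+\delta}$ sites forces a push of order $\sqrt{\log N}$, giving a cost of order $\mathrm{cap}(\mathcal S)\log N\gg N/\log N$, which destroys the normalized limit; what is needed is only a \emph{disconnection} event in the shell region. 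The paper's proof is organized accordingly: the event $C_N^h$ of \eqref{eq:lower_sup_1} (disconnection of $\partial_{\mathrm{out}}U_N$ from $\partial K_N$, plus a lower bound on the harmonic average $\xi^{U_N}$ on $D_N$) is typical at levels slightly above $h_*$, so Lemma~\ref{lem:entrop_lower} gives it at cost $\tfrac12(h_*-h)^2\mathrm{cap}(K_N)(1+o(1))$; the crossing is then produced \emph{conditionally}, under the Dirichlet law $\P_{U_N}$, at cost $e^{-\frac12\varepsilon^2\mathrm{cap}_{U_N}(D_N)(1+o(1))}$, which vanishes after $\varepsilon\to0$ by \eqref{eq:cap_tube_d=3bis}, the two pieces being glued by monotonicity in $\xi^{U_N}$. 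Without this structure your scheme either double counts (a connection cost plus a shell cost of the same order, yielding a constant strictly worse than $\tfrac{\pi}{6}(h_*-h)^2$) or rests on the false typicality claim. Two smaller points: for $d\ge4$ the ``connect and surround by a moat'' event is not monotone, so FKG alone does not apply, but your alternative via the midpoint decomposition \eqref{eq:LB1_dim4}--\eqref{eq:LB2_dim4} is exactly the paper's argument and works; and for $\mathrm{LocUniq}(N,h)^c$ your bookkeeping (capacity ``dominated by that of $T_{2N}$'') would cost a factor $2$ in the constant --- the cheaper route, used in the paper via \eqref{eq:lower_sup_502}, is to let the infinite cluster supply one crossing for free and insulate a single finite crossing.
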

	
%
%

	\subsection{General entropic lower bound}\label{sec:lower_tools}
	The following lemma will be used in the course of proving Theorem~\ref{P:lb}, but is of independent interest. The  lower bound it asserts in \eqref{eq:entrop_lower2} will follow by a change of measure argument, see e.g.~the proof of Theorem 2.1 in \cite{Sz15}, or Lemma~2.3 in \cite{BDZ95}, for results of a similar flavor. Given an event $A \in \mathcal B(\R^K)$, $K\subset \Z^d$ and a height parameter $h\in \R$, we define 
	\begin{equation}\label{eq:LBevent}
	A^h = A^h(\varphi) = \{\varphi_{|_K}-h\in A\},
	\end{equation}
where, with hopefully obvious notation $\varphi_{|_K}-h$ refers to the field (restricted to $K$) shifted by $-h$ coordinatewise.

	\begin{lemma}[Entropic lower bound] \label{lem:entrop_lower}
		Let $K_N\subset \subset U_N\subset \Z^d$ be subsets with $\mathrm{cap}_{U_N}(K_N)\to\infty$. Let $A_N\in\mathcal{B}(\R^{K_N})$ and $I\subset \R$ be an interval such that, for every $h'\in I$,
		\begin{equation}\label{eq:entrop_lower1}
		\P_{U_N}[A^{h'}_N]\to 1.
		\end{equation}
		Then for every $h\notin I$,
		\begin{equation}\label{eq:entrop_lower2}
		\liminf_{N \to \infty} \frac{1}{\mathrm{cap}_{U_N}(K_N)}\log \P_{U_N}\big[A^h_N\big]\geq -\frac{1}{2}d(h, I)^2.
		\end{equation}
	\end{lemma}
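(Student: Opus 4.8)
The plan is to prove \eqref{eq:entrop_lower2} by the standard entropic (tilting) argument: we compare $\P_{U_N}$ on the event $A_N^h$ with a shifted measure under which the event becomes typical, and pay for the shift via the relative entropy (Kullback--Leibler divergence). Fix $h \notin I$ and let $h' \in I$ be arbitrary; we will optimise over $h'$ at the end. The natural choice of shifted field is $\varphi + (h-h')f$, where $f = f_N \colon \Z^d \to \R$ is the (relative) equilibrium potential of $K_N$ in $U_N$, i.e. the $g_{U_N}$-potential of the equilibrium measure $e_{K_N, U_N}$ normalised so that $f \equiv 1$ on $K_N$; concretely $f_N(x) = P_x[H_{K_N} < T_{U_N}]$ by \eqref{eq:lastexit}, which indeed equals $1$ on $K_N$ and vanishes on $\Z^d \setminus U_N$. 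The point of this choice is twofold: on $K_N$ the shifted field equals $\varphi + (h - h')$ coordinatewise, so that $A_N^h$ holds for the \emph{original} field precisely when $A_N^{h'}$ holds for the \emph{shifted} field; and $f_N$ is the cheapest shift realising the value $h-h'$ on $K_N$ in the sense of the Cameron--Martin norm, which is exactly what produces the capacity in the denominator.

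The execution goes as follows. Let $\mathbb{Q}_N$ denote the law of $\varphi - (h-h')f_N$ under $\P_{U_N}$; equivalently, $\mathbb{Q}_N$ is $\P_{U_N}$ tilted by the Gaussian shift $(h-h')f_N$. By the Cameron--Martin theorem for the free field (see e.g.\ the computation in the proof of Theorem~2.1 in \cite{Sz15}),
\begin{equation*}
\frac{d\P_{U_N}}{d\mathbb{Q}_N}(\varphi) = \exp\!\Big( (h-h')\,\langle f_N, \varphi\rangle_{\mathcal{H}} - \tfrac12 (h-h')^2 \|f_N\|_{\mathcal{H}}^2 \Big),
\end{equation*}
where $\|f_N\|_{\mathcal{H}}^2 = \langle e_{K_N,U_N}, f_N\rangle = \mathrm{cap}_{U_N}(K_N)$ by \eqref{eq:cap_K_U}, \eqref{eq:lastexit} and $f_N \equiv 1$ on $K_N$, and $\langle f_N, \varphi\rangle_{\mathcal{H}} = \sum_x e_{K_N,U_N}(x)\varphi_x$ is, under $\mathbb{Q}_N$, a Gaussian with mean $-(h-h')\mathrm{cap}_{U_N}(K_N)$ and variance $\mathrm{cap}_{U_N}(K_N)$. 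Writing $\kappa_N \coloneqq \mathrm{cap}_{U_N}(K_N) \to \infty$, Jensen's inequality (applied to $\log$) gives
\begin{equation*}
\log \P_{U_N}[A_N^h] = \log \mathbb{E}_{\mathbb{Q}_N}\!\Big[\tfrac{d\P_{U_N}}{d\mathbb{Q}_N}\,\mathbf{1}_{A_N^h}\Big] \geq \log \mathbb{Q}_N[A_N^h] + \frac{1}{\mathbb{Q}_N[A_N^h]}\,\mathbb{E}_{\mathbb{Q}_N}\!\Big[\big(\log \tfrac{d\P_{U_N}}{d\mathbb{Q}_N}\big)\mathbf{1}_{A_N^h}\Big].
\end{equation*}
Now under $\mathbb{Q}_N$ the field $\varphi$ has the law of $\psi - (h-h')f_N$ with $\psi \sim \P_{U_N}$, so on $K_N$ the field $\varphi$ equals $\psi - (h-h')$ coordinatewise; hence $A_N^h(\varphi) = \{\varphi_{|K_N} - h \in A_N\} = \{\psi_{|K_N} - h' \in A_N\} = A_N^{h'}(\psi)$, and therefore $\mathbb{Q}_N[A_N^h] = \P_{U_N}[A_N^{h'}] \to 1$ by hypothesis \eqref{eq:entrop_lower1}. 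It remains to handle the second term: $\mathbb{E}_{\mathbb{Q}_N}[(\log \tfrac{d\P_{U_N}}{d\mathbb{Q}_N})\mathbf{1}_{A_N^h}] = -(h-h')\kappa_N \cdot \mathbb{E}_{\mathbb{Q}_N}[Z_N \mathbf{1}_{A_N^h}] - \tfrac12(h-h')^2\kappa_N\,\mathbb{Q}_N[A_N^h]$ where $Z_N \coloneqq \kappa_N^{-1}\sum_x e_{K_N,U_N}(x)\varphi_x$ is, under $\mathbb{Q}_N$, a Gaussian with mean $-(h-h')$ and variance $\kappa_N^{-1} \to 0$; thus $Z_N \to -(h-h')$ in $L^2(\mathbb{Q}_N)$, and since $\mathbf{1}_{A_N^h}$ is bounded with $\mathbb{Q}_N$-probability tending to $1$, we get $\mathbb{E}_{\mathbb{Q}_N}[Z_N \mathbf{1}_{A_N^h}] \to -(h-h')$. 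Dividing through by $\kappa_N$, the $\log \mathbb{Q}_N[A_N^h]$ term and the cross term contribute $o(1)$, and we are left with $\liminf_N \kappa_N^{-1}\log\P_{U_N}[A_N^h] \geq -(h-h')^2 + \tfrac12(h-h')^2 = -\tfrac12(h-h')^2$. Optimising over $h' \in I$ (taking $h'$ to approach the endpoint of $I$ nearest to $h$) yields $\liminf_N \kappa_N^{-1}\log \P_{U_N}[A_N^h] \geq -\tfrac12 d(h, I)^2$, which is \eqref{eq:entrop_lower2}.

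The main obstacle, and the only genuinely delicate point, is the Cameron--Martin / change-of-measure identity in the correct Hilbert-space framework: one must verify that $f_N = P_\cdot[H_{K_N} < T_{U_N}]$ lies in the Cameron--Martin space $\mathcal{H}$ of $\P_{U_N}$ (whose inner product is $\langle u, v\rangle_{\mathcal{H}} = \sum_{x,y} u(x)(g_{U_N})^{-1}(x,y)v(y)$ on the relevant finite-dimensional support, or more invariantly $(-\Delta_{U_N})$-energy), that its norm is indeed $\mathrm{cap}_{U_N}(K_N)$, and that the shift $(h-h')f_N$ acts as claimed — in particular that it is constant equal to $h-h'$ precisely on $K_N$. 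All of this is classical (it is essentially the content of \eqref{eq:lastexit}--\eqref{eq:cap_var} together with the Markov property of $\varphi$ under $\P_{U_N}$), and if one prefers a self-contained route one can simply work with the finite-dimensional Gaussian vector $(\varphi_x)_{x \in U_N}$, for which the tilting density is an explicit exponential of a linear functional. A minor technical caveat is that $A_N$ is a Borel subset of $\R^{K_N}$ only, so the event $A_N^h$ depends on $\varphi$ only through its restriction to $K_N$; this is harmless and in fact simplifies the bookkeeping, since everything above only used $f_N \equiv 1$ on $K_N$ and the joint law of $(\varphi_x)_{x \in K_N}$ together with the single linear functional $\sum_x e_{K_N,U_N}(x)\varphi_x$.
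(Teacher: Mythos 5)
Your argument is correct and takes essentially the same route as the paper: both tilt $\P_{U_N}$ by $(h-h')$ times the equilibrium potential $f_N(\cdot)=P_\cdot[H_{K_N}<T_{U_N}]$ via Cameron--Martin, identify the relative entropy (equivalently the Cameron--Martin norm squared over two) as $\tfrac12(h-h')^2\mathrm{cap}_{U_N}(K_N)$, use the Jensen-type entropy inequality, note that under the tilted law the event becomes $A_N^{h'}$ and hence typical, and finally optimize over $h'\in I$; your only deviation is that you expand $\E_{\mathbb{Q}_N}[(\log \tfrac{d\P_{U_N}}{d\mathbb{Q}_N})\mathbf{1}_{A_N^h}]$ by an explicit Gaussian computation instead of invoking the packaged inequality \eqref{eq:gen.entrop.lower}, which is an immaterial variant. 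Do fix the sign bookkeeping, though: as literally written, $\mathbb{Q}_N$ is the law of $\varphi-(h-h')f_N$, under which $A_N^h(\varphi)$ corresponds to $A_N^{2h-h'}(\psi)$ rather than $A_N^{h'}(\psi)$, and the displayed density and the stated mean of $Z_N$ are not consistent with either shift direction; with the upward shift $\varphi+(h-h')f_N$ announced in your opening sentence, all identities become consistent and produce exactly your final bound $-\tfrac12(h-h')^2$, hence $-\tfrac12 d(h,I)^2$ after optimizing.
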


	\begin{proof}
		Recall the following fact, which is a consequence of Jensen's inequality, see e.g.~
		the discussion following (2.7) in \cite{BDZ95} for a proof. Given two probability measures $\tilde{\P}$ and $\P$ such that $\tilde{\P}$ is absolutely continuous with respect to $\P$, and an event $A$ with positive $\tilde{\P}$-probability, one has
		\begin{equation}\label{eq:gen.entrop.lower}
		\P[A]\geq\tilde{\P}[A]e^{-(1/\tilde{\P}[A])(H(\tilde{\P}|\P)+1/e)}.
		\end{equation}
		where $H(\tilde{\P}|\P):=\tilde{\E}\left[\log\frac{d\tilde{\P}}{d\P}\right]$ is the relative entropy of $\tilde{\P}$ with respect to $\P$. Abbreviate $K=K_N$ and $U=U_N$ in the sequel. Pick $h\notin I$ and $h' \in I$. Using \eqref{eq:lastexit}, it follows by an application of the Cameron-Martin theorem, see e.g.~\cite[Theorem 14.1]{janson_1997} with the choice (in the notation of \cite{janson_1997}) $\xi \coloneqq (h-h') \big\langle e_{K,U}, \varphi \big\rangle  \in L^2(\P)$, that $\tilde{\P}_U$ defined by
		\begin{equation}
		\label{eq:shift_density}
		\frac{d\tilde{\P}_U}{d\P_U} = \exp \Big\{  (h-h') \big\langle e_{K,U}, \varphi \big\rangle - \frac{ (h-h')^2}{2} \text{cap}_U(K)\Big\}
		\end{equation}
		is a probability such that $\varphi$ has the same law under $\tilde \P_U$ as $\varphi+f $ under $\P_U$, where 
		\begin{equation}
		\label{eq:shift_f}
		f(x)=(h-h')P_{x}[H_K < T_U], \ x \in \Z^d.
		\end{equation}
		(indeed observe to this effect that in (14.3) of \cite{janson_1997}, one obtains $\rho_{\xi}(\varphi_{\cdot})=\varphi_{\cdot}+ \mathbb{E}_U[\xi\varphi_{\cdot}]= \varphi_{\cdot}+f_{\cdot}$  by \eqref{eq:lastexit}).
		In particular, $f=h-h'$ on $K$, whence $\tilde{\P}_U[A^h(\varphi)]= \P_U[A^h(\varphi+f)] = \P_U[A^{h'}(\varphi)]$ which tends to~$1$ as $N \to \infty$ by \eqref{eq:entrop_lower1}. Moreover, by \eqref{eq:shift_density} and \eqref{eq:shift_f}, noting that $ \tilde{\E}_U\big[ \big\langle e_{K,U}, \varphi \big\rangle \big] =  \big\langle e_{K,U}, f \big\rangle  $, one sees that
		\begin{equation*}
		\begin{split}
		H\big(\tilde{\P}_U\big|\P_U\big)&= (h-h') \tilde{\E}_U\big[ \big\langle e_{K,U}, \varphi \big\rangle \big]   - \frac{ (h-h')^2}{2} \text{cap}_U(K) 
		=  \frac{ (h-h')^2}{2} \text{cap}_U(K).
		\end{split}
		\end{equation*}
		Applying \eqref{eq:gen.entrop.lower}, taking logarithms and letting $N \to \infty$ now readily yields \eqref{eq:entrop_lower2}, since $h'\in I $ was arbitrary.
	\end{proof}

	\subsection{Lower bounds for $d=3$}\label{sec:lower_sub}

	In this section, we show the lower bounds \eqref{eq:main_lbsubcrit_d=3} and \eqref{eq:main_lbsupercrit_d=3}, which will both follow from an application of Lemma~\ref{lem:entrop_lower}, with carefully chosen events $A^h$ in \eqref{eq:LBevent} as to implement sufficiently cost-effective strategies for connection. The asserted bound \eqref{eq:main_lbsupercrit_d=3} bears the additional difficulty that the event in question is not monotone, which makes its proof more involved than \eqref{eq:main_lbsubcrit_d=3}.

\begin{proof}[Proof of \eqref{eq:main_lbsubcrit_d=3} and \eqref{eq:main_lbsupercrit_d=3}]
We begin with the proof of \eqref{eq:main_lbsubcrit_d=3}. Recalling the notation from \eqref{eq:def_TN}, define the thin cylinder $K_N \coloneqq T_N(L)$, with
\begin{equation}
\label{eq:LBpf1}
 L\coloneqq \lfloor N^{\delta}\rfloor, \text{ for $\delta \in (0,\textstyle\frac1{d-1})$,}
\end{equation}
and let $F_N^-= \{ 0\} \times [-L,L]^{d-1}$, $F_N^+= \{ N\} \times [-L,L]^{d-1}$. Note that $F_N^{\pm} \subset K_N$ and that $0\in F_N^-$, while $F_N^+ \subset \partial B_N$. For $h \in \R$, consider the event
\begin{equation}
\label{eq:LBpf2}
A_N^h \coloneqq A_{N,L}^h \coloneqq  \{ \text{$F_N^-$ and $F_N^+$ are connected by a path in $\{ \varphi \geq h\} \cap K_N$}  \},
\end{equation}
which is of the form \eqref{eq:LBevent}. Let us first check that 
\begin{equation}
	\label{eq:lower_sup_500}
	\text{$\lim_N \P[{A}^{h'}_N]= 1$ for all $h' < h_*$.} 
	\end{equation}
In view of \eqref{eq:def_locuniq}, one readily sees that
$$
\bigcap_{k \in \{-L, -L+1,\dots, N+L\} }\big( \{\lr{}{\varphi\geq h'}{B_{L/8}}{\partial B_{L}}\} \cap \text{LocUniq}(L/4,h') \big) \circ \tau_{ke_1}\subset A_N^{h'},
$$
where $\tau_x$, $x \in \Z^d$, denote the canonical space shifts on $\R^{\Z^d}$.
Now, 
combining the bounds of \eqref{eq:EXISTUNIQUE} and in view of \eqref{eq:LBpf1} one deduces that $\P[\lr{}{\varphi\geq h'}{B_{L/8}}{\partial B_{L}}] \geq  1-\exp\{-cN^{\delta \Cr{c:strexp}(h')}\}$. Together with \eqref{eq:def_hbar} and a union bound, and in view of \eqref{eq:LBpf1}, this is easily seen to imply $\lim_N \P[A^{h'}_N]= 1$, as desired. Lemma~\ref{lem:entrop_lower} thus applies with $U_N = \Z^d$, $I:=(-\infty, h_*)$, and yields for that for all $h\geq h_*$,
	\begin{equation}\label{eq:pf_low_sub}
	\liminf_{N \to \infty} \frac{\log N}{N} \log\P\big[A^h_N\big]\geq -\frac{\pi}{6}(h- h_*)^2 (1 + \Cr{thick-tube}\delta^{\Cr{thick-tube'}}),
	\end{equation}
	using that $\mathrm{cap}(K_N)\sim (1 + \Cr{thick-tube}\delta^{\Cr{thick-tube'}})\frac{\pi}{3}\frac{N}{\log N}$ as $N \to \infty$, as follows directly from \eqref{eq:cap_tube_d=3} and \eqref{eq:cap_line_asymp}. 
	
	Next, by the FKG-inequality for $\varphi$, one knows that $\P[ F_N^{-} \subset \{\varphi \geq h\}  ]\geq \exp\left\{-C(h)L^{d-1}\right\}.$ In particular, in view of \eqref{eq:LBpf1}, it follows that
	\begin{equation}\label{eq:pf_low_sub2}
	\liminf_{N \to \infty} \frac{\log N}{N} \log\P\big[F_N^{-} \subset \{\varphi \geq h\}\big]=0, \text{ for all $h \in \R$.}
	\end{equation} 
	Recalling $A_N^h$ from \eqref{eq:LBpf2} and the fact that $0\in F_N^-$, one observes that $A_N^h\cap \{ F_N^{-} \subset \{\varphi \geq h\}\}$ implies $\{0\xleftrightarrow[]{\varphi \ge h}\partial B_N\}$. Hence, \eqref{eq:main_lbsubcrit_d=3} follows directly from \eqref{eq:pf_low_sub}, \eqref{eq:pf_low_sub2} and the FKG-inequality upon letting $\delta \to 0$.  
	
\medskip	
We now show \eqref{eq:main_lbsupercrit_d=3}. For arbitrary $\delta \in (0,\frac13)$, let $D_N \subset U_N \subset K_N$ be defined as $D_N\coloneqq T_N( N^\delta)$ $U_N\coloneqq T_N( 
N^{2\delta})$, $K_N\coloneqq T_N( N^{3\delta})$ (see \eqref{eq:def_TN} for notation) and abbreviate $\xi^N=\xi^{U_N}$, $\psi^N=\psi^{U_N}$, cf. \eqref{eq:decomp}. For an additonal parameter $\varepsilon > 0$ and $h < h_* +\varepsilon$, consider the event
	\begin{equation}\label{eq:lower_sup_1}
	C_N^{h}:=\big\{\nlr{}{\varphi\geq h}{\partial_{\text{out}}U_N}{\partial K_N}\big\} \cap \big\{\textstyle \inf_{D_N} \displaystyle \xi^N \geq -(h_* + \varepsilon - h)\big\},
		\end{equation}
and note that $C_N^{h}$ is of the form \eqref{eq:LBevent} with $K=K_N$; observe to this 
effect that due to \eqref{eq:decomp}, the condition $\xi_{\cdot}^N \geq -(h_* + \varepsilon - 
h)$ can be recast as $\sum_{y\in U_N} P_{\cdot}[X_{H_{U_N}}=y](\varphi_y - h)\geq -(h_* + 
\varepsilon)$, i.e. as a condition on the field $\varphi_{\cdot}-h$ restricted to $U_N 
(\subset K_N)$.

	We now argue that $C_N^{h'}$ is typical as $N \to \infty$ for every $h'\in (h_{*} , h_* + 
	\varepsilon)$. For such $h'$, the probability of $\{\lr{}{\varphi\geq 
		h'}{\partial_{\text{out}}U_N}{\partial K_N}\}$ vanishes as $N \to \infty$ by \eqref{eq:def_h**} and a union bound. Next, for $x \in D_N$ one has
\begin{equation*}
\begin{split}
\E[ (\xi_x^N)^2] &\stackrel{\eqref{eq:decomp}}{=} \sum_{y,z} P_x[X_{T_{U_N}}=y] P_x[X_{T_{U_N}}=z]g(y,z)=  \sum_{y} P_x[X_{T_{U_N}}=y] E_x[g(X_{T_{U_N}},y)] \\
& \ \,=  \sum_{y} P_x[X_{T_{U_N}}=y] g(x,y) \stackrel{\eqref{eq:Greenasympt}}{\leq} C\, \text{dist}(x, U_N^c)^{-(d-2)} \leq CN^{-2\delta(d-2)},
\end{split}
\end{equation*}
where the equality in the second line is obtained by applying the strong Markov property at 
time $T_{U_N}$, noting that $x \in U_N$ whereas $y \notin U_N$. It then follows from a union 
bound over $D_N$ and a standard Gaussian tail estimate that $\P[\textstyle \inf_{D_N} 
\displaystyle \xi^N \geq -(h_* + \varepsilon - h')] \to 1$ as $N \to \infty$ for all $h'\in 
(h_{*} , h_* + \varepsilon)$ (alternatively one could also use Lemma~\ref{lem:BTIS_Szn} below 
to deduce this). All in all, in view of \eqref{eq:lower_sup_1}, one obtains
\begin{equation}
\label{eq:lower_sup_2.}
\text{$\P[C_N^{h'}]\to 1$ as $N \to \infty$ for every $h' \in (h_{*} , h_* + \varepsilon)$.}
\end{equation}
Using \eqref{eq:lower_sup_2.} and applying Lemma~\ref{lem:entrop_lower}, one infers that for every $h \leq h_*$ (and all $\delta, \varepsilon > 0$, implicit in the definition of $C_N^h$), 
	\begin{equation}\label{eq:lower_sup_2}
	\liminf_{N \to \infty} \frac{1}{\mathrm{cap}(K_N)}\log \P\big[C_N^h\big]\geq -\frac{1}{2}(h_* - h)^2.
	\end{equation}
Next, recall the event ${A}_N^h\coloneqq \tilde{A}_{N,\lfloor N^\delta \rfloor}^h$ from \eqref{eq:LBpf2}. In words, ${A}_N^h$ is the event that the cylinder $D_N$ contains a crossing in $\{\varphi \geq h\}$ intersecting both `slices' $F_{N}^{\pm}$ as defined below \eqref{eq:LBpf1}, with $L= \lfloor N^\delta \rfloor$. Combining the occurrence of ${A}_N^h$
and the insulating property of the disconnection event in \eqref{eq:lower_sup_1}, one deduces that 
$${A}_N^h\cap C_N^h \subset \bigcup_{x\in F_N^-} \{\lr{}{\varphi\geq h}{x}{\partial B_{N}(x)},\, \nlr{}{\varphi\geq h}{x}{\infty}\}$$ 
for all $h \in \R$, and hence by a union bound and translation invariance, that
	\begin{equation}\label{eq:lower_sup_5}
	\P[\lr{}{\varphi\geq h}{0}{\partial B_N},\, \nlr{}{\varphi\geq h}{0}{\infty}]\geq \frac{1}{|F_N^-|}\P[A_N^h\cap C_N^h].
	\end{equation}
Since the event $C_N^h$ in \eqref{eq:lower_sup_1} is  $\mathcal{F}_{U_N^c}= \sigma(\varphi_x; x \in U_N^c)$-measurable, applying the decomposition \eqref{eq:decomp}, one sees that for all $h \leq h_*$,
	\begin{equation}\label{eq:lower_sup_3}
	\P[{A}_N^h \cap C_N^h] = \E\big[ \P[{A}_N^h | \mathcal{F}_{U_N^c}]  1_{C_N^h} \big]  = \E\big[ \P_{U_N}[{A}_N^h (\cdot + \xi^{N})]  1_{C_N^h} \big] 
	\geq \P_{U_N}[{A}_N^{h_* + \varepsilon}] \P[C_N^h],
	\end{equation}
	where in the last step, we used monotonicity of the event ${A}_N^h(\cdot )$ along with the control on $\xi^{N}$ supplied by \eqref{eq:lower_sup_1}. A meaningful lower bound on $\P_{U_N}[{A}_N^{h_* + \varepsilon}] $ is obtained as follows. First recall that $\P[A^{h}_N]\to 1$ for all $h< h_*$ by \eqref{eq:lower_sup_500}. Then one writes, for all $h<h_*$, with $\varepsilon_0 = \frac12(h_*-h)$, 
$$
\P_{U_N}[{A}^{h}_N]=\P[{A}^{h}_N(\psi^N)] \geq \P\big[{A}^{h}_N(\psi^N) ,  \, \sup_{D_N}\xi^N \leq \varepsilon_0\big] \geq \P[{A}^{h+\varepsilon_0}_N(\varphi)] -  \P\big[ \sup_{D_N}\xi^N > \varepsilon_0\big].
$$
By \eqref{eq:lower_sup_500} and since $h +\varepsilon_0 = \frac12(h+h_*) < h_*$, the first term on the right-hand side tends to $1$ as $N \to\infty$. By similar considerations as above \eqref{eq:lower_sup_2.}, one sees that $\P\big[ \sup_{D_N}\xi^N > \varepsilon_0\big] \to 0$, yielding that $\P_{U_N}[{A}_N^h]\to 1$ as $N \to \infty$ for every $h<h_*$. Thus, Lemma~\ref{lem:entrop_lower} applies and gives, for all $\delta, \varepsilon > 0$,
	\begin{equation}\label{eq:lower_sup_4}
	\liminf_{N} \frac{1}{\mathrm{cap}_{U_N}(D_N)}\log \P_{U_N}\big[{A}^{h_* + \varepsilon}_N\big]\geq -\frac{1}{2}\varepsilon^2
	\end{equation}
(note also that $\mathrm{cap}_{U_N}(D_N) \geq \mathrm{cap}(D_N) \to \infty$, as required for 
Lemma~\ref{lem:entrop_lower} to apply). Finally, we put all these estimates together to arrive 
at \eqref{eq:main_lbsupercrit_d=3}: to see this, first plug
\eqref{eq:lower_sup_4} and \eqref{eq:lower_sup_2} into \eqref{eq:lower_sup_3} to get, for all $h\leq h_*$ and 
$\varepsilon,\delta > 0$, with $\rho_1 \coloneqq \liminf_{N }\frac{{\rm cap}(K_N)}{{\rm cap}(T_N)}$ and $\rho_2 \coloneqq \liminf_{N} \frac{{\rm cap}_{U_N}(D_N)}{{\rm cap}(T_N)}$,
\begin{equation*}
\liminf_{N \to \infty} \frac{1}{\mathrm{cap}(T_N)}\log \P\big[A_N^h\cap C_N^h\big] \geq -\frac{1}{2}\Big((h_* - h)^2 \rho_1 + \varepsilon^2 \rho_2 \Big).
\end{equation*}
Now using \eqref{eq:cap_tube_d=3} and 
\eqref{eq:cap_tube_d=3bis} to bound $\rho_1$ and $\rho_2$ respectively, and subsequently applying \eqref{eq:lower_sup_5} while using the fact that $|F_N^-| \leq C N^{\delta(d-1)}$, one obtains for all $h\leq h_*$ and 
$\varepsilon,\delta > 0$,
	\begin{equation}\label{eq:lower_sup_6}
	\begin{split}
	&\liminf_{N \to \infty} \frac{1}{\mathrm{cap}(T_N)}\log \P\big[\lr{}{\varphi\geq h}{0}{\partial B_N},\, \nlr{}{\varphi\geq h}{0}{\infty}\big]\geq \\	&\liminf_{N \to \infty} \frac{1}{\mathrm{cap}(T_N)}\log \P\big[A_N^h\cap C_N^h\big]\geq -\frac{1}{2}\Big((h_* - h)^2(1+\Cr{thick-tube}\delta^{\Cr{thick-tube'}})+\Cr{thick-tube}\delta^{-1}\varepsilon^2\Big).
	\end{split}
	\end{equation}
	The result \eqref{eq:main_lbsupercrit_d=3} now follows from \eqref{eq:lower_sup_6} by 
	first letting $\varepsilon\to0$ and then $\delta\to 0$ (recall \eqref{eq:cap_line_asymp}).
	
	It remains to argue that \eqref{eq:main_lbsupercrit_d=3} continues to hold for the event $ 
	\text{2-arms}(N,h)$. Then the claim automatically follows for $\textnormal{LocUniq}(N,h)^{c}$ as the former is 
	contained in the latter (recall their definitions in~\eqref{eq:intro2arm} and 
	\eqref{eq:def_locuniq}). To this end, let 
	$H^- = \{x \in B_{2N} : x^1 \leq -N\}$ and
	\begin{equation}
		\label{eq:lower_sup_501}
	A_{N,\text{left}}^h \coloneqq \big\{  \lr{}{\{\varphi\geq h\} \cap H^-}{B_N}{\partial B_{2N}}\big\}, \quad A_{N,\text{right}}^h\coloneqq (C_N^h \cap A_N^h) \circ \tau_{Ne_1}
	\end{equation}
with $A_N^h$ and $C_N^h$ given by \eqref{eq:LBpf2} and \eqref{eq:lower_sup_1} respectively. 
Note that the latter implicitly depends on $\delta$, cf.~\eqref{eq:LBpf1}, \eqref{eq:LBpf2} 
and above \eqref{eq:lower_sup_1}. Since $A_{N,\text{left}}^h \supset A_N^h \circ 
\tau_{-2Ne_1}$, combining \eqref{eq:lower_sup_500}, Lemma~\ref{lem:entrop_lower} and 
translation invariance of $\P$, one deduces in particular that
\begin{equation}
		\label{eq:lower_sup_501.1}
\liminf_{N} \frac1{\mathrm{cap}(T_N)}\log \P\big[A_{N,\text{left}}^h \big]\geq -\frac{1}{2}((h-h_*)_+)^2, \text{ for all } h \in \R.
	\end{equation}
The choices \eqref{eq:lower_sup_501} imply that
	\begin{equation}
	\label{eq:lower_sup_502}
(A_{N,\text{left}}^h \cap A_{N,\text{right}}^h) \subset \text{2-arms}(N,h).
	\end{equation}
Indeed, the events $A_{N,\text{left}}^h$ and $A_N^h \circ \tau_{Ne_1}$ each yield a crossing cluster in $B_{2N}\setminus B_N$, and $C_N^h  \circ \tau_{Ne_1}$ implies that these are not connected in $\{\varphi \geq h \}$, whence 
\eqref{eq:lower_sup_502}. Now, with a similar decoupling argument as around \eqref{eq:lower_sup_3}, one obtains that for all $\varepsilon>0$,
\begin{equation}
	\label{eq:lower_sup_503}
	\P[A_{N,\text{left}}^h \cap A_{N,\text{right}}^h] \geq \P[A_{N,\text{left}}^{h+ \varepsilon}] \, \P[A_{N,\text{right}}^h] - 2 e^{-c\varepsilon^2 N^{d-2}},
\end{equation}		
where, in the notation of \eqref{eq:decomp}, the exponential error term corresponds to twice the 
probability that $|\xi_x^{H^+}|> \frac{ \varepsilon}{2}$ for some $x \in H^-$, with $H^+ 
\coloneqq \{x \in B_{2N} :  x^1 \geq 0\}$. Taking logarithms on the both sides of 
\eqref{eq:lower_sup_503}, 
multiplying by $\frac{\log N}{N}$ and letting $N \to \infty$, the desired lower bound for $\P[\text{2-arms}(N,h)]$ 
with $h \leq h_*$ follow on account of \eqref{eq:lower_sup_502}, \eqref{eq:lower_sup_501.1} 
and the second line of \eqref{eq:lower_sup_6} upon letting $\delta,\varepsilon \to 0^+$.
\end{proof}

	\subsection{Lower bound for $d \geq 4$}\label{sec:lower_sup}
	
	We now supply the proof of \eqref{eq:main_lb_dgeq4}. A straightforward strategy consists of opening all the vertices in the line segment $T_N=T_N(0)$ (recall \eqref{eq:def_TN} for notation) and closing all vertices in its outer boundary $\partial_{\text{out}} T_N$. However, the Gaussian free field does not satisfy a uniform finite-energy property, which would make this easy to implement. Moreover, unlike in the subcritical case, one cannot immediately apply the FKG-inequality as the event in question is not monotonic. In order to deal with these issues, we use the midpoint extension $\tilde{\varphi}$ introduced at the end of Section~\ref{sec:preliminaries}, see in particular \eqref{eq:phi_extend1} and \eqref{eq:phi_extend2}.
	
	\begin{proof}[Proof of \eqref{eq:main_lb_dgeq4}]
Recall the decomposition \eqref{eq:LB1_dim4} of $\tilde{\varphi}$. Now let $\overline{T}_N\coloneqq T_N\cup\partial_{{\rm out}}T_N$ (where $T_N$ is viewed as a subset of $\Z^d$, whence $\overline{T}_N \subset \Z^d$) and let $M_N \coloneqq \{m\in\mathbb{M}^d:~ m\sim x, \text{ for some } x\in \overline{T}_N\}$. Using the fact that the absolute value of the Gaussian free field on any transient graph (and thus in particular of $\tilde{\varphi}$ on $\tilde{\Z}^d$) satisfies the FKG-inequality, see e.g.~(1.3) and Corollary 1.3 in \cite{eisenbaum2014}, one obtains that
	\begin{equation}\label{eq:GFFsquare_FKG}
	\P[|\hat{\xi}_{x}| \leq 1, ~ \forall x\in \overline{T}_N] \geq
	\P[|\tilde{\varphi}_m|\leq 1, ~ \forall m\in M_N]\geq \P[|\tilde{\varphi}_{m_0}|\leq 1]^{|M_N|}\geq e^{-CN},
	\end{equation}
	where the first inequality follows because $\hat{\xi}_{x}$ is the mean of $\tilde{\varphi}$ evaluated at its neighbors and $m_0$ refers to an arbitrary reference point in $\mathbb{M}^d$.
 As a consequence, one has, for all $h \in \R$,
	\begin{align*}
	&\P[\lr{}{\varphi\geq h}{0}{\partial B_N},\, \nlr{}{\varphi\geq h}{0}{\infty}]\geq \P[\varphi_x\geq h, ~\forall x\in T_N, ~\varphi_y<h,~\forall \partial_{\text{out}}T_N] \\[-0.3em]
	&\qquad \stackrel{\eqref{eq:LB1_dim4}}{\geq}
\P[|\hat{\xi}_{x}| \leq 1, ~ \forall x\in \overline{T}_N, ~\hat{\psi}_x\geq h +1, ~\forall x\in T_N, ~\hat{\psi}_y<h-1, ~\forall y \in \partial_{\text{out}}T_N] \\ &\qquad \stackrel{\eqref{eq:LB2_dim4}}{=} \P[|\hat{\xi}_{x}| \leq 1, ~ \forall x\in \overline{T}_N] \, P[X\geq h+1]^{|T_N|} \, P[X< h-1]^{|\partial_{\text{out}}T_N|} \stackrel{\eqref{eq:GFFsquare_FKG}}{\geq} e^{-C(h)N},
	\end{align*}
	where in the penultimate step, we also used independence of $\hat{\psi}$ and $\hat{\xi}$ and $X$ is a $N(0,1/2)$-distributed random variable. One easily adapts the above argument in order to create two `insulated' paths in $\{ \varphi \geq h \}$ joining $B_N$ to $\partial B_{2N}$ at an exponential cost in $N$, thus obtaining the lower bound \eqref{eq:main_lb_dgeq4} for the event $ \textnormal{LocUniq}(N,h)^{c}$ instead. This completes the proof of \eqref{eq:main_lb_dgeq4} and with it that of Theorem \ref{P:lb}.
	\end{proof}

\section{Coarse-graining} \label{sec:coarsegrain}

We now prepare the ground for the upper bounds that will be derived in the next section. The main result of this section is a certain coarse-graining scheme for paths, see Proposition~\ref{prop:coarse_paths} below. Its proof is split over Sections~\ref{sec:coarse3d} and \ref{sec:coarse4d}, which deal with the case $d=3$ and $d\geq 4$, respectively.
The key effect of the scheme is to keep the capacity of the `coarse-grained path' above a certain threshold, see \eqref{eq:cg99d=3}, \eqref{eq:cg_capd=3}
, while maintaining good control on the entropy factor for its possible choices. The notion of `coarse-grained path' is formalized in Definition~\ref{def:admissible}. It depends on a function $\Gamma(\cdot)$ parametrizing this entropy, see \eqref{def:admissible3} and \eqref{eq:cg_gamma}.

We now describe the precise setup. We consider positive integers $L\geq 1$ and $K\geq 100$ and introduce the lattice 
	\begin{equation}
	\label{eq:LL}
	\mathbb L = \mathbb L(L) \coloneqq L\Z^d
	\end{equation}
	along with the boxes
	\begin{equation}\label{eq:boxes}
	 C_z \coloneqq z+[0,L)^d ,\; \
	D_z \coloneqq z+[-3L,4L)^d,\; \ U_z \coloneqq z+[-KL+1, L+KL-1)^d
	\end{equation}
	as well as
	\begin{equation}
	\label{eq:C_boxes}
	\tilde{C}_z \coloneqq z+[-L,2L)^d,\; \ \tilde{D}_z  \coloneqq z+[-2L,3L)^d
	\end{equation}
attached to $z \in \mathbb L$. Notice that $ C_z \subset \tilde{C}_z \subset \tilde{D}_z \subset D_z\subset U_z$.  When considering more than one scale in a given context, we will sometimes explicitly refer to the associated length scale $L$ by writing $ C_{z,L}= C_z, \tilde{C}_{z,L}=\tilde{C}_z$ etc. 
Using the notation of \cite[Section 4]{Sz15}, for any $z \in  \mathbb L$, we introduce the decomposition
	\begin{equation}\label{eq:decomp_z}
	\varphi= \xi^z + \psi^z
	\end{equation}
where $\xi^z_x\coloneqq \xi^{U_z}_x =  E_x[\varphi_{X_{T_{U_z}}}]$ for all $x \in \mathbb{Z}^d$, cf.~\eqref{eq:decomp}, and $\psi^z= \psi^{U_z}$, with $U_z$ as in \eqref{eq:boxes}. Recall that $\psi^z$ has law $\mathbb{P}_{U_z}$. Letting $\tilde{U}_z\coloneqq (U_z\cap\Z^d) \cup \{ \tilde{x} \in \tilde{\Z}^d : \exists x \in (U_z \cap \Z^d) \text{ s.t. } |x-\tilde{x}|= \frac12\}$, we further define for $z \in  \mathbb L$ the extended harmonic average
\begin{equation}
\label{eq:xi_tilde_z}
\tilde{\xi}^z \coloneqq  \tilde{\xi}^{\tilde{U}_z}  \ \big(\stackrel{ \eqref{eq:Markov_GFFtilde}}{=} \tilde{E}_{\cdot}\big[\tilde{\varphi}_{\tilde{X}_{T_{\tilde{U}_z}}}\big] \big), \quad \tilde{\psi}^z \coloneqq \tilde{\varphi} -  \tilde{\xi}^{z}, 
\end{equation}	
where $\tilde{\varphi}$ refers to the extension of $\varphi$ to the graph $\tilde{\Z}^d$, see the discussion leading to \eqref{eq:phi_extend1}. On account of \eqref{eq:phi_extend2} and \eqref{eq:phi_extend4}, one thus has $\tilde{\xi}^{z}\vert_{\Z^d}= \xi^z$ and $\tilde{\psi}^{z}\vert_{\Z^d}= \psi^z$.
 Moreover, if
	\begin{equation}
	\label{eq:C:cond1}
\text{	 $\mathcal{C}\subset\mathbb L$ is a non-empty collection of sites with mutual $|\cdot|_{\infty}$-distance at least $2KL+L$,}
\end{equation}
 then denoting by $\tilde{\psi}^z=(\tilde{\psi}^z_{\tilde{x}})_{\tilde{x} \in \tilde{\Z}^d}$, for $z\in \mathcal{C}$, 
 one has that
	\begin{align}\label{eq:decomp_boxes}
	\begin{split}
	&\text{the Gaussian fields $\{ \tilde{\xi}, \, \tilde{\psi}^z, z\in \mathcal{C}\}$, where $ \tilde{\xi}= (\tilde{\xi}^z_{\tilde{x}})_{z\in \mathcal{C},\, \tilde{x}\in \tilde{U}_z}$, are independent.}
	\end{split}
	\end{align}
	Given $\mathcal{C}$ as in \eqref{eq:C:cond1}, we write 
	\begin{equation}
	\label{eq:cg_C}
	\Sigma \coloneqq \Sigma(\mathcal{C}) \coloneqq \bigcup_{z\in \mathcal{C}}  C_z.
	\end{equation}
The following precise result will be useful.
\begin{lemma}[Control of harmonic average]\label{lem:BTIS_Szn} There exist $\Cl{c:btis}(d) 
\in [1, \infty)$ as well as, for all $K \geq 100$, $\alpha(K)>1$ with $\lim_{K \to \infty} \alpha(K)= 1$ such that for every $a>0$, one has
\begin{equation}
\label{eq:BTIS_Szn}
		\limsup_{L} \sup_{\mathcal{C}} \bigg\{ \log\P\Big[\bigcap_{z\in\mathcal{C}} \{\sup_{ D_z} \tilde{\xi}^z\geq a\}\Big] +  \frac{1}{2}\bigg(a-\frac{\Cr{c:btis}}{K}\sqrt{\frac{|\mathcal{C}|}{\mathrm{cap}(\Sigma)}}\bigg)^2_+ \,\frac{\mathrm{cap}(\Sigma)}{\alpha(K)} \bigg\} \leq 0,
\end{equation}
		where the supremum over $\mathcal{C}$ runs over the sets satisfying \eqref{eq:C:cond1}, $\sup_{ D_z}$ refers to the supremum over all points in $D_z \cap \tilde{\mathbb{Z}}^d$ and $r_+ = r\vee 0$ for any $r \in \R$.
	\end{lemma}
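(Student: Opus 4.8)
The plan is to deduce Lemma~\ref{lem:BTIS_Szn} from the Borell--TIS inequality together with a careful estimate of the variance of the supremum of $\tilde{\xi}^z$ over $D_z$. Recall that $\tilde\xi^z$ on $D_z$ is a Gaussian field with small fluctuations: since $D_z \subset U_z$ and $U_z$ is of size $\asymp KL$ while $D_z$ is of size $\asymp L$, harmonicity of $\xi^z$ gradients tells us that $\tilde\xi^z$ is nearly constant on $D_z$. More precisely, I would first show that $\sup_{x,y \in D_z}\E[(\tilde\xi^z_x - \tilde\xi^z_y)^2] \leq C/K^{d-2} \cdot (\text{typical variance})$, so the supremum over $D_z$ behaves, up to an error controlled by $K$, like a single Gaussian variable; this is where the factor $\alpha(K) \to 1$ and the correction $\Cr{c:btis}/K$ come from.

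The core of the argument, following \cite[Section 4]{Sz15} (in particular the reasoning around Theorem~3.1 and (4.10)--(4.20) there), runs as follows. Let $Z := \max_{z \in \mathcal{C}} \big(\sup_{D_z}\tilde\xi^z\big)$ — really one should work with the vector $(\sup_{D_z}\tilde\xi^z)_{z\in\mathcal C}$, but the event $\bigcap_z\{\sup_{D_z}\tilde\xi^z \geq a\}$ is a lower-orthant event for this Gaussian vector. By independence (see \eqref{eq:decomp_boxes}) the fields $\tilde\xi^z$, $z\in\mathcal{C}$, are independent, so the structure is that of a product. I would apply a Gaussian concentration / change-of-measure bound: writing $\bar e = \overline{e}_\Sigma$ for the normalized equilibrium measure of $\Sigma$ and noting that for each $z$ the variable $\sup_{D_z}\tilde\xi^z$ is bounded below by (a small multiple, via $C_z$'s contribution to $\Sigma$) the harmonic average seen from $\Sigma$, one obtains that the left-tail of the minimum over $z$ of these variables is governed by $\mathrm{cap}(\Sigma)$. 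Concretely, the relevant lower bound $\E[\min_z \sup_{D_z}\tilde\xi^z] \gtrsim -\frac{\Cr{c:btis}}{K}\sqrt{|\mathcal{C}|/\mathrm{cap}(\Sigma)}$ and the variance bound $\mathrm{Var}\big(\sum_z \lambda_z \sup_{D_z}\tilde\xi^z\big) \leq \alpha(K)\,(\text{quadratic form in }\lambda)$ feed into a Chernoff-type estimate: optimizing over the tilt, $\P[\bigcap_z\{\sup_{D_z}\tilde\xi^z \geq a\}] \leq \exp\{-\tfrac12 (a - \Cr{c:btis}K^{-1}\sqrt{|\mathcal C|/\mathrm{cap}(\Sigma)})_+^2 \,\mathrm{cap}(\Sigma)/\alpha(K)\}$, after taking $L \to \infty$ to clean up lower-order terms. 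The $\limsup_L$ is needed because the variance and mean estimates hold only asymptotically (the Green-function asymptotics \eqref{eq:Greenasympt} and the comparison $g_{U_z} \approx g$ on $D_z$ carry $o(1)$ errors as $L\to\infty$), and the $\sup_{\mathcal C}$ is uniform because all the constants $C, \Cr{c:btis}, \alpha(K)$ depend only on $K$ and $d$, not on the geometry of $\mathcal{C}$ beyond the separation condition \eqref{eq:C:cond1}.

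The technical heart is establishing the two inputs to the Gaussian computation uniformly in $\mathcal C$: (i) an \emph{upper} bound on the covariance structure of $(\sup_{D_z}\tilde\xi^z)_z$ in terms of $\mathrm{cap}(\Sigma)$ and the Green function between the boxes $C_z$, yielding the $\alpha(K)$ factor — this uses that $\tilde\xi^z$ is the harmonic extension and that $D_z$ sits well inside $U_z$, so the oscillation of $\tilde\xi^z$ over $D_z$ is a $1/K$-small perturbation of its value at the center; and (ii) the \emph{lower} bound $\E[\min_z \sup_{D_z}\tilde\xi^z] \geq -\Cr{c:btis}K^{-1}\sqrt{|\mathcal{C}|/\mathrm{cap}(\Sigma)}$, which follows from a second-moment / Cauchy--Schwarz computation: $\sum_z \E[(\sup_{D_z}\tilde\xi^z)^2] \leq |\mathcal C| \cdot \max_z \mathrm{Var}(\tilde\xi^z_{z})$ and $\mathrm{Var}(\tilde\xi^z_{z})$ is comparable, up to $K$-dependent constants, to $1/\mathrm{cap}(C_z) \asymp L^{2-d}$, combined with $\mathrm{cap}(\Sigma) \asymp |\mathcal C| L^{d-2}$ coming from the separation of the boxes and subadditivity/superadditivity of capacity. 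I expect step (i) — controlling the oscillation of the harmonic average over $D_z$ uniformly and turning it into the precise $\alpha(K)$ with $\alpha(K)\to 1$ — to be the main obstacle, since it requires quantitative harmonic-measure estimates on the annular region $U_z \setminus D_z$ that are uniform over all admissible configurations $\mathcal{C}$; this is exactly the content that \cite[Section 4]{Sz15} develops, and I would invoke those estimates (the analogue of Lemma~4.2 and the Borell--TIS application in Theorem~4.1 there) rather than reprove them from scratch.
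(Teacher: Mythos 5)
Your plan misses the one idea the paper's proof actually consists of, and the route you sketch in its place contains genuine errors. The paper does not redo any of the Section~4 analysis of \cite{Sz15} for the mid-point extension: with the lattice harmonic average $\xi^z$ in place of $\tilde\xi^z$, the bound \eqref{eq:BTIS_Szn} \emph{is} Corollary~4.4 of \cite{Sz15}, and the whole proof is the observation that the extended statement reduces to it. Indeed, $\tilde\xi^z$ is harmonic in $\tilde U_z$, so at any mid-point $\tilde x\in D_z\cap\tilde\Z^d$ one has $\tilde\xi^z_{\tilde x}=\tfrac12(\xi^z_{x_1}+\xi^z_{x_2})$ with both lattice neighbours $x_1,x_2\in D_z\cap\Z^d$; hence $\{\sup_{D_z\cap\tilde\Z^d}\tilde\xi^z\ge a\}\subset\{\sup_{D_z\cap\Z^d}\xi^z\ge a\}$, and the known lattice estimate applies verbatim, uniformly in $\mathcal C$. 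Your proposal instead sets out to rebuild the Borell--TIS/capacity machinery for the extended field, which is both unnecessary and, as sketched, flawed.

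Two concrete problems with the sketch. First, you claim, citing \eqref{eq:decomp_boxes}, that the fields $\tilde\xi^z$, $z\in\mathcal C$, are independent, ``so the structure is that of a product''. That is not what \eqref{eq:decomp_boxes} says: the independence there is between the local fields $\tilde\psi^z$ and the \emph{single joint} field $\tilde\xi=(\tilde\xi^z_{\tilde x})_{z\in\mathcal C,\,\tilde x\in\tilde U_z}$; the harmonic averages in different boxes are strongly correlated (their cross-covariances are Green-function terms), and it is exactly this correlation, tested against equilibrium-measure weights, that produces the variance proxy of order $\alpha(K)/\mathrm{cap}(\Sigma)$ and hence the factor $\mathrm{cap}(\Sigma)$ in \eqref{eq:BTIS_Szn}; a genuinely product structure would lead to a cost of order $|\mathcal C|L^{d-2}a^2$ and the computation you outline does not transfer to the true, correlated field. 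Second, your step (ii) rests on $\mathrm{cap}(\Sigma)\asymp|\mathcal C|L^{d-2}$ ``by separation and sub/superadditivity''. Subadditivity gives only the upper bound, and the matching lower bound fails for $d=3$ in precisely the configurations this lemma is applied to: for boxes strung along a segment of length $N$ one has $\mathrm{cap}(\Sigma)\asymp N/\log N$ while $|\mathcal C|L\asymp N/K$, so the ratio diverges like $(\log N)/K$. (Relatedly, what is needed for the centering term is an \emph{upper} bound on the expectation of the equilibrium-weighted average of the $\sup_{D_z}\tilde\xi^z$ --- this is where the correction $\frac{c}{K}\sqrt{|\mathcal C|/\mathrm{cap}(\Sigma)}$ comes from in \cite{Sz15} --- not a lower bound on $\E[\min_z\sup_{D_z}\tilde\xi^z]$.) If you replace the rebuilt machinery by a direct citation of \cite[Corollary~4.4]{Sz15} together with the mid-point averaging inclusion above, the proof closes in a few lines.
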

\begin{proof} With $\xi^z$ in place of $\tilde{\xi}^z$, \eqref{eq:BTIS_Szn} is proved in \cite[Corollary~4.4]{Sz15}. To extend the result to $\tilde{\xi}^z$, observe that, for any mid-point $\tilde{x} \in (D_z \cap \tilde{\mathbb{Z}}^d)\setminus \Z^d$, the neighbors $x_1$ and $x_2$ of $\tilde{x}$ (in $\tilde{\Z}^d$) both belong to $D_z \cap \mathbb{Z}^d$. Furthermore, by harmonicity of $\tilde{\xi}^z_{\cdot}$ in $D_z \cap \tilde{\mathbb{Z}}^d$, cf.~\eqref{eq:boxes} and \eqref{eq:xi_tilde_z}, one has
$
 \tilde{\xi}^z_{\tilde{x}}= \frac12(\tilde{\xi}^z_{x_1}+ \tilde{\xi}^z_{x_2})=  \frac12({\xi}^z_{x_1}+ {\xi}^z_{x_2}),
$ whence $\tilde{\xi}^z_{\tilde{x}} \leq a$ whenever ${\xi}^z_{x_i} \leq a$ for $i=1,2$. Together, these observations yield that $$\textstyle \{\sup_{ D_z \cap \tilde{\Z}^d} \tilde{\xi}^z\geq a\} \subset \{\sup_{ D_z \cap \Z^d} {\xi}^z\geq a\},$$ and the claim follows.
\end{proof}
We will be interested in families of collections $\mathcal{C}$ satisfying \eqref{eq:C:cond1} with certain finer properties. In what follows, let $\Lambda_N $ be any of the elements in
\begin{equation}
\label{eq:scriptS_N}
\text{$\mathcal{S}_N \coloneqq \{  B_N, \, B_{2N} \setminus  B_N, \, \tilde{D}_{0,N}\setminus \tilde{C}_{0,N}\}$, for $N \geq 1$ (see below \eqref{eq:C_boxes} for notation).}
\end{equation}
In line with the wording below \eqref{eq:def_locuniq}, for $U\subset V \subset \subset \Z^d$, we say that a $*$-path $\gamma$ in $\Z^d$ (see Section \ref{sec:preliminaries} for its definition) \textit{crosses} $V\setminus U$ if $\gamma$ intersects both $U$ and $\partial V$. If $U=\{0\}$ we omit the reference to $U$; e.g.~when $\gamma$ crosses $B_N$ we mean that $\gamma$ intersects both $0$ and $\partial B_N$.
\begin{defn}\label{def:admissible} $(K \geq 100, \, L \geq 1, \, N \geq 10KL).$
Let $\Gamma:[1,\infty)\mapsto [0,\infty)$. A family $\mathcal{A} = \mathcal A_{N, L}^K(\Lambda_N)$ of collections $\mathcal{C} \subset \mathbb{L}$ is called $\Gamma$-admissible if, for some $\Cr{c:nLB}(d) \in (0,1) $,
\begin{align}
&\begin{array}{l}
\text{all $\mathcal C \in \mathcal A$ have equal cardinality $n \coloneqq |\mathcal C|$ satisfying $n \in [\frac{\Cl[c]{c:nLB}N}{u(KL)}, \frac{N}{ u(KL)}]$, where}\\
\text{$u(x)=x$ if $d=3$ and $u(x)=x(\log x)^2$ if $d \geq 4$, \eqref{eq:C:cond1} holds for all $\mathcal C \in \mathcal A$} \\
\text{and $\tilde{D}_z=\tilde{D}_{z,L}\subset \Lambda_N$ for all $z \in \mathcal{C}$,} 
\end{array}\label{def:admissible1}\\[0.5em]
&\begin{array}{l}
\text{for any $*$-path $\gamma$ 
crossing $\Lambda_N$, there exists $\mathcal{C} \in \mathcal{A}$ such that $\gamma$}\\ 
\text{crosses $\tilde{D}_{z}\setminus C_z$ (with $ C_z= C_{z,L}$, $\tilde{D}_z=\tilde{D}_{z,L}$ as in \eqref{eq:boxes}, \eqref{eq:C_boxes}) for all $z \in \mathcal{C}$,}
\end{array} \label{def:admissible2}\\[0.5em]
&\begin{array}{l}\text{$\log |\mathcal{A}| \leq \Gamma(N/L).$} \end{array} \label{def:admissible3} 
\end{align}
\end{defn}

The main result of this section is the following:
\begin{proposition}[Coarsening of crossing paths]\label{prop:coarse_paths} There exists $\Cr{C:cg_complexity}(d) \in [1,\infty)$ such that, for all $K \geq 100$, 
	$L \geq 1$, $N \geq 10KL$ and $\Lambda_N \in \mathcal{S}_N$, one can construct a $\Gamma$-admissible collection $\mathcal{A}=\mathcal{A}_{N,L}^K(\Lambda_N)$ with the 
	following properties. 

\begin{itemize}
\item[i)] If $d=3$,
	\begin{align}
\label{eq:cg_gamma}
& \Gamma(r)= {\Cl{C:cg_complexity}}{K}^{-1} r \log (r\vee 2),
	\end{align}
and there exist positive numbers $ \lambda (K) \in (0,1]$ satisfying
\begin{equation}
\textstyle \lim_{K \to\infty} \lambda(K)=1, \label{eq:cg99d=3}
\end{equation}
such that, for all $\rho \in (0,1)$,
\begin{align}
\label{eq:cg_capd=3}
\liminf_{ N \to \infty} \; \inf_{L_0(N) \leq L \leq L_1(N)}  \; 
&\inf_{\mathcal{C}\in \mathcal{A}} \; \inf_{\substack{\tilde{\mathcal{C}}\subset \mathcal{C} \\ |\tilde{\mathcal{C}}|\geq (1-\rho)|\mathcal{C}|}} \; \frac{\mathrm{cap}(\tilde{\Sigma})}{\mathrm{cap}(T_{N})} \geq \lambda(K) \, (1 - \rho) 
\end{align}
with $\tilde{\Sigma} =\Sigma(\tilde{\mathcal{C}})$ $($see \eqref{eq:cg_C} for $\Sigma(\cdot))$ 
 and $(L_0(N), L_1(N))$ any sequence with $\lim_N L_0(N) = \infty$ and $L_1(N) \leq N/10K$ satisfying $\lim_N \tfrac{\log L_1(N)}{\log N} = 0$. 
\item[ii)] If $d \geq 4$, 
 \begin{equation}
\label{eq:cg_gamma4}
\tag{\ref{eq:cg_gamma}'}
 \Gamma(r)= {\Cr{C:cg_complexity}} r
 \end{equation}
and \eqref{eq:cg_capd=3} remains valid with $L_0(N)=1$, $L_1(N)=N/10K$ and for some $\lambda(K) \in (0,1]$.
\end{itemize}
Moreover, the above conclusions continue to hold 
with $\Lambda_N=B_N \setminus B_{\varepsilon N}$, for any $\varepsilon \in (0,\frac13)$, upon replacing $T_{N}$ by $T_{(1-\varepsilon)N}$ in \eqref{eq:cg_capd=3}.   
\end{proposition}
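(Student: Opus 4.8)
## Proof proposal for Proposition~\ref{prop:coarse_paths}

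The plan is to build the admissible family $\mathcal{A}$ by a greedy extraction procedure applied to $*$-paths, and then to verify the capacity lower bound \eqref{eq:cg_capd=3} using the tube estimates of Section~\ref{sec:preliminaries}. First I would set up the combinatorial skeleton. Tile $\Lambda_N$ by the lattice $\mathbb{L} = \mathbb{L}(KL)$ of mesh $KL$ (a coarsened version of \eqref{eq:LL}, chosen so that the boxes $U_z$ of neighboring retained sites are automatically well-separated in the sense of \eqref{eq:C:cond1} — this forces a bookkeeping where $\mathcal{C}$ lives on a sublattice of $\mathbb{L}(L)$ with spacing commensurate with $2KL+L$). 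Any $*$-path $\gamma$ crossing $\Lambda_N$ must traverse a sequence of such coarse boxes; one extracts from this sequence a maximal sub-collection of boxes that are pairwise far apart (distance $\ge 2KL+L$), discarding a bounded-density fraction. The key quantitative input is that a $*$-path crossing $\Lambda_N$ has $\asymp N/(KL)$ such well-separated boxes it genuinely crosses (this is where $u(KL)$ enters: for $d=3$ the path has length $\gtrsim N$ so it crosses $\gtrsim N/(KL)$ coarse boxes; for $d \ge 4$ the extra $(\log)^2$ factor in $u$ is a slack built in to accommodate the weaker capacity-of-line bound \eqref{eq:cap_line_asymp4d} and the coarse-graining entropy). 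The family $\mathcal{A}$ is then the collection of all $\mathcal{C}$ that arise this way, and \eqref{def:admissible1}, \eqref{def:admissible2} hold essentially by construction.

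Next I would bound the entropy $\log|\mathcal{A}|$ to get \eqref{def:admissible3} with the claimed $\Gamma$. Each $\mathcal{C} \in \mathcal{A}$ is a set of $n \asymp N/(KL)$ points in $\mathbb{L} \cap \Lambda_N$; naively this gives $\binom{|\Lambda_N/L|}{n}$ choices, which is too large. The standard fix (à la the renormalization schemes in \cite{Sz15, RoS13}) is to encode $\mathcal{C}$ not as an arbitrary subset but as the trace of a connected $*$-path at the coarse scale: a coarse-grained path visiting $n$ boxes in $\mathbb{Z}^d$ is determined by its starting box (at most $|\Lambda_N/KL|$ choices, i.e.\ $e^{O(\log(N/KL))}$) and its $\le n$ steps among $O(1)^{\text{something}}$... — more precisely, among a number of directions that is polynomial in $K$ because consecutive retained boxes are within distance $O(KL)$ of each other along the path. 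This yields $\log|\mathcal{A}| \le C n \log(CK) + C\log(N/L) \le \Cr{C:cg_complexity} K^{-1}\cdot (N/L)\log(N/L)$ once one checks the arithmetic with $n \asymp N/(KL)$; the division by $K$ in \eqref{eq:cg_gamma} comes precisely from $n \asymp N/(KL)$ beating the $\log(CK)$ per-step cost. For $d \ge 4$ one instead has $n \asymp N/(KL(\log KL)^2)$, and the logarithmic factor absorbs the $\log(N/L)$, giving the linear bound \eqref{eq:cg_gamma4}. The details of this encoding differ between $d=3$ and $d\ge4$, which is why the proof is split across Sections~\ref{sec:coarse3d} and~\ref{sec:coarse4d}.

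For the capacity bound \eqref{eq:cg_capd=3}, fix $\mathcal{C} \in \mathcal{A}$ and $\tilde{\mathcal{C}} \subset \mathcal{C}$ with $|\tilde{\mathcal{C}}| \ge (1-\rho)|\mathcal{C}|$, so $\tilde\Sigma = \bigcup_{z\in\tilde{\mathcal{C}}} C_z$ is a union of $\ge (1-\rho) n$ disjoint $L$-boxes. The lower bound should come from comparing $\tilde\Sigma$ to a thin tube: the retained boxes lie (approximately) along the crossing path, hence are contained in a tube of length $\asymp N$ and width $\asymp KL$, and — crucially — they are \emph{dense} in that tube at all scales above $KL$ (because consecutive retained boxes are within $O(KL)$, and we discarded only a $\rho$-fraction, which at worst creates holes but preserves the visibility hypothesis \eqref{eq:vis1} of Lemma~\ref{lem:visibility} with a $\gamma$ depending on $\rho$). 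One then invokes Lemma~\ref{lem:visibility} (this is exactly the "porous lines" statement flagged as being needed here) to control the escape probability from $\tilde\Sigma$, translate that via the last-exit decomposition \eqref{eq:lastexit}/\eqref{eq:cap_bound_Green} into $\mathrm{cap}(\tilde\Sigma) \gtrsim N/\log N$ for $d=3$, and compare with $\mathrm{cap}(T_N) \sim \frac{\pi}{3}\frac{N}{\log N}$ from \eqref{eq:cap_line_asymp}. Getting the \emph{sharp} constant $\lambda(K)\to 1$ — rather than just $\lambda(K) > 0$ — requires more care: one must show that widening the line to a width-$KL$ tube and making it porous costs only a $(1+o_K(1))$ factor in capacity, which is precisely what Lemma~\ref{lem:cap_tube2} (specifically \eqref{eq:cap_tube_d=3}) provides after a scaling argument, together with the observation that porosity at a \emph{bounded} density costs a factor that $\to 1$ as the density $\to 1$. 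The conditions $\lim_N L_0(N) = \infty$ and $\log L_1(N)/\log N \to 0$ enter here: the former makes the asymptotic capacity formulas for $L$-boxes kick in, the latter ensures $\log L$ is negligible against $\log N$ so that the tube $T_N$ (not $T_{N/L}$ rescaled) governs the leading order. For $d \ge 4$ the analogous argument uses \eqref{eq:cap_line_asymp4d} and only needs $\mathrm{cap}(\tilde\Sigma) \asymp N$, so no sharp-constant analysis and no restriction on $L_0, L_1$ is needed, giving $\lambda(K)\in(0,1]$ with $L_0 = 1$, $L_1 = N/10K$.

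The main obstacle I anticipate is the combination, in the $d=3$ case, of (a) keeping the entropy bound \eqref{eq:cg_gamma} with the factor $K^{-1}$ — this pins down how aggressively one must thin the retained boxes, since too many retained boxes blows up the entropy while too few kills the capacity — and (b) simultaneously retaining enough capacity, with sharp constant, after the adversarial removal of a $\rho$-fraction of boxes in \eqref{eq:cg_capd=3}. These two requirements pull in opposite directions, and reconciling them is exactly the "walking the fine line" remark in the introduction. The sharp-constant bookkeeping — tracking that width-$KL$ tubes, porosity, and the passage from $L$-boxes to their center points each contribute only $1 + o_K(1)$ or $1+o_\rho(1)$ multiplicative errors, with no hidden dimensional constant spoiling $\lambda(K)\to1$ — will be the most delicate part, and is the reason Lemma~\ref{lem:visibility} is stated with an explicit $\gamma$-dependence and Lemma~\ref{lem:cap_tube2} with the explicit $(1+\Cr{thick-tube}\delta)$ form rather than mere two-sided bounds. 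The final clause about $\Lambda_N = B_N \setminus B_{\varepsilon N}$ and replacing $T_N$ by $T_{(1-\varepsilon)N}$ is routine: a crossing of that annulus has radial extent $(1-\varepsilon)N$, so all estimates go through verbatim with $N$ replaced by $(1-\varepsilon)N$ in the relevant places.
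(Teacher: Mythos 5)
Your construction diverges from the paper's in a way that creates genuine gaps. For $d=3$ you retain boxes \emph{consecutively along the path} with spacing $O(KL)$, claiming per-step entropy $O(\log K)$; but then nothing forces the retained boxes to be spread over a radial extent $\asymp N$ --- the first $\asymp N/(KL)$ boxes along a crossing path can all sit in a region much smaller than $N$, and your subsequent claim that ``the retained boxes lie in a tube of length $\asymp N$ and width $\asymp KL$ and are dense in it'' is simply false for a general crossing path. The paper instead takes one box per concentric shell $S_i=\partial B_{3KLi}$, which builds the radial spread into the construction; the price is an entropy of order $\log(N/L)$ \emph{per box} (the position on a shell), which is exactly where the $\log r$ in \eqref{eq:cg_gamma} comes from. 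If your $O(\log K)$-per-step encoding were compatible with \eqref{eq:cg_capd=3}, it would remove that logarithm --- precisely the open question raised in Remark~\ref{remark:coarsening}, 2) --- so this should have been a warning sign. The capacity argument is also not the one you sketch: the paper does not use Lemma~\ref{lem:cap_tube2} here, but a projection/contraction argument (via the variational formula \eqref{eq:cap_var}) mapping $\tilde\Sigma$ onto a \emph{porous line}, followed by Lemma~\ref{lem:visibility} at density $\asymp 1/K$ (Lemmas~\ref{Claim:cap1} and~\ref{Claim:cap2}). In particular, your treatment of the adversarial removal of a $\rho$-fraction fails: the removed boxes can form a contiguous block, leaving a gap of length $\rho N$, so no uniform visibility density $\gamma(\rho)$ holds at all scales; the correct mechanism is to compact the gaps, comparing to a homogeneous porous line of length $\approx(1-\rho)N$, which is where the factor $(1-\rho)$ in \eqref{eq:cg_capd=3} actually originates --- not from a ``density close to $1$'' heuristic (the density here is $\asymp 1/(3K)$, and the cost tends to $1$ only because of the logarithmic capacity structure in $d=3$).

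For $d\ge4$ the gap is more structural. Your entropy accounting --- that the $(\log KL)^2$ in $u$ ``absorbs the $\log(N/L)$'' --- is incorrect: with $n\asymp N/(KL(\log KL)^2)$ one gets $n\log(N/L)$, and $\log(N/L)$ can vastly exceed $(\log KL)^2$ (e.g.\ $L$ bounded, $N\to\infty$), so this route does not yield the linear complexity \eqref{eq:cg_gamma4}. The paper's point (cf.\ Remarks~\ref{R:cg3to4} and~\ref{Rk:onionsd=4}) is that the shell-type scheme in $d\ge4$ only gives $\Gamma(r)\asymp K^{-1}r\log r$ and hence only near-optimal bounds; the linear entropy requires the genuinely different hierarchical construction of Section~\ref{sec:coarse4d} --- coarse-graining of \emph{shapes} indexed by the leaves of a binary tree (Lemma~\ref{L:CGd=4}), with the capacity lower bound obtained by the recursive estimate of Lemma~\ref{lem:coarse_capacity} exploiting the dyadic separation \eqref{eq:proper2}, rather than any visibility/porous-line argument (which is unavailable since the walk does not see one-dimensional sets when $d\ge4$). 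Your proposal contains neither this construction nor a substitute for it, and the missing spread/separation structure also undermines your claimed bound $\mathrm{cap}(\tilde\Sigma)\gtrsim N$ in that case.
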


\begin{remark}
\label{remark:coarsening}
\begin{enumerate}[label={\arabic*)}]

\item The statement of Proposition~\ref{prop:coarse_paths} could even be generalized to the case $\Lambda_N=B_N \setminus B_{\varepsilon N}$, for any $\varepsilon \in (0,1)$, with suitable modifications. Specifically, the condition in \eqref{eq:cg_capd=3} relating $L$ and $N$ would involve $K$ and $\varepsilon$, and $\Cr{c:nLB}$ in \eqref{def:admissible1} would depend on~$\varepsilon$. We refrain from such extensions since we will only be interested in the limit $\varepsilon \downarrow 0$ in the sequel.

\item The differing complexities $\Gamma(r)$ in \eqref{eq:cg_gamma}, \eqref{eq:cg_gamma4}, reflect different procedures in implementing the coarse-graining depending on whether $d=3$ or $d\geq 4$. An interesting question this brings about is the following: can one devise a coarsening scheme in dimension $3$ (for instance the strategy employed for $d \geq 4$, or a variation thereof), which would be more cost-effective, i.e. reduce $\Gamma(r)$, ideally getting rid of the logarithmic factor in \eqref{eq:cg_gamma}, while retaining the controls \eqref{eq:cg99d=3}, \eqref{eq:cg_capd=3} on the capacity?

\item The coarse-graining scheme used in case $d=3$ can also be employed in dimensions $d\geq4$ in such a way that \eqref{eq:cg_capd=3} continues to hold. The resulting higher combinatorial complexity, see \eqref{eq:cg_gamma}, already yields near-optimal asymptotic upper bounds for the quantities of interest. We refer to Remarks~\ref{R:cg3to4} and~\ref{Rk:onionsd=4}~below for further details. 
\end{enumerate}

\end{remark}

\medskip

The proof of Proposition \ref{prop:coarse_paths} is split over Sections \ref{sec:coarse3d} and \ref{sec:coarse4d}, which deal separately with the cases $d=3$ and $d \geq 4$.

\subsection{Proof of Proposition~\ref{prop:coarse_paths} for $d=3$}\label{sec:coarse3d}
	Let $L\geq1$ and $N\geq 10KL$ be integers. We consider the case $\Lambda_N=B_N$. The small adaptations needed to account for the remaining cases in $\mathcal{S}_N $, see \eqref{eq:scriptS_N}, as well as  for $\Lambda_N = B_N \setminus B_{\varepsilon N}$, are indicated at the end of the proof. For each $i=1,\dots, n$ with $n\coloneqq \lfloor N/3KL \rfloor-1$ (note that $n \geq1$), define the concentric shells $S_i \coloneqq \partial B_{3KLi}$.

	By paving $S_i$ with boxes of the form $C_z= z+[0,L)^d$ for $z \in \mathbb{L}$, and considering the successive first exits of the path $\gamma$ from each of the sets {\em enclosed} by $S_i$, one finds for each $i =1,\dots, n$ a point $z_i \in \mathbb{L}$ such that 
		\begin{equation}
		\label{eq:cg100}
		C_{z_i} \cap S_i \neq \emptyset \text{ and $\gamma$ crosses $  \tilde{D}_{z_i}\setminus C_{z_i}$}. 
		\end{equation}
Note that $\tilde{D}_{z_i} \subset B_N$ for all $i=1,\dots,n$. We then define $\mathcal{A}=\mathcal{A}_{N,L}^K(B_N)$ as the family consisting of all collections $\mathcal{C}\coloneqq \{z_i : 1\leq i \leq n\} $ that can be obtained in this way. 
		
		We proceed to verify the conditions of Definition \ref{def:admissible} for $\Gamma(\cdot)$ as in \eqref{eq:cg_gamma} (with $d=3$). Properties \eqref{def:admissible1} and \eqref{def:admissible2} are immediate by construction. Regarding the cardinality of $\mathcal{A}$, one notes that the number of choices for $z_i$ is bounded by $(C\frac{N}{L})^{d-1}$, whence $|\mathcal{A}| 
\leq (C\frac{N}{L})^{n(d-1)},$ from which \eqref{def:admissible3} follows with $\Gamma(\cdot)$ given by \eqref{eq:cg_gamma}, for suitable choice of $\Cr{C:cg_complexity}$. 
		
		It remains to argue that \eqref{eq:cg_capd=3} holds, with $\lambda(K)$ satisfying \eqref{eq:cg99d=3}. This will be done in two steps, corresponding to Lemmas~\ref{Claim:cap1} and  \ref{Claim:cap2} below. We begin with a reduction step (Lemma~\ref{Claim:cap1}), consisting of replacing the set $\tilde{\Sigma}$ appearing in \eqref{eq:cg_capd=3} (recall \eqref{eq:cg_C}) by the `porous line'
 \begin{equation}
		\label{eq:capcompa-1}
		\widetilde{T}_N \coloneqq \bigcup_{i=1}^{\lceil (1-\rho)n \rceil} \big(\tilde{z}_i+ [0,L)\times \{0\}^2\big), \quad  \text{with} \quad \tilde{z}_i \coloneqq  S_i \cap ( [0,\infty)\times \{0\}^2) \, \big(=   ( 3LKi,0,0)\big).
		\end{equation}
Lemma~\ref{Claim:cap2} then compares the capacity of porous and non-porous lines. 
		\begin{lemma} 
			\label{Claim:cap1}
			For all $\rho \in (0,1)$, 
			\begin{equation} 
			\label{eq:capcompa1}
			\liminf_{ N \to \infty} \; \inf_{L_0(N) \leq L \leq N/10K}  \;  \inf_{\mathcal{C}\in \mathcal{A}} \; \inf_{\substack{\tilde{\mathcal{C}}\subset \mathcal{C} \\ |\tilde{\mathcal{C}}|\geq (1-\rho)|\mathcal{C}|}} \;\frac{\mathrm{cap}(\tilde{\Sigma})}{\mathrm{cap}(\widetilde{T}_N)} \geq   \lambda(K),
			\end{equation}
			for suitable $\lambda(K)>0$ satisfying \eqref{eq:cg99d=3}  and any $L_0(N)$ with $\lim_N L_0(N)=\infty$.
		\end{lemma}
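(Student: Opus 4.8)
The plan is to prove \eqref{eq:capcompa1} through the variational characterisation \eqref{eq:cap_var} of the capacity, by transporting a (near-)optimal measure for $\widetilde{T}_N$ onto $\tilde{\Sigma}$ in such a way that the within-chunk energy is preserved exactly and the long-range energy is inflated only by a factor controlled by $K$ alone. Throughout, write $E=E_{\Z^d}$ for the Dirichlet energy appearing in \eqref{eq:cap_var}.

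Write $n=|\mathcal{C}|$ and $m=\lceil(1-\rho)n\rceil$, the number of length-$L$ segments composing $\widetilde{T}_N$, cf.~\eqref{eq:capcompa-1}. Since $|\tilde{\mathcal{C}}|\ge(1-\rho)n$ is an integer, hence $\ge m$, and $\mathrm{cap}(\cdot)$ is monotone, I would first discard all but $m$ of the boxes $C_z$, $z\in\tilde{\mathcal{C}}$. Indexing the elements of $\mathcal{C}$ by the shell $S_i$ they meet, I then write $\tilde{\mathcal{C}}=\{z_{i_1},\dots,z_{i_m}\}$ with $1\le i_1<\dots<i_m\le n$, so that $i_\ell\ge\ell$ for every $\ell$. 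Next I pick a probability measure $\mu$ on $\widetilde{T}_N$ with $E(\mu)\le(1+\varepsilon)/\mathrm{cap}(\widetilde{T}_N)$, which \eqref{eq:cap_var} furnishes (one may take $\mu=\overline e_{\widetilde T_N}$, for which this holds with $\varepsilon=0$); since the $\ell$-th segment $\tilde z_\ell+[0,L)\times\{0\}^2$ of $\widetilde{T}_N$ is the translate of a subset of $C_{z_{i_\ell}}$, I transport $\mu$ restricted to that segment, via the corresponding translation, into $C_{z_{i_\ell}}$, for each $\ell=1,\dots,m$. Call $\nu$ the resulting probability measure (supported on $\tilde{\Sigma}$), and $p_\ell\coloneqq\mu(\tilde z_\ell+[0,L)\times\{0\}^2)=\nu(C_{z_{i_\ell}})$.

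The core step is to split $E(\nu)=D(\nu)+R(\nu)$ and $E(\mu)=D(\mu)+R(\mu)$ into the pairs $x,y$ lying in a common box $C_{z_{i_\ell}}$ (resp.\ common segment) and the remaining pairs. By translation invariance of $g(\cdot,\cdot)$ one has $D(\nu)=D(\mu)$ outright. For the remainder, if $x\in C_{z_{i_\ell}}$ and $y\in C_{z_{i_{\ell'}}}$ with $\ell<\ell'$, then, using that $C_z$ has $|\cdot|_\infty$-diameter $<L$, that $C_{z_{i_\ell}}$ meets $S_{i_\ell}=\partial B_{3KLi_\ell}$, and that $i_{\ell'}-i_\ell\ge\ell'-\ell$, one gets $|x-y|\ge 3KL(\ell'-\ell)-2L$; whereas two points of distinct segments $\ell<\ell'$ of $\widetilde T_N$ are at distance $\le 3KL(\ell'-\ell)+L$. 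Because $L\ge L_0(N)\to\infty$ and $K\ge100$, all these distances exceed $(3K-2)L_0(N)\to\infty$, so \eqref{eq:Greenasympt} applies uniformly and yields, with some $\varepsilon=\varepsilon(N)\to0$ depending only on $L_0(N)$,
\begin{equation*}
R(\nu)\le 2\sum_{\ell<\ell'}p_\ell p_{\ell'}\,\frac{\Cr{c:green}(1+\varepsilon)}{3KL(\ell'-\ell)-2L},\qquad R(\mu)\ge 2\sum_{\ell<\ell'}p_\ell p_{\ell'}\,\frac{\Cr{c:green}(1-\varepsilon)}{3KL(\ell'-\ell)+L}.
\end{equation*}
Since $k\mapsto(3Kk+1)/(3Kk-2)$ is decreasing on $\{1,2,\dots\}$, a term-by-term comparison gives $R(\nu)\le\frac{1+\varepsilon}{1-\varepsilon}\cdot\frac{3K+1}{3K-2}\,R(\mu)$ (the case $m=1$ being trivial, both remainders vanishing), whence $E(\nu)=D(\mu)+R(\nu)\le\frac{1+\varepsilon}{1-\varepsilon}\cdot\frac{3K+1}{3K-2}\,E(\mu)$. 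By \eqref{eq:cap_var}, $\mathrm{cap}(\tilde{\Sigma})\ge 1/E(\nu)\ge\frac{1-\varepsilon}{(1+\varepsilon)^2}\cdot\frac{3K-2}{3K+1}\,\mathrm{cap}(\widetilde{T}_N)$; as $\varepsilon\to0$ depends only on $L_0(N)$, this holds uniformly over $L\in[L_0(N),N/10K]$, over $\mathcal{C}\in\mathcal{A}$ and over admissible $\tilde{\mathcal{C}}$, so letting $N\to\infty$ gives \eqref{eq:capcompa1} with $\lambda(K)=\frac{3K-2}{3K+1}$, which satisfies \eqref{eq:cg99d=3}.

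The main obstacle — and the reason for routing through \eqref{eq:cap_var} rather than \eqref{eq:cap_bound_Green} — is to keep the comparison lossless at the level of the self-energy of a single chunk (which is of order $\log L$): a cruder estimate would conflate this with the inter-chunk contributions and fail to produce a constant $\lambda(K)$ tending to $1$. The transport argument circumvents this precisely because each segment of $\widetilde{T}_N$ is carried \emph{isometrically} into the corresponding box of $\tilde{\Sigma}$. Finally, for the other choices $\Lambda_N\in\mathcal S_N$ and for $\Lambda_N=B_N\setminus B_{\varepsilon N}$, the only change is that the shells $S_i$ and the reference points $\tilde z_i$ are translated and/or their number is adjusted — $\widetilde{T}_N$ being replaced by the corresponding porous segment of $[0,\infty)\times\{0\}^2$, and, in the last case, by a porous version of $T_{(1-\varepsilon)N}$ (cf.\ Lemma~\ref{Claim:cap2}) — which affects none of the mutual-distance bounds above, so the argument carries over verbatim.
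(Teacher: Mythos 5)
Your proposal is correct and is essentially the paper's argument: both lower-bound $\mathrm{cap}(\tilde{\Sigma})$ through the variational formula \eqref{eq:cap_var} by transporting measures chunk-by-chunk between $\widetilde{T}_N$ and $\tilde{\Sigma}$, preserving the within-chunk energy exactly by translation invariance and controlling the inter-chunk Green's function values via \eqref{eq:Greenasympt} together with the $3KL$ shell spacing. The only difference is organizational: the paper proceeds in two steps (projecting the boxes onto the axis via the contraction \eqref{eq:capcompa-0.5}, then trimming and packing the gaps), whereas you push the equilibrium measure of $\widetilde{T}_N$ onto $\tilde{\Sigma}$ in a single step, with the term-by-term comparison justified by $i_\ell \geq \ell$; either way one obtains a constant $\lambda(K)=1-O(1/K)$ satisfying \eqref{eq:cg99d=3}.
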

		
				We prove \eqref{eq:capcompa1} using a projection argument, first by rigidly displacing the boxes in $\tilde{\Sigma}$ onto $\R\times \{0\}^2$, then by `packing them together' along this axis to obtain the `homogenous porosity' of $\widetilde{T}_N$. Since these operations essentially reduce the ($\ell^2$-)distances between pairs of points in $\tilde{\Sigma}$, the capacity expectedly decreases. This intuition is formalized below.

	\begin{proof}[Proof of Lemma~\ref{Claim:cap1}]
	In view of \eqref{eq:Greenasympt}, for any $\varepsilon\in(0,1)$ we can choose $\widetilde{L}_0(\varepsilon) \geq 100$ such that
		\begin{equation}
		\label{eq:capcompa0}
		\Cr{c:green}^{-1} |x| g(0,x)  \in \big[1-\varepsilon, 1/(1-\varepsilon)\big], \text{ whenever }|x|_{\infty} \geq \widetilde{L}_0.
		\end{equation}
		Let $L\geq \widetilde{L}_0(\varepsilon)$. Notice that $\tilde{z}_i$ in \eqref{eq:capcompa-1} is such that $C_{\tilde{z}_i}$, cf. \eqref{eq:boxes}, is the (unique) box among those paving $S_i$ intersecting the positive half-line $\mathbb{Z}_+ \times \{0\}^2$. Now for $\mathcal{C} \in \mathcal{A}$ and $z_i \in \mathcal{C}$ as in \eqref{eq:cg100}, let $\tau(z_i)\coloneqq \tilde{z}_i$.  Importantly, $\tau$ is an ($\ell^{2}$-)projection, in that	
\begin{equation}
		\label{eq:capcompa-0.5}
		|\tau(z)-\tau(z')|
		\leq |z-z'|, \text{ for all } z,z' \in \mathcal{C} \text{ and } \mathcal{C}\in \mathcal{A}.
		\end{equation}
Indeed, by construction, \eqref{eq:capcompa-0.5} holds with $|\cdot|_{\infty}$ in place of $|\cdot|$ and \eqref{eq:capcompa-0.5} follows because $|\tau(z)-\tau(z')|= |\tau(z)-\tau(z')|_{\infty}$ and $|\cdot|_{\infty} \leq |\cdot|$. The map $\tau$ extends naturally to a bijection defined on the set $\Sigma$ (cf. \eqref{eq:cg_C}) by setting $\tau(y)= \tau(z)+ y-z$, if $y \in C_z$ for some $z \in \mathcal{C}$. In words, $\tau$ sends any point in $C_z$ to the corresponding point in $C_{\tau(z)}$.

		Recalling the notation from \eqref{eq:cap_var}, for any probability measure $\mu$ supported on $\tilde{\Sigma}$, with $\mu' =\tau \circ \mu$ and $\tilde{\Sigma}_{\tau}\coloneqq \{ \tau(z): z \in \tilde{\Sigma}\}$, it follows that
		\begin{equation}
		\label{connecLDP5}
		E(\mu)=\sum_{x,y \in  \tilde{\Sigma}_{\tau}} \mu'(x) g\left(\tau^{-1}(x), \tau^{-1}(y)\right) \mu'(y) \leq \kappa \cdot E(\tau \circ \mu)
		\end{equation}
		with 
		\begin{equation}
		\label{connecLDP6}
		\kappa \coloneqq \sup_{x,y \in \tilde{\Sigma}_{\tau}}\frac{g\left(\tau^{-1}(x), \tau^{-1}(y)\right)}{g(x,y)}.
		\end{equation}
The quantity $\kappa$ is suitably bounded as follows. By translation invariance of the Green's function, if $x,y$ belong to the same box $C_{\tau(z)}$ for some $z\in \tilde{\mathcal{C}}$, then $g(\tau^{-1}(x),\tau^{-1}(y))=g(x-y)$. Otherwise, i.e. if $x \in C_{\tau(z)}$ and $y \in C_{\tau(z')}$ for $z,z' \in \mathcal{C}$ with $z \neq z'$, using the triangle inequality and \eqref{eq:capcompa-0.5}, one readily infers that $|x-y| \leq |\tau^{-1}(x)- \tau^{-1}(y)|+ 4\sqrt{3}L,$ for all $x,y \in \tilde{\Sigma}_{\tau}$. With this observation, returning to \eqref{connecLDP6} and using \eqref{eq:capcompa0}, which is in force as $|x-y| \wedge |\tau^{-1}(x) -\tau^{-1}(y)|\geq KL \geq \widetilde{L}_0$ whenever $x$ and $y$ belong to different boxes, one finds that
		\begin{equation*}
		\kappa \leq 1 \vee \bigg( \frac{1}{(1-\varepsilon)^2} \sup_{\substack{x \in  C_{\tau(z)}, y \in  C_{\tau(z')} \\z,z' \in \mathcal{C},\,  z \neq z'}} \frac{|x-y|}{|\tau^{-1}(x)-\tau^{-1}(y)|} \bigg) \leq \frac{1+C/K}{(1-\varepsilon)^2},
		\end{equation*}
		using in the last step that $|\tau^{-1}(x)-\tau^{-1}(y)| \geq KL$. Substituting this bound into \eqref{connecLDP5}, taking an infimum over $\mu$, noting that $\mu \mapsto \tau \circ \mu$ is a bijection between probability measures with support on $\tilde{\Sigma}$ and $\tilde{\Sigma}_{\tau}$, and applying the variational formula \eqref{eq:cap_var}, one obtains the lower bound 
		\begin{equation} 
		\label{eq:capcompa20}
		\mathrm{cap}(\tilde{\Sigma}) \geq  (1-C/K)(1- \varepsilon)^2 \mathrm{cap}(\tilde{\Sigma}_{\tau}).
		\end{equation}
		
		In view of \eqref{eq:capcompa1}, in order to produce the set $\widetilde{T}_N$, cf. \eqref{eq:capcompa-1}, one trims $\tilde{\Sigma}_{\tau}$ as follows. First, let $\widetilde{T}_N':=\bigcup_{i=1}^{\lceil (1-\rho)n \rceil} C_{\tilde{z}_i}$ and note that $\widetilde{T}_N\subset \widetilde{T}_N'$, whence $\text{cap}(\widetilde{T}_N) \leq \text{cap}(\widetilde{T}_N' )$. Observe that the elements of $\mathcal{C}$, and hence of $\tilde{\mathcal{C}}$, are  naturally ordered according to increasing index $i$, cf. below \eqref{eq:cg100}. Now one only retains the boxes in $\tilde{\Sigma}_{\tau}$ corresponding to the first $\lceil (1-\rho)n \rceil$ elements of $\tilde{\mathcal{C}}$ in this ordering (recall that $\tilde{\mathcal{C}}$ has at least this many elements), and only keeps the intersection of the resulting set of boxes with the line $\mathbb{Z}\times \{0\}^2 $, thus obtaining overall a smaller set $\tilde{\Sigma}_{\tau}' \subset \tilde{\Sigma}_{\tau}$, whence $\text{cap}(\tilde{\Sigma}_{\tau}' ) \leq \text{cap}(\tilde{\Sigma}_{\tau} ) $. The resulting set $\tilde{\Sigma}_{\tau}'$ is in natural bijection with $\widetilde{T}_N'$, essentially by removing the gaps, one by one by rigidly shifting all the intervals to the (say) right of the gap by a suitable constant amount to the left. This operation either leaves the relative position between two points $x,y \in \tilde{\Sigma}_{\tau}' $ unchanged or \textit{reduces} their Euclidean norm, but in the latter case never as to fall below $KL \geq \widetilde{L}_0$ (a lower bound on the gap size in $\widetilde{T}_N$). With this observation, a similar argument as above, using \eqref{eq:cap_var} and \eqref{eq:capcompa0} yields that 
		\begin{equation}
		\label{eq:capcompa21}
		\text{cap}(\tilde{\Sigma}_{\tau} ) \geq   \text{cap}(\tilde{\Sigma}_{\tau}' ) \geq (1-\varepsilon)^2 \text{cap}(\widetilde{T}_N') \geq (1-\varepsilon)^2 \text{cap}(\widetilde{T}_N).
		\end{equation}
		By letting $\varepsilon\to0$ (and therefore $L\geq \widetilde{L}_0\to\infty$ as well as $N \geq 10KL \to \infty$), \eqref{eq:capcompa20} and \eqref{eq:capcompa21} imply \eqref{eq:capcompa1} with $\lambda(K)=1-C/K$.
	\end{proof}	
				
		We proceed with
		\begin{lemma}
			\label{Claim:cap2}
			For all $K \geq 100$, $\rho \in (0,1)$,
			\begin{equation}
			\label{eq:capcompa22}
			\liminf_{\frac{\log N}{ \log L} \to \infty} \  \frac{ \mathrm{cap}(\widetilde{T}_N)}{ \mathrm{cap}\big({T}_{N}\big)} \geq 1-\rho,
			\end{equation}
where the $\liminf$ regards any sequence $(N_k, L_k)_{k \geq 0}$ satisfying $\frac{\log N_k}{\log L_k} \to \infty$ as $k \to \infty$.
		\end{lemma}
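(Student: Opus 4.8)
Here is how I would approach Lemma~\ref{Claim:cap2}.

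The plan is to deduce the lower bound on $\mathrm{cap}(\widetilde{T}_N)$ directly from the elementary estimate \eqref{eq:cap_bound_Green} (applied with $K=\widetilde{T}_N$, $U=\Z^d$), which reduces everything to an upper bound on $\max_{x\in\widetilde{T}_N}\sum_{y\in\widetilde{T}_N}g(x,y)$. Recall from \eqref{eq:capcompa-1} that $\widetilde{T}_N$ is a disjoint union of $m=\lceil(1-\rho)n\rceil$ intervals $I_1,\dots,I_m$ of length $L$ along a coordinate axis, with consecutive intervals at $\ell^\infty$-distance $3LK$, where $n=\lfloor N/3KL\rfloor-1$. I work along an arbitrary sequence with $\log N/\log L\to\infty$, so that $N\to\infty$ and $\log L=o(\log N)$, whence $KL=N^{o(1)}=o(N)$ for fixed $K$, $|\widetilde{T}_N|=mL=\frac{(1-\rho)N}{3K}(1+o(1))$, and $\log m=(1+o(1))\log N$. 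Granting that for every $\varepsilon>0$ one has $\max_{x\in\widetilde{T}_N}\sum_{y\in\widetilde{T}_N}g(x,y)\le(1+\varepsilon)\frac{2\Cr{c:green}}{3K}\log m\,(1+o(1))$, plugging this into \eqref{eq:cap_bound_Green} gives $\mathrm{cap}(\widetilde{T}_N)\ge(1-o(1))\frac{3K}{2(1+\varepsilon)\Cr{c:green}}\cdot\frac{mL}{\log m}=(1-o(1))\frac{(1-\rho)N}{2(1+\varepsilon)\Cr{c:green}\log N}$; since $\mathrm{cap}(T_N)\sim\frac{\pi}{3}\frac{N}{\log N}=\frac{1}{2\Cr{c:green}}\frac{N}{\log N}$ by \eqref{eq:cap_line_asymp} (recall $\Cr{c:green}=\Cr{c:green}(3)=\frac{3}{2\pi}$), this yields $\liminf\frac{\mathrm{cap}(\widetilde{T}_N)}{\mathrm{cap}(T_N)}\ge\frac{1-\rho}{1+\varepsilon}$, and letting $\varepsilon\downarrow0$ proves the claim.

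To establish the bound on $\sum_{y\in\widetilde{T}_N}g(x,y)$, fix $x\in I_{i_0}$. The points of $I_{i_0}$ itself contribute $g(0)+O(\log L)=o(\log N)$ by \eqref{eq:Greenasympt}. For an interval $I_{i_0\pm\ell}$ with $\ell\ge1$, the essential observation is that its $L$ points lie at distances from $x$ forming a block of $L$ consecutive integers, all comparable to $3LK\ell$; precisely, these distances start at some $B\ge 3LK\ell-(L-1)=L(3K\ell-1)+1$. Using $g(z)\sim\Cr{c:green}|z|^{-1}$ one gets, for all $\ell$ with $3LK\ell-L\ge L_0(\varepsilon)$, that $\sum_{y\in I_{i_0+\ell}}g(x,y)\le(1+\varepsilon)\Cr{c:green}\sum_{j=0}^{L-1}(B+j)^{-1}\le(1+\varepsilon)\Cr{c:green}\cdot\frac{L}{L(3K\ell-1)}=\frac{(1+\varepsilon)\Cr{c:green}}{3K\ell-1}$, and likewise for $I_{i_0-\ell}$; the at most finitely many remaining $\ell$ (which occur only when $L$ is bounded in terms of $\varepsilon,K$) contribute $O_{\varepsilon,K}(1)=o(\log N)$. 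Summing over $\ell$ in both directions and using $\sum_{\ell=1}^{m}\frac{1}{3K\ell-1}=\frac{1}{3K}(\log m+O_K(1))$ gives the asserted bound, the $o(\log N)$ errors being absorbed since $\frac{\log m}{3K}$ is itself of order $\log N$.

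The main obstacle is precisely this bookkeeping of constants: a crude version of the argument — say retaining only one point per interval, or using the lossy estimate $3K\ell-1\ge\ell(3K-1)$ — introduces a spurious multiplicative factor ($3K$, or $1-\frac1{3K}$) that does not wash out in the limit, giving only $\liminf\ge(1-\frac1{3K})(1-\rho)$ or worse. Getting the exact constant $1-\rho$ hinges on the two observations used above: one must keep \emph{all} $L$ points of each interval, so that the Green mass contributed by $I_{i_0\pm\ell}$ is of order $\frac{1}{3K\ell}$ rather than $\frac{1}{3LK\ell}$; and replacing $3K\ell$ by $3K\ell-1$ is harmless, because this bounded shift does not affect the leading term $\log m$ of the harmonic-type sum. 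The only other technical wrinkle, to be dispatched routinely, is the fallback to the crude bound $g(z)\le C|z|^{-1}$ for the finitely many pairs at distance below $L_0(\varepsilon)$, which can arise only for bounded $L$ and contribute $O(1)=o(\log N)$. (One could alternatively run the argument through the sweeping identity $\mathrm{cap}(\widetilde{T}_N)=\mathrm{cap}(T_M)\,P_{\overline{e}_{T_M}}[H_{\widetilde{T}_N}<\infty]$, with $T_M$ the line segment spanning $\widetilde{T}_N$, estimating the hitting probability via Lemma~\ref{lem:visibility}; some extra care is then needed, since the density hypothesis \eqref{eq:vis1} degenerates for starting points lying deep inside a hole of $\widetilde{T}_N$.)
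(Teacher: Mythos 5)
Your proof is correct, and it takes a genuinely different route from the paper. You bound $\mathrm{cap}(\widetilde{T}_N)$ from below directly via \eqref{eq:cap_bound_Green}, computing $\max_{x\in\widetilde T_N}\sum_{y\in\widetilde T_N}g(x,y)$ with enough precision (all $L$ points per interval, distances $\geq L(3K\ell-1)+1$ to the $\ell$-th interval, asymptotics \eqref{eq:Greenasympt} with the exact $\Cr{c:green}(3)=\tfrac{3}{2\pi}$) that the factor $\tfrac{1}{3K}$ from the reduced cardinality cancels exactly against the factor $\tfrac1{3K}$ in the Green sum, yielding the sharp constant $\tfrac{\pi}{3}$ and hence $\liminf \geq \tfrac{1-\rho}{1+\varepsilon}$ for every $\varepsilon>0$; your handling of the same-interval term ($O(\log L)=o(\log N)$) and of the finitely many short-range exceptions when $L$ stays bounded is also sound. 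The paper instead compares $\widetilde{T}_N$ with the full segment $\widehat T_N=T_{\lceil(1-\rho)n\rceil 3KL+L}$ through the sweeping identity \eqref{eq:sweeping_2}, reducing the claim to $\inf_{x\in\widehat T_N}P_x[H_{\widetilde T_N}<\infty]\to 1$, which is exactly Lemma~\ref{lem:visibility} (the density hypothesis \eqref{eq:vis1} holds with $\gamma\asymp 1/K$ for every $x\in\widehat T_N$, since such $x$ is within $3KL$ of $\widetilde T_N$ and every ball of radius $r\geq 3KL$ around it contains order $r/3K$ points of $\widetilde T_N$ — so the concern in your closing parenthetical does not actually arise there), and then invokes \eqref{eq:cap_line_asymp} for $\widehat T_N$. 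Your argument is more self-contained and elementary (no hitting-probability machinery), at the price of the careful constant bookkeeping you identify; the paper's argument avoids all constant chasing by funneling the sharp constant through the already-established asymptotics \eqref{eq:cap_line_asymp}, with the visibility lemma doing the comparison "for free" since the escape probability is $O(\log L/\log N)=o(1)$.
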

			\begin{proof} 
		Let $\widehat{T}_N \coloneqq T_{\lceil (1-\rho)n \rceil 3KL +L}$ (recall that $n= \lfloor N/3LK \rfloor-1$ and \eqref{eq:def_TN} for notation). On account of \eqref{eq:cap_line_asymp}, it suffices to show that $\liminf_{N/L \to \infty} \  \frac{ \mathrm{cap}(\widetilde{T}_N)}{ \mathrm{cap}(\widehat{T}_{N})} \geq 1$ and \eqref{eq:capcompa22} directly follows. By definition, see \eqref{eq:capcompa-1}, $\widetilde{T}_N \subset \widehat{T}_N$ and 
		\begin{equation}
		\label{eq:capcompa23}
		\text{for all $x \in  \widehat{T}_N$, $d(x, \widetilde{T}_N) \leq 3KL$}.
		\end{equation}
		Hence, by \eqref{eq:sweeping_2}, \eqref{eq:capcompa22} follows at once if we argue that
		\begin{equation}
		\label{eq:capcompa3}
		\liminf_{N/L \to \infty} \  \inf_{x \in  \widehat{T}_N} P_x[H_{\widetilde{T}_N}< \infty]=1.
		\end{equation}
In view of \eqref{eq:capcompa23}, Lemma~\ref{lem:visibility} applied to $T= \widetilde{T}_N \subset \widehat{T}_N $ and $x \in  \widehat{T}_N$ with the choices $r=6KL$ and $\gamma = \frac{1}{13K}$, by which \eqref{eq:vis1} is satisfied, yields that $$P_x[H_{\widetilde{T}_N}= \infty] \leq 
C \Big(\frac{\log L}{\log N}\Big)^{c(K)}\to 0 \text{ as $\log N/ \log L \to \infty$. }$$ Thus, \eqref{eq:capcompa3} follows, which completes the proof.
				\end{proof}
		
With Lemmas~\ref{Claim:cap1} and~\ref{Claim:cap2} at hand, we can conclude the proof of \eqref{eq:cg_capd=3} (and \eqref{eq:cg99d=3}) for $\Lambda_N =B_N$. 
The remaining cases in $\mathcal{S}_N $, see \eqref{eq:scriptS_N}, i.e.~$\Lambda_N=B_{2N} \setminus B_N$, resp.~$\Lambda_N=\tilde{D}_{0,N}\setminus \tilde{C}_{0,N}$ are dealt with by considering instead $S_i =  \partial B_{N+ 3KLi}$, resp.~$S_i = \partial ([-3KLi-N, 2N+ 3KLi)^d)$ (cf.~\eqref{eq:C_boxes}), for $1 \leq i \leq n$, and adapting the subsequent arguments accordingly. In particular, the bound for $|\mathcal{A}|$ remains valid for these choices. The set $\widetilde{T}_N$ in \eqref{eq:capcompa-1} changes accordingly whence $\tilde{z}_i = (N+ 3LKi,0,0)$, resp.~$\tilde{z}_i = (2N+ 3LKi-1,0,0)$. The statements and proofs of Lemmas~\ref{Claim:cap1} and \ref{Claim:cap2} then remain valid.

In the case $\Lambda_N = B_N \setminus B_{\varepsilon N}$ for a given $\varepsilon \in 
(0,\frac13)$, one considers the shells $S_i \coloneqq \partial B_{\lceil\varepsilon N\rceil + 
3KLi}$, for $1\leq i \leq n\coloneqq \lfloor (1-\varepsilon)N/3KL \rfloor-1$ (note that $n 
\geq 1$ when $N \geq 10KL$). Then, defining $\widetilde{T}_N$ as in \eqref{eq:capcompa-1} (whence $\tilde{z}_i = (\lceil \varepsilon N\rceil + 3LKi,0,0)$), Lemma~\ref{Claim:cap1} 
remains valid and Lemma~\ref{Claim:cap2} as well upon replacing $T_N$ by $T_{(1-\varepsilon)N}$ 
in \eqref{eq:capcompa22}. The lower bound \eqref{eq:cg_capd=3} with $T_{(1-\varepsilon)N}$ 
instead of $T_N$ then follows as above. This concludes the proof of 
Proposition~\ref{prop:coarse_paths} in the case $d= 3$. \hfill $\square$

\begin{remark}
\label{R:cg3to4} The above coarse-graining scheme also applies when $d \geq 4$. As a result, one obtains a
$\Gamma$-admissible collection ${\mathcal{A}}'={\mathcal{A}}_{N,L}^K(\Lambda_N)'$, with $\Gamma(\cdot)$ as defined in \eqref{eq:cg_gamma} for $d=3$, i.e.~$\Gamma(r)=\frac{\Cr{C:cg_complexity} }{K}r \log r$, and with $u(x)=x$ in \eqref{def:admissible1}, so that the statement~\eqref{eq:cg_capd=3} for $d \geq 4$ holds with ${\mathcal{A}}'$ in place of ${\mathcal{A}}$. Only \eqref{eq:cg_capd=3} requires an explanation. Repeating the steps leading up to \eqref{eq:capcompa-1}, one shows an analogue of Lemma~\ref{Claim:cap1} 
without the condition \eqref{eq:cg99d=3}. The proof essentially remains the same except that one simply uses $c \leq |x|^{d-2} g(0,x)\le C$ for all $x \in \Z^d$ instead of \eqref{eq:capcompa0}, which is sufficient since one only aims at a $\lambda(K) > 0$ in \eqref{eq:capcompa1}. Then Lemma~\ref{Claim:cap2} gets replaced by the statement that 
$ \frac{ \mathrm{cap}(\widetilde{T}_N)}{ \mathrm{cap}({T}_{N})} \geq cK^{-1} (1-\rho)$ for all $L \geq 1$ and $N \geq 10KL$, which follows by covering ${T}_{N}$ with at most $CK$ shifted copies of the set $\widetilde{T}_N$ and using monotonicity and subadditivity of $\mathrm{cap}(\cdot)$ (see e.g.~Proposition 2.2.1(b) in \cite{La91} regarding the latter).
The slightly higher combinatorial complexity of the collection $\mathcal{A}'$, reflected by the logarithmic factor in $\Gamma(\cdot)$, cf.~\eqref{eq:cg_gamma}, only yields near-optimal upper bounds for $d \geq 4$, see Remark~\ref{Rk:onionsd=4} below. The presence of the additional logarithm is remedied by the approach of Section~\ref{sec:coarse4d}.
\end{remark}
\medskip
		\subsection{Proof of Proposition~\ref{prop:coarse_paths} for $d\geq 4$}	\label{sec:coarse4d}

			We introduce the length scales 
	\begin{equation}
	\label{eq:coarsegrain_r_n}
	L_0 \coloneqq 1, \quad L_{m+1} \coloneqq  \lceil 2(1+\varepsilon_m)L_m \rceil , \text{ with } \varepsilon_m \coloneqq (m+1)^{-2}, \, m \geq 0.
	\end{equation}
Note that $2^m \leq L_m \leq C2^m$ for all $m \geq 0$, which will be used frequently below.
Throughout the proof we use $C_{z,k}, \tilde{C}_{z,k}, D_{z, k}$ and $U_{z, k}$ to denote the boxes $ C_z, \tilde{C}_z, 
	D_z$ and $U_z$, see~\eqref{eq:boxes}, corresponding to the length $L = L_k$ for any $z \in \Z^d$. Also let $\hat{C}_{z,k}\coloneqq z + [-L_{k} + L_{k-1}, 2L_k - 
	L_{k-1})^d$ and observe that $C_{z,k}\subset \hat{C}_{z,k}\subset \tilde{C}_{z,k}$. 
	
We first define a certain coarse graining of paths crossing generic shapes, see Lemma~\ref{L:CGd=4} below, which will later be applied inductively to define an admissible 
collection with the desired properties. Roughly speaking, the idea is to implement a cascading 
scheme on the path $\gamma$ of diameter~$N$, thus only retaining its trace in a system of 
well-separated boxes at scale $L$ (naturally indexed by the leaves of a binary tree of depth 
$\approx \log_2(N/L)$), see for instance \cite{DrRaSap14}, Section 8.1 for a gentle 
introduction to this circle of ideas in a related model. 

The precise recursive scheme 
underlying the proof of Lemma~\ref{L:CGd=4} 
builds upon the ideas from \cite{RoS13, PT12}. 
In particular, we modify the sub-division scheme of \cite{PT12} in order to ensure that the 
smaller annuli obtained after each iteration lie inside $\Lambda_N$ (see the proof of 
Lemma~\ref{L:CGd=4} below). This is to guarantee that the annuli obtained at the very end are 
also contained in $\Lambda_N$ as required by Definition~\ref{def:admissible}. However, doing 
so causes the annuli to overlap  and we need to use more restricted regions than full annuli 
in order to avoid that. A general notion of regions with which we can accomplish this is that 
of {\em shapes} which we introduce now.\\ 
A {\em shape} at level $k$ anchored at $z \in \Z^d$ is any $*$-connected subset $S$ of $\tilde{C}_{z,k} \setminus C_{z,k}^-$, where $C_{z,k}^-= C_{z,k}\setminus \partial C_{z,k}$, 
intersecting both 
$\partial C_{z,k}$
 and $\partial \tilde{C}_{z,k}$. 
 The collection of all shapes at level $k$ anchored at $z$ will be denoted by $\mathcal S_{z, k}$ and $\mathcal{S}_{k} \coloneqq \bigcup_{z \in \Z^d} \mathcal S_{z, k}$ 
 is the set of all shapes at level $k$. 
For all practical purposes except the one mentioned in the previous paragraph, the reader can 
safely picture a shape in $\mathcal{S}_{k}$ as an annulus $\tilde{C}_{z,k} \setminus 
C_{z,k}^-$.

In what follows, let $\mathbb{T}_n$, $n \geq 0$, denote the (rooted) complete binary tree of 
depth $n$ and let $\mathcal{L}(\mathbb{T}_n)= \{0,1\}^n$ (with $\mathcal{L}(\mathbb{T}_0)=\{ 
\varnothing \}$, the root) be its set of leaves. The leaves of $\mathbb{T}_n$ provide a 
natural indexing set due to the recursive dyadic manner in which the coarsening scheme 
operates, cf.~\eqref{eq:proper2}.

\begin{lemma}[Coarse-graining of shapes] \label{L:CGd=4}

For any integers $n \geq  k > 0$, $z \in \Z^d$ and all shapes $S \in \mathcal S_{z,n}$, there exists a family $\mathcal A_S = 
\mathcal A_{S, k}$ with
\begin{equation}
\label{eq:complexity}
\log |\mathcal A_{S}| \le C L_{n - k}
\end{equation}
 of collections $\mathcal D  = \{(z(\ell), S(\ell)): \ell \in \mathcal{L}(\mathbb 
T_{n-k})\} \subset \Z^d \times \mathcal{S}_{k}$ satisfying the following three properties:
\begin{align}
&\label{eq:proper1}\begin{array}{l}\text{$S(\ell) \subset S$ and $S(\ell) \in \mathcal S_{z(\ell), k}$ for all $\ell \in\mathcal{L} (\mathbb T_{n-k})$,}\end{array}\\[0.5em]
&\label{eq:proper2}\begin{array}{l}
\text{if $n> k$, the sub-collections $\mathcal D_i \coloneqq \{(z(\ell), S(\ell)): \ell \in  \mathcal{L}(\mathbb T_{n-k, i})\}$, $i=1,2$,}\\
	\text{of $\mathcal D$, where $\mathbb T_{n-k, i}$, $i=1,2$, denote the two binary sub-trees of $\mathbb T_{n-k}$ with 
	}\\
	\text{depth $n-k-1$, belong to $\mathcal A_{S_1}$ and $\mathcal 
	A_{S_2}$ for some shapes $S_1,S_2 \in \mathcal{S}_{n-1}$}\\
	\text{such that $d_{\infty}(S_1, S_2) \ge 2\varepsilon_{n-1}L_{n-1}$,}
\end{array}\\[0.5em]
&\label{eq:proper3}\begin{array}{l} 
\text{for any $*$-path $\gamma \subset S$ crossing $\tilde{C}_{z, n} \setminus C_{z,n}$, there exists $\mathcal D \in \mathcal A_S$ such that}\\
\text{for all $\ell \in \mathcal{L}(\mathbb T_{n-k})$, $\gamma$ 
	induces a $*$-path $\gamma' \subset S(\ell)$ crossing $  \tilde{C}_{z(\ell), k} \setminus C_{z(\ell), k}$.}
	\end{array}
\end{align}
\end{lemma}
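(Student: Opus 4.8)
\textbf{Proof plan for Lemma~\ref{L:CGd=4}.}

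The plan is to proceed by downward induction on $n-k$, i.e.\ to construct $\mathcal{A}_S$ for shapes $S$ at level $n$ by a recursive dyadic splitting that stops once one reaches level $k$. In the base case $n=k$ the family $\mathcal{A}_S$ consists of the single collection $\{(z, S)\}$ indexed by the one-element leaf set $\mathcal{L}(\mathbb{T}_0)=\{\varnothing\}$; properties \eqref{eq:proper1} and \eqref{eq:proper3} are then trivial (taking $\gamma'=\gamma$), \eqref{eq:proper2} is vacuous, and $\log|\mathcal{A}_S|=0\le CL_0$. For the inductive step, suppose $n>k$ and that the statement holds for all shapes at level $n-1$ (with the same target level $k$). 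Given $S\in\mathcal{S}_{z,n}$, the key geometric move is to split the annular region $\tilde{C}_{z,n}\setminus C_{z,n}$ into two concentric sub-annuli separated by a gap of width $2\varepsilon_{n-1}L_{n-1}$: concretely, one looks at the trace of $S$ in a shell of the form $\tilde{C}_{z',n-1}\setminus C_{z',n-1}$ for anchor points $z'$ on a suitable sublattice tiling $\tilde C_{z,n}$, and records in which such shell $S$ (hence any crossing $\gamma$) must cross. Because $L_n=\lceil 2(1+\varepsilon_{n-1})L_{n-1}\rceil$, the box $\tilde C_{z,n}$ has room for exactly such a pair of well-separated level-$(n-1)$ shells, one ``inner'' and one ``outer''; the scaling $2^m\le L_m\le C2^m$ guarantees the separation $d_\infty(S_1,S_2)\ge 2\varepsilon_{n-1}L_{n-1}$ demanded in \eqref{eq:proper2}. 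This is the step where the precise recursion \eqref{eq:coarsegrain_r_n} for $(L_m)$, and in particular the summability of $\varepsilon_m=(m+1)^{-2}$ (which keeps $L_m$ comparable to $2^m$), is used.

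Having fixed this splitting, one enumerates the possible outcomes: the anchor $z'$ of the level-$(n-1)$ sub-shell in which $\gamma$ first enters, together with the restriction $S_i$ of $S$ to the relevant shell. The number of choices of $z'$ is bounded by the number of sublattice points in $\tilde C_{z,n}$, which is $O((L_n/L_{n-1})^d)=O(1)$; this is crucial so that the complexity does not blow up. Moreover, $S_1$ and $S_2$ are \emph{determined} by $S$ and the choice of shell (they are just $S$ intersected with the shell and then restricted to a level-$(n-1)$ shape containing the relevant crossing), so no extra combinatorial cost is incurred there. Thus $\mathcal{A}_S$ is defined as the union, over the $O(1)$ choices of the pair of shells/anchors, of all collections $\mathcal{D}=\mathcal{D}_1\cup\mathcal{D}_2$ with $\mathcal{D}_i\in\mathcal{A}_{S_i}$ (using the inductive families for the two level-$(n-1)$ shapes $S_1,S_2$, each applied with target level $k$), with the leaves of the two depth-$(n-1-k)$ subtrees identified with $\mathcal{L}(\mathbb{T}_{n-k,i})$ in the obvious way. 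Property \eqref{eq:proper2} then holds by construction, \eqref{eq:proper1} follows since $S_i\subset S$ and by the inductive guarantee applied to $S_i$, and \eqref{eq:proper3} follows by splitting a crossing $\gamma$ of $\tilde C_{z,n}\setminus C_{z,n}$ into the piece that crosses the inner shell and the piece that crosses the outer shell, applying the inductive coarse-graining property to each piece inside $S_1$ resp.\ $S_2$.

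For the complexity bound \eqref{eq:complexity} one writes the recursion $|\mathcal{A}_S|\le C_0\cdot\max_{S_1}|\mathcal{A}_{S_1}|\cdot\max_{S_2}|\mathcal{A}_{S_2}|$, where $C_0$ is the $O(1)$ number of shell/anchor choices. Taking logarithms and setting $a_j\coloneqq \sup\{\log|\mathcal{A}_S|: S\in\mathcal{S}_{n'}, n'-k=j\}$ gives $a_j\le \log C_0 + 2a_{j-1}$ with $a_0=0$, hence $a_j\le (\log C_0)(2^j-1)\le (\log C_0)\,2^j$. Since $2^{n-k}\le L_{n-k}$, this yields $\log|\mathcal{A}_S|\le C L_{n-k}$ as claimed. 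The main obstacle I anticipate is purely bookkeeping rather than conceptual: one must check carefully that the region $\tilde C_{z,n}\setminus C_{z,n}^-$ genuinely contains a pair of level-$(n-1)$ shells that are (i) both crossed by any shape crossing the level-$n$ annulus, (ii) separated by at least $2\varepsilon_{n-1}L_{n-1}$ in $\ell^\infty$-distance, and (iii) anchored at points of a bounded sublattice; this is exactly what the choice $L_{n+1}=\lceil 2(1+\varepsilon_n)L_n\rceil$ is engineered to provide, but making the inclusions $C_{z(\ell),k}\subset\hat C_{z(\ell),k}\subset\tilde C_{z(\ell),k}\subset S(\ell)\subset S\subset\tilde C_{z,n}$ all fit together through $n-k$ iterations, and verifying that the induced sub-paths $\gamma'$ remain $*$-connected and genuinely cross the smaller annuli, requires care with the nested box definitions in \eqref{eq:boxes}, \eqref{eq:C_boxes}.
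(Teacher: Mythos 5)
Your overall architecture is the same as the paper's: induction in $n$ with base case $n=k$ and the singleton family $\{(z,S)\}$, a dyadic step that extracts one ``inner'' and one ``outer'' level-$(n-1)$ crossing with anchors chosen among $O(1)$ candidates, gluing of the two inductive families along the two subtrees, and the complexity recursion $a_j\le \log C_0+2a_{j-1}$ played against $2^{j}\le L_{j}$; this is exactly how \eqref{eq:complexity}, \eqref{eq:proper1} and \eqref{eq:proper3} are obtained in the paper.

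The gap is in the step that actually yields the separation \eqref{eq:proper2}, and the mechanism you invoke for it does not work as stated. A level-$(n-1)$ shell lives in $\tilde C_{z',n-1}\setminus C^-_{z',n-1}$, whose outer side is $3L_{n-1}\approx \tfrac32 L_n$, while the level-$n$ annulus $\tilde C_{z,n}\setminus C_{z,n}$ has width only $L_n\approx 2L_{n-1}$ per side. So there is no \emph{fixed} concentric pair of level-$(n-1)$ shells that every crossing must traverse (a small annulus cannot separate $C_{z,n}$ from $\partial\tilde C_{z,n}$); the anchors are necessarily $\gamma$-dependent, chosen among $O(1)$ candidates. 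More importantly, even with an ``inner'' anchor (box inside $C_{z,n}$) and an ``outer'' anchor (box near $\partial \tilde C_{z,n}$), the two enlarged boxes overlap radially: the inner $\tilde C_{z_1,n-1}$ reaches $L_{n-1}$ beyond $C_{z,n}$, the outer $\tilde C_{z_2,n-1}$ reaches $2L_{n-1}$ inward, and $2L_{n-1}+L_{n-1}>L_n$, so the sub-shapes $S_1,S_2$ (components of $S$ in these small annuli) can touch; $d_\infty(S_1,S_2)\ge 2\varepsilon_{n-1}L_{n-1}$ does \emph{not} follow from the scaling \eqref{eq:coarsegrain_r_n} alone. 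The paper's proof contains an extra device which your sketch omits: the inner anchors $T_1\subset C_{z,n}$ are chosen with $C_{z_1,n-1}\subset C_{z,n}$ covering the exposed faces, which confines $S_1\subset \tilde C_{z_1,n-1}\subset z+[-L_{n-1},L_n+L_{n-1})^d$, while the outer crossing is extracted at the \emph{last exit} of $\gamma$ from the intermediate box $\hat C_{z,n}=z+[-L_n+L_{n-1},2L_n-L_{n-1})^d$, which confines the outer sub-shape to $S\setminus \hat C_{z,n}$; the two regions are then at $\ell^\infty$-distance $L_n-2L_{n-1}\ge 2\varepsilon_{n-1}L_{n-1}$. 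Without this (or an equivalent) confinement of $S_1$ and $S_2$ to the two sides of a slab of width $L_n-2L_{n-1}$, property \eqref{eq:proper2} --- which is precisely what later feeds the capacity lower bound of Lemma~\ref{lem:coarse_capacity} --- is not established. Once this device is added, the rest of your argument (bounded anchor sets, path-dependent choice of sub-shape as in the paper, verification of \eqref{eq:proper1} and \eqref{eq:proper3}, and the complexity bound) goes through as you describe.
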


\begin{proof} Fix $k > 0$. We proceed by induction over $n \geq k$. When $n = k$, we simply choose $\mathcal A_S$ to be the singleton set consisting of $\mathcal D \coloneqq \{(z, S)\}$ whence \eqref{eq:complexity}, \eqref{eq:proper1} and \eqref{eq:proper3} hold.

Suppose now that for some $n \geq 0$ and each $S \in \mathcal{S}_{n}$, there exists a family $\mathcal{A}_{S}$ satisfying  \eqref{eq:proper1}-\eqref{eq:proper3}, and such that, for some $b_{n-k} \in (0,\infty)$,
\begin{equation}
\label{eq:complexity:C_n}
\sup_{S \in  \mathcal{S}_{n}} \log |\mathcal A_{S}| \le b_{n-k} L_{n - k}
\end{equation}
(note that \eqref{eq:complexity:C_n} holds for $n=k$ with $b_0=0$). Consider a shape $S \in \mathcal{S}_{z,n+1}$ for some $z \in \Z^d$. Choose a fixed set of vertices $T_1 \subset C_{z, n+1}$ of cardinality $|T_1| \le \Cl{C:boundT1}$ for suitable $\Cr{C:boundT1} \in [1,\infty)$ such that $C_{w, n} \subset C_{z, n+1}$ for all $w \in T_1$ and the faces of the boxes $\{C_{w, n}; w \in T_1\}$ form a cover of the  
faces of $C_{z,n+1}$. Now, since $C_{w, n} \subset C_{z, n+1}$ for all $w \in T_1$, given any $*$-path $\gamma$ 
crossing $\tilde{C}_{z, n+1} \setminus C_{z, n+1}$, one finds $z_1 \in T_1$ such that $\gamma$ induces a path $\gamma_1$ crossing $ \tilde{C}_{z_1, n} \setminus C_{z_1, n}$. Furthermore if 
$\gamma \subset S$, it follows that $\gamma_1 \subset S_{z_1}$ where $S_{z_1} \subset S$ 
is 
the $*$-connected component of $S \cap (\tilde{C}_{z_1, n} \setminus C_{z_1, n}^-)$ containing  $\gamma_1$
. It is clear from this definition that $S_{z_1} \in \mathcal S_{z_1, n}$.

By a similar reasoning, one finds a set $T_2  \subset \hat{C}_{z, n+1}$, see below \eqref{eq:coarsegrain_r_n} for notation, with $|T_2| \le  \Cl{C:boundT2}$ for suitable $\Cr{C:boundT2} \in [1, \infty)$ such that for some $z_2 \in T_2$, $\gamma$ exits $\hat C_{z, n+1}$ for the last time through $C_{z_2, n} \subset \hat C_{z, n+1}$. Since $z_2 \in \hat C_{z, n+1}$ and consequently $\tilde C_{z_2, n} \subset \tilde{C}_{z, n+1}$, we deduce from these definitions that $\gamma$ induces a path $\gamma_2$ crossing $ \tilde C_{z_2, n} \setminus C_{z_2, n}$. Now, let $S_{z_2} \in \mathcal S_{z_2, n}$ be defined as the $*$-connected component of $S \cap (\tilde{C}_{z_2, n} \setminus  C_{z_2, n}^-)$ containing $\gamma_2$. 
Note that $\gamma_2  \subset S_{z_2} \subset S \setminus \hat C_{z, n+1}$. Now recalling that $S_{z_1} \subset \tilde C_{z_1, n} \subset z + [-L_n, L_{n+1} + L_{n})^d$, it follows that 
\begin{equation}
\label{eq:cgd=4.103}
d_{\infty}(S_{z_1}, S_{z_2}) \ge d_{\infty}([-L_n, L_{n+1} + L_{n})^d, \partial_{\text{out}}\hat C_{0, n+1}) \geq L_{n+1}-2L_n\stackrel{\eqref{eq:coarsegrain_r_n}}{\geq} 2\varepsilon_nL_n.
\end{equation}
Therefore, upon defining $\mathcal A_S$ to be the collection of all $\mathcal D$ such that the restriction of $\mathcal D$ to the leaves of the left and right sub-trees of $\mathbb T_{n+1-k}$ 
of depth $n-k$ correspond to some $\mathcal D_1 \in \mathcal A_{S_{z_1}}$ and $\mathcal 
D_2 \in \mathcal A_{S_{z_2}}$, for some $z_1 \in T_1$ and $z_2\in T_2$ respectively, then the 
properties \eqref{eq:proper1}-\eqref{eq:proper3} follow as an immediate consequence of the above construction and the induction hypothesis. In particular, the distance constraint in \eqref{eq:proper2} (at level $n+1$) is exactly \eqref{eq:cgd=4.103}.

Finally, observe that 
\begin{equation}
\label{eq:recursion}
|\mathcal 
A_{S}| \le |T_1|\cdot|T_2| \,\sup_{S' \in \mathcal{S}_{n}}|\mathcal A_{S'}|^2 \leq   \Cr{C:boundT1}  \Cr{C:boundT2}\sup_{S' \in \mathcal{S}_{ n}}|\mathcal A_{S'}|^2. 
\end{equation}
Together, \eqref{eq:complexity:C_n}, \eqref{eq:recursion} and \eqref{eq:coarsegrain_r_n} readily imply 
that a bound similar to \eqref{eq:complexity:C_n} holds with $n+1$ in place of $n$ and $b_{n+1-k}\coloneqq b_{n-k} + \frac{\log ( \Cr{C:boundT1}  \Cr{C:boundT2})}{L^{n-k}}$
, from which \eqref{eq:complexity} follows with the choice $C= \lim_{n \to \infty} b_n (< \infty)$.
\end{proof}

The next result entails a capacity estimate which will be key in deducing \eqref{eq:cg_capd=3}.
For a given shape $S \in \mathcal{S}_{n}$ and a collection $\mathcal D  = \{(z(\ell), S(\ell)): \ell \in \mathcal{L}(\mathbb 
T_{n-k})\} \in \mathcal A_{S, k}$ (with $\mathcal A_{S, k}$ as given by Lemma~\ref{L:CGd=4}), consider a collection $\mathcal{B}=\{ {B}(\ell): \ell \in \mathcal{L}(\mathbb 
T_{n-k})\}$ of boxes such that, for all $\ell \in  \mathcal{L}(\mathbb 
T_{n-k}) $,
\begin{equation}
\label{eq:cgd=4.1}
{B}(\ell)= z + [0,r)^d, \text{ for some } z \in \Z^d \text{ s.t. } B(\ell) \cap S(\ell) \neq \emptyset \text{ and }1 \leq r \leq \frac{1}{2}\varepsilon_k L_k.
\end{equation}
Now, define the set
\begin{equation}
\label{eq:cgd=4.2}
S_\mathcal{B}(\tilde{\mathcal D}) \coloneqq \bigcup_{\substack{ \ell \in\mathcal{L}(\mathbb T_{n-k}): \\  (z(\ell), S(\ell)) \in \tilde{\mathcal{D}}}} B(\ell), \quad  \text{ for $\tilde{\mathcal{D}} \subset \mathcal{D}$}
\end{equation}
and 
\begin{equation}
\label{eq:kappa_cap}
\kappa_{n,k} \coloneqq \inf_{\rho \in (0,1] }\inf_{S \in \mathcal{S}_{n}}\inf_{\mathcal D \in \mathcal A_{S, k}} \inf_{\mathcal{B}} \inf_{\substack{ \tilde{\mathcal D} \subset \mathcal D: \\|\tilde{\mathcal D}| \geq \rho |\mathcal D |}} \rho^{-1}{\rm cap}\,(S_{\mathcal{B}}(\tilde{\mathcal D})),
\end{equation}
with the infimum over $\mathcal{B}$ ranging over all collections of boxes satisfying \eqref{eq:cgd=4.1}.
The following lemma supplies suitable lower bounds on the quantity $\kappa_{n,k}$.

\begin{lemma} $(d \geq 4)$
	\label{lem:coarse_capacity}
	For any $n \ge k > 0$, one has 
	\begin{equation}
	\label{eq:coarse_capacity}
	\kappa_{n+1,k} \ge 2^{n+1}
	\wedge \frac{2\kappa_{n,k}}{1 + C\frac{2^{n+1}
	}{(\varepsilon_n L_n)^{d-2}}}\,.
	\end{equation}
As a consequence, for all for all $n \ge k > 0$, one has
	\begin{equation}
\label{eq:coarse_capacity2}
\kappa_{n,k} \ge 2^{n} \wedge \Cl[c]{c:kappaLB} 2^{n-k}\kappa_{k,k}. 
\end{equation}
\end{lemma}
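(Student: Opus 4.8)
The plan is to prove the recursion \eqref{eq:coarse_capacity} first, and then deduce \eqref{eq:coarse_capacity2} by unrolling it. For the recursion, fix $n\ge k>0$, a shape $S\in\mathcal S_{n+1}$, a collection $\mathcal D\in\mathcal A_{S,k}$, an admissible collection of boxes $\mathcal B$ as in \eqref{eq:cgd=4.1}, a parameter $\rho\in(0,1]$, and a sub-collection $\tilde{\mathcal D}\subset\mathcal D$ with $|\tilde{\mathcal D}|\ge\rho|\mathcal D|$. We must bound $\rho^{-1}\mathrm{cap}(S_{\mathcal B}(\tilde{\mathcal D}))$ from below. First I would invoke property \eqref{eq:proper2}: the collection $\mathcal D$ splits into $\mathcal D_1,\mathcal D_2$, restrictions to the two sub-trees $\mathbb T_{n+1-k,i}$, which lie in $\mathcal A_{S_i,k}$ for shapes $S_1,S_2\in\mathcal S_n$ with $d_\infty(S_1,S_2)\ge 2\varepsilon_n L_n$. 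Write $\tilde{\mathcal D}_i\coloneqq\tilde{\mathcal D}\cap\mathcal D_i$ and $\rho_i\coloneqq|\tilde{\mathcal D}_i|/|\mathcal D_i|$; since $|\mathcal D_1|=|\mathcal D_2|=\tfrac12|\mathcal D|$, we have $\rho_1+\rho_2\ge 2\rho$. Correspondingly $S_{\mathcal B}(\tilde{\mathcal D})=\Sigma_1\sqcup\Sigma_2$ with $\Sigma_i\coloneqq S_{\mathcal B_i}(\tilde{\mathcal D}_i)$ the part built from leaves of the $i$-th sub-tree (the box collection $\mathcal B$ restricts to an admissible $\mathcal B_i$ for $S_i$, since $B(\ell)\cap S(\ell)\ne\emptyset$ and $S(\ell)\subset S_i$ by \eqref{eq:proper1}, and the radius bound $r\le\tfrac12\varepsilon_k L_k$ is unchanged). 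By the definition \eqref{eq:kappa_cap} of $\kappa_{n,k}$ (applied with $\rho_i$ in place of $\rho$, valid whenever $\rho_i>0$), we get $\mathrm{cap}(\Sigma_i)\ge\rho_i\,\kappa_{n,k}$.

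The next step is to glue the two capacity bounds across the gap $d_\infty(S_1,S_2)\ge 2\varepsilon_n L_n$. Here I would use the variational characterization \eqref{eq:cap_var}: take near-optimal probability measures $\mu_i$ on $\Sigma_i$ with $E(\mu_i)\approx 1/\mathrm{cap}(\Sigma_i)$ and form the convex combination $\mu=\tfrac12\mu_1+\tfrac12\mu_2$ on $\Sigma_1\cup\Sigma_2$. Then
\[
E(\mu)=\tfrac14 E(\mu_1)+\tfrac14 E(\mu_2)+\tfrac12\sum_{x\in\Sigma_1,\,y\in\Sigma_2}\mu_1(x)g(x,y)\mu_2(y).
\]
The cross term is controlled because every $x\in\Sigma_1$, $y\in\Sigma_2$ satisfy $|x-y|\ge d_\infty(S_1,S_2)-C\varepsilon_k L_k\ge c\varepsilon_nL_n$ (the boxes $B(\ell)$ have radius $\le\tfrac12\varepsilon_kL_k\le\tfrac12\varepsilon_nL_n$ since $k\le n$ and $\varepsilon_\cdot$ is decreasing), so $g(x,y)\le C(\varepsilon_nL_n)^{-(d-2)}$ by \eqref{eq:Greenasympt}; since $\mu_1,\mu_2$ are probability measures the cross term is $\le C(\varepsilon_nL_n)^{-(d-2)}$. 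Thus
\[
E(\mu)\le \tfrac14\Big(\tfrac{1}{\mathrm{cap}(\Sigma_1)}+\tfrac{1}{\mathrm{cap}(\Sigma_2)}\Big)+\tfrac{C}{(\varepsilon_nL_n)^{d-2}},
\]
whence $\mathrm{cap}(S_{\mathcal B}(\tilde{\mathcal D}))\ge 1/E(\mu)$. One then optimizes the elementary inequality: writing $a_i=\mathrm{cap}(\Sigma_i)\ge\rho_i\kappa_{n,k}$, one has $\tfrac14(a_1^{-1}+a_2^{-1})\le\tfrac14\kappa_{n,k}^{-1}(\rho_1^{-1}+\rho_2^{-1})$, and since $\rho_1+\rho_2\ge 2\rho$ one checks $\rho_1^{-1}+\rho_2^{-1}\le 2\rho^{-1}\cdot\frac{1}{\rho_1\rho_2/\rho\cdot(\text{something})}$ — more simply, bound $\tfrac14(\rho_1^{-1}+\rho_2^{-1})\le\tfrac{1}{\rho_1+\rho_2}\cdot\frac{(\rho_1+\rho_2)^2}{4\rho_1\rho_2}$ and combine with the trivial bound $\mathrm{cap}\ge c\cdot(\text{number of points})^{??}$; cleanest is to track things so that the final bound reads $\mathrm{cap}(S_{\mathcal B}(\tilde{\mathcal D}))\ge 2\rho\,\kappa_{n,k}/(1+C\rho\kappa_{n,k}(\varepsilon_nL_n)^{-(d-2)})$, and then use $\rho\kappa_{n,k}\le\mathrm{cap}(S_{\mathcal B}(\mathcal D))\le|\mathcal D|\le C2^{n+1}$ (each box contributes $O(1)$ to capacity by \eqref{eq:capball} with $r=O(1)$, actually $r$ may be large — but $r\le\tfrac12\varepsilon_kL_k$ and there are $2^{n-k}$ of them, and a cruder bound $\mathrm{cap}\le\sum\mathrm{cap}(B(\ell))\le C2^{n-k}L_k^{d-2}\le C2^{n-k}2^{k(d-2)}$ suffices when combined with the competing term $2^{n+1}$ in the $\wedge$) to replace the $\rho\kappa_{n,k}$ in the denominator by $C2^{n+1}$. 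Handling the degenerate case $\rho_1=0$ or $\rho_2=0$ separately (then $S_{\mathcal B}(\tilde{\mathcal D})$ is entirely $\Sigma_2$ resp. $\Sigma_1$ and $\mathrm{cap}\ge\rho_2\kappa_{n,k}=2\rho\kappa_{n,k}\cdot\tfrac{\rho_2}{2\rho}$, which is $\ge$ the claimed bound after absorbing constants, or one simply notes $2\rho\le\rho_2$ so this is even better) gives \eqref{eq:coarse_capacity} in all cases.

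For \eqref{eq:coarse_capacity2}, I would iterate \eqref{eq:coarse_capacity} from level $k$ down to level $n$. At level $k$ the base case is immediate: $\kappa_{k,k}$ is just the infimum over single boxes $B\subset\Z^d$ with $1\le r\le\tfrac12\varepsilon_kL_k$ and $\rho=1$ of $\mathrm{cap}(B)$, so $\kappa_{k,k}\ge c>0$ by \eqref{eq:capball}. Then, as long as the first term $2^{m+1}$ in the minimum on the right of \eqref{eq:coarse_capacity} is not the active one, each step multiplies $\kappa$ by $2/(1+C2^{m+1}(\varepsilon_mL_m)^{-(d-2)})$. Since $\varepsilon_mL_m\ge c\, 2^m/(m+1)^2$ and $d\ge 4$, the correction factor is $2^{m+1}(\varepsilon_mL_m)^{-(d-2)}\le C\,2^{m+1}(m+1)^{2(d-2)}2^{-m(d-2)}=C(m+1)^{2(d-2)}2^{-m(d-3)}$, which is summable in $m$ (this is precisely where $d\ge 4$, i.e.\ $d-3\ge 1$, enters — note it would fail for $d=3$, consistent with the dichotomy emphasized in the introduction). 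Hence $\prod_{m=k}^{n-1}\bigl(1+C(m+1)^{2(d-2)}2^{-m(d-3)}\bigr)^{-1}\ge\prod_{m\ge 1}(1+\cdots)^{-1}=:\Cr{c:kappaLB}'>0$, so $\kappa_{n,k}\ge \Cr{c:kappaLB}'2^{n-k}\kappa_{k,k}$ on the event that the $2^{m+1}$ cap never bites; and whenever it does bite at some level $m\le n$, one directly gets $\kappa_{n,k}\ge\kappa_{m,k}\cdot(\text{product of remaining factors})\ge 2^{m+1}\cdot c\cdot 2^{n-m}\ge c\,2^n$ — more carefully, once $\kappa_{m+1,k}\ge 2^{m+1}$, subsequent steps keep $\kappa\ge 2^{m+1}\cdot\prod(2/(1+\cdots))\ge c\,2^{m+1}$, which is $\ge c$ times $2^{n}$ only if $m$ is close to $n$; the right bookkeeping is to note that from the first level $m_0$ where the cap is active onward, $\kappa_{j,k}\ge 2^{m_0}\cdot c\ge c$ and also $\kappa_{j,k}$ cannot exceed $C2^j$, and the $\wedge 2^n$ in \eqref{eq:coarse_capacity2} absorbs exactly this regime. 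Combining the two regimes yields \eqref{eq:coarse_capacity2} with $\Cr{c:kappaLB}=\Cr{c:kappaLB}'\kappa_{k,k}\wedge c$. The main obstacle is the careful bookkeeping in the gluing step — keeping the constant in the denominator of \eqref{eq:coarse_capacity} uniform and correctly tracking the degenerate split cases — but conceptually it is a routine subadditivity-of-energy argument powered by the separation \eqref{eq:cgd=4.103} and the transience gain available only for $d\ge 4$.
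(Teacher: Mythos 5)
Your overall architecture matches the paper's: split $\tilde{\mathcal D}$ along the two sub-trees via \eqref{eq:proper2}, use the separation \eqref{eq:cgd=4.103} together with \eqref{eq:Greenasympt} to control the interaction, and then unroll the recursion using that the correction terms are summable precisely because $d\geq 4$. However, the quantitative gluing step has a genuine gap. With the equal-weight mixture $\mu=\tfrac12\mu_1+\tfrac12\mu_2$ you only get
\[
\mathrm{cap}(S_{\mathcal B}(\tilde{\mathcal D}))\;\gtrsim\;\Big[\tfrac14\big(a_1^{-1}+a_2^{-1}\big)+C(\varepsilon_nL_n)^{-(d-2)}\Big]^{-1},
\]
i.e.\ essentially the harmonic mean $\tfrac{4a_1a_2}{a_1+a_2}$, whereas the recursion \eqref{eq:coarse_capacity} needs the \emph{sum} $a_1+a_2\geq \tfrac{|\tilde{\mathcal D}|}{|\mathcal D|}\,2\kappa_{n,k}$ in the numerator. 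When $\tilde{\mathcal D}$ is unevenly spread over the two sub-trees (e.g.\ almost entirely in one of them, which the definition \eqref{eq:kappa_cap} allows), the harmonic mean is smaller than the sum by an unbounded factor, and even the fallback $\mathrm{cap}\geq a_1\vee a_2$ loses a factor $2$; since the factor $2$ per step is exactly what produces $2^{n-k}$ in \eqref{eq:coarse_capacity2}, any constant per-step loss is fatal. Your patch at this point is not a derivation: you simply assert the bound ``$\mathrm{cap}\geq 2\rho\kappa_{n,k}/(1+C\rho\kappa_{n,k}(\varepsilon_nL_n)^{-(d-2)})$'' that you want. The fix is easy but must be made: either weight the mixture by capacities, $\mu=\tfrac{a_1}{a_1+a_2}\mu_1+\tfrac{a_2}{a_1+a_2}\mu_2$, which gives $E(\mu)\leq \tfrac{1}{a_1+a_2}+C(\varepsilon_nL_n)^{-(d-2)}$ and hence $\mathrm{cap}\geq (a_1+a_2)/(1+C(a_1\vee a_2)(\varepsilon_nL_n)^{-(d-2)})$, or argue as the paper does via the sweeping identity \eqref{eq:sweeping} and \eqref{eq:lastexit}, which yields the same inequality.

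A second, related problem is how you insert $2^{n+1}$ into the denominator. Bounding $\rho\kappa_{n,k}$ (or $\mathrm{cap}(S_{\mathcal B}(\mathcal D))$) from above does not work: as you yourself note, the boxes can have radius up to $\tfrac12\varepsilon_kL_k$, and the resulting bound $C2^{n-k}L_k^{d-2}$ is far larger than $2^{n+1}$, so it does not justify the denominator $1+C2^{n+1}(\varepsilon_nL_n)^{2-d}$ with a constant uniform in $k$ (uniformity in $k$ matters later, since $k=k_0$ grows with $L$). The clean way, and the paper's, is a dichotomy: if $\mathrm{cap}(\Sigma_1)\vee\mathrm{cap}(\Sigma_2)\geq 2^{n+1}$ then monotonicity of $\mathrm{cap}$ already gives $\mathrm{cap}(S_{\mathcal B}(\tilde{\mathcal D}))\geq 2^{n+1}\geq\rho\,2^{n+1}$; otherwise plug the bound $\mathrm{cap}(\Sigma_1)\vee\mathrm{cap}(\Sigma_2)<2^{n+1}$ directly into the denominator. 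With these two corrections your argument goes through; the deduction of \eqref{eq:coarse_capacity2} by induction (summability of $2^{n+1}(\varepsilon_nL_n)^{2-d}$ for $d\geq4$, with the $2^n\wedge$ term absorbing the capped regime) is in line with the paper, even if your bookkeeping there is somewhat loose.
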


\begin{proof}
Consider $S \in \mathcal{S}_{n+1}$, $\mathcal D \in \mathcal A_{S} = \mathcal A_{S, k}$, a collection $\mathcal{B}$ satisfying \eqref{eq:cgd=4.1} and  $\tilde{\mathcal D} \subset \mathcal D$ with $|\tilde{\mathcal D}| \geq \rho \mathcal D$ for some $\rho \in (0,1]$. Consider the sub-collections $\mathcal D_1$ 
and $\mathcal D_2$ of $\mathcal D$ given by \eqref{eq:proper2}, define $\tilde{\mathcal D}_i 
\coloneqq \tilde{\mathcal D} \cap \mathcal D_i$ as well as the sets  $S_{\mathcal{B}}(\tilde{\mathcal D}_i)$ for $i = 1, 2$ in similar fashion as \eqref{eq:cgd=4.2}, so that $S_{\mathcal{B}}(\tilde{\mathcal{D}})=S_{\mathcal{B}}(\tilde{\mathcal D}_1) \cup S_{\mathcal{B}}(\tilde{\mathcal D}_2)$. By \eqref{eq:proper2} and \eqref{eq:cgd=4.1} and since $n \geq k$, one has
	\begin{equation}
	\label{eq:coarse_capacity_separation}
		d_{2}(S_{\mathcal{B}}(\tilde{\mathcal D}_1), S_{\mathcal{B}}(\tilde{\mathcal D}_2)) \ge  \varepsilon_{n}L_{n}.
		\end{equation}
Now, since $S_{\mathcal{B}}(\tilde{\mathcal D}) \supset S_{\mathcal{B}}(\tilde{\mathcal D}_i)$ for $i=1,2$, using the sweeping identity \eqref{eq:sweeping}, one bounds
\begin{align}
\label{eq:coarse_capacity_sweep}
{\rm cap}(S_{\mathcal{B}}(\tilde{\mathcal D}_i)) &\le \sum_{z \in S_{\mathcal{B}}(\tilde{\mathcal D}_i)}e_{S_{\mathcal{B}}(\tilde{\mathcal D})}(z) + \max_{z \in S_{\mathcal{B}}(\tilde{\mathcal D}_{3-i})}P_z[H_{S_{\mathcal{B}}(\tilde{\mathcal D}_i)} < \infty]\sum_{z \in S_{\mathcal{B}}(\tilde{\mathcal D}_{3-i})}e_{S_{\mathcal{B}}(\tilde{\mathcal D})}(z).
\end{align}
Using \eqref{eq:lastexit}, one finds, for $i=1,2$,
\begin{align*}
\max_{z \in S_{\mathcal{B}}(\tilde{\mathcal D}_{3-i})}P_z[H_{S_{\mathcal{B}}(\tilde{\mathcal D}_i)} < \infty] &\le {\rm cap}(S_{\mathcal{B}}(\tilde{\mathcal D}_i))\max_{z \in S_{\mathcal{B}}(\tilde{\mathcal D}_{3-i}), y \in S_{\mathcal{B}}(\tilde{\mathcal D}_i)}g(z, y),
	\end{align*}
The maximum of Green's functions on the right-hand side is bounded by $C(\varepsilon_nL_n)^{2-d}$ in view of 
\eqref{eq:coarse_capacity_separation} and \eqref{eq:Greenasympt}. Substituting the resulting bound into \eqref{eq:coarse_capacity_sweep} yields the estimates 
$$
{\rm cap}(S_{\mathcal{B}}(\tilde{\mathcal D}_i)) \leq  \sum_{z \in S_{\mathcal{B}}(\tilde{\mathcal D}_i)}e_{S_{\mathcal{B}}(\tilde{\mathcal D})}(z) +  C\,\frac{{\rm cap}(S_{\mathcal{B}}(\tilde{\mathcal D}_1)) \vee {\rm cap}(S_{\mathcal{B}}(\tilde{\mathcal D}_2))}{(\varepsilon_n L_n)^{d-2}}\sum_{z \in S_{\mathcal{B}}(\tilde{\mathcal D}_{3-i})}e_{S_{\mathcal{B}}(\tilde{\mathcal D})}(z),
$$
valid for $i=1,2$. Adding these and solving for ${\rm cap}(S_{\mathcal{B}}(\tilde{\mathcal D}))$ gives, in view of \eqref{eq:kappa_cap},
\begin{equation}
\label{eq:kappa_cap2}
{\rm cap}(S_{\mathcal{B}}(\tilde{\mathcal D})) \ge \frac{{\rm cap}(S_{\mathcal{B}}(\tilde{\mathcal D}_1)) + {\rm cap}(S_{\mathcal{B}}(\tilde{\mathcal D}_2))}{1 + C\,\frac{{\rm cap}(S_{\mathcal{B}}(\tilde{\mathcal D}_1)) \vee {\rm cap}(S_{\mathcal{B}}(\tilde{\mathcal D}_2))}{(\varepsilon_n L_n)^{d-2}}} \ge \frac{|\tilde {\mathcal D}|}{|\mathcal D|}\,\frac{2\kappa_{n, k}}{1 + C\,\frac{{\rm cap}(S_{\mathcal{B}}(\tilde{\mathcal D}_1)) \vee {\rm cap}(S_{\mathcal{B}}(\tilde{\mathcal D}_2))}{(\varepsilon_n L_n)^{d-2}}}\,.
\end{equation}
However, since ${\rm cap}(S_{\mathcal{B}}(\tilde{\mathcal D})) \ge {\rm cap}(S_{\mathcal{B}}(\tilde{\mathcal D}_1)) \vee {\rm cap}(S_{\mathcal{B}}(\tilde{\mathcal D}_2)) =: \Xi$ due to the monotonicity of $\text{cap}(\cdot)$, see \eqref{eq:sweeping_2}, 
distinguishing whether $ \Xi \geq 2^{n+1}
$, in which case ${\rm cap}(S_{\mathcal{B}}(\tilde{\mathcal D}))$ inherits this lower bound, or $ \Xi < 2^{n+1}
$, in which case one applies \eqref{eq:kappa_cap2}, it follows that
\begin{equation*}
	{\rm cap}(S_{\mathcal{B}}(\tilde{\mathcal D})) \ge \frac{|\tilde {\mathcal D}|}{|\mathcal D|}\Big(2^{n+1}
	\wedge \frac{2\kappa_{n, k}}{1 + C\,\frac{2^{n+1}
	}{(\varepsilon_n L_n)^{d-2}}}\Big),
\end{equation*}
yielding \eqref{eq:coarse_capacity}. The lower bound \eqref{eq:coarse_capacity2} follows from \eqref{eq:coarse_capacity} and a straightforward induction argument, with $ \Cr{c:kappaLB} = \prod_{n = 0}^{\infty}(1 + C\frac{2^{n+1}}{(2\varepsilon_{n} L_{n})^{d-2}})^{-1}  > 0$ (see \eqref{eq:coarsegrain_r_n}).
\end{proof}

We now complete the proof of Proposition~\ref{prop:coarse_paths} for $d\geq 4$. Let $L \geq 1$ $K \geq 100$ and $N \geq 10KL$. We first introduce the collection $\mathcal{A}=\mathcal{A}_{N,L}^K(\Lambda_N)$ for any $\Lambda_N \in \mathcal{S}_N$ (recall its definition from \eqref{eq:scriptS_N}) and verify 
that it is $\Gamma$-admissible. Let $n_0$ be maximal such that $L_{n_0} \leq N/10$, which by \eqref{eq:coarsegrain_r_n} implies that
 \begin{equation}
 \label{eq:cgd=4.3}
N/30 \leq L_{n_0} \leq N/10, \text{ and let $k_0$ be such that } L_{k_0-1} \leq 5L \leq L_{k_0}.
 \end{equation}
 Note that $k_0 \leq n_0$. The set $\mathcal{A}$ will be defined in terms of the coarse grainings $\mathcal{A}_{S,k_0}$ of a fixed number of shapes $S \in \mathcal{S}_{n_0}$, which we now introduce. Let $V^0 \subset \Z^d$ consist of $0$ and any point $z \in L_{n_0}\Z^d$ such that $C_{z, n_0}= z + [0,L_{n_0})^d$ intersects $\partial B_{3N/2}$ or $\partial [-\frac{N}{2}, \frac{3N}{2})^d$. Define $\mathcal{S}^0 =\{ \tilde{C}_{z,n_0} \setminus C_{z,n_0} : z \in V^0\}  \subset \mathcal{S}_{n_0}$. Note that $|\mathcal{S}^0| \leq C$ by choice of $n_0$ in \eqref{eq:cgd=4.3}. Moreover, in view of \eqref{eq:scriptS_N}, any $*$-path $\gamma$ crossing $\Lambda_N$ induces a $*$-path crossing $S$ for some $S \in \mathcal{S}^0$.

Let us briefly pause to describe in plain words how $\mathcal{A}$ will be constructed. With $L_{n_0} \approx N$ and to a given shape $S\in \mathcal{S}_{n_0}$ among the finite set $\mathcal{S}^0$, we can associate by means of Lemma~\ref{L:CGd=4} (applied with $n=n_0$ and $k=k_0$)
a collection of shapes at scale $L_{k_0} \approx L$ indexed by the leaves of a binary tree of depth $n_0-k_0$. We can attach to this setup a system of $L$-boxes which are simply the translates of $[0,L)^d$ anchored at a point $y \in \mathbb{L}$ intersecting the resulting shapes. The important requirement \eqref{def:admissible2} that these boxes witness the macroscopic crossing will essentially follow from \eqref{eq:proper3}. One issue with this is that the resulting system of boxes is only separated by a distance roughly $\varepsilon_{k_0} L_{k_0} \ll L$, which does not satisfy \eqref{eq:C:cond1}. To fix this, we `trim' the collection by going down the tree until reaching an intermediate scale $L_k$ of order $KL$, and then only retain a fixed leave (say the smallest one in lexicographic order) in each subtree of depth $k-k_0$ attached to a vertex at depth $n_0 -k$. We now proceed to formalize this and subsequently verify $\Gamma$-admissibility (see Definition~\ref{def:admissible} and \eqref{eq:cg_gamma4}) of the resulting collection of boxes along with the required estimate \eqref{eq:cg_capd=3} on their capacity.

\smallskip

Let $k \in [k_0,n_0]$ be such that
\begin{equation}
 \label{eq:cgd=4.4}
2\varepsilon_{k-2} L_{k-2} \leq 2KL + 3L \leq 2\varepsilon_{k-1} L_{k-1}
\end{equation}
and 
consider the subset of leaves $\ell' \in \mathcal{L}(\mathbb 
T_{n_0-k_0})$ of the form $\ell' = \ell \times(0,\dots,0)=: \ell_0$, where $\ell \in \mathcal{L}(\mathbb 
T_{n_0-k})$ is arbitrary and $(0,\dots,0) \in  \mathcal{L}(\mathbb 
T_{k-k_0})$ is fixed. For a given shape $S \in \mathcal{S}^0$ and a collection $\mathcal{D}_0 \in \mathcal{A}_{S,k_0}$, `prune' $\mathcal{D}_0 = \{(z(\ell'), S(\ell')): \ell' \in \mathcal{L}(\mathbb 
T_{n_0-k_0})\}$ (and forget the anchor point $z(\ell')$) to obtain the collection $\mathcal{D} = \{S(\ell_0): \ell \in \mathcal{L}(\mathbb 
T_{n_0-k})\}$. Let $\mathcal{A}'$ denote the collections $\mathcal{D}$ thereby obtained as $\mathcal{D}_0$ ranges over $\mathcal{A}_{S,k_0}$ and $S \in \mathcal{S}^0$. The collection $\mathcal{A}=\mathcal{A}_{N,L}^K(\Lambda_N)$ is then defined as (recall $\mathbb{L}$ from \eqref{eq:LL})
\begin{equation}
 \label{eq:cgd=4.5}
 \begin{split}
\mathcal{A}= \big\{ \mathcal{C}= ( y(\ell) )_{ \ell \in  \mathcal{L}(\mathbb 
T_{n_0-k})} \ :  \  & y(\ell) \in \mathbb{L}, \, \exists \, \mathcal{D} = \{S(\ell_0): \ell \in \mathcal{L}(\mathbb 
T_{n_0-k})\} \in \mathcal{A}' \\
&\text{s.t. }  C_{y(\ell),L} \cap S(\ell_0) \neq \emptyset \text{ for all } \ell \in  \mathcal{L}(\mathbb 
T_{n_0-k})  \big\}.
\end{split}
\end{equation}
We now verify that $\mathcal{A}$ defined in \eqref{eq:cgd=4.5} is $\Gamma$-admissible. The fact that $|\mathcal{A}|$ satisfies \eqref{def:admissible3} with $\Gamma(\cdot)$ as in \eqref{eq:cg_gamma4} follows from \eqref{eq:complexity}, the fact that $ |\mathcal{S}^0|\leq C$, whence $\log | \mathcal{A}'|\leq CL_{n_0-k_0}$, and since the choice of points in $\mathcal{C}$ for a given $\mathcal{D} \in  \mathcal{A}' $ is bounded by $C^{|\mathbb{T}_{n_0-k}|}$. Overall, this gives $\log | \mathcal{A}|\leq CL_{n_0-k_0}$, as desired (note that $L_{n_0-k_0} \leq C N/L$ by \eqref{eq:cgd=4.3}).

The crossing property \eqref{def:admissible2} can be seen as follows. Let $\gamma$ be a $*$-path crossing $\Lambda_N$. As noted above, $\gamma$ induces a crossing for one of the annuli shapes $S \in \mathcal{S}^0$. By \eqref{eq:proper3} there exists $\mathcal{D} \in \mathcal{A}_{S,k_0}$ such that the following holds for every $\ell \in  \mathcal{L}(\mathbb 
T_{n_0-k})$. The $*$-path $\gamma$ induces a $*$-path $\gamma'$ crossing $\tilde{C}_{z(\ell), k_0} \setminus C_{z(\ell), k_0}$ with $\gamma' \subset S(\ell_0)$. In particular, by paving the part of $\partial S(\ell_0)$ adjacent to $C_{z(\ell), k_0}$ by boxes $C_{y,L}$, for $y \in \mathbb{L}$ and by choice of $L_{k_0}$ in \eqref{eq:cgd=4.3}, one finds a point $y(\ell)$ such that $\tilde{D}_{y(\ell),L}\setminus C_{y(\ell),L} $ is crossed by $\gamma'$. The resulting collection $\mathcal{C}$ belongs to $\mathcal{A}$ and \eqref{def:admissible2} follows.

Regarding \eqref{def:admissible1}, observe that $n=|\mathcal{C}|= |\mathcal{L}(\mathbb{T}_{n_0-k})|$, whose logarithm is comparable to $ L_{n_0}/L_k$, hence to $N/ (KL (\log KL)^2)$ using \eqref{eq:cgd=4.3}, \eqref{eq:cgd=4.4} and the fact that $c  (\log KL)^{-2} \leq \varepsilon_k \leq C  (\log KL)^{-2} $. The required separation property \eqref{eq:C:cond1} then follows from  \eqref{eq:proper1} and \eqref{eq:proper2}. Indeed, the latter (applied inductively) implies that any two shapes $S(\ell_0)$, $S(\ell_0')$ with $\ell \neq \ell' \in  \mathcal{L}(\mathbb 
T_{n_0-k})$ are each subsets of two shapes $S(\ell),S(\ell')\in \mathcal{S}_{k}$ separated by $2\varepsilon_{k-1}L_{k-1} \geq  2KL + 3L$, see \eqref{eq:cgd=4.4}. Hence $S(\ell_0)$, $S(\ell_0')$ inherit this separation. On account of \eqref{eq:cgd=4.5}, the resulting points $y(\ell)$, $y(\ell')$ are then at $\ell^{\infty}$-distance at least $2KL + L$.

Thus $\mathcal{A}$ is $\Gamma$-admissible. To see that the capacity lower bound \eqref{eq:cg_capd=3} holds, first observe that $L \geq \frac{\Cr{c:rLB}}{K} \varepsilon_k L_{k}$ by \eqref{eq:cgd=4.4} upon choosing $\Cr{c:rLB}$ small enough and choose $r(\geq 1)$ satisfying
\begin{equation}
\label{eq:condition_r}
\frac{\Cl[c]{c:rLB}}{2K} \leq \frac{ r}{\varepsilon_k L_k} \leq \Big(\frac{1}{2} \wedge \frac{\Cr{c:rLB}}{K} \Big).
\end{equation}
In particular $r \leq L$. Now consider an arbitrary collection $\mathcal{C} \in \mathcal{A}$ and note that
\begin{equation}
\label{eq:capLBfinal}
\text{each box $C_{y(\ell),L}$, $y(\ell) \in {\mathcal{C}}$, contains a box $B(\ell)$ satisfying \eqref{eq:cgd=4.1} (with $n=n_0$);}
\end{equation} 
indeed this follows immediately by construction of $C_{y(\ell),L}$, which intersects $S(\ell)$ by definition, see \eqref{eq:cgd=4.5}, and the fact that $r \leq L$.

Together, \eqref{eq:capLBfinal}, \eqref{eq:cg_C} and \eqref{eq:cgd=4.2} imply that for any $\mathcal{C} \in \mathcal{A}$ and any sub-collection $\tilde{\mathcal{C}}\subset \mathcal{C}$ with $|\tilde{\mathcal{C}}| \geq (1-\rho)|{\mathcal{C}}|$ for some $\rho \in (0,1)$, ${\rm cap}(\Sigma(\tilde{\mathcal{C}})) \geq {\rm cap}(S_{\mathcal{B}}(\tilde{\mathcal D}))$, for some family $\mathcal{B}$ satisfying \eqref{eq:cgd=4.1}, where $\mathcal{D}$ refers to the collection generating $\mathcal{C}$, see \eqref{eq:cgd=4.5}, and $\tilde{\mathcal D} \subset \mathcal{D}$ is the sub-collection of $\mathcal{D}$ corresponding to the indices $\ell \in  \mathcal{L}(\mathbb 
T_{n_0-k}) $ appearing in $\tilde{\mathcal{C}}\subset \mathcal{C}$. It follows that
\begin{equation}
\label{eq:cgfinald=4.1}
(1-\rho)^{-1} {\rm cap}(\Sigma(\tilde{\mathcal{C}})) \stackrel{\eqref{eq:kappa_cap}}{\geq} \kappa_{n_0,k} \stackrel{\eqref{eq:coarse_capacity2}}{\geq} c K^{-(d-2)} N,
\end{equation}
where the last inequality is obtained by combining the fact that $2^{n_0} \geq cN$ due to \eqref{eq:cgd=4.3} (see also the note following \eqref{eq:coarsegrain_r_n}) and observing that 
\begin{equation}
\label{eq:cgfinald=4.2}
\begin{split}
2^{-k}\kappa_{k,k} 
&\stackrel{\eqref{eq:kappa_cap}}{\geq} c L_{k}^{-1} \text{cap}(B_{r})  \stackrel{\eqref{eq:condition_r}}{\geq } c K^{-(d-2)} \inf_{m \geq 0} L_m^{-1}\text{cap}(B_{\varepsilon_m L_m}) \\
&\stackrel{\eqref{eq:capball}}{\geq } c K^{-(d-2)} \inf_{m\geq 0} L_m^{d-3}\varepsilon_m^{d-2} \stackrel{\eqref{eq:coarsegrain_r_n}}{\geq} c'K^{-(d-2)}.
\end{split}
\end{equation}
The bound \eqref{eq:cg_capd=3} follows immediately from \eqref{eq:cgfinald=4.1}. If $\Lambda_N=B_N \setminus B_{\varepsilon N}$ for some $\varepsilon \in (0,\frac13)$, one simply sets $\mathcal{A}_{N,L}^{K}(\Lambda_N)\coloneqq \mathcal{A}_{N,L}^{K}(B_N \setminus B_{N/2})$, which has the desired properties. This completes the proof of Proposition~\ref{prop:coarse_paths} for $d\geq 4$.  \hfill $\square$



\section{Upper bounds}\label{sec:upper}
Using the coarse-graining scheme developed in the last section, see in particular Proposition~\ref{prop:coarse_paths}, we  now derive companion upper bounds to the lower bounds obtained in Theorem~\ref{P:lb}.  
The main result of this section is:
\begin{theorem}[Upper bounds] \label{P:ub} $\quad$
	\begin{enumerate}
		\item[i)] If $d=3$, then 
		\begin{align}
		&\text{for all $h > h_{*}$, }\limsup_{N\to\infty}\, \frac{\log N}{N} \log \P[\lr{}{\varphi\geq h}{0}{\partial B_N}
		] \leq -\frac{\pi}{6}(h - h_{*})^2,  \label{eq:main_ubsubcrit_d=3} \\
		&\text{for all $h< h_*$, }\limsup_{N\to\infty}\, \frac{\log N}{N} \log \P[\lr{}{\varphi\geq h}{0}{\partial B_N},\, \nlr{}{\varphi\geq h}{0}{\infty}
		] \leq -\frac{\pi}{6}(h_* - h)^2. \label{eq:main_ubsupercrit_d=3}
		\end{align}
		\item[ii)] If $d \geq 4$, then
	\begin{align}
		&\text{for all $h > h_{*}$, }\limsup_{N\to\infty}\, \frac{1}{N} \log \P[\lr{}{\varphi\geq h}{0}{\partial B_N}
		] < 0,  \label{eq:main_ubsubcrit_d>4} \\
		&\text{for all $h< h_*$, }\limsup_{N\to\infty}\, \frac{1}{N} \log \P[\lr{}{\varphi\geq h}{0}{\partial B_N},\, \nlr{}{\varphi\geq h}{0}{\infty}
		] < 0. \label{eq:main_ubsupercrit_d>4}
	\end{align}
	\end{enumerate}
		Moreover, the bounds \eqref{eq:main_ubsupercrit_d=3} and \eqref{eq:main_ubsupercrit_d>4} also hold for the events $ \textnormal{LocUniq}(N,h)^{c}$ and $\text{2-arms}(N,h)$ (see~\eqref{eq:def_locuniq} and \eqref{eq:intro2arm}) in place of $ \displaystyle \{ \lr{}{\varphi\geq h}{0}{\partial B_N},\, \nlr{}{\varphi\geq h}{0}{\infty}\}$.
\end{theorem}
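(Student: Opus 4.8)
The plan is to prove Theorem~\ref{P:ub} by treating the subcritical regime $h>h_*$ (Section~\ref{sec:upper_sub}) and the supercritical regime $h<h_*$ (Section~\ref{sec:upper_sup}) separately, in both cases using Proposition~\ref{prop:coarse_paths} as the engine. Given an occurrence of $\{\lr{}{\varphi\geq h}{0}{\partial B_N}\}$ (or of a crossing of an annulus $\Lambda_N\in\mathcal{S}_N$, see~\eqref{eq:scriptS_N}), one picks a $*$-path realizing this crossing and applies the coarse-graining: this produces a $\Gamma$-admissible family $\mathcal{A}=\mathcal{A}_{N,L}^K(\Lambda_N)$ and, for the realized configuration, a collection $\mathcal{C}\in\mathcal{A}$ such that $\{\varphi\geq h\}$ crosses $\tilde{D}_z\setminus C_z$ for every $z\in\mathcal{C}$. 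Fixing $\mathcal{C}$ costs an entropy factor $|\mathcal{A}|\leq e^{\Gamma(N/L)}$, which for $d=3$ is $e^{(\Cr{C:cg_complexity}/K)(N/L)\log(N/L)}$ --- subleading compared to $N/\log N$ once $L$ is chosen to grow poly-logarithmically in $N$ --- and for $d\geq4$ is $e^{\Cr{C:cg_complexity}N/L}$, affordable for $L$ a large constant.

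For $h>h_*$, the argument is a two-scale bootstrap. At a base scale $L$, decompose $\varphi=\xi^z+\psi^z$ in each box $U_z$, cf.~\eqref{eq:decomp_z}: for a threshold $a\in(0,h-h_*)$, the event that $\{\varphi\geq h\}$ crosses $\tilde{D}_z\setminus C_z$ is contained in the union of the event that the \emph{local} field $\psi^z$ has a crossing of $\tilde{D}_z\setminus C_z$ at level $h-a$ and the event $\{\sup_{D_z}\xi^z\geq a\}$. The fields $(\psi^z)_{z\in\mathcal{C}}$ are independent by \eqref{eq:decomp_boxes}, and each local crossing probability is bounded by the a-priori one-arm estimate at scale $L$ (started from the stretched-exponential bound~\eqref{eq:def_h**}), contributing $e^{-c|\mathcal{C}|L^{\Cr{c:strexp}}}$ with $|\mathcal{C}|\asymp N/L$, hence negligible at scale $N/\log N$. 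The contribution of the harmonic averages, over all but a fraction $\rho|\mathcal{C}|$ of boxes, is controlled by Lemma~\ref{lem:BTIS_Szn}: its exponent is $-\tfrac12(a-o(1))_+^2\,\mathrm{cap}(\Sigma(\tilde{\mathcal{C}}))/\alpha(K)$, and the capacity lower bound~\eqref{eq:cg_capd=3} gives $\mathrm{cap}(\Sigma(\tilde{\mathcal{C}}))\geq\lambda(K)(1-\rho)\,\mathrm{cap}(T_N)$, which by \eqref{eq:cap_line_asymp} is $\sim\lambda(K)(1-\rho)\frac{\pi}{3}\frac{N}{\log N}$ when $d=3$ and $\geq cK^{-(d-2)}N$ when $d\geq4$. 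Summing over $\mathcal{C}\in\mathcal{A}$ against the entropy factor, optimizing over $a\uparrow h-h_*$ and $\rho\downarrow0$, and letting $K\to\infty$ so that $\lambda(K),\alpha(K)\to1$ yields a two-scale recursion from scale $L$ to scale $N$; iterating it a bounded number of times with $L=L(N)$ poly-logarithmic (along the lines of Proposition~\ref{prop:bootstrap}) upgrades \eqref{eq:def_h**} to \eqref{eq:main_ubsubcrit_d=3} and \eqref{eq:main_ubsubcrit_d>4}. The prefactor $\tfrac{\pi}{6}(h-h_*)^2=\tfrac12\cdot\tfrac{\pi}{3}(h-h_*)^2$ is exactly the Gaussian exponent $\tfrac12 a^2\,\mathrm{cap}(T_N)$ at the optimal $a=h-h_*$.

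For $h<h_*$, connections become typical, and the cost measures the difficulty of insulating the finite cluster of $0$ from the infinite cluster $\mathscr{C}$. One introduces a good event $G_N$ (cf.~\eqref{def:goodevent}) demanding that any macroscopic path hits $a_N$ microscopic ``contact regions'' --- local areas at a fixed microscopic scale enjoying an insertion-tolerance property formulated through the midpoint decomposition $\tilde{\varphi}=\hat{\xi}+\hat{\psi}$ of \eqref{eq:LB1_dim4}, the point being that the $\hat{\psi}_x$ are i.i.d.\ by~\eqref{eq:LB2_dim4}, which restores a finite-energy surgery at an i.i.d.\ cost --- within each of $b_N$ disjoint interfaces that are connected to infinity in $\{\varphi\geq h\}$. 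On $G_N$, Lemma~\ref{lem:blocking_layers} yields $b_N$ nested $*$-connected blocking layers in $\{\varphi<h\}$ shielding $B_N$, so that $\{\lr{}{\varphi\geq h}{0}{\partial B_N},\,\nlr{}{\varphi\geq h}{0}{\infty}\}\cap G_N$ forces, at each of the $a_Nb_N$ contact points, the failure of an independent local event of probability bounded away from $1$; this produces a bound of order $e^{-c\,a_Nb_N}$, and one needs $a_Nb_N\geq cN$ for $d\geq4$ and $a_Nb_N\gg N/\log N$ for $d=3$. The main work is to show $\P[G_N^c]$ is negligible at these scales, via a second bootstrap (Proposition~\ref{lem:inclusion_fxn_goodevent}): starting from a localized event $\mathcal{G}_z$ at a base scale $L$ much larger than the microscopic scale (with a single contact point, for which an a-priori estimate combining local uniqueness~\eqref{eq:def_hbar} and the disconnection bound of \cite[Theorem~5.5]{Sz15} is available), the scheme either reproduces $\mathcal{G}_z$ at a larger scale while increasing the number of contact points, or, in the final step, stacks good $L$-boxes to build the $b_N$ interfaces, witnessing the improvement on a coarse-grained event to which the same local-field/harmonic-average dichotomy --- now governing $\P[G_N^c]$ and again drawing on Lemma~\ref{lem:BTIS_Szn} and~\eqref{eq:cg_capd=3} --- applies. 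The bounds~\eqref{eq:main_ubsupercrit_d=3} and~\eqref{eq:main_ubsupercrit_d>4} follow, and the $\text{LocUniq}(N,h)^c$ variant results from running the same argument with $\Lambda_N=B_{2N}\setminus B_N$ in place of $B_N$, using the corresponding element of $\mathcal{S}_N$ in Proposition~\ref{prop:coarse_paths}.

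I expect the supercritical case to be the main obstacle. The delicate points are to define $G_N$ so that both (i) $\P[G_N^c]$ is small at the capacity-driven scale and (ii) on $G_N$ one genuinely gains an i.i.d.\ insertion-tolerance cost in spite of the strong, algebraically decaying correlations of $\varphi$; the Dirichlet-boundary-condition decoupling required to handle the opposite monotonicities of the connecting path and the insulating interface (flagged in the introduction) is where most of the technical effort will go.
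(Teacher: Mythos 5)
Your overall architecture is the paper's: the subcritical part (coarse-graining via Proposition~\ref{prop:coarse_paths}, the $\psi/\xi$ dichotomy with independence of the local fields, Lemma~\ref{lem:BTIS_Szn} played against the capacity bound \eqref{eq:cg_capd=3}, entropy $\Gamma(N/L)$, and a poly-logarithmic bootstrap as in Proposition~\ref{prop:bootstrap}) is an accurate account, and the supercritical skeleton (a good event as in \eqref{def:goodevent} with contact points, insertion tolerance through the midpoint decomposition \eqref{eq:LB1_dim4}--\eqref{eq:LB2_dim4}, and a second bootstrap as in Proposition~\ref{lem:inclusion_fxn_goodevent}) also matches. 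Two points in your supercritical sketch, however, do not work as written.

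First, Lemma~\ref{lem:blocking_layers} is not what converts $G_N$ into the bound $e^{-c\,a_N b_N}$, and there are no ``blocking layers in $\{\varphi<h\}$'' in that step: the gain on $G_N$ comes from an exploration of the cluster of $0$ combined with the conditional product-measure representation \eqref{eq:insert_tol}, each visit to a contact region giving an independent, uniformly positive chance of merging with the ambient cluster (this is Lemma~\ref{lem:truncated}); Lemma~\ref{lem:blocking_layers} is used instead \emph{inside} the renormalization, on the coarse lattice $\mathbb{L}$, to stack the good $L$-boxes into the $b$ disjoint interfaces (proof of Proposition~\ref{lem:inclusion_fxn_goodevent}). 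Second, and more substantially, you build ``connected to infinity in $\{\varphi\ge h\}$'' into the definition of $G_N$; taken literally this is circular, since bounding $\P[G_N^c]$ would then require an a-priori disconnection-from-infinity estimate of essentially the precision one is trying to prove, which is not available and cannot come out of the local seed event $\mathcal G_z$. The paper's $G_N$ only demands ambient clusters at a level $h'>h$ inside $\Lambda_N$; the connection to infinity is paid for by separate error terms in \eqref{eq:truncated}--\eqref{eq:locauniq}, and for the truncated one-arm event the required bound on the probability that $B_{\sigma N}$ fails to reach infinity in $\{\varphi\ge h'\}$ is itself obtained only \emph{after} the $\textnormal{LocUniq}$ bound, by chaining local uniqueness over dyadic scales with the finite-volume disconnection bound of \cite[Theorem~5.5]{Sz15}, cf.~\eqref{eq:disconnect_infty}; so the $\textnormal{LocUniq}$ case must be handled first, not as a variant at the end. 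Two minor further corrections: the a-priori estimate for $\mathcal G_z$ (Lemma~\ref{lem:loc_amb_cluster1}) comes from the sprinkled renormalization of \cite{DPR18} applied on $\tilde{\Z}^d$ with seed bounds \eqref{eq:EXISTUNIQUE}, not directly from \eqref{eq:def_hbar} together with \cite{Sz15}; and the ``Dirichlet boundary conditions / opposite monotonicity'' difficulty you flag at the end pertains to the supercritical \emph{lower} bound of Section~\ref{sec:lower}, not to Theorem~\ref{P:ub}.
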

In spite of a common thread, the treatment of the subcritical ($h>h_*$) and supercritical 
($h<h_*$) regimes involve significantly different ideas. The supercritical case is more 
involved, mostly due to the additional disconnection constraint present in the events. 
Correspondingly, the upper bounds of Theorem~\ref{P:ub} are furnished separately in two 
subsections. Section~\ref{sec:upper_sub} contains the proof of \eqref{eq:main_ubsubcrit_d=3} 
and \eqref{eq:main_ubsubcrit_d>4}, Section~\ref{sec:upper_sup} that of 
\eqref{eq:main_ubsupercrit_d=3} and \eqref{eq:main_ubsupercrit_d>4}.
 
\subsection{Upper bounds for the subcritical phase}\label{sec:upper_sub}
We start by giving an overview of the proof strategy leading to \eqref{eq:main_ubsubcrit_d=3} and \eqref{eq:main_ubsubcrit_d>4}. To any path connecting $0$ to $\partial 
B_N$ in $\{\varphi \ge h\}$, one associates, in view of 
Proposition~\ref{prop:coarse_paths}, a collection of well-separated boxes of carefully chosen size $L \ll N$, 
each containing a box-to-box crossing at scale $L$. By the decomposition \eqref{eq:decomp_z} of $\varphi$ into the sum of 
$\psi$ and $\xi$ within each such box, it follows that when $h > h_*$, either $\xi$ is atypical for all but a small proportion of the boxes, or the localized version of the event, involving crossings in $\{\psi \ge h_{*} + \varepsilon\}$, behaves atypically for the remaining boxes. The corresponding events $E_{N, L}$ and $F_{N, L}$ (for $\psi$ and $\xi$, respectively), are defined in \eqref{eq:EandF} below. Together, they yield the central estimate \eqref{eq:bnd_psi_xi}, which drives the subsequent upper bounds. The key control on the event $F_{N, L}$ involving the harmonic average, derived in Lemma~\ref{lem:bnd_xibad} for $d=3$ and Lemma~\ref{lem:bnd_xibad4} for $d \geq 4$, is obtained by combining Lemma~\ref{lem:BTIS_Szn} and the capacity estimates of \eqref{eq:cg_capd=3}. 
The resulting bound ends up carrying the leading order in \eqref{eq:main_ubsubcrit_d=3}. The localized event $E_{N, L}$ is dealt with in Lemmas~\ref{lem:bnd_psibadstep1} and~\ref{lem:bnd_psibadd4}, and essentially inherits a given a-priori estimate (for instance \eqref{eq:def_h**}). Pitting the resulting bounds against the entropy factor \eqref{eq:cg_gamma} coming from the choice of coarse-grainings in Proposition~\ref{prop:coarse_paths} leads to an improved bound on the one-arm probability, for suitably chosen box sizes $L$. This scheme can be applied as a bootstrapping mechanism, see
Proposition~\ref{prop:bootstrap} below, thus yielding the desired bound \eqref{eq:main_ubsubcrit_d=3} starting from \eqref{eq:def_h**} in several steps (in contrast, a single step suffices when $d \geq 4$).

\medskip

We now render the above precise. Let $h > h'$ and $\varepsilon \in (0, h - h')$. Referring to the notations from \eqref{eq:LL}--\eqref{eq:decomp_z}, given $L \ge 1$, $K \geq 100$ and a vertex $z\in \mathbb L = \mathbb L(L)$, we introduce the events
\begin{align}
	&\{\text{$z$ is $\psi$-bad}\} \coloneqq \{\lr{}{\psi^z\geq h' + \frac{\varepsilon}{4}}{C_z}{\partial \tilde C_z}\}, \mbox{ and} \label{eq:psibadsub}\\
	&\{\text{$z$ is $\xi$-bad}\} \coloneqq\big\{ \sup_{D_z}\xi^z \geq h-h' -\textstyle\frac{\varepsilon}{4}\big\}.\label{eq:xibadsub}
\end{align}
We also refer to the box $C_z$ as $\psi/\xi$-bad whenever $z$ is $\psi/\xi$-bad. Next, for any $N \ge 4K L$ and and $\rho \in (0, 1)$, consider the events
\begin{equation}
\label{eq:EandF}
\begin{split}
	&E_{N, L} = E_{N, L}^K(\rho, h, h', \varepsilon):=\left\{
	\begin{array}{c}
		\text{$\exists\,\mathcal{C}\in \mathcal A_{N, L}$ and $\tilde{\mathcal{C}}\subset \mathcal{C}$ with  $|\tilde{\mathcal{C}}| = \lceil\rho 
|\mathcal C|\rceil$} \\ 
		\text{such that all the sites $z \in \tilde{\mathcal{C}}$ are $\psi$-bad}
	\end{array}
	\right\}, \\
	&F_{N, L} = F_{N, L}^K(\rho, h, h', \varepsilon):=\left\{
	\begin{array}{c}
		\text{$\exists\,\mathcal{C}\in \mathcal{A}_{N, L}$ and $\tilde{\mathcal{C}}\subset \mathcal{C}$ with $|\tilde{\mathcal{C}}|=|\mathcal{C}|- \lceil\rho 
|\mathcal C|\rceil$} \\ 
		\text{such that all the sites $z\in \tilde{\mathcal{C}}$ are $\xi$-bad},
	\end{array}
	\right\}
\end{split}
\end{equation}
where $\mathcal A_{N, L} = \mathcal{A}^K_{N,L}(B_N)$ is the admissible collection given by Proposition~\ref{prop:coarse_paths}.
In applications below, the events $E_{N, L}$ and $F_{N, L}$ will typically be `bad', i.e.~have low probability, and $\rho$ will be close to $0$.  Since $\varphi = \xi^z + \psi^z$ on $U_z \supset 
\tilde{C}_z$ and $D_z \supset \tilde{C}_z$, it is then a consequence of the property~\eqref{def:admissible2} of $\mathcal A_{N, L}$ and \eqref{eq:psibadsub}, \eqref{eq:xibadsub}, \eqref{eq:EandF} that
\begin{equation}\label{eq:bnd_psi_xi}
\P[\lr{}{\varphi\geq h}{0}{\partial B_N}]\leq \P[E_{N, L}]+ \P[F_{N, L}].
\end{equation} 

The following (a-priori) bound will be useful in dealing with \eqref{eq:psibadsub} and the event $E_{N, L}$ in \eqref{eq:EandF}. It will also apply to a different notion of $\psi$-badness in the next subsection, hence the general formulation. Consider an arbitrary increasing set $A \in \mathcal B(\R^{\tilde C_z})$. Then, in the notation of \eqref{eq:LBevent}, for all $\varepsilon >0$ there exists $\Cr{c:cap}(\varepsilon) \in (0,1)$ increasing in $\varepsilon$ with the property that, if for some $h'\in \R$ and $L \geq 1$,
\begin{equation}
\label{eq:inputbnd}
\P[A^{h'}(\varphi)] \le e^{-2f(L)}\,\,\mbox{with } \log 2 \le f(L) \le \Cr{c:cap}(\varepsilon) L,
\end{equation}
then
\begin{equation}
\label{eq:psibad}
\P\big[A^{h' + \frac\varepsilon{4}} (\psi^z)\big] \le e^{-f(L)}.
\end{equation}
Indeed, \eqref{eq:psibad} follows immediately from the decomposition $\varphi = \xi^z + \psi^z$ valid on $U_z$, whence
\begin{align*}
	\P\big[A^{h' + \frac\varepsilon{4}} (\psi^z)\big] 	\le\P[A^{h'}(\varphi)] + \P\big[\,\inf_{D_z} \xi^z \le -\varepsilon/4\,\big] 
	\le  \P[A^{h'}(\varphi)] + e^{-2\Cl[c]{c:cap}(\varepsilon) L} \stackrel{\eqref{eq:inputbnd}}{\le}  e^{-f(L)},
\end{align*}
where in the penultimate step we used Lemma~\ref{lem:BTIS_Szn} for the singleton $\mathcal C 
\coloneqq \{z\}$ along with the lower bound  ${\rm cap}(C_z) \ge c L$ from \eqref{eq:capball} (valid for all $d \ge 3$).

\medskip

At this point we consider the cases $d=3$ and $d \geq 4$ separately.

\medskip

\noindent {\bf Upper bound for $d = 3$.} Recall from \eqref{eq:def_h**} that the quantity $\P[\lr{}{\varphi\geq 
h}{0}{\partial B_N}]$ decays stretched exponentially in $N$ for every $h >h_{*}$ with some exponent $\beta = \Cr{c:strexp}(h) \in (0, 1)$. In what follows, we will bootstrap this decay to 
the one asserted by \eqref{eq:main_ubsubcrit_d=3} in -- as will soon turn out to be 
necessary -- two steps. This is encapsulated in the following proposition, from which the upper bound \eqref{eq:main_ubsubcrit_d=3} will quickly follow.
\begin{proposition}[Bootstrap]\label{prop:bootstrap}
	Let $h' \in \R$ and $\beta' \in (0, 1)$ be such that
	\begin{equation}
	\label{eq:bootstrapinput}
    \limsup_{N \to \infty}\frac{1}{N^{\beta'}}\log \P[\lr{}{\varphi\geq h'}{0}{\partial B_N}] < 0.
	\end{equation}
	Then for all $h > h'$, the following improved bounds hold, depending on the value of $\beta'$. If $\beta' \le 1/2$, then
	\begin{equation}
	\label{eq:bootstrapoutput1}
	\limsup_{N \to \infty}\frac{1}{N^\beta}\log \P[\lr{}{\varphi\geq h}{0}{\partial B_N}] < 0 \mbox{ for every }\beta < 1,
	\end{equation}
    whereas if $\beta' > 1/2$, then
		\begin{equation}
	\label{eq:bootstrapoutput2}
	\limsup_{N \to \infty}\frac{\log N}{N}\log \P[\lr{}{\varphi\geq h}{0}{\partial B_N}] \le \frac{\pi}{6}(h - h')^2.
	\end{equation}
\end{proposition}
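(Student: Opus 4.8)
The strategy is to feed the a-priori bound \eqref{eq:bootstrapinput} into the two-scale estimate \eqref{eq:bnd_psi_xi} and optimize over the intermediate scale $L=L(N)$. Fix $h>h'$, pick $\varepsilon\in(0,h-h')$, and set $h'':=h'+\varepsilon$ (or some point strictly between $h'$ and $h$; the precise bookkeeping will involve applying the $\psi$-estimate at a slightly shifted height). First I would record the estimate for the $\psi$-term: since $\{\lr{}{\psi^z\geq h'+\frac\varepsilon4}{C_z}{\partial\tilde C_z}\}$ is an increasing event of the form $A^{h'+\varepsilon/4}(\psi^z)$, display \eqref{eq:psibad} applies with $f(L)$ of the order of $\log\P[\lr{}{\varphi\geq h'}{C_z}{\partial\tilde C_z}]^{-1}\gtrsim c L^{\beta'}$ (for $L$ large, using \eqref{eq:bootstrapinput} and that $f(L)\le\Cr{c:cap}L$), so each site is $\psi$-bad with probability at most $e^{-cL^{\beta'}}$. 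By independence of the fields $\psi^z$ over the well-separated collection (cf.~\eqref{eq:decomp_boxes}), a union bound over $\mathcal A_{N,L}$ and over sub-collections $\tilde{\mathcal C}$ of size $\lceil\rho|\mathcal C|\rceil$ gives
\[
\P[E_{N,L}]\le |\mathcal A_{N,L}|\,2^{|\mathcal C|}\,e^{-c\rho|\mathcal C|L^{\beta'}}\le \exp\{\Gamma(N/L)-c\rho\,\tfrac{N}{L}\,L^{\beta'}+C\tfrac NL\},
\]
using $|\mathcal C|\asymp N/L$ from \eqref{def:admissible1} (with $u(x)=x$ in $d=3$) and $\Gamma(r)=\Cr{C:cg_complexity}K^{-1}r\log r$ from \eqref{eq:cg_gamma}. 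So $\log\P[E_{N,L}]\le -c\tfrac NL\big(L^{\beta'}-C_K\log(N/L)\big)$.

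Next the $\xi$-term. Here I would invoke the harmonic-average control, i.e.\ Lemma~\ref{lem:BTIS_Szn} combined with the capacity lower bound \eqref{eq:cg_capd=3}: on $F_{N,L}$ there is a sub-collection $\tilde{\mathcal C}$ of size $(1-\rho)|\mathcal C|$ all of whose boxes have $\sup_{D_z}\tilde\xi^z\ge h-h'-\varepsilon/4=:a$, so with $\tilde\Sigma=\Sigma(\tilde{\mathcal C})$,
\[
\P[F_{N,L}]\le |\mathcal A_{N,L}|\,2^{|\mathcal C|}\sup_{\tilde{\mathcal C}}\P\Big[\bigcap_{z\in\tilde{\mathcal C}}\{\sup_{D_z}\tilde\xi^z\ge a\}\Big]\le \exp\big\{\Gamma(N/L)+C\tfrac NL-\tfrac12(a-o(1))_+^2\tfrac{\mathrm{cap}(\tilde\Sigma)}{\alpha(K)}\big\},
\]
and by \eqref{eq:cg_capd=3}, $\mathrm{cap}(\tilde\Sigma)\ge\lambda(K)(1-\rho)\mathrm{cap}(T_N)\sim\lambda(K)(1-\rho)\frac\pi3\frac{N}{\log N}$ (using \eqref{eq:cap_line_asymp}), \emph{provided} $L_0(N)\le L\le L_1(N)$ with $L_1(N)$ growing subpolynomially. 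The entropy term $\Gamma(N/L)+C\tfrac NL=C_K\tfrac NL\log(N/L)$ is of order $\tfrac NL\log N$, which is negligible against $\tfrac{N}{\log N}$ as soon as $L/\log^2 N\to\infty$. Hence for such $L$,
\[
\limsup_N\frac{\log N}{N}\log\P[F_{N,L}]\le -\frac{\lambda(K)(1-\rho)}{2\alpha(K)}\Big(h-h'-\frac\varepsilon4\Big)^2\cdot\frac\pi3.
\]
Letting $K\to\infty$ (so $\lambda,\alpha\to1$), then $\rho\to0$ and $\varepsilon\to0$, the right side tends to $-\frac\pi6(h-h')^2$.

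Finally, balance the two terms by choosing $L=L(N)$. For \eqref{eq:bootstrapoutput2} (case $\beta'>1/2$): take $L=(\log N)^{\theta}$ with $\theta>2/\beta'$; then $L^{\beta'}=(\log N)^{\theta\beta'}\gg\log N\gtrsim\log(N/L)$, so the $E$-term obeys $\frac{\log N}{N}\log\P[E_{N,L}]\le -c(\log N)^{\theta\beta'+1}/L\to-\infty$, which dominates (is more negative than) any constant, while $L\ge(\log N)^2\cdot(\log\log N)\gg\log^2 N$ makes the $F$-bound above valid and $L\le L_1(N)$ subpolynomial; plugging into \eqref{eq:bnd_psi_xi} yields \eqref{eq:bootstrapoutput2}. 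For \eqref{eq:bootstrapoutput1} (case $\beta'\le1/2$): here the target is merely stretched-exponential with any exponent $\beta<1$, so take $L=N^{\delta}$ for small $\delta>0$; then $\Gamma(N/L)\asymp N^{1-\delta}\log N$, $\mathrm{cap}(\tilde\Sigma)\asymp N/\log N\gg N^{1-\delta}\log N$, so $\log\P[F_{N,L}]\le -cN/\log N$, and $\log\P[E_{N,L}]\le -cN^{1-\delta}(N^{\delta\beta'}-\log N)\le -cN^{1-\delta+\delta\beta'}$; choosing $\delta$ close enough to $1$ makes both exponents exceed $\beta$, giving \eqref{eq:bootstrapoutput1}. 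The main obstacle is the careful matching of scales: one must verify simultaneously that $L$ lies in the admissible window $[L_0(N),L_1(N)]$ of Proposition~\ref{prop:coarse_paths}, that $L$ is large enough to absorb the entropy factor $\Gamma(N/L)$ against $\mathrm{cap}(T_N)\sim\frac\pi3 N/\log N$, and that $L^{\beta'}$ beats the logarithmic entropy cost in the $\psi$-term — it is precisely the tension between "$L$ large" (for $F$) and "$L$ not too large" (for $E$, and for $L\le L_1(N)$) that forces the two-step structure when $\beta'\le1/2$, since then a single choice of $L$ cannot simultaneously reach exponent $1$.
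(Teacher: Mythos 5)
Your overall strategy is exactly the paper's: feed the a-priori bound into the two-scale estimate \eqref{eq:bnd_psi_xi}, bound $\P[E_{N,L}]$ via \eqref{eq:psibad}, independence and the entropy \eqref{eq:cg_gamma}, bound $\P[F_{N,L}]$ via Lemma~\ref{lem:BTIS_Szn} together with \eqref{eq:cg_capd=3} and \eqref{eq:cap_line_asymp}, then tune $L=L(N)$. However, your two scale choices contain genuine errors in the balancing, and as written neither case goes through.

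For $\beta'>1/2$ you take $L=(\log N)^\theta$ with $\theta>2/\beta'$ and assert that $\frac{\log N}{N}\log\P[E_{N,L}]\le -c(\log N)^{1+\theta\beta'-\theta}\to-\infty$. That limit requires $1-\theta(1-\beta')>0$, i.e.\ $\theta<\frac{1}{1-\beta'}$, and this is compatible with your lower bound $\theta>2/\beta'$ only when $\beta'>2/3$: for $\beta'\in(1/2,2/3]$ one has $2/\beta'\ge \frac{1}{1-\beta'}$, so every admissible $\theta$ in your prescription makes the $E$-term decay only like $\exp\{-cN/(\log N)^{\theta(1-\beta')}\}$ with $\theta(1-\beta')>1$, which is too slow and spoils \eqref{eq:bootstrapoutput2}. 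The correct window is $\theta\in\big(2,\tfrac{1}{1-\beta'}\big)$ (non-empty precisely because $\beta'>1/2$; note $1/\beta'<2$, so the entropy condition $\theta\beta'>1$ is automatic), which is what the paper's choice $(2+\theta)=\frac{1}{2\beta'(1-\beta')}$ realizes.

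For $\beta'\le1/2$ you take $L=N^\delta$. This lies outside the range $L\le L_1(N)$ with $\log L_1(N)/\log N\to0$ under which Proposition~\ref{prop:coarse_paths} (via Lemma~\ref{Claim:cap2}, whose proof uses $\log L/\log N\to0$) and Lemma~\ref{lem:bnd_xibad} are stated, so you cannot invoke \eqref{eq:cg_capd=3} to get $\mathrm{cap}(\tilde\Sigma)\gtrsim N/\log N$ without redoing the capacity argument for polynomial $L$. Moreover your optimization runs the wrong way: the $E$-exponent is $1-\delta(1-\beta')$, so to exceed a given $\beta<1$ you need $\delta<\frac{1-\beta}{1-\beta'}$, i.e.\ $\delta$ \emph{small}; with "$\delta$ close to $1$", as you write, the bound degenerates to essentially the input $N^{\beta'}$. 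The paper avoids both issues by keeping $L$ poly-logarithmic also in this case (e.g.\ $L=(\log N)^{2/\beta'}$), which stays inside the admissible window and already yields decay $\exp\{-cN/(\log N)^{C}\}$, more than enough for \eqref{eq:bootstrapoutput1}. With these two corrections your argument coincides with the paper's proof.
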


Assuming Proposition~\ref{prop:bootstrap} to hold, we first give the short:

\begin{proof}[Proof of \eqref{eq:main_ubsubcrit_d=3}]
Let $h > h_*$. By \eqref{eq:def_h**} we have \eqref{eq:bootstrapinput} at any height $h' > h_*$ with exponent $\beta' =  \Cr{c:strexp}(h') > 0$. Therefore, by \eqref{eq:bootstrapoutput1}, we obtain that \eqref{eq:bootstrapinput} holds for every $h' > h_*$ and $\beta' < 1$ -- in particular for $\beta' = 3/4$ (say). Consequently, we obtain the bound in \eqref{eq:bootstrapoutput2} for any $h' \in (h_*, h)$. The result now follows by sending $h' \to h_*$.
\end{proof}
\begin{remark}
	\label{remark:bootstrap}
Proposition~\ref{prop:bootstrap}  highlights in a transparent form the paradigm underlying our strategy to obtain sharp upper bounds. Indeed, a similar (but considerably more involved) bootstrapping mechanism is at work in the supercritical regime; see  
Section~\ref{sec:upper_sup}. The choice \eqref{eq:bootstrapinput} as a starting point for the 
bootstrap reflects the fact that  stretched exponential estimates naturally come out of the 
static renormalization arguments leading to the existence of a non-trivial subcritical regime, 
see~\cite{RoS13}. One could forego one step in deducing \eqref{eq:main_ubsubcrit_d=3} as 
\eqref{eq:bootstrapoutput1} is implied by the strongest available results \cite{PR15,PT12}, but 
our findings do not rely on these. Moreover, we will face similar issues in the supercritical regime, where such results are not available {\em a-priori}. In fact, one could even deduce the desired bound 
\eqref{eq:bootstrapoutput2} from a much weaker a-priori estimate than a stretched-exponential bound by bootstrapping a few more times, see Remark~\ref{remark:stronger_bootstrap} below.
	\end{remark}
	\medskip
We now aim at showing Proposition~\ref{prop:bootstrap}. Its proof combines individual estimates for $\P[E_{N, L}]$ and
$ \P[F_{N, L}]$, cf.~\eqref{eq:EandF} and \eqref{eq:bnd_psi_xi}, which are supplied in the following two lemmas.

	\begin{lemma}\label{lem:bnd_psibadstep1} $(\rho \in (0, 1), \, K \geq 100, \, h > h', \, \varepsilon \in (0, h - h')).$ 
If \eqref{eq:inputbnd} holds with $A^{h'} (\varphi)= \{\lr{}{\varphi \ge h'}{C_0}{\partial {\tilde C}_0}\}$ for some $L \geq 1$, then with $E_{N, L}=  E_{N, L}^K(\rho, h, h', \varepsilon)$, for all $N \ge 10K L$ one has
\begin{equation}
\label{eqn:bnd_psibadstep1}
\log \P[E_{N, L}] \le n(C \log (nK) - \rho f(L)),
	\end{equation}	
	 where $n = |\mathcal C|$ for any $ \mathcal{C} \in \mathcal A$ (cf. \eqref{def:admissible1} in Definition~\ref{def:admissible}).
\end{lemma}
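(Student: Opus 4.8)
The plan is to union bound over the choices of $\mathcal{C} \in \mathcal{A}$ and of the sub-collection $\tilde{\mathcal{C}} \subset \mathcal{C}$, and then exploit independence of the localized $\psi$-fields over the well-separated boxes $\tilde{C}_z$, $z \in \tilde{\mathcal{C}}$. First I would recall that by \eqref{def:admissible3} and the form of $\Gamma$ in \eqref{eq:cg_gamma}, the number of collections satisfies $\log|\mathcal{A}| \le \Gamma(N/L) \le \Cr{C:cg_complexity} K^{-1}(N/L)\log(N/L)$; but I should express this directly in terms of $n = |\mathcal{C}|$. Since $n \in [\Cr{c:nLB} N/(u(KL)), N/u(KL)]$ with $u(x) = x$ for $d=3$, we have $N/L \asymp nK$, hence $\log|\mathcal{A}| \le C\Cr{C:cg_complexity}K^{-1} \cdot nK \cdot \log(CnK) \le Cn\log(nK)$ (absorbing $\Cr{C:cg_complexity}$ and the constants into $C$; note $K^{-1}$ only helps). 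The number of choices of $\tilde{\mathcal{C}} \subset \mathcal{C}$ with $|\tilde{\mathcal{C}}| = \lceil \rho n\rceil$ is at most $2^n \le e^{n}$, which is also $\le e^{Cn\log(nK)}$. Combining, the total combinatorial factor is at most $e^{Cn\log(nK)}$.

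Next, for a fixed pair $(\mathcal{C}, \tilde{\mathcal{C}})$, I would bound the probability that every $z \in \tilde{\mathcal{C}}$ is $\psi$-bad. Recall from \eqref{eq:psibadsub} that $\{z \text{ is } \psi\text{-bad}\} = \{\lr{}{\psi^z \ge h' + \varepsilon/4}{C_z}{\partial \tilde{C}_z}\} = A^{h' + \varepsilon/4}(\psi^z)$ with $A^{h'}(\varphi) = \{\lr{}{\varphi \ge h'}{C_0}{\partial \tilde{C}_0}\}$ (using translation invariance). Since the event $\{z \text{ is }\psi\text{-bad}\}$ depends only on $\psi^z$ restricted to $\tilde{C}_z \subset U_z$, and the fields $\{\psi^z : z \in \mathcal{C}\}$ are independent by \eqref{eq:decomp_boxes} (the mutual $|\cdot|_\infty$-distance between sites of $\mathcal{C}$ is at least $2KL+L$, so the enlarged boxes $U_z$ are disjoint — this is exactly condition \eqref{eq:C:cond1}), the events $\{z \text{ is }\psi\text{-bad}\}$, $z \in \tilde{\mathcal{C}}$, are independent. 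Hence
\[
\P\big[\text{all } z \in \tilde{\mathcal{C}} \text{ are }\psi\text{-bad}\big] = \prod_{z \in \tilde{\mathcal{C}}} \P\big[A^{h' + \varepsilon/4}(\psi^z)\big] \le e^{-|\tilde{\mathcal{C}}| f(L)} \le e^{-\rho n f(L)},
\]
where the first inequality uses \eqref{eq:psibad} (which is applicable precisely because $A$ is increasing and \eqref{eq:inputbnd} holds by hypothesis for our choice of $A$), and the last uses $|\tilde{\mathcal{C}}| = \lceil \rho n\rceil \ge \rho n$. Putting the two pieces together via a union bound,
\[
\P[E_{N,L}] \le e^{Cn\log(nK)} \cdot e^{-\rho n f(L)} = e^{n(C\log(nK) - \rho f(L))},
\]
which is \eqref{eqn:bnd_psibadstep1}.

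The only mildly delicate point — and thus the step I would be most careful about — is the bookkeeping translating $\Gamma(N/L)$ into a bound of the form $Cn\log(nK)$: one must check that $N/L$ and $nK$ are comparable up to multiplicative constants (which follows from \eqref{def:admissible1} with $u(x)=x$ since $n \asymp N/(KL)$), and that the logarithmic and linear factors combine correctly, with the benign $K^{-1}$ prefactor in $\Gamma$ simply absorbed. Everything else is a routine union bound plus the independence of the local fields over the well-separated boxes, which is guaranteed by the separation condition \eqref{eq:C:cond1} built into the admissible collection, together with the a-priori input \eqref{eq:psibad}. No feature specific to $d=3$ is used beyond $u(x)=x$; the same argument with $u(x) = x(\log x)^2$ would give the analogous statement in higher dimensions.
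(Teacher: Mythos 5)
Your proposal is correct and follows essentially the same route as the paper: a union bound over $\mathcal{C}\in\mathcal{A}$ and $\tilde{\mathcal{C}}\subset\mathcal{C}$, with $\log|\mathcal{A}|\le Cn\log(nK)$ obtained from \eqref{def:admissible3}, \eqref{eq:cg_gamma} and the lower bound on $n$ in \eqref{def:admissible1}, followed by independence of the local fields $\psi^z$ over the well-separated boxes via \eqref{eq:decomp_boxes}, translation invariance, and the a-priori bound \eqref{eq:psibad}. The only cosmetic difference is that the paper bounds the number of sub-collections by $\binom{n}{\lceil\rho n\rceil}$ rather than $2^n$, which is immaterial since both are absorbed into $e^{Cn\log(nK)}$.
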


\begin{proof} On account of \eqref{eq:EandF} and by a union bound, one obtains
	\begin{equation}\label{eq:pf_bnd_psibad_1}
		\begin{split}
			\P[E_{N, L}]&\leq |\mathcal{A}_{N, L}| {n\choose \lceil \rho n \rceil} \sup_{\mathcal{C}\in \mathcal{A}_{N, L}} \sup_{\substack{\tilde{\mathcal{C}}\subset \mathcal{C} \\ |\tilde{\mathcal{C}}|= \lceil \rho n \rceil}}  \P[\text{$z$ is $\psi$-bad},\, \forall z\in \tilde{\mathcal{C}}]\\
			&\leq e^{Cn\log (nK)} \sup_{\mathcal{C}\in \mathcal{A}_{N, L}} \sup_{\substack{\tilde{\mathcal{C}}\subset \mathcal{C} \\ |\tilde{\mathcal{C}}|=\lceil \rho n \rceil}}  \P[\text{$z$ is $\psi$-bad},\, \forall z\in \tilde{\mathcal{C}}]
		\end{split}
	\end{equation}
where the second line follows using \eqref{def:admissible3}, \eqref{eq:cg_gamma} and the lower bound on $n$ from \eqref{def:admissible1} in order to bound $|\mathcal A_{N, L}|$. Now, due to the independence property \eqref{eq:decomp_boxes} and by translation invariance, one has, for any $\tilde{\mathcal{C}}\subset\mathcal{C}\in \mathcal{A}$ with $|\tilde{\mathcal{C}}|= \lceil \rho n \rceil$, since \eqref{eq:inputbnd} holds,
	\begin{equation*}
	\P[\text{$z$ is $\psi$-bad},\, \forall z\in \tilde{\mathcal{C}}] = \P[\text{$0$ is $\psi$-bad}]^{\lceil\rho n \rceil} \stackrel{\eqref{eq:psibadsub}, \eqref{eq:psibad}}{\le} \ e^{-\rho n f(L)},
	\end{equation*}
	which together with \eqref{eq:pf_bnd_psibad_1} gives \eqref{eqn:bnd_psibadstep1}.
\end{proof}

\medskip

Next we present the relevant bound for $\P[F_{N, L}]$. Fix any function $L_1(N)$ such that $(\log L_1(N )/\log \log N) \to \infty$ and $(\log L_1(N) / \log N) \to 0$ (cf.~below \eqref{eq:cg_capd=3} in Proposition~\ref{prop:coarse_paths}). As will become clear, as soon as $L$ grows fast enough with $N$, the `energy term' stemming from $F_{N,L}$ will dominate the `entropy term' arising from the relevant union bound (similar to \eqref{eq:pf_bnd_psibad_1} above).

\begin{lemma}\label{lem:bnd_xibad}$(\rho \in (0, 1), \, K \geq 100, \, h > h', \, \varepsilon \in (0, h - h')).$ 
For any $\theta > 0$, one has
	\begin{equation}\label{eq:bnd_xibad} 	
\limsup_{N \to \infty}\,\sup_{L \in [ (\log N)^{2+\theta}, L_1(N)]}\frac{\log N}{N}\log \P[F_{N, L}] \leq
- \frac{\pi}{6} \frac{\lambda(K)(1-\rho)}{\alpha(K)} (h - h' - \varepsilon/4)^2,
	\end{equation}
with $\alpha(K)$ and $\lambda(K)$ as appearing in Lemma~\ref{lem:BTIS_Szn} and Proposition~\ref{prop:coarse_paths}, respectively.
\end{lemma}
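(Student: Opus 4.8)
The plan is to estimate $\P[F_{N,L}]$ by a union bound over the admissible collections and then control the resulting intersection probability via the concentration estimate of Lemma~\ref{lem:BTIS_Szn} for the harmonic averages, together with the capacity lower bound \eqref{eq:cg_capd=3}. Concretely, \eqref{eq:EandF} and a union bound over $\mathcal{C}\in\mathcal{A}_{N,L}$ (the collection $\mathcal{A}^K_{N,L}(B_N)$ from Proposition~\ref{prop:coarse_paths}) and over $\tilde{\mathcal{C}}\subset\mathcal{C}$ with $|\tilde{\mathcal{C}}|=n-\lceil\rho n\rceil$, $n=|\mathcal{C}|$, give
\[
\P[F_{N,L}]\le|\mathcal{A}_{N,L}|\,{n \choose \lceil \rho n \rceil}\,\sup_{\mathcal{C}\in\mathcal{A}_{N,L}}\ \sup_{\substack{\tilde{\mathcal{C}}\subset\mathcal{C}\\ |\tilde{\mathcal{C}}|=n-\lceil\rho n\rceil}}\P\Big[\textstyle\bigcap_{z\in\tilde{\mathcal{C}}}\{z\text{ is }\xi\text{-bad}\}\Big].
\]
First I would dispose of the combinatorial prefactor: since $d=3$ forces $u(KL)=KL$, \eqref{def:admissible1} gives $n\le N/(KL)$, so ${n\choose\lceil\rho n\rceil}\le e^{(N/KL)\log2}$, while $\log|\mathcal{A}_{N,L}|\le\Gamma(N/L)\le\tfrac{\Cr{C:cg_complexity}}{K}\tfrac{N}{L}\log N$ by \eqref{def:admissible3} and \eqref{eq:cg_gamma}. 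Hence the prefactor contributes at most $\tfrac{C}{K}\tfrac{N}{L}\log N$ to $\log\P[F_{N,L}]$, which after multiplying by $\tfrac{\log N}{N}$ is at most $\tfrac{C}{K(\log N)^{\theta}}\to0$ uniformly over $L\ge(\log N)^{2+\theta}$.

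Next I would bound the intersection probability. As observed in the proof of Lemma~\ref{lem:BTIS_Szn}, the event $\{z\text{ is }\xi\text{-bad}\}$ coincides with $\{\sup_{D_z}\tilde{\xi}^z\ge a\}$ for $a\coloneqq h-h'-\tfrac{\varepsilon}{4}>0$ (recall $\varepsilon\in(0,h-h')$), and any $\tilde{\mathcal{C}}\subset\mathcal{C}\in\mathcal{A}_{N,L}$ inherits the separation property \eqref{eq:C:cond1}. Applying Lemma~\ref{lem:BTIS_Szn} with $\tilde{\mathcal{C}}$ in place of $\mathcal{C}$, one gets, for any $\eta>0$ and all large $N$ (uniformly over $L\ge(\log N)^{2+\theta}$ and over $\tilde{\mathcal{C}}$),
\[
\log\P\Big[\textstyle\bigcap_{z\in\tilde{\mathcal{C}}}\{z\text{ is }\xi\text{-bad}\}\Big]\le-\tfrac12\Big(a-\tfrac{\Cr{c:btis}}{K}\sqrt{\tfrac{|\tilde{\mathcal{C}}|}{\mathrm{cap}(\tilde\Sigma)}}\Big)_+^2\,\frac{\mathrm{cap}(\tilde\Sigma)}{\alpha(K)}+\eta,
\]
with $\tilde\Sigma=\Sigma(\tilde{\mathcal{C}})$. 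Since $|\tilde{\mathcal{C}}|=n-\lceil\rho n\rceil\ge(1-\rho)n-1$ and $n\to\infty$ uniformly (as $L\le L_1(N)=N^{o(1)}$ forces $n\ge\Cr{c:nLB}N^{1-o(1)}/K$), for any fixed $\delta>0$ one has $|\tilde{\mathcal{C}}|\ge(1-\rho-\delta)|\mathcal{C}|$ for $N$ large; then \eqref{eq:cg_capd=3} (applied with $\rho+\delta$ in place of $\rho$ and $L_0(N)=(\log N)^{2+\theta}$) and \eqref{eq:cap_line_asymp} yield $\mathrm{cap}(\tilde\Sigma)\ge(1-o(1))\lambda(K)(1-\rho-\delta)\tfrac{\pi}{3}\tfrac{N}{\log N}$, uniformly in $L$ and $\tilde{\mathcal{C}}$. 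Combined with $|\tilde{\mathcal{C}}|\le N/(KL)\le N/(K(\log N)^{2+\theta})$, this forces $|\tilde{\mathcal{C}}|/\mathrm{cap}(\tilde\Sigma)\le C(K,\rho,\delta)(\log N)^{-(1+\theta)}\to0$, so the $\tfrac{\Cr{c:btis}}{K}\sqrt{\cdot}$ correction is negligible.

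Finally I would assemble the pieces: multiplying by $\tfrac{\log N}{N}$, taking $\limsup_{N\to\infty}\sup_{L\in[(\log N)^{2+\theta},L_1(N)]}$, and using $\tfrac{\log N}{N}\mathrm{cap}(\tilde\Sigma)\ge(1-o(1))\lambda(K)(1-\rho-\delta)\tfrac{\pi}{3}$ (note $\tfrac{\log N}{N}\eta\to0$), one obtains the bound $-\tfrac{\pi}{6}\tfrac{\lambda(K)(1-\rho-\delta)}{\alpha(K)}a^2$; since the left-hand side does not depend on $\delta$, letting $\delta\downarrow0$ gives \eqref{eq:bnd_xibad}. I do not expect a genuinely hard step here; the only point requiring care is the bookkeeping, namely checking that \emph{every} error term --- the combinatorial entropy $\tfrac{N}{L}\log N$, the correction $\tfrac{\Cr{c:btis}}{K}\sqrt{|\tilde{\mathcal{C}}|/\mathrm{cap}(\tilde\Sigma)}$ and the $o(1)$ from Lemma~\ref{lem:BTIS_Szn}, and the $(1-o(1))$ losses in \eqref{eq:cg_capd=3} and \eqref{eq:cap_line_asymp} --- is $o(N/\log N)$ \emph{uniformly} in $L$. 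This is precisely what pins down the window $[(\log N)^{2+\theta},L_1(N)]$: $L$ must exceed $(\log N)^{2+\theta}$ so that $(\log N)^2/L\to0$ (killing the entropy) and $\log N/L\to0$ (killing $|\tilde{\mathcal{C}}|/\mathrm{cap}(\tilde\Sigma)$), yet stay below $L_1(N)=N^{o(1)}$ for \eqref{eq:cg_capd=3} to be applicable and for $n\to\infty$; uniformity in $L$ then comes for free from the uniform formulations of Lemma~\ref{lem:BTIS_Szn} and Proposition~\ref{prop:coarse_paths}.
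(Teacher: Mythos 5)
Your proposal is correct and follows essentially the same route as the paper: a union bound over $\mathcal{C}\in\mathcal{A}_{N,L}$ and $\tilde{\mathcal{C}}$ whose entropy is killed by $L\geq(\log N)^{2+\theta}$, then Lemma~\ref{lem:BTIS_Szn} applied to $\tilde{\mathcal{C}}$ combined with the capacity lower bound \eqref{eq:cg_capd=3} and the asymptotics \eqref{eq:cap_line_asymp}, with the $\tfrac{\Cr{c:btis}}{K}\sqrt{|\tilde{\mathcal{C}}|/\mathrm{cap}(\tilde\Sigma)}$ correction shown negligible exactly as in the paper's $\delta$-term. The only (harmless) cosmetic differences are your explicit $\delta$-sprinkling to absorb the $\lceil\rho n\rceil$ rounding and the additive rather than multiplicative treatment of the error from Lemma~\ref{lem:BTIS_Szn}.
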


\begin{proof}
	Recalling \eqref{eq:xibadsub} and \eqref{eq:EandF}, and proceeding as in \eqref{eq:pf_bnd_psibad_1}, one finds that
	\begin{align*}
		\begin{split}
		\log	\P[F_{N, L}]
			&\leq C n\log (nK) + \sup_{\mathcal{C}\in \mathcal{A}_{N, L}} \sup_{\substack{\tilde{\mathcal{C}}\subset \mathcal{C} \\ |\tilde{\mathcal{C}}| = n - \lceil \rho n \rceil}} \log \P\Big[\bigcap_{z\in\tilde{\mathcal{C}}} \big\{\sup_{x\in D_z} \xi_x^z\geq h - h' - \varepsilon/4\big\}\Big].
		\end{split}
	\end{align*}
Denoting the event on the right hand side above by $F(\tilde{\mathcal C})$, we get, combining Lemma~\ref{lem:BTIS_Szn}, the capacity lower bound \eqref{eq:cg_capd=3} from Proposition~\ref{prop:coarse_paths} and the fact that $n \leq N/LK$ by \eqref{def:admissible1},
\begin{align*}
&\sup_{L \in [(\log N)^{2+\theta}, L_1(N)]}\,\sup_{\mathcal{C}\in \mathcal{A}_{N, 
L}} \sup_{\substack{\tilde{\mathcal{C}}\subset \mathcal{C} \\ |\tilde{\mathcal{C}}| = 
n - \lceil \rho n \rceil}} \log \P[F(\tilde{\mathcal C})] \\
&\qquad \qquad \le -\frac{1}{2}(h - h' - \varepsilon/4 - \delta)_+^2 \frac{\lambda(K)(1-\rho)}{\alpha(K)}\mathrm{cap}(T_N)(1+o_N(1)),
\end{align*}
where $$\delta = \delta(N, K, \rho, \theta) \coloneqq \displaystyle C\sqrt{\frac{N}{\lambda(K)(1-\rho)K^3(\log N)^{2+\theta}\mathrm{cap}(T_N)}} \longrightarrow 0 ,\text{ as $N \to \infty$},$$ 
using the asymptotics for $\mathrm{cap}(T_N)$ from \eqref{eq:cap_line_asymp} in the last step. Finally notice that
$$n \log (nK) \le \frac{N}{ K (\log N)^{1+\theta}}\, \text{ for $L \ge  (\log N)^{2+\theta}$}.$$
The lemma follows by combining the previous displays along with the asymptotic of ${\rm cap}(T_N)$.
\end{proof}
With Lemmas~\ref{lem:bnd_psibadstep1} and \ref{lem:bnd_xibad} at hand, we proceed to the
\begin{proof}[Proof of Proposition~\ref{prop:bootstrap}] 
%
%
Let $h' \in \R$ be such that \eqref{eq:bootstrapinput} holds and consider $h> h'$ and $\varepsilon \in (0,h-h')$.
First choose $K \geq 100$ large enough and $\rho \in (0,1)$ close enough to $0$, both 
depending on $h$, $h'$ and $\varepsilon$, such that, applying Lemma \ref{lem:bnd_xibad}, one 
obtains, for all $\theta > 0$, $N \geq C(\varepsilon, h,h' ,\theta)$ and $L \in  [ (\log 
N)^{2+\theta}, L_1(N)]$,
\begin{equation}
\label{eq:bootstrapFNL}
\log \P[F_{N, L}] \leq - \frac{\pi}{6} (h-h'- \varepsilon/2)^2 \frac{N}{\log N}
\end{equation}
(recall to that effect that both $\alpha(K)$ and $\lambda(K)(1-\rho)$ converge to 1 in the limit $\rho \to 0$ and $K \to \infty$ by Lemma~\ref{lem:BTIS_Szn} and \eqref{eq:cg99d=3} in Proposition \ref{prop:coarse_paths}, respectively). Now for any  $N \geq 10^3$, let
\begin{equation}
\label{eq:bootstrap10}
L = L(N, \theta)\coloneqq (\log N)^{2+\theta},
\end{equation}
for some $\theta >0$ to be chosen. Notice that with \eqref{eq:bootstrap10} and by \eqref{eq:bootstrapinput}, the condition \eqref{eq:inputbnd} holds with $A^{h'} (\varphi)= \{\lr{}{\varphi \ge h'}{C_0}{\partial {\tilde C}_0}\}$ and $f(L) = c(h',\beta')L^{\beta'}$ whenever $N \ge C(\varepsilon, h,h' ,\theta, \beta')$. With this choice of $f(\cdot)$ and since $nK \leq N/L$
by \eqref{def:admissible1}, it follows that
 $$C\log (n K) - \rho f(L) \le - \rho f(L)/2, \text{ for all $N \ge C(\varepsilon, h,h' ,\theta, \beta')$}$$
as soon as $\theta$ is chosen such that
\begin{equation}
\label{eq:lambdabnd1}
(2 + \theta) \beta' > 	1.
\end{equation}
Hence applying Lemma~\ref{lem:bnd_psibadstep1} and using the lower bound on $n$ from \eqref{def:admissible1}, one gets (with $L$ as in \eqref{eq:bootstrap10})
\begin{equation}
\label{eq:bootstrapENL}
\log \P[E_{N, L}] \le -\frac{c(h', \beta')\rho N}{ K (\log N)^{(2 + \theta)(1 - \beta')}},
\end{equation}
provided \eqref{eq:lambdabnd1} is satisfied and $N$ is sufficiently large. Plugging the bounds from \eqref{eq:bootstrapFNL} and \eqref{eq:bootstrapENL} into \eqref{eq:bnd_psi_xi} we immediately deduce, letting $N\to \infty$ and then $\varepsilon \to 0$, that
\begin{equation}
\label{eq:subcrit_final1}
\lim_{N \to \infty}\frac{(\log N)^{\beta}}{N}\log \P[\lr{}{\varphi\geq h}{0}{\partial B_N}] \le - \frac{\pi}{6} (h-h')^2
\end{equation}
for any value of $\beta$ satisfying 
\begin{equation}
\label{eq:subcrit_beta'}
  \begin{array}{ll}
	\beta > (2 + \theta)(1 - \beta'), &\text{ if } (2 + \theta)(1 - \beta') \geq 1\\
	\beta = 1, &\text{ otherwise}
	\end{array}
\end{equation}
and any choice of $\theta >0$ such that \eqref{eq:lambdabnd1} is satisfied.
If $\beta' \leq 1/2$, the conditions $\theta > 0$ and $ (2 + \theta)(1 - \beta') < 1$ cannot 
simultaneously hold. Hence, in this case, choosing for example $\theta=\theta(\beta')$ so that 
$(2 + \theta) \beta' = 2$, whence \eqref{eq:lambdabnd1} is satisfied, \eqref{eq:subcrit_final1} 
yields the bound \eqref{eq:bootstrapoutput1}. 
On the other hand when $\beta' > 1/2$, the conditions \eqref{eq:lambdabnd1} and $ (2 + \theta)(1 - \beta') < 1$ 
can be recast as
$$
 \frac{1}{\beta'}-2<\theta< \frac{1}{1-\beta'}-2 
$$
(note that the interval of admissible values for $\theta$ is non-degenerate because 
$\beta'>1/2$). So choosing for instance $\theta = \frac{1}{2\beta'(1-\beta')}-2$, we obtain \eqref{eq:subcrit_final1} with $\beta=1$ (since $(2 + \theta)(1 - \beta') < 1$ holds), which is \eqref{eq:bootstrapoutput2}.
\end{proof}
\begin{remark}\label{remark:stronger_bootstrap}
A careful examination of the proof of Proposition~\ref{prop:bootstrap} reveals that a stretched exponential a-priori bound
such as \eqref{eq:bootstrapinput} is not required to arrive at \eqref{eq:main_ubsubcrit_d=3}. Indeed one could for instance obtain the same result by means of a few additional bootstrapping steps starting from a much weaker estimate of the type $ \limsup_L \frac{\P[\lr{}{\varphi \geq h }{C_{0, L}}{\partial \tilde C_{0, L}}]}{ (\log L)^{\beta'(h)}}< 0$ for some $\beta'(h) > 0$ and all $h> h_*$ (or even a $k$-fold composition of $\log$, for some fixed integer $k\geq 1$). Combining with other existing methods, see e.g.~\cite{PR15}, one would further obtain that \eqref{eq:main_ubsubcrit_d=3} holds as soon as $\P[\lr{}{\varphi \geq h }{C_{0, L}}{\partial \tilde C_{0, L}}]$ is bounded from above by a suitable $c(d)\in (0,1)$ uniformly along a diverging subsequence of scales $L$. Similar conclusions could be drawn in the supercritical regime, cf. Remark~\ref{R:final},1).

\end{remark}

\bigskip

\noindent {\bf Upper bound for $d \geq 4$.} We now supply the proof of  
\eqref{eq:main_ubsubcrit_d>4}. Throughout the remainder of Section~\ref{sec:upper_sub}, for an arbitrary level $h > h_{*}$ (as appearing in \eqref{eq:main_ubsubcrit_d>4}), we simply fix $h'= 
(h_{*}+h)/2$, $\varepsilon = (h - h') / 8$, $\rho = 1/2$ and $K = 100$ in \eqref{eq:psibadsub}--\eqref{eq:EandF}. 
The events $E_{N,L}$ and $F_{N,L}$ thus effectively depend on the sole parameter $h$. The following two results replace Lemmas~\ref{lem:bnd_psibadstep1} and \ref{lem:bnd_xibad}, respectively.

\begin{lemma}\label{lem:bnd_psibadd4}
$(d \geq4, \, h>h_*)$	For all $L \geq C(h)$ and $N \ge 10^3L$ one has
	\begin{equation}
	\label{eq:psibadd=4subcrit}
		\P[E_{N, L}] \le  e^{-c(h) N / L}.
	\end{equation}	
\end{lemma}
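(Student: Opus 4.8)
The structure is identical to that of Lemma~\ref{lem:bnd_psibadstep1}: first reduce the probability of $E_{N,L}$ to a product over $\psi$-bad boxes via a union bound over the admissible collection $\mathcal A_{N,L}$ and the choice of $\tilde{\mathcal C}\subset\mathcal C$; then use independence of the localized fields $\psi^z$ (property \eqref{eq:decomp_boxes}) to factorize; then feed in a good a-priori bound on the probability that a single box is $\psi$-bad. The only difference with $d=3$ is in the accounting of the two competing exponents, and this is where the choice of parameters made just above the lemma ($K=100$ fixed, $\rho=1/2$, $h'=(h_*+h)/2$, $\varepsilon=(h-h')/8$) enters: since $d\ge4$, Proposition~\ref{prop:coarse_paths}(ii) gives $\Gamma(r)=\Cr{C:cg_complexity}r$ (no logarithmic factor), hence $\log|\mathcal A_{N,L}|\le \Cr{C:cg_complexity}(N/L)$, and moreover $u(x)=x(\log x)^2$ in \eqref{def:admissible1}, so $n=|\mathcal C|\asymp N/(L(\log L)^2)$ and in particular $n\le N/L$. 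Thus the combinatorial cost $|\mathcal A_{N,L}|\binom{n}{\lceil\rho n\rceil}\le e^{Cn\log n + C N/L}\le e^{CN/L}$ (absorbing the binomial via $\binom{n}{\lceil\rho n\rceil}\le 2^n$ and the $\log|\mathcal A_{N,L}|$ term into $CN/L$), uniformly in $L$.

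Concretely, I would write
\begin{equation*}
\P[E_{N, L}]\le |\mathcal{A}_{N, L}|\,{n\choose \lceil \rho n \rceil}\, \sup_{\mathcal{C}\in \mathcal{A}_{N, L}} \sup_{\substack{\tilde{\mathcal{C}}\subset \mathcal{C} \\ |\tilde{\mathcal{C}}|= \lceil \rho n \rceil}}  \P[\text{$z$ is $\psi$-bad},\, \forall z\in \tilde{\mathcal{C}}]\le e^{C(h) N/L}\,\P[\text{$0$ is $\psi$-bad}]^{\lceil\rho n\rceil},
\end{equation*}
using \eqref{eq:decomp_boxes} and translation invariance in the last step. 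Now the single-box estimate: one needs $\P[\lr{}{\psi^0\ge h'+\varepsilon/4}{C_0}{\partial\tilde C_0}]\le e^{-c'(h)L}$ for $L\ge C(h)$. This follows from \eqref{eq:inputbnd}--\eqref{eq:psibad} once one knows an a-priori bound $\P[\lr{}{\varphi\ge h'}{C_0}{\partial\tilde C_0}]\le e^{-2f(L)}$ with $f(L)$ linear in $L$; and since $h'>h_*$, the stretched-exponential estimate \eqref{eq:def_h**} gives such a bound with exponent $\Cr{c:strexp}(h')$, which is $<1$ in general, so $f(L)=c L^{\Cr{c:strexp}(h')}$ rather than linear. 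Here one should instead invoke the fact (available for $d\ge4$) that the one-arm probability decays genuinely exponentially in the box side-length — this is precisely the $\Cr{c:strexp}=1$ case of \eqref{eq:def_h**} recorded in the introduction as being due to \cite{PR15,PT12} for $d\ge4$ and $h>h_*$. With that input, $f(L)\ge cL$, so $\P[\text{$0$ is $\psi$-bad}]\le e^{-c'(h)L}$ by \eqref{eq:psibad}, and therefore $\P[E_{N,L}]\le e^{C(h)N/L}e^{-c'(h)\rho n L}$. Since $n\ge \Cr{c:nLB}N/(L(\log L)^2)$ by \eqref{def:admissible1}, the exponent $c'(h)\rho nL\ge c''(h)N/(\log L)^2$, which does not quite dominate $C(h)N/L$ for small $L$; so one must be a little more careful and not throw away the full strength.

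The cleaner route, which I would actually follow, is to keep the product form: $\P[\text{$z$ $\psi$-bad},\forall z\in\tilde{\mathcal C}]\le e^{-\rho n f(L)}$ with $f(L)\ge cL$, and $\log(|\mathcal A_{N,L}|\binom{n}{\lceil\rho n\rceil})\le Cn\log(CN/L)\le Cn\log N$. Then $\log\P[E_{N,L}]\le n(C\log N - \rho f(L))$. For $L\ge C(h)(\log N)$ this is $\le -c n L\le -c'N/(\log L)^2\le -c''N/L^{1/2}$, which is not the claimed $e^{-c(h)N/L}$. This mismatch signals that the lemma as stated really does want the genuinely linear-in-$L$ decay of the single-box one-arm event \emph{without} a $\log N$ entropy competitor — i.e. one should NOT bound $|\mathcal A_{N,L}|$ by its $\Gamma$-admissible bound $e^{CN/L}$ and then also pay $\binom{n}{\rho n}$; rather, observe $n\log N$ is the dangerous term only when $L$ is polylogarithmic, whereas the target bound $e^{-cN/L}$ is weakest exactly there. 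So: when $L\ge (\log N)^2$, say, we have $n\le N/L\le N/(\log N)^2$, hence $n\log N\le N/\log N = o(N/L)$ only if $L=o(\log N)$ — contradiction. The honest conclusion is that one must use $C\log(nK)-\rho f(L)\le -\rho f(L)/2$, valid once $f(L)=cL\ge (2C/\rho)\log(nK)$, i.e. once $L\ge C(h)\log N$; this is consistent with the regime of $L$ actually used downstream (where $L$ is at least polylogarithmic and one only needs $\P[E_{N,L}]\le e^{-c(h)N/L}$ as a crude bound). So the final statement to prove is:
\begin{equation*}
\log\P[E_{N,L}]\le n\bigl(C\log(nK)-\tfrac{\rho}{2} f(L)\bigr)\le -\tfrac{\rho}{4}\,n\,f(L)\le -c(h)\,N/L
\end{equation*}
for $L\ge C(h)$, where the last step uses $n\ge \Cr{c:nLB} N/(L(\log L)^2)$ and $f(L)\ge cL$, which gives $nf(L)\ge c N/(\log L)^2\ge c N/L$ for $L\ge C$ — wait, that is false for large $L$; rather $nf(L)\ge c N/(\log L)^2$ and one wants $\ge c(h)N/L$, which holds since $L\ge(\log L)^2$ eventually. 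Good: so the bound $e^{-c(h)N/L}$ follows, and in fact one gets the stronger $e^{-c(h)N/(\log L)^2}$, but the stated weaker form suffices.

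\textbf{Summary of the steps and the main obstacle.} (1) Union bound over $\mathcal A_{N,L}$ and $\tilde{\mathcal C}$, bounding $|\mathcal A_{N,L}|\le e^{Cn\log(CN/L)}$ via \eqref{def:admissible3}, \eqref{eq:cg_gamma4} and the lower bound on $n$ in \eqref{def:admissible1}. (2) Factorize over $z\in\tilde{\mathcal C}$ using independence \eqref{eq:decomp_boxes} and translation invariance. (3) Single-box bound: verify \eqref{eq:inputbnd} with $A^{h'}(\varphi)=\{\lr{}{\varphi\ge h'}{C_0}{\partial\tilde C_0}\}$ and $f(L)=c(h')L$ using the exponential ($\Cr{c:strexp}=1$) a-priori one-arm bound valid for $d\ge4$, $h'>h_*$; then apply \eqref{eq:psibad}. (4) Combine: $\log\P[E_{N,L}]\le n(C\log(nK)-\rho c(h')L/2)$; absorb the entropy term since $\log(nK)\le\log N$ and $L$ will be taken large, then use $n\ge\Cr{c:nLB}N/(L(\log L)^2)$ to conclude $\P[E_{N,L}]\le e^{-c(h)N/L}$. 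The main obstacle, and the only place requiring care, is step (3): one must have a genuinely linear-in-$L$ (not merely stretched-exponential) decay for the box-crossing probability of $\{\varphi\ge h'\}$, which is the special feature of $d\ge4$ recorded in the introduction; once that is in hand, everything else is a routine union-bound-and-independence computation.
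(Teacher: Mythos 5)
Your skeleton (union bound over $\mathcal{A}_{N,L}$ and $\tilde{\mathcal{C}}$, factorization via \eqref{eq:decomp_boxes}, single-box estimate via \eqref{eq:inputbnd}--\eqref{eq:psibad}) is the same as the paper's, but the way you finally assemble it has a genuine gap. In your concluding chain you bound the combinatorial cost per box by $C\log(nK)$ (i.e.\ essentially $\log N$) and absorb it into $f(L)$, which forces $L\gtrsim \log N$; you then assert this is consistent with how the lemma is used downstream. It is not: for $d\ge 4$ the lemma is applied with $L=C(h)$ a \emph{fixed constant} (see the proof of \eqref{eq:main_ubsubcrit_d>4}), and the statement must hold for all $L\ge C(h)$ uniformly in $N\ge 10^3L$ -- if one were forced to take $L\gtrsim\log N$, the resulting decay would only be of order $e^{-cN/\log N}$, which does not yield Theorem~\ref{thm:main_d>3}. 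For fixed $L$ and $N\to\infty$ the inequality $C\log(nK)\le \tfrac{\rho}{4} f(L)$ needed for your final display fails, so the proof as written does not establish \eqref{eq:psibadd=4subcrit} in the stated regime.

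The correct accounting -- which you wrote down at the start and then abandoned -- is that for $d\ge4$ one has $\log|\mathcal{A}_{N,L}|\le \Cr{C:cg_complexity}\, N/L$ by \eqref{eq:cg_gamma4}, and since $n\ge cN/(L(\log L)^2)$ by \eqref{def:admissible1}, the total entropy (including the binomial factor) is at most $e^{Cn(\log L)^2}$: the per-box entropy is $(\log L)^2$, independent of $N$. This also removes your other detour: you do not need the genuinely exponential ($\Cr{c:strexp}=1$) one-arm input from \cite{PR15,PT12}. The stretched-exponential a-priori bound \eqref{eq:def_h**} gives, via \eqref{eq:inputbnd}--\eqref{eq:psibad}, $\P[\text{$0$ is $\psi$-bad}]\le e^{-c(h)L^{\Cr{c:strexp}}}$, and $L^{\Cr{c:strexp}}$ already beats $(\log L)^2$ once $L\ge C(h)$, so $\log\P[E_{N,L}]\le n\big(C(\log L)^2-c(h)L^{\Cr{c:strexp}}\big)\le -c'\,n\,L^{\Cr{c:strexp}}\le -c''(h)N/L$, using the lower bound on $n$ and $L^{\Cr{c:strexp}}\ge (\log L)^2$ for $L\ge C(h)$. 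This is exactly the paper's proof; note that the paper deliberately avoids any reliance on \cite{PR15,PT12} (cf.\ Remark~\ref{remark:bootstrap} and Remark~\ref{remark:stronger_bootstrapdge4}). Incidentally, the worry in your first attempt that $c''N/(\log L)^2$ ``does not quite dominate'' $CN/L$ was unfounded: dominance requires only $L\ge C'(\log L)^2$, i.e.\ precisely the hypothesis $L\ge C(h)$ of the lemma, so that route (with either the exponential or the stretched-exponential single-box input) would also have worked had you kept the $e^{CN/L}$ entropy bound.
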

\begin{proof}
The proof mimics that of Lemma~\ref{lem:bnd_psibadstep1}, with small modifications. Proceeding as in \eqref{eq:pf_bnd_psibad_1}, using \eqref{def:admissible3}, \eqref{eq:cg_gamma4} and the bound $\tfrac{cN}{L (\log L)^2} \leq n$ from \eqref{def:admissible1} to bound $|\mathcal{A}_{N,L}^{100}|$, one finds,
\begin{align}\label{eq:pf_bnd_psibad_1.11}
	\begin{split}
		\P[E_{N, L}] 
		&\leq e^{Cn(\log L)^2}\P[\text{$0$ is $\psi$-bad}]^{\lceil n/2 \rceil} \le e^{n(C(\log L)^2 - c(h)L^{\Cr{c:strexp}})}, 
	\end{split}
\end{align}
for all $L \geq 1$, $N \geq 10^3L$, where the first inequality also relies on the independence property \eqref{eq:decomp_boxes} and the second one on the fact that \eqref{eq:def_h**} and \eqref{eq:inputbnd}--\eqref{eq:psibad} combine to give a suitable bound on $\P[\text{$0$ is $\psi$-bad}]$. Using the lower bound on $n$ yet again, \eqref{eq:psibadd=4subcrit} readily follows from \eqref{eq:pf_bnd_psibad_1.11}.
	\end{proof}
\begin{remark}\label{remark:stronger_bootstrapdge4}
The conclusions of Lemma~\ref{lem:bnd_psibadd4} would remain unaltered if one replaced \eqref{eq:def_h**} by the (weaker) assumption that  $\P[\lr{}{\varphi \geq h }{C_{0, L}}{\partial \tilde C_{0, L}}] \leq e^{ -c(h)(\log L)^{2+\varepsilon}}$, for some $\varepsilon=\varepsilon(h) >0$ and all $L \geq 1$, $h>h_*$. This is related to the power in the definition of $\varepsilon_m$ in \eqref{eq:coarsegrain_r_n}, and could be relaxed to a `$1+\varepsilon$'-condition by suitable modification of \eqref{eq:coarsegrain_r_n} and the subsequent arguments of Section~\ref{sec:coarse4d}, which would lead to a corresponding improvement of the lower bound on $n$ in~\eqref{def:admissible1}.
	\end{remark}
The analogue of Lemma~\ref{lem:bnd_xibad} is
\begin{lemma}\label{lem:bnd_xibad4} $(d \geq 4, \, h > h_*)$
	For some $C(h) > 0$,
	\begin{equation}
		\label{eq:xibadd=4subcrit}	
		\limsup_{N \to \infty}\,\sup_{L \in [ C(h), N / 10K]}\frac{1}{N}\log \P[F_{N, L}] <
		0.
	\end{equation}
\end{lemma}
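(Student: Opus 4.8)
The plan is to mimic the proof of Lemma~\ref{lem:bnd_xibad}, replacing the $d=3$ capacity asymptotics of lines by the bound \eqref{eq:cap_line_asymp4d}, which reads $\mathrm{cap}(T_N) \asymp N$ when $d \geq 4$, and using the version of Proposition~\ref{prop:coarse_paths} valid in dimensions $d \geq 4$, together with the complexity estimate \eqref{eq:cg_gamma4}. First I would recall from \eqref{eq:xibadsub} and \eqref{eq:EandF}, arguing via a union bound exactly as in \eqref{eq:pf_bnd_psibad_1}, that
\begin{equation*}
\log \P[F_{N, L}] \leq \Gamma(N/L) + \log \binom{n}{\lceil \rho n\rceil} + \sup_{\mathcal{C}\in \mathcal{A}_{N, L}}\, \sup_{\substack{\tilde{\mathcal{C}}\subset\mathcal{C} \\ |\tilde{\mathcal{C}}| = n - \lceil \rho n\rceil}} \log \P\Big[\bigcap_{z\in\tilde{\mathcal{C}}}\big\{\sup_{D_z}\xi_x^z \geq h - h' - \varepsilon/4\big\}\Big],
\end{equation*}
with $\rho=1/2$, $K=100$, $h'=(h_*+h)/2$, $\varepsilon=(h-h')/8$ fixed as stipulated before the lemma. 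Since $\Gamma(N/L)= \Cr{C:cg_complexity} N/L$ by \eqref{eq:cg_gamma4} and $\log\binom{n}{\lceil\rho n\rceil}\leq Cn \leq CN/(L(\log L)^2) \leq CN/L$ by \eqref{def:admissible1}, the combinatorial and entropic prefactors contribute at most $CN/L$ to $\log \P[F_{N,L}]$.

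Next I would apply Lemma~\ref{lem:BTIS_Szn} to the event $F(\tilde{\mathcal{C}})=\bigcap_{z\in\tilde{\mathcal{C}}}\{\sup_{D_z}\tilde{\xi}^z \geq a\}$ with $a = h-h'-\varepsilon/4 > 0$ (a fixed positive constant, since $h > h'$), and the collection $\mathcal{C}$ replaced by $\tilde{\mathcal{C}}$, which still satisfies \eqref{eq:C:cond1}. This gives, for $L$ large,
\begin{equation*}
\log \P[F(\tilde{\mathcal{C}})] \leq -\frac12\Big(a - \frac{\Cr{c:btis}}{K}\sqrt{\frac{|\tilde{\mathcal{C}}|}{\mathrm{cap}(\tilde{\Sigma})}}\Big)_+^2 \frac{\mathrm{cap}(\tilde{\Sigma})}{\alpha(K)} + o(\mathrm{cap}(\tilde{\Sigma})),
\end{equation*}
where $\tilde\Sigma = \Sigma(\tilde{\mathcal{C}})$. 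By \eqref{eq:cg_capd=3} in Proposition~\ref{prop:coarse_paths} for $d\geq 4$ (valid with $L_0(N)=1$, $L_1(N)=N/10K$), we have $\mathrm{cap}(\tilde{\Sigma}) \geq \lambda(K)(1-\rho)\,\mathrm{cap}(T_N) \geq c\lambda(K)\,N$ using \eqref{eq:cap_line_asymp4d}; also $|\tilde{\mathcal{C}}| \leq n \leq N/(u(KL)) \leq N/(KL)$, so $|\tilde{\mathcal{C}}|/\mathrm{cap}(\tilde{\Sigma}) \leq C/(KL\lambda(K))$, whence the correction term $\frac{\Cr{c:btis}}{K}\sqrt{|\tilde{\mathcal{C}}|/\mathrm{cap}(\tilde{\Sigma})} \leq C K^{-3/2}L^{-1/2}\lambda(K)^{-1/2}$ can be made smaller than $a/2$ for all $L \geq C(h)$ (this is where the lower bound $L \geq C(h)$ in \eqref{eq:xibadd=4subcrit} enters). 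Consequently $\log \P[F(\tilde{\mathcal{C}})] \leq -c(h)\,\mathrm{cap}(\tilde{\Sigma})(1+o(1)) \leq -c'(h)\,N$ for $L\geq C(h)$ and $N$ large, uniformly in $L \in [C(h), N/10K]$.

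Combining the two displays, $\log \P[F_{N,L}] \leq CN/L - c'(h)N \leq -c''(h)N$ once $L \geq C(h)$ is chosen large enough that $C/L \leq c'(h)/2$; this holds uniformly over $L \in [C(h), N/10K]$, giving \eqref{eq:xibadd=4subcrit}. The main point to be careful about — though it is not really an obstacle — is that here, unlike in $d=3$, the entropy prefactor $CN/L$ lives at the \emph{same} exponential scale $N$ as the harmonic term, so one genuinely needs $L$ large (rather than poly-logarithmic in $N$) to absorb it; this is harmless because $\P[E_{N,L}]$ is controlled for all $L\geq C(h)$ by Lemma~\ref{lem:bnd_psibadd4}, so one may simply fix $L = L(h)$ a large enough constant. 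Finally, \eqref{eq:main_ubsubcrit_d>4} follows by combining Lemmas~\ref{lem:bnd_psibadd4} and~\ref{lem:bnd_xibad4} via \eqref{eq:bnd_psi_xi} at this fixed scale $L$, exactly as \eqref{eq:main_ubsubcrit_d=3} follows from its constituent estimates; and the extension to $\mathrm{LocUniq}(N,h)^c$ proceeds by the same gluing argument, applying the coarse-graining to the shapes $\Lambda_N = B_{2N}\setminus B_N$ in $\mathcal{S}_N$.
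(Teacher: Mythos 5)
Your proposal is correct and follows essentially the same route as the paper: the capacity lower bound $\mathrm{cap}(\tilde\Sigma)\geq cN$ from \eqref{eq:cg_capd=3} together with \eqref{eq:cap_line_asymp4d}, fed into Lemma~\ref{lem:BTIS_Szn} (with $|\tilde{\mathcal{C}}|/\mathrm{cap}(\tilde\Sigma)\leq CL^{-1}$ made small by taking $L\geq C(h)$), and a union bound whose entropic cost is $O(N/L)$ by \eqref{eq:cg_gamma4} and \eqref{def:admissible1}, hence absorbed for $L\geq C(h)$ large. This is exactly the paper's argument, including the observation that in $d\geq4$ the entropy and the harmonic term live at the same exponential scale, so $L$ is fixed as a large constant rather than poly-logarithmic in $N$.
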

\begin{proof} For arbitrary $L \geq 1$, $N \geq 10^3L$ and any given collection $\mathcal{C}\in \mathcal{A}= \mathcal{A}_{N,L}^{100}$ and $\tilde{\mathcal{C}}\subset \mathcal{C}$ with $|\tilde{\mathcal{C}}| = n - \lceil n/2 \rceil$, one obtains by virtue of \eqref{eq:cg_capd=3} and \eqref{eq:cap_line_asymp4d} that $\mathrm{cap}(\tilde{\Sigma}) \geq c N$. Together with Lemma~\ref{lem:BTIS_Szn}, this is seen to imply that, 
\begin{equation}
\label{eq:xibadd=4subcrit.1}
\P\Big[\bigcap_{z\in\tilde{\mathcal{C}}} \big\{\sup_{x\in D_z} \xi_x^z\geq h - h' - \textstyle\frac{\varepsilon}{4} \displaystyle\big\}\Big] \leq e^{-c(h)N}
\end{equation}
(with $h'$, $\varepsilon$ as defined above Lemma~\ref{lem:bnd_psibadd4}) whenever $L \geq C(h)$ and $N \geq 10^3L$, noting that $\frac{|\tilde{\mathcal{C}}|}{\mathrm{cap}(\tilde{\Sigma})} \leq cL^{-1}$ becomes suitably small for such $L$, cf.~\eqref{eq:BTIS_Szn}. In view of \eqref{eq:EandF}, applying a union bound over the choices of $\mathcal{C}\in \mathcal{A}_{N, L}$ and $\tilde{\mathcal{C}}\subset \mathcal{C}$, \eqref{eq:xibadd=4subcrit} is easily seen to follow since 
$\log |\mathcal{A}| \leq Cn (\log L)^2 \leq C\frac{N}{L} $, see~\eqref{eq:pf_bnd_psibad_1.11} and the upper bound on $n$ from \eqref{def:admissible1}. Thus, the resulting combinatorial complexity doesn't spoil the upper bound in \eqref{eq:xibadd=4subcrit.1} whenever $L \geq C'(h)$.
\end{proof}

\begin{proof}[Proof of \eqref{eq:main_ubsubcrit_d>4}]
The upper bound \eqref{eq:main_ubsubcrit_d>4} follows immediately by combining \eqref{eq:bnd_psi_xi}, \eqref{eq:psibadd=4subcrit} and \eqref{eq:xibadd=4subcrit} upon choosing $L=C(h)$ large enough to ensure both Lemmas~\ref{lem:bnd_psibadd4} and \ref{lem:bnd_xibad4} are in force. \end{proof}

%
\begin{remark}
\label{Rk:onionsd=4}
Following up on Remark~\ref{R:cg3to4}, we describe which upper bounds can be derived for $d \geq 4$ using the collection $\mathcal{A}'$ (obtained by following the coarse-graining scheme used for $d=3$). For $h > h_*$, fix $h'$, $\varepsilon$, $K$ and $\rho$ as above \eqref{lem:bnd_psibadd4} and define $E_{N, L}' = E_{N, L}^K(\rho, h, h', \varepsilon)'$, $F_{N, L}'$ as in \eqref{eq:EandF}, but with $\mathcal{A}'$ in place of $\mathcal{A}$. Similarly as in \eqref{eq:bnd_psi_xi}, by admissibility of $\mathcal{A}'$, one sees that $\P[\lr{}{\varphi\geq h}{0}{\partial B_N}]\leq \P[E_{N, L}']+ \P[F_{N, L}']$. Using 
\eqref{eq:cg_capd=3}, \eqref{eq:xibadd=4subcrit.1} and recalling the larger combinatorial complexity of $|\mathcal{A}'|$ (as in \eqref{eq:pf_bnd_psibad_1} for instance), one finds that
\begin{equation}
\label{eq:subcritconcl1}
\P[F_{N, L}'] \leq e^{C n\log (nK)}e^{-c(h)N} \stackrel{\eqref{def:admissible1}}{\leq} e^{CN \frac{\log N}{L} - cN} \leq e^{-c'(h)N}, \text{ if $C(h)\log N \leq L \leq cN$.}
\end{equation}
Regarding $\P[E_{N, L}']$, observe that the bound \eqref{eqn:bnd_psibadstep1} from Lemma~\ref{lem:bnd_psibadstep1} holds with $E_{N, L}'$ instead of $E_{N, L}$. This crucially uses the fact that $n=|\mathcal{C}|$ for $\mathcal{C} \in \mathcal{A}'$ satisfies the lower bound \eqref{def:admissible1} with $u(x)=x$, which is used in the proof of  Lemma~\ref{lem:bnd_psibadstep1}. As a consequence, one obtains
\begin{equation}
\label{eq:subcritconcl2}
 \P[E_{N, L}'] \le e^{ - c \frac{f(L)N}{L}}, \text{ if $f(L) \geq C \log N$, $L \leq cN$,}
	\end{equation}	
assuming \eqref{eq:inputbnd} holds with $A^{h'} (\varphi)= \{\lr{}{\varphi \ge h'}{C_0}{\partial {\tilde C}_0}\}$.

As we now explain, one can deduce from \eqref{eq:subcritconcl1} and \eqref{eq:subcritconcl2} that for any integer $k \geq 1$
\begin{equation}
\label{eq:subcritconcl3}
 \P[\lr{}{\varphi \ge h}{0}{\partial B_{N}}] \le e^{ - c(h) N/(\log^{(k)}\hspace{-0.3ex}N)^{\Cl{C:logk}}}, \text{ for all $N \geq 1$, $h > h_*$,}
\end{equation}	
where $\Cr{C:logk} = \Cr{C:logk}(d) \in [1, \infty)$ and $\log^{(k)}(\cdot)$ denotes the $k$-fold composition of $\log(\cdot)$.
To obtain \eqref{eq:subcritconcl3} one proceeds as follows: in a separate (first) step, starting from \eqref{eq:def_h**}, one chooses $f(L)=c(h)L^{\Cr{c:strexp}}$ and $L=C(h)(\log N)^{1+\Cr{c:strexp}^{-1}}$, whence \eqref{eq:subcritconcl1} and \eqref{eq:subcritconcl2} apply and yield \eqref{eq:subcritconcl3} for $k=1$ with $\Cr{C:logk}= \Cr{c:strexp}^{-1}-\Cr{c:strexp}$. Now, assuming \eqref{eq:subcritconcl3} to hold for some $k \geq 1$, one chooses $f(L)=c(h) L/(\log^{(k)}\hspace{-0.3ex}L)^{\Cr{C:logk}}$ and $L = C(h) \log N (\log^{(k+1)}\hspace{-0.3ex}N)^{\Cr{C:logk}}$, whence \eqref{eq:subcritconcl1} and  \eqref{eq:subcritconcl2} apply (in particular \eqref{eq:inputbnd} holds with this choice of $f(\cdot)$ due to \eqref{eq:subcritconcl3}, and the conditions on $L$ and $f(L)$ in  \eqref{eq:subcritconcl1}, \eqref{eq:subcritconcl2} are met) and readily yield \eqref{eq:subcritconcl3} with $k+1$ instead of $k$.

Note that \eqref{eq:subcritconcl3} is nearly \eqref{eq:main_ubsubcrit_d>4} and that the obstruction to obtaining the desired result comes from competition between the entropy of $\mathcal{A}'$ and the local fields leading to \eqref{eq:subcritconcl2}, not the contribution \eqref{eq:subcritconcl1} from the harmonic average, which exhibits the desired exponential decay.
 
 \end{remark}
\subsection{Supercritical phase}\label{sec:upper_sup}
%
We now proceed to the proofs of \eqref{eq:main_ubsupercrit_d=3} and \eqref{eq:main_ubsupercrit_d>4} in Theorem~\ref{P:ub}, along with the corresponding statements for $ \textnormal{LocUniq}(N,h)^{c}$, $h< h_*$, which we will actually prove first. In all cases, our argument revolves around a notion 
of good event $G_N$, see \eqref{def:goodevent}, which will allow us to construct ambient clusters with 
certain desirable properties. The bottom line is that it will be costly for any large connected set to avoid connecting to any such ambient cluster. This is quantified in Lemma~\ref{lem:truncated}. The desired estimate for $\P[G_N]$ is then arrived to by means of a renormalization scheme, starting from a certain localized good event $\mathcal{G}_z$, see Definition~\ref{def:goodevent2}, which satisfies a suitable a-priori estimate, see Lemma~\ref{lem:loc_amb_cluster1}. Importantly, the scheme, whose essential features are captured by Proposition~\ref{lem:inclusion_fxn_goodevent}, improves not only probabilistic estimates but also the number $a$ of contact points inherent to the definition of $\mathcal{G}_z$, see \eqref{def:goodevent2.2}, in each step of the iteration. The proofs of \eqref{eq:main_ubsupercrit_d=3} and \eqref{eq:main_ubsupercrit_d>4} for all events of interest follow by combining Lemma~\ref{lem:truncated}, Proposition~\ref{lem:inclusion_fxn_goodevent} and Lemma~\ref{lem:loc_amb_cluster1} and are presented at the end of this section.

For $\tilde{f}:\tilde{\Z}^d\to \R$ (cf.~above \eqref{eq:phi_extend1} regarding 
$\tilde{\Z}^d\supset \Z^d$), we define the local average $(A\tilde{f})_x= (2d)^{-1} \sum_{m \in \mathbb{M}^d: m\sim x}\tilde 
f_m$, for $x \in {\Z}^d$. For 
integers $L_0 \geq 1$ and $M >1$, we then introduce the set 
		\begin{equation}
		\label{eq:ubsuper1}
\mathcal{M}(\tilde{f}) \coloneqq \big\{ x \in \Z^d: (A\tilde{f})_y
  \geq - M \text{ for all }y \in B_{2L_0}(x)\big\}, \qquad\mathcal{M}\coloneqq\mathcal{M}(\tilde{\varphi})
\end{equation}
(note that $B_{2L_0}(x)\subset \Z^d$ by definition and recall the extension $\tilde{\varphi}$ 
from \eqref{eq:phi_extend2}). Due to the decomposition $\tilde{\varphi}=\hat{\xi}+ \hat{\psi}$ 
from \eqref{eq:LB1_dim4}, this means that $\mathcal{M}= \{ x \in \Z^d:  \hat{\xi}_y  \geq - M, 
\, y \in B_{2L_0}(x)\}$. The condition used in the definition \eqref{eq:ubsuper1} provides us with a uniform insertion tolerance bound {\em on} the set $\mathcal M$ which will be used in the proof of Lemma~\ref{lem:truncated} below (see around the display \eqref{eq:insert_tol}) and 
which --- as already noted in the beginning of Section~\ref{sec:lower_sup} --- is not otherwise 
true.

We now introduce a key (good) event, involving various parameters, which roughly ensures the existence of  ambient clusters with desirable properties. In the sequel, $N,L$ and $L_0$ will denote three relevant length scales, $M>1$ controls the midpoint averages from below in \eqref{eq:ubsuper1}, and $b$ will count a certain number of `interfaces', to be introduced shortly, each containing $a$ (so-called) `contact points'. Throughout the rest of this section, we assume that  
\begin{equation}
\label{eq:ubsuper2}
\begin{split}
&\text{$N, L,L_0$ be integers with $N \geq L > 2L_0 \geq1$, $M >1$ and 
$a,b \in \N_{>0}$,}
\end{split}
\end{equation}
and that $\Lambda_N \in \{ B_{2N} \setminus B_N,\,B_N \setminus B_{\sigma N}, \sigma \in (0,\frac13)\}$ is arbitrary, unless specified otherwise.
To avoid clumsy notation, we will keep dependence on the quantities appearing in \eqref{eq:ubsuper2} and $\Lambda_N$ implicit in the sequel, except for the ones that are subject to change in any given context. The intermediate scale $L$ will first appear in Definition~\ref{def:goodevent2} below. For $h' \in \R$, the event 
$G_{N} = G_{N}(L_0,a, b, h',M)$ is defined as
\begin{equation}\label{def:goodevent}
G_{N} =
\left\{
\begin{array}{c}
\text{$\exists$ disjoint connected sets $\mathscr C_i \subset (\Lambda_N \cap \{\varphi \ge h'\})$, for $1 \leq i \leq b$, such}\\
\text{that any $*$-path $\gamma$ crossing $\Lambda_N$ contains points $x_{i,j} =x_{i,j}(\gamma)$ satisfying}\\
\text{$B_{L_0}(x_{i,j}) \cap \mathscr C_i \cap\mathcal{M} \neq \emptyset$, for all $ 1 \leq i \leq b$ and $1 \leq j \leq a$.}
\end{array}
\right\}.
\end{equation}
(for concreteness, the indexing of the sets $\mathscr C_i $ can be induced by a fixed deterministic ordering of the vertices of $\Z^d$, with $\mathscr C_1 $ containing the smallest vertex in this ordering etc.)
Notice that the sets $\mathscr C_i$ in \eqref{def:goodevent} may very well be connected in $\Lambda_N \cap \{\varphi \ge h'\}$.
In the sequel, we will refer to the sets $\mathscr C_i $ as `interfaces' and to $x_{i,j}$, for $ 1 \leq i \leq b$ and $1 \leq j \leq a$, as the ensuing `contact points'. Thus, there will be a total of $a\cdot b$ contact points whenever $G_{N}$ occurs.

For $h' \in \R$, consider the sigma-algebra $\mathcal{F}_{h'} \coloneqq \sigma(\tilde{\varphi}_{\tilde y}, 1\{\varphi_x \ge h'\};\,  x \in \Z^d, \tilde y \in \mathbb M^d)$ and notice for later reference that $\mathcal{M}$, the event $G_{N}$ as well as the sets $\mathscr C_1, \ldots, \mathscr C_{b}$  are all $\mathcal{F}_{h'} $-measurable. Our interest in $G_{N}$ stems from the following lemma.  For later reference, note that the contribution of $\P[G_{N}^c] $ in \eqref{eq:truncated} and \eqref{eq:locauniq} will eventually turn out to be dominant in generating \eqref{eq:main_d=3}.

\begin{lemma}
	\label{lem:truncated} $(h < h' , \, \eqref{eq:ubsuper2}).$
There exists $c=c(h, h', L_0,M) > 0$ such that, with $G_{N} = G_{N}(L_0, a,b, h')$ as defined in \eqref{def:goodevent}, the following holds when $a \geq C(L_0, h,h',M)$ and $b \geq C(L_0)$. If $\Lambda_N = B_{N} \setminus B_{\sigma N}$, 
	\begin{align}
	&\P[\lr{}{\varphi \ge h}{0}{\partial B_{N}}, \nlr{}{\varphi \ge h}{0}{\infty}] \le \P[G_{N}^c] + \P[\nlr{}{\varphi \ge h'}{B_{\sigma N}}{\infty}] + e^{-cb a }. \label{eq:truncated}
	\end{align}
and if $\Lambda_N = B_{2N} \setminus B_{N}$,
\begin{align}
&\P[{\rm LocUniq}(N, h)^c] \le \P[G_{N}^c] + \P[\nlr{}{\varphi \ge h'}{B_{N}}{\partial B_{2N}}] + e^{-c b a }. 	\label{eq:locauniq}
	\end{align}
	\end{lemma}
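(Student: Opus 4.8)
\textbf{Proof proposal for Lemma~\ref{lem:truncated}.}

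The plan is to exploit the insertion-tolerance property on the set $\mathcal{M}$, together with the $a b$ quasi-independent ``contact points'' guaranteed on the event $G_N$, to pay a small i.i.d.-type cost at each contact point for \emph{not} connecting to the ambient clusters $\mathscr{C}_i$. I would treat \eqref{eq:truncated} in detail; \eqref{eq:locauniq} follows by the identical argument after replacing the r\^ole of the infinite cluster by the (unique, by $G_N$) crossing cluster of $B_{2N}\setminus B_N$, using the inclusion analogous to \eqref{eq:lower_sup_502}. First I would observe the deterministic inclusion: on the event $\{\lr{}{\varphi \ge h}{0}{\partial B_N}\}$, any path realizing the connection, when viewed as a $*$-path, crosses $\L_N = B_N \setminus B_{\sigma N}$; hence on $G_N \cap \{\lr{}{\varphi \ge h'}{B_{\sigma N}}{\infty}\}$ each of the ambient clusters $\mathscr{C}_i$ is connected to $\infty$ in $\{\varphi \ge h'\} \supset \{\varphi \ge h\}$ (since $h<h'$), and the crossing path has contact points $x_{i,j}$ with $B_{L_0}(x_{i,j}) \cap \mathscr{C}_i \cap \mathcal{M} \neq \emptyset$. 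Therefore, if the crossing path were \emph{not} connected to $\infty$ in $\{\varphi \ge h\}$, then near each such contact point the field must fail to bridge the $O(L_0)$-gap between the path and $\mathscr{C}_i$ at level $h$; this is where the cost $e^{-cba}$ enters.

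The key mechanism is the following conditional insertion-tolerance estimate. Condition on $\mathcal{F}_{h'}$, which determines $\mathcal{M}$, the event $G_N$, and the clusters $\mathscr{C}_i$. On $\mathcal{M}$ one has $\hat\xi_y \ge -M$ for all $y$ in a $2L_0$-neighbourhood, so by the decomposition $\tilde{\varphi}=\hat\xi + \hat\psi$ with $\hat\psi$ i.i.d.\ $N(0,1/2)$ (see \eqref{eq:LB1_dim4}, \eqref{eq:LB2_dim4}), conditionally on $\mathcal{F}_{h'}$ and on the configuration being compatible with a contact point at $x_{i,j}$, there is a probability bounded below by some $\varkappa = \varkappa(h,h',L_0,M)>0$ — uniformly — that all vertices in $B_{L_0}(x_{i,j})$ lie in $\{\varphi \ge h\}$, which forces the path to be connected to $\mathscr{C}_i$ and hence to $\infty$ in $\{\varphi \ge h\}$. (Here one uses that raising finitely many i.i.d.\ Gaussians above $h-(-M)=h+M$ costs a bounded amount, and that $\mathscr{C}_i \cap B_{L_0}(x_{i,j}) \neq \emptyset$ together with $B_{L_0}(x_{i,j})\subset\{\varphi\ge h\}$ wires the path to $\mathscr{C}_i$.) Thus, conditionally, the event that the path avoids $\infty$ in $\{\varphi\ge h\}$ while $G_N$ holds implies that this ``good local bridging'' fails at \emph{every} one of the $ab$ contact points; since the $B_{L_0}(x_{i,j})$ can be taken disjoint (the $\mathscr{C}_i$ are disjoint and one can further thin the $x_{i,j}$ within each $i$ to be $3L_0$-separated, retaining $\ge a' = a/C(L_0)$ of them), and the $\hat\psi$ variables in disjoint boxes are independent, the conditional probability of this is at most $(1-\varkappa)^{a'b}$. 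Taking expectations over $\mathcal{F}_{h'}$ and using a union bound over the (at most $\P[G_N^c] + \P[\nlr{}{\varphi \ge h'}{B_{\sigma N}}{\infty}]$ complementary) scenarios yields \eqref{eq:truncated} with $c = -\log(1-\varkappa)/C(L_0)$, valid once $a \ge C(L_0,h,h',M)$ ensures $a' \ge 1$ and $b \ge C(L_0)$.

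The main obstacle is the measurability/independence bookkeeping in the conditional step: one must verify that, \emph{given} $\mathcal{F}_{h'}$, the event ``$B_{L_0}(x_{i,j})\subset\{\varphi \ge h\}$'' genuinely has conditional probability bounded below, uniformly in the (random, $\mathcal{F}_{h'}$-measurable) location $x_{i,j}$ and in the conditioning. The subtlety is that $\mathcal{F}_{h'}$ already reveals $1\{\varphi_x \ge h'\}$, so conditionally $\varphi_x$ is either a Gaussian truncated to $[h',\infty)$ (which is automatically $\ge h$ since $h<h'$) or truncated to $(-\infty,h')$; the problematic vertices are the latter, and for those one needs the conditional law of $\varphi_x = \hat\xi_x + \hat\psi_x$ given $\hat\xi_x \ge -M$ (from $\mathcal{M}$) and $\varphi_x < h'$ to still put mass $\ge$ const on $[h,h')$ — which holds because $\hat\psi_x$ has a density bounded below on any compact interval and $[h+M, h'+M]$ has positive length. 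Handling the joint conditioning over the whole box $B_{L_0}(x_{i,j})$ and the fact that the $x_{i,j}$ depend on the \emph{path} (not fixed in advance) requires passing through the decomposition $\tilde\varphi = \hat\xi+\hat\psi$ carefully and noting $\hat\xi$ is $\mathcal{F}_{h'}$-measurable while $\hat\psi$, restricted to $\Z^d\setminus\bigcup$(already-revealed box), remains i.i.d.; I would make the $x_{i,j}$ definite by fixing a deterministic rule (e.g.\ the lexicographically first admissible choice of crossing path and contact points) so that, after conditioning, only finitely many fresh i.i.d.\ Gaussians are involved per box. This is exactly the ``insertion tolerance on $\mathcal{M}$'' point flagged after \eqref{eq:ubsuper1}, and carrying it out rigorously is the crux of the proof.
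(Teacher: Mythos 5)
There is a genuine gap, and it sits at the very first, ``deterministic'' step of your argument: you assert that on $G_N \cap \{\lr{}{\varphi \ge h'}{B_{\sigma N}}{\infty}\}$ each ambient cluster $\mathscr{C}_i$ is connected to $\infty$ in $\{\varphi \ge h'\}$. Nothing in \eqref{def:goodevent} gives this: the $\mathscr{C}_i$ are merely connected subsets of $\Lambda_N \cap \{\varphi\ge h'\}$, and the contact condition $B_{L_0}(x_{i,j})\cap\mathscr{C}_i\cap\mathcal{M}\neq\emptyset$ only says that a crossing path (in particular, a crossing path $\gamma_0$ extracted from the infinite $h'$-cluster) comes within distance $L_0$ of $\mathscr{C}_i$ --- it does not make $\gamma_0$ intersect $\mathscr{C}_i$. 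Consequently, bridging the level-$h$ cluster of $0$ to some $\mathscr{C}_i$ inside a box $B_{L_0}(x_{i,j})$ produces no contradiction with $\{\nlr{}{\varphi\ge h}{0}{\infty}\}$, because that $\mathscr{C}_i$ may itself be a finite cluster. The whole cost estimate $e^{-cab}$ therefore does not follow from a single conditioning on $\mathcal{F}_{h'}$ as you propose; the same objection applies to your one-line reduction of \eqref{eq:locauniq}, since $G_N$ does not force uniqueness of crossing clusters either.

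Repairing this is exactly where the paper spends its effort: it introduces the intermediate level $h''=(h+h')/2$ and an auxiliary event $G'_N(\lambda)$ requiring that the \emph{infinite} cluster $\mathscr{C}$ of $\{\varphi\ge h''\}$ meets $B_{\sigma N}$ and that every crossing $*$-path has order $ab$ points within $L_0$ of $\mathscr{S}=B_N\cap\mathscr{C}\cap\mathcal{M}$. A first application of the insertion-tolerance representation, conditionally on $\mathcal{F}_{h'}$ and along the contact points of the fixed path $\gamma_0$ in the infinite $h'$-cluster, shows via a binomial-domination estimate that, up to probability $e^{-cab}$, a positive fraction of the $\mathscr{C}_i$ do get wired to $\mathscr{C}$ after sprinkling from $h'$ to $h''$; this is the content of \eqref{eq:G'bnd} and is the missing idea in your proposal. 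Only then does a second conditioning, now on $\mathcal{F}_{h''}$, together with the sequential exploration of the level-$h$ cluster of $0$ (your second half is essentially this step, including the measurability bookkeeping and the thinning/separation of contact boxes, which the paper also carries out), yield the bound \eqref{eq:truncated2} and hence the $e^{-cab}$ term. In short: your mechanism is right for the second stage, but without the intermediate sprinkling $h'\to h''$ the clusters you bridge to are not known to reach infinity, and the proof as written fails.
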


Before delving into the proof, let us briefly give some intuition for \eqref{eq:truncated} and \eqref{eq:locauniq} and describe the salient features of the argument. We will focus on \eqref{eq:truncated} (the case of \eqref{eq:locauniq} is similar). In essence, \eqref{eq:truncated} asserts that, whenever the event $G_N$ occurs and $B_{\sigma N}$ connects to infinity at level $h'$ slightly below $h$, the event of interest on the left-hand side of \eqref{eq:truncated} comes at a uniform cost in each of the areas around the $a\cdot b$ contact points generated by the event $G_N$. Indeed, around each of those contact points, the cluster of $0$ in $\{ \varphi \geq h\}$ gets close to an interface, and the occurrence of $\{\lr{}{\varphi \ge h'}{B_{\sigma N}}{\infty} \}$ guarantees that all the interfaces are actually part of the infinite cluster at level $h'$. The cumulated cost of avoiding the interfaces near the contact points gives rise to the term $e^{-c b a }$. However, decoupling these avoidance events is hindered by the long-range correlations, which is dealt with in the proof by exploring the contact points one by one, and working with two sprinkled levels $h$ and $h'$. The controls given by \eqref{eq:ubsuper1}, which are required to hold in the area around contact points by definition of $G_N$, see \eqref{def:goodevent} (in particular, note that every contact point has a nearby point in $\mathcal{M}$) will be used to implement this decoupling.

\begin{proof}
We only give the proof of \eqref{eq:truncated}. The proof of \eqref{eq:locauniq} follows by straightforward modifications of the argument. We begin by introducing an auxiliary event $G'_{N}$ as follows.
With $h'' \coloneqq (h + h') / 2$ and for $\lambda > 0$, the event $G'_{N}(\lambda)$ occurs if all of the following hold:
\begin{itemize}
	\item[i)] the set  $\{\varphi \ge h''\}$ contains an infinite cluster $\mathscr C$ which intersects $B_{\sigma N}$.	
	\item [ii)] Letting $\mathscr S = B_N \cap \mathscr C \cap\mathcal{M}$, 
	there exists --- for any $*$-path $\gamma$ crossing $B_N \setminus 
	B_{\sigma N}$ --- a set $S \subset \gamma$ with $|S| \ge \lambda b a  $ such that $B_{L_0}(x) \cap \mathscr S \neq \emptyset$ for every $x \in S$.
\end{itemize}
The parameter $\lambda$, chosen below in \eqref{eq:explolambda}, will ensure a certain well-separatedness property.
Clearly the event $G'_{N}(\lambda)$ is measurable relative to $\mathcal F_{h''}$ (cf.~below \eqref{def:goodevent}). We will argue that for suitable $c, \lambda > 0$ each depending on $h, h', L_0$ only,
\begin{align}
\label{eq:truncated2}
&\P\big[\,\lr{}{\varphi \ge h}{0}{\partial B_{N}}, \nlr{}{\varphi \ge h}{0}{\infty} \,\big|\, \mathcal F_{h''}\big]1_{G'_{N}(\lambda)} \le e^{-cba}, \text{ and}\\
&\label{eq:G'bnd}
\P\big[\, G'_{N}(\lambda)^c \cap G_{N} \cap \{\lr{}{\varphi \ge h'}{B_{\sigma N}}{\infty}\}\big] \le e^{-cba},
\end{align}
from which \eqref{eq:truncated} readily follows.

The proofs of \eqref{eq:truncated2} and \eqref{eq:G'bnd} both hinge on the following representation of the conditional distribution of $\{\varphi \geq h \}$ under $\P[\, \cdot \, | \mathcal{F}_{h''}]$. In what follows, given $\mathbf{p}= (\mathbf{p}_x)_{x\in\Z^d}$ with $\mathbf{p}_x \in [0,1]$, let $P_{\mathbf{p}}$ denote the corresponding product measure on $\{0,1\}^{\Z^d}$, with canonical coordinates $Y=(Y_x)_{x\in \Z^d}$, so that $P_{\mathbf{p}}[Y_x=1]= \mathbf{p}_x$ for all $x \in \Z^d$. By means of \eqref{eq:LB1_dim4} and \eqref{eq:LB2_dim4}, one infers that for all $ \tilde h> h$,
\begin{equation}
\label{eq:insert_tol}
\begin{split}
&\text{the law of $(1\{\varphi_x\geq h\})_{x\in \Z^d}$ under $\P[\, \cdot \, | \mathcal{F}_{\tilde{h}}]$ is $P_{\mathbf{p}}$ with $\mathbf{p}_x = \P[\varphi_x \geq h | \mathcal{F}_{\tilde{h}}] $, $x\in \Z^d$,}\\
&\mathbf{p}_x=1 \text{ if } x \in \{\varphi \ge \tilde{h}\} \text{ and } 	\mathbf{p}_x\geq \Cl[c]{c:inserttol} = \Cr{c:inserttol}(h,\tilde{h}, M) \text{ if } x \in B_{L_0}(\mathcal{M}).
	\end{split}
\end{equation}
Indeed, regarding the last part, on the event $ \{\varphi_x < \tilde{h}\}$, one has by \eqref{eq:LB1_dim4} and \eqref{eq:LB2_dim4} that $ \P[\varphi_x \geq h | \mathcal{F}_{\tilde{h}}] = 1-\frac{\Phi(\sqrt{2}(h-\hat{\xi}_x))}{\Phi(\sqrt{2}(\tilde{h}-\hat{\xi}_x))} $, where $\Phi(\cdot)$ denotes the distribution function of a standard Gaussian random variable. In case $ x \in B_{L_0}(\mathcal{M})$, one obtains the desired lower bound using that $\hat{\xi}_x \geq - M$ by \eqref{eq:ubsuper1} and $\tilde h > h$ whilst noting that $\lim_{\xi \to \infty} \frac{\Phi(s-{\xi})}{\Phi(t-{\xi})}=0$ for any $s,t \in \R$ with $s< t$. 

With \eqref{eq:insert_tol} at hand, we proceed to show \eqref{eq:truncated2}, \eqref{eq:G'bnd}, starting with \eqref{eq:truncated2} and choosing
\begin{equation}
\label{eq:explolambda}
\text{$\lambda\coloneqq 1/(3|B_{3L_0}|)^2$ (so that $\lceil 2\sqrt\lambda b \rceil \leq b/|B_{3L_0}|$ when $b \geq C(L_0)$).}
\end{equation} 
The choice of $\lambda$ in \eqref{eq:explolambda} will soon become clear.
Let $C(0)$ denote the cluster of $0$ in $B_L \cap \{\varphi \ge h\}$, under the measure $\P[\, \cdot \, | \mathcal{F}_{h'}]\stackrel{\text{law}}{=} P_{\mathbf{p}}$. We explore $C(0)$ vertex by vertex starting from $0$ in a canonical way, i.e.~checking at each step the state of the smallest (in a fixed deterministic ordering of the points in $\Z^d$) unexplored vertex in the exterior neighborhood of the currently explored piece of $C(0)$. We 
do so until the first time we discover a vertex $x_1\in C(0)$ (there may not be any) which 
is in the exterior neighborhood of some $B_{L_0}(y_1)$ with $y_1 \in \mathscr S$ (note that $\mathscr{S}$ as defined in ii) above is $\mathcal{F}_{h''}$-measurable). At this 
point, we explore the state of every vertex in $B_{L_0}(y_1)$. By definition, $\mathscr{S} \subset \mathscr{C}$ so $\mathscr{C}$ intersects $B_{L_0}(y_1)$. We stop the exploration if some vertex of $\mathscr{C} \cap B_{L_0}(y_1)$ lies in $C(0)$, which occurs for instance if all the vertices of $B_{L_0}(y_1)$ belong to $C(0)$. Otherwise we continue exploring $C(0)$ until we discover a vertex $x_2\in C(0)$ in the exterior neighborhood of $B_{L_0}(y_2)$ for some $y_2 \in \mathscr S \setminus \{y_1\}$ which was not visited by the exploration yet. As before, we then explore the state of every vertex in $B_{L_0}(y_2)$, stopping the exploration if $C(0)$ intersects $\mathscr{C}$ in that box and continuing otherwise. We proceed like this until either stopping or discovering the whole cluster $C(0)$. By construction, if the algorithm stops prior to discovering $C(0)$, the event $\{ \lr{}{\varphi \ge h}{0}{\infty}\}$ occurs.

Using the above algorithm, one deduces the following. Let $\tau \coloneqq |\{ y_1, 
y_2,\dots, \}|$ denote the number of points $y_i$ discovered until the algorithm stops (possibly~$\tau =0$). Then in view of property ii) above, with $\lambda' = \lambda / |B_{L_0+1}| $,
\begin{equation}
\label{eq:algo111}
 G'_{N}(\lambda) \cap \{\lr{}{\varphi \ge h}{0}{\partial B_{N}}, \nlr{}{\varphi \ge h}{0}{\infty}\} \subset \{ \tau \geq \lambda' ba\}
\end{equation}
(in particular, the right-hand side implies $\{\tau \geq 1\}$ whenever $a \geq C(h, h', L_0)$, 
as stipulated above \eqref{eq:truncated}). Moreover, by means of \eqref{eq:insert_tol} and 
\eqref{eq:explolambda}, one sees that under $\P[\, \cdot \, | \mathcal{F}_{h'}]$ and on the 
event $G'_{N}(\lambda)$, conditionally on  $\{\tau > n\}$ for some $0\leq n < \lambda' ba$, 
the event $\{\tau = n+1 \}$ occurs with probability at least $\Cr{c:inserttol}'$, where 
$\Cr{c:inserttol}' ( h, 
h'', L_0,M) \coloneqq \Cr{c:inserttol}(h,h'',M)^{|B_{L_0 + 1}|}$ (with $\Cr{c:inserttol}$ as in \eqref{eq:insert_tol}). Together with \eqref{eq:algo111}, this readily implies that the left-hand side of \eqref{eq:truncated2} is bounded by $(1-\Cr{c:inserttol}')^{\lambda' ba}$, as claimed.

We now turn to the proof of \eqref{eq:G'bnd}. Recall that $ G_{N} \cap \{\lr{}{\varphi \ge h'}{B_{\sigma N}}{\infty}\}$ is $\mathcal{F}_{h'}$-measurable. We will in fact show that 
\begin{equation}
\label{eq:G'bnd.1}
\P\big[\, G'_{N}(\lambda)^c  \, \big| \, \mathcal{F}_{h'}\big]1 \{  G_{N}, \lr{}{\varphi \ge h'}{B_{\sigma N}}{\infty}\} \le e^{-cba},
\end{equation}
which implies \eqref{eq:G'bnd}. Under $\P[\, \cdot \, | \mathcal{F}_{h'}]$ and on the event $ G_{N} \cap \{\lr{}{\varphi \ge h'}{B_{\sigma N}}{\infty}\}$, fix a path $\gamma_0$ in the infinite cluster of $\{ \varphi \geq h'\}$ crossing $\Lambda_N$. Notice in particular that $\gamma_0 \subset \mathscr{C}$, cf.~i) above. By definition of $ G_{N}$, see \eqref{def:goodevent}, and by suitable labeling (using for instance the above ordering of $\Z^d$), one finds disjoint connected sets $\mathscr C_i \subset (\Lambda_N \cap \{\varphi \ge h'\})$, for $1 \leq i \leq  \lceil 2 \sqrt{\lambda} b \rceil$ (with $\lambda$ as in \eqref{eq:explolambda}) and corresponding points $x_{i,j}(\gamma_0)$, $1 \leq j \leq  \lceil \sqrt{\lambda} a \rceil$, satisfying $B_{L_0}(x_{i,j}(\gamma_0)) \cap \mathscr C_i \cap\mathcal{M} \neq \emptyset$ and $|x_{i,j}(\gamma_0)- x_{i',j'}(\gamma_0)|_{\infty} > 2L_0$ for any $(i,j) \neq (i',j')$ and all $1 \leq  i,i'\leq \lceil 2\sqrt{\lambda}b \rceil$, $1 \leq  j,j'\leq \lceil \sqrt{\lambda}a \rceil$. Thus, letting 
\begin{equation}
\label{eq:G'bnd.2}
I \coloneqq \big\{ i : \lr{}{\varphi \ge h''}{x_{i,j}(\gamma_0)}{\mathscr C_i} \text{ for some } 1 \leq  j\leq \lceil \sqrt{\lambda}a \rceil  \big\}
\end{equation}
it follows that under $\P[\, \cdot \, | \mathcal{F}_{h'}]$ and on the event $\{\lr{}{\varphi \ge h'}{B_{\sigma N}}{\infty}, \,  G_{N}\}$,
\begin{equation}
\label{eq:G'bnd.3}
 G'_{N}(\lambda)^c \subset  \big\{ |I|< \sqrt{\lambda}b \big\};
 \end{equation}
indeed for an arbitrary $*$-path $\gamma$ crossing $\Lambda_N$, the occurrence of $ G_{N}$ guarantees the existence of vertices $x_{i,j}(\gamma)$ such that $B_{L_0}(x_{i,j}(\gamma)) \cap \mathscr C_i \cap\mathcal{M} \neq \emptyset$, for every $1 \leq  i\leq \lceil 2 \sqrt{\lambda}b \rceil$ and $1 \leq  j \leq \lceil \sqrt{\lambda}a \rceil$. Moreover, if $i \in I$, then in fact $\mathscr C_i \subset \mathscr{C}$ on account of \eqref{eq:G'bnd.2}. Hence, if $|I| \geq \sqrt{\lambda} b$, the set $S\coloneqq  \{ x_{i,j}(\gamma): i\in I, \, 1 \leq  j \leq \lceil \sqrt{\lambda}a \rceil \}$ satisfies the properties listed in ii) above and \eqref{eq:G'bnd.3} follows (the occurrence of i) is ensured by that of $\{\lr{}{\varphi \ge h'}{B_{\sigma N}}{\infty}\}$, which is conditioned on).

Finally, using \eqref{eq:insert_tol}, \eqref{eq:explolambda}, \eqref{eq:G'bnd.2} and the separation of the points $x_{i,j}(\gamma_0)$, $1 \leq  i\leq \lceil 2\sqrt{\lambda}b \rceil$, $1 \leq  j\leq \lceil \sqrt{\lambda}a \rceil$, one infers that $|I|$ stochastically dominates (under $\P[\, \cdot \, | \mathcal{F}_{h'}]$ and on $ G_{N}$) a binomial random variable with $\lceil 2\sqrt{\lambda}b \rceil$ trials and success probability $1- (1-\Cl[c]{c:inserttolnew})^{\lceil \sqrt{\lambda}a \rceil}$, where $\Cr{c:inserttolnew}: =\Cr{c:inserttol}' ( h, 
h', L_0,M)$. Thus, 
\begin{equation*}
\P\big[|I|< \sqrt{\lambda}b  \big| \mathcal{F}_{h'}\big]1_{{G_{N}}} \leq 2^{\lceil 2\sqrt{\lambda}b \rceil} (1-\Cr{c:inserttolnew})^{\lceil \sqrt{\lambda}a \rceil \sqrt{\lambda} b} \leq e^{-cb a}
\end{equation*}
for suitable $c=c(L_0,h,h',M)$,
as soon as $\sqrt{\lambda} a \geq C(L_0, h, h', M)$, whence \eqref{eq:G'bnd.1} readily follows from \eqref{eq:G'bnd.3}. This completes the proof of \eqref{eq:G'bnd} and with it that of Lemma~\ref{lem:truncated}.
\end{proof}

The upper bounds \eqref{eq:main_ubsupercrit_d=3} and \eqref{eq:main_ubsupercrit_d>4}, as well as the corresponding  bounds for ${\rm LocUniq}(h, N)^c$, will eventually follow from Lemma \ref{lem:truncated}. As mentioned above that lemma, all estimates will ultimately turn out to be carried by the upper bound for $\P[G_{N}^c]$. 
Our next step is thus to derive suitable upper bounds on $\P[G_{N}^c]$, which we achieve by a renormalization scheme operating as follows. In a single step, the scheme goes from a base scale $L$ to a larger scale $N$, cf.~\eqref{eq:ubsuper2}. Whereas $b$ as appearing in \eqref{def:goodevent} remains effectively fixed through the iteration (as will turn out $b$ will grow roughly like $N/u(KL)$, cf.~\eqref{def:admissible1}), the scheme is designed to \textit{simultaneously} improve on both the growth of $a\coloneqq a_L$ in \eqref{def:goodevent} and the strength of the estimate on $\P[G_{N}^c]$ in each step. Roughly speaking, this will boost $a_L$ from being of order $1$ to growing faster than $L /( \log L)^C$ when $d = 3$ and linearly in $L$ when $d \ge 4$, whence the error terms in \eqref{eq:truncated}, \eqref{eq:locauniq} become sufficiently small.

 The starting point of the argument is a certain good event ${\mathcal G}_z$, which we now introduce, and for which we will later supply a suitable a-priori estimate, see Lemma~\ref{lem:loc_amb_cluster1}. In the sequel, $\tilde\chi = (\tilde\chi^z)_{z \in \mathbb{L}}$, where $\tilde\chi^z$ refers to either $ \tilde\varphi$ or $\tilde \psi^z$, see \eqref{eq:xi_tilde_z}. We write $\chi^z$ for the restriction of $\tilde\chi^z$ to $\Z^d$. Recall \eqref{eq:LL}-\eqref{eq:decomp_z}, \eqref{eq:ubsuper1} and \eqref{eq:ubsuper2} regarding notation.
\begin{defn}(Good event). \label{def:goodevent2}
For $h_1  \le h_2 \leq h_3  \in \R$ and $z \in \mathbb{L}$, 
the event  ${\mathcal G}_z (\tilde\chi) = {\mathcal G}_z(\tilde\chi, L_0, L, a, h_1, h_2, h_3,M)$ occurs if all of the following hold:
\begin{align}
	&\label{def:goodevent2.1}
	\begin{array}{l}
	\text{$C_z$ is connected to $\partial D_z$ in $\{\chi^z \ge h_3\}$ 
	and for all $z' \in \mathbb L$ such that $|z - z'|_{\infty} \le L$,}\\
	\text{all clusters of $\{\chi^{z'} \ge h_2 \}$ crossing $\tilde D_z \setminus \tilde C_z$ are connected inside $D_z \cap \{ \chi^z \geq h_1 \}$.} \end{array}\\[0.3em]
	&\label{def:goodevent2.2}
	\begin{array}{l}
	\text{letting $\mathscr S_z = \tilde D_z \cap \mathscr C_z \cap\mathcal{M}( \tilde{\chi}^z)$ where $\mathscr C_z$ denotes the cluster of $C_z$}\\
	\text{in $\{\chi^z \ge h_1\}$, for any $*$-path $\gamma$ crossing $\tilde D_z \setminus 
	\tilde C_z$ there exists a set}\\
	\text{$S \subset \gamma$ with $|S| \ge a$ such that $B_{L_0}(x) \cap \mathscr S_z \neq \emptyset$ for every $x \in S$.}
	\end{array}
	\end{align}
	%
\end{defn}

Henceforth we routinely suppress the dependence of $\mathcal{G}_z$ on parameters which stay fixed.
Note that ${\mathcal G}_z$ indeed depends on $\tilde{\varphi}$, the extension of $\varphi$ to $\tilde{\Z}^d$, through $\mathcal{M}( \tilde{\chi}^z)$ in \eqref{def:goodevent2.2}, see \eqref{eq:ubsuper1}. We now introduce, abbreviating $\tilde{\psi}=(\tilde{\psi}^z)_{z \in \mathbb L}$ and 
for $\varepsilon > 0$,
\begin{align}
&\label{eq:tildeGdef}
\mathcal{H}_z^1 \coloneqq \mathcal G_{z}(\tilde{\psi}) \cap \big\{\sup_{D_z}|\tilde{\xi}^z| \le \varepsilon \big\},\\
&\label{eq:starGdef}
\mathcal{H}_z^2 \coloneqq \mathcal G_{z}(\tilde{\psi}) \cap \big\{\inf_{D_z}\tilde{\xi}^z \ge -\varepsilon \big\},
\end{align}
each inheriting the dependence on parameters from $\mathcal{G}_z$ and where, as in \eqref{eq:BTIS_Szn}, $\sup_{ D_z}$/$\inf_{ D_z}$ refer to the suprema/infima over all points in $D_z \cap \tilde{\mathbb{Z}}^d$. 

The events introduced in Definition~\ref{def:goodevent2} will be the fundamental building blocks of most constructions to follow. Lemma~\ref{lem:loc_amb_cluster1} will soon witness these events to be typical in the supercritical regime when $a=1$. The goal will then roughly consist of improving \textit{both} their likelihood and the value of $a$ simultaneously, a feature which will be achieved by means of the bootstrapping scheme in Proposition~\ref{lem:inclusion_fxn_goodevent} below.

The intuition for the above definition is as follows. Typically, neighboring points $z,z' \in 
\mathbb{L}$ for which $\mathcal{G}_z$ and $\mathcal{G}_{z'}$ occur roughly give rise to 
corresponding macroscopic cluster at scale $L$ which must communicate (i.e.~be glued together) 
due to \eqref{def:goodevent2.1}. Moreover, by means of \eqref{def:goodevent2.2} the resulting 
joint cluster has a desirable number of contact point(s) (parametrized by $S$) for any 
macroscopic annulus crossing $\gamma$ in either of the constituent boxes. Continuing in this 
way by stacking neighboring (good) boxes in which these events occur thus generates large 
clusters with these properties. Here, we are deliberately passing over various technical 
details, including the necessity to work with both the full and localized fields, whence 
$\tilde{\chi}$, and the fact that our construction must operate simultaneaously at various 
sprinkled levels $h_1$--$h_3$, essentially to permit going back and forth between the two 
fields, whenever the relevant harmonic average $\tilde{\xi}^{\cdot}$ is under control, as 
ensured by \eqref{eq:tildeGdef} or \eqref{eq:starGdef}.

We now make these heuristics precise. Let $\varepsilon_0 \coloneqq  \frac1{3}(1 \wedge (h_2-h_1) \wedge (h_3-h_2))$. 
We start by considering two points. The key features of \eqref{def:goodevent2.1}-\eqref{eq:starGdef} are summarized in the following  

\begin{lemma}[Gluing; $h_1  < h_2 < h_3, \,  \varepsilon \in (0, \varepsilon_0),  \eqref{eq:ubsuper2}$] \label{label:gluing}
$\quad$

\medskip
\noindent
For $ z \in \mathbb{L}$, with $(\tilde\chi_1, \tilde\chi_2) \in \{ (\tilde\varphi, \tilde\psi ), (\tilde\psi,\tilde\varphi )\}$ and
$\mathcal{G}^1 \coloneqq \mathcal G_z(\tilde\chi_1, h_1, h_2, h_3,M)$, $\mathcal{G}^2 \coloneqq \mathcal{G}_z(\tilde\chi_2, h_1 - \varepsilon, h_2 + \varepsilon, h_3 - \varepsilon, M+\varepsilon)$, one has
\begin{equation}
\label{eq:locuniqdecomp}
\mathcal{G}^1 \cap \{  \sup_{z'} \sup_{ D_{z'}}|\tilde{\xi}_{\cdot}^{z'}| \le \varepsilon \} \subset \mathcal{G}^2,
\end{equation}
with $z'$ ranging over all points in $\mathbb{L}$ with $|z' - z|_\infty \le L$. In particular,
\begin{equation}
\label{eq:ellinfapproxsupcrit}
\P[\mathcal{G}^2] \ge 1 - e^{-f(L)} \text{ whenever }	\P[\mathcal{G}^1] \ge 1 - e^{-2f(L)} \text{ with } 4\log(d) \le 4f(L) \le \Cr{c:cap}(4 \varepsilon) L
\end{equation}
where $\Cr{c:cap}$ is as in \eqref{eq:inputbnd}. Moreover, for $z, z' \in \mathbb{L}$ with 
$|z' - z|_\infty = L$, if $\mathcal{H}_z^i \cap \mathcal{H}_{z'}^i$ occurs for some $i \in \{ 
1,2\}$,
\begin{equation}
\begin{split}
&\text{all clusters of $\{\Xi_i^y \ge h_2 + \varepsilon_i\}$ crossing $\tilde{D}_y \setminus \tilde C_y$, for $y \in \{ z,z'\},$ belong}\\
&\text{to a single connected subset $\mathscr{C}_{z,z'}$ of $ \{\varphi \ge h_1 - \varepsilon\} \cap (  D_z\cup   D_{z'})$, which}\\
&\text{crosses both ${D}_z \setminus  C_z$ and ${D}_{z'} \setminus  C_{z'}$,}
\end{split}
\label{property:gluing_neighbor}
\end{equation}
where $\Xi_1^y= \varphi$, $\varepsilon_1=\varepsilon$ and $\Xi_2^y= \psi^y$, $\varepsilon_2=0$, and 
\begin{equation}
\label{property:gluing_neighbor2}
\begin{split}
&\text{for any $*$-path $\gamma$ crossing $\tilde{D}_y \setminus \tilde C_y$, for $y \in \{ z,z'\},$ there exists a set $S \subset \gamma$}\\
&\text{with $|S| \ge a$ such that $B_{L_0}(x) \cap  \tilde D_y \cap \mathscr{C}_{z,z'} \cap\mathcal{M}_{M+\varepsilon}(\tilde\varphi)\neq \emptyset$, for all $x \in S$.}
\end{split}
\end{equation}
\end{lemma}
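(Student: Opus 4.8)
\textbf{Proof plan for Lemma~\ref{label:gluing}.}

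The plan is to treat the four displayed assertions in sequence, since each builds on the previous. First I would establish \eqref{eq:locuniqdecomp}. Write $\tilde\chi_1 = \tilde\xi^{z'} + \tilde\chi_2^{z'}$ pointwise — more precisely, on the relevant boxes $D_{z'}$ one has $\tilde\varphi = \tilde\xi^{z'} + \tilde\psi^{z'}$ by \eqref{eq:xi_tilde_z}, and conversely $\tilde\psi^{z'} = \tilde\varphi - \tilde\xi^{z'}$. On the event $\{\sup_{z'}\sup_{D_{z'}}|\tilde\xi^{z'}_\cdot|\le\varepsilon\}$, a connection in $\{\chi_1^y \ge h\}$ inside some $D_y$ (with $|y-z|_\infty \le L$, so the relevant $U_y$-harmonic averages are controlled) becomes a connection in $\{\chi_2^y \ge h - \varepsilon\}$, and a connection in $\{\chi_1^y \ge h\}$ inside $D_y$ implies one in $\{\chi_2^y \ge h+\varepsilon\}$ only in the reverse direction, so one must be careful which way each clause of Definition~\ref{def:goodevent2} points: clause \eqref{def:goodevent2.1} requires \emph{existence} of a high-threshold connection ($h_3$) and \emph{merging} of all low-threshold crossing clusters ($h_2$) inside $\{\ge h_1\}$; shifting $\chi_1 \to \chi_2$ costs $\varepsilon$ in the ``easy'' direction for the existence part (so $h_3 \to h_3 - \varepsilon$) and costs $\varepsilon$ in the ``hard'' direction for the merging part — a cluster of $\{\chi_2 \ge h_2+\varepsilon\}$ is a cluster of $\{\chi_1 \ge h_2\}$, which gets merged inside $\{\chi_1 \ge h_1\} \subset \{\chi_2 \ge h_1 - \varepsilon\}$. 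Similarly clause \eqref{def:goodevent2.2}: the set $\mathscr C_z$ only grows when passing to the lower threshold $h_1 - \varepsilon$, and $\mathcal M(\tilde\chi_1) \subset \mathcal M_{M+\varepsilon}(\tilde\chi_2)$ since shifting $\tilde\chi$ by at most $\varepsilon$ changes $A\tilde\chi$ by at most $\varepsilon$, so $\mathscr S_z$ only grows and the contact-point property is preserved with the \emph{same} $a$. This gives \eqref{eq:locuniqdecomp}; since $\varepsilon < \varepsilon_0$, the shifted thresholds still satisfy $h_1 - \varepsilon < h_2 + \varepsilon < h_3 - \varepsilon$.

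For \eqref{eq:ellinfapproxsupcrit} I would simply bound
$\P[(\mathcal G^2)^c] \le \P[(\mathcal G^1)^c] + \P[\sup_{z'}\sup_{D_{z'}}|\tilde\xi^{z'}| > \varepsilon]$
using \eqref{eq:locuniqdecomp}. The first term is at most $\e^{-2f(L)}$ by hypothesis. For the second, union bound over the $O(1)$ many $z'$ with $|z'-z|_\infty \le L$ (there are at most $3^d$ such lattice points since $z,z' \in \mathbb L = L\Z^d$), reduce as in Lemma~\ref{lem:BTIS_Szn}'s proof from $\tilde\xi^{z'}$ to $\xi^{z'}$ on $D_{z'} \cap \Z^d$, apply Lemma~\ref{lem:BTIS_Szn} to the singleton $\mathcal C = \{z'\}$ twice (once for $\sup$, once for $-\inf$ by symmetry of the Gaussian field) together with the capacity lower bound $\mathrm{cap}(C_{z'}) \ge cL$ from \eqref{eq:capball}; this yields $\P[\sup_{D_{z'}}|\tilde\xi^{z'}| > \varepsilon] \le \e^{-2\Cr{c:cap}(4\varepsilon)L} \le \e^{-2f(L)}$ under the stated bound $4f(L) \le \Cr{c:cap}(4\varepsilon)L$. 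Summing the $\le 3^d$ terms and absorbing the constant into the $4\log d$ slack (so that $\e^{-2f(L)}\cdot(1 + 3^d) \le \e^{-f(L)}$ once $f(L) \ge 4\log d$) gives $\P[(\mathcal G^2)^c] \le \e^{-f(L)}$.

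The main obstacle, and the bulk of the remaining work, is \eqref{property:gluing_neighbor} and \eqref{property:gluing_neighbor2}. Fix $i \in \{1,2\}$ and suppose $\mathcal H_z^i \cap \mathcal H_{z'}^i$ occurs with $|z'-z|_\infty = L$. Abbreviate, for the case $i=1$, $\tilde\chi = \tilde\varphi$ (so $\mathcal G_z(\tilde\varphi,\dots)$ holds with parameters $h_1,h_2,h_3,M$ shifted by the $\tilde\psi \leftrightarrow \tilde\varphi$ version of \eqref{eq:locuniqdecomp}), and for $i=2$, $\tilde\chi = \tilde\psi$. In both cases, applying \eqref{eq:locuniqdecomp} (in the appropriate direction) converts the $\mathcal G_z(\tilde\psi,\dots)$ inside $\mathcal H_z^i$ into $\mathcal G_z(\Xi_i,\dots)$ at the shifted thresholds, at the cost of $\varepsilon$ absorbed into $\varepsilon_i$ and the parameter shifts $h_1 - \varepsilon$, $M+\varepsilon$; the point of the two cases is precisely to handle both ``$\varphi$ is the working field'' and ``$\psi$ is the working field'' uniformly. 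Now the geometry: since $|z-z'|_\infty = L$, the boxes overlap as $\tilde C_z \cap \tilde C_{z'} \neq \emptyset$ and $\tilde D_z \setminus \tilde C_z$ meets $\tilde D_{z'} \setminus \tilde C_{z'}$ (check via \eqref{eq:boxes}, \eqref{eq:C_boxes}: $\tilde C_z = z + [-L,2L)^d$, $\tilde D_z = z + [-2L,3L)^d$, so with $z' = z + Le_1$ one has $C_z \subset \tilde D_{z'} \setminus \tilde C_{z'}$ partially, and the key is that $z'$ is one of the points with $|z'-z|_\infty \le L$ appearing in clause \eqref{def:goodevent2.1} for $z$, and vice versa). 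Hence by \eqref{def:goodevent2.1} applied \emph{at} $z$ with the neighbor $z'$: all clusters of $\{\chi^{z'} \ge h_2\}$ (at the $z$-shifted threshold) crossing $\tilde D_z \setminus \tilde C_z$ merge inside $D_z \cap \{\chi^z \ge h_1\}$; and the high-threshold connection $C_z \leftrightarrow \partial D_z$ in $\{\chi^z \ge h_3\} \subset \{\chi^z \ge h_1\}$ ties $C_z$ into this cluster. Symmetrically at $z'$. Because the $\{\ge h_2+\varepsilon_i\}$-crossing clusters of $\tilde D_z\setminus\tilde C_z$ and of $\tilde D_{z'}\setminus\tilde C_{z'}$ both contain points in the overlap region $\tilde D_z \cap \tilde D_{z'}$, and each is absorbed into the respective $\{\ge h_1 -\varepsilon\}$-cluster of $C_z$ (resp. $C_{z'}$), and those two $\{\ge h_1-\varepsilon\}$-clusters themselves overlap in $\tilde C_z \cap \tilde C_{z'}$ (via the crossing clusters, which live in both), one concludes there is a single connected $\mathscr C_{z,z'} \subset \{\varphi \ge h_1 - \varepsilon\} \cap (D_z \cup D_{z'})$ containing all of them; that it crosses both $D_z \setminus C_z$ and $D_{z'} \setminus C_{z'}$ follows from the high-threshold connections $C_z \leftrightarrow \partial D_z$ and $C_{z'} \leftrightarrow \partial D_{z'}$. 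Finally \eqref{property:gluing_neighbor2}: given a $*$-path $\gamma$ crossing $\tilde D_y \setminus \tilde C_y$ for $y \in \{z,z'\}$, apply clause \eqref{def:goodevent2.2} of $\mathcal G_y(\tilde\psi,\dots)$ — or rather of the version after \eqref{eq:locuniqdecomp} and the definitions \eqref{eq:tildeGdef}/\eqref{eq:starGdef} — to get $S \subset \gamma$, $|S| \ge a$, with $B_{L_0}(x) \cap \mathscr S_y \ne \emptyset$ for $x \in S$, where $\mathscr S_y = \tilde D_y \cap \mathscr C_y \cap \mathcal M(\tilde\chi^y)$; then observe $\mathscr C_y \subset \mathscr C_{z,z'}$ (just shown, with room: $\mathscr C_y$ is a $\{\ge h_1\}$ or $\{\ge h_1 - \varepsilon\}$ cluster feeding into $\mathscr C_{z,z'}$) and $\mathcal M(\tilde\chi^y) \subset \mathcal M_{M+\varepsilon}(\tilde\varphi)$ — the latter inclusion because on $\mathcal H_y^i$ one has $\sup_{D_y}|\tilde\xi^y| \le \varepsilon$ (or $\inf \ge -\varepsilon$, which still gives $(A\tilde\varphi) \ge (A\tilde\psi^y) - \varepsilon$ so the $-(M+\varepsilon)$ threshold for $\tilde\varphi$ is implied by the $-M$ threshold for $\tilde\psi^y$), using $\tilde\varphi = \tilde\xi^y + \tilde\psi^y$ and that $A$ is an average, hence $\|A\tilde\xi^y\|_\infty \le \|\tilde\xi^y\|_\infty$. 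This gives $B_{L_0}(x) \cap \tilde D_y \cap \mathscr C_{z,z'} \cap \mathcal M_{M+\varepsilon}(\tilde\varphi) \supset B_{L_0}(x) \cap \mathscr S_y \ne \emptyset$, completing \eqref{property:gluing_neighbor2}. The one point requiring genuine care throughout is bookkeeping the threshold and $M$-shifts consistently across the two cases $i=1,2$ and the direction in which each monotone inclusion goes; everything else is a routine overlap-of-boxes argument.
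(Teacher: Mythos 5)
Your treatment of \eqref{eq:locuniqdecomp} and \eqref{eq:ellinfapproxsupcrit} is correct and essentially the paper's own argument (two--sided comparison of $\chi_1,\chi_2$ on the controlled boxes, tracking the direction of monotonicity clause by clause, the inclusion $\mathcal M_M(\tilde\chi_1)\cap\tilde D_z\subset\mathcal M_{M+\varepsilon}(\tilde\chi_2)\cap\tilde D_z$ via the averaging operator, then a union bound plus Lemma~\ref{lem:BTIS_Szn} for a single box). The gap is in \eqref{property:gluing_neighbor}, and it propagates to \eqref{property:gluing_neighbor2}. First, your opening move --- ``applying \eqref{eq:locuniqdecomp} converts $\mathcal G_y(\tilde\psi)$ inside $\mathcal H_y^i$ into $\mathcal G_y(\Xi_i)$ at shifted thresholds'' --- is not available: the hypothesis of \eqref{eq:locuniqdecomp} requires control of $\tilde\xi^{z''}$ on $D_{z''}$ for \emph{all} $z''\in\mathbb L$ with $|z''-y|_\infty\le L$, whereas $\mathcal H_z^i\cap\mathcal H_{z'}^i$ controls only $\tilde\xi^{z}$ and $\tilde\xi^{z'}$; worse, for $i=2$ the control is one--sided (only $\inf_{D_y}\tilde\xi^y\ge-\varepsilon$), so no comparison of level sets in the direction you need (from $\varphi$ down to $\psi^y$, or between $\psi^z$ and $\psi^{z'}$) is possible at all. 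This is precisely why $\Xi_2^y=\psi^y$ with $\varepsilon_2=0$: for $i=2$ the gluing must be carried out entirely at the level of the local fields, using only $\mathcal G_z(\tilde\psi)\cap\mathcal G_{z'}(\tilde\psi)$, and the harmonic bounds enter \emph{afterwards} (the lower bound to place the glued set inside $\{\varphi\ge h_1-\varepsilon\}$; the upper bound, available only when $i=1$, to absorb crossing clusters of $\{\varphi\ge h_2+\varepsilon\}$ into crossing clusters of $\{\psi^y\ge h_2\}$). Your plan inverts this order and therefore cannot be run as written for $i=2$.

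Second, and more importantly, the actual cross--box connection is not proved. You justify it by saying the two $\{\ge h_1-\varepsilon\}$--clusters of $C_z$ and $C_{z'}$ ``overlap in $\tilde C_z\cap\tilde C_{z'}$ via the crossing clusters, which live in both''; but when $i=2$ the two boxes carry \emph{different} fields $\psi^z$ and $\psi^{z'}$, their superlevel sets need not overlap anywhere, and a crossing cluster of $\{\psi^z\ge h_2\}$ at $z$ is in general not a crossing cluster of $\{\psi^{z'}\ge h_2\}$ at $z'$ (and cannot be compared to one without an upper bound on the harmonic averages, which $\mathcal H^2$ does not give). The missing idea is the hub mechanism: the crossing of $D_z\setminus C_z$ in $\{\psi^z\ge h_3\}$ guaranteed by the first part of \eqref{def:goodevent2.1} is, by the box geometry ($C_z\subset\tilde C_{z'}$ and any path from $C_z$ to $\partial D_z$ meets $\partial\tilde D_{z'}$), also a crossing cluster of $\{\psi^z\ge h_2\}$ of $\tilde D_{z'}\setminus\tilde C_{z'}$, while at the same time being a connected subset of $D_z\cap\{\psi^z\ge h_1\}$. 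Feeding this object into the merging clause of \eqref{def:goodevent2.1} \emph{at the neighboring box} $z'$, applied with the neighbor $w=z$ (this is exactly where the ``for every $z'$ with $|z-z'|_\infty\le L$'' in Definition~\ref{def:goodevent2} is used), hooks the structure at $z$ onto the structure at $z'$ inside $D_{z'}\cap\{\psi^{z'}\ge h_1\}$; only then do \eqref{eq:tildeGdef}/\eqref{eq:starGdef} convert the resulting connected set into a subset of $\{\varphi\ge h_1-\varepsilon\}$, and (for $i=1$ only) the two--sided bound identifies the $\{\varphi\ge h_2+\varepsilon\}$--crossing clusters with $\{\psi^y\ge h_2\}$--crossing ones. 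Your proposal gestures at the neighbor clause but never isolates this hub step (nor the geometric fact underlying it --- your parenthetical geometry, e.g.\ ``$C_z\subset\tilde D_{z'}\setminus\tilde C_{z'}$ partially'', is incorrect, since in fact $C_z\subset\tilde C_{z'}$), so the crucial step of \eqref{property:gluing_neighbor}, and with it the containment $\mathscr S_y\subset\mathscr C_{z,z'}$ needed for \eqref{property:gluing_neighbor2}, remains unestablished.
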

\begin{proof} We write $\mathcal{M}= \mathcal{M}_M$ throughout the proof to make the dependence on $M$ explicit, cf.~\eqref{eq:ubsuper1}. The two-sided control on the harmonic average in \eqref{eq:locuniqdecomp} and \eqref{eq:decomp_z} imply that for all $h \in \R$ and $i \in \{1,2\}$, any connected subset of $\{\chi_i  \geq h \} \cap {D}_{z'}$ is contained in a connected subset of $\{\chi_{3-i}  \geq h -\varepsilon \} \cap D_{z'}$, for any $z'\in \mathbb{L}$ with $|z' - z|_\infty \le L$. 
Applying this repeatedly in the context of \eqref{def:goodevent2.1} readily yields the corresponding property for the event on the right-hand side of \eqref{eq:locuniqdecomp}. Regarding \eqref{def:goodevent2.2}, recalling the local averaging operator $A$ from above \eqref{eq:ubsuper1}, on $B_{2L_0}(\tilde{D}_z) (\subset \Z^d)$ one has that $A\tilde{\chi}_2= A(\tilde{\chi}_1 \pm \tilde{\xi}^z) \geq A\tilde{\chi}_1 - |A\tilde{\xi}^z| \geq A\tilde{\chi}_1 -\varepsilon$ by linearity and using that $|A\tilde{\xi}^z|\leq A|\tilde{\xi}^z| \leq \varepsilon$ on $B_{2L_0}(\tilde{D}_z)$, whence $\mathcal{M}_M( \tilde{\chi}_1^z) \cap \tilde{D}_z$ is contained in $\mathcal{M}_{M+\varepsilon}( \tilde{\chi}_2^z) \cap \tilde{D}_z$.

The implication \eqref{eq:ellinfapproxsupcrit} follows readily from \eqref{eq:locuniqdecomp} upon applying e.g.~\eqref{eq:BTIS_Szn} for a single box (along with the bound on the capacity of a box as given by \eqref{eq:capball}) to control the tails of the harmonic average.

To obtain \eqref{property:gluing_neighbor}, first observe that regardless of the choice of $i\in \{1,2\}$, by \eqref{def:goodevent2.1}, \eqref{eq:tildeGdef}, \eqref{eq:starGdef} and the fact that any path crossing $D_{z'}\setminus C_{z'}$ also crosses $\tilde D_z \setminus \tilde C_z$ (see \eqref{eq:boxes} and \eqref{eq:C_boxes}), the following holds: all clusters of $\{\psi^y \ge h_2 \}$ crossing $\tilde{D}_y \setminus \tilde C_y$, for $y \in \{ z,z'\},$ belong to a single connected component $\mathscr{C}_{z,z'}$ of $ (\{\psi^z \ge h_1 \} \cap D_z) \cup( \{\psi^{z'} \ge h_1 \} \cap  D_{z'})$, which crosses both ${D}_z \setminus  C_z$ and ${D}_{z'} \setminus  C_{z'}$ (the last part is due to the first item in \eqref{def:goodevent2.1}, which guarantees the existence of such a crossing for $\psi^y$ above level $h_3 > h_2$). The control on the lower tail of $\xi^z$, $\xi^{z'}$ present in \eqref{eq:starGdef} (and \eqref{eq:tildeGdef}) implies that $\mathscr{C}_{z,z'}$ belongs to a connected component of $ \{\varphi \ge h_1 - \varepsilon\}$, thus yielding \eqref{property:gluing_neighbor} in case $i=2$. The upper bound on $\xi^y$, $y \in \{ z,z'\}$, in \eqref{eq:tildeGdef} further implies that any cluster in $ \{\varphi \ge h_2 + \varepsilon\}$ crossing $\tilde{D}_y \setminus \tilde C_y$ is part of a crossing cluster of $ \{\psi^y \ge h_2 \}$, whence \eqref{property:gluing_neighbor} follows for $i=1$.

Finally, the property \eqref{property:gluing_neighbor2} is inherited from \eqref{def:goodevent2.2} when $\mathcal{H}_z^i \cap \mathcal{H}_{z'}^i$ occurs, regardless of $i \in \{1,2\}$. For, by construction, the cluster of $C_y$ in $\mathscr{C}_{z,z'}$ contains the cluster of $C_y$ in $ \{\psi^y \ge h_1 \}$ for $y \in \{z,z'\}$, and $(\mathcal{M}_M( \tilde{\psi}^y) \cap \tilde{D}_y) \subset (\mathcal{M}_{M+\varepsilon}( \tilde\varphi) \cap \tilde{D}_y)$, since $A \tilde{\varphi}= A(\tilde{\psi}^y + \tilde{\xi}^y) \geq A\tilde{\psi}^y -\varepsilon$ in $B_{2L_0}(\tilde{D}_y)$ if the event $ \mathcal{H}_{y}^i$ occurs (this only uses the lower bound on $\tilde{\xi}^y$).
\end{proof}

We now set up the bootstrapping scheme that will lead to a suitable control of $\P[G_{N}^c]$. The index $i \in \{ 1,2 \}$ in \eqref{eq:tildeGdef}, \eqref{eq:starGdef} and below reflects the fact that intermediate steps (corresponding to $i=1$) and the final step (corresponding to $i=2$) of the argument need to be dealt with in distinct ways. 
The coarse-graining scheme developed in Section~\ref{sec:coarsegrain} now enters the picture.
Referring to Proposition~\ref{prop:coarse_paths}, for integers $K \ge 100$, $N \ge 10 K L$, we let $\mathcal A^1 = \mathcal A_{N, L}^{K}(\tilde D_{0,N} \setminus \tilde C_{0,N})$ and $\mathcal A^2 = \mathcal A_{N, L}^K(\Lambda_N)$ (with $\Lambda_N \in \{B_N \setminus B_{\sigma N}, B_{2N} \setminus B_{N}\}$, $\sigma \in (0, 1/3)$, as below \eqref{eq:ubsuper2}), 
and define, for $\rho \in (0, 1)$,
\begin{equation}
\label{def:good_box_event2}
H_{N,L}^{\,i}  
\coloneqq  \bigcap_{\mathcal C \in \mathcal{A}^i} 
	\,\, \bigcup_{\substack{\tilde{\mathcal{C}}\subset \mathcal{C} \\ |\tilde{\mathcal{C}}|\geq 
			\rho|\mathcal{C}|}}\,\,\bigcap_{z \in \tilde{\mathcal C}} \, \mathcal{H}_z^i.
\end{equation}
The event $H_{N,L}^{\,i}  $ will typically be `good', i.e.~likely, and $\rho$ small, i.e.~close to $0$.
We elaborate a bit more on the central role of the event $H_{N,L}^{\,i}$ in Remark~\ref{R:onestepsupercrit},2) below.
The next result is at the heart of our argument. It shows that $H_{N,L}^{\,i}$ typically reproduces the event from Definition~\ref{def:goodevent2} (implicit in $\mathcal{H}_z^i$) at a higher scale $N$ and with an {\em improved} choice of $a$ (for $i=1$), as well as the target event $G_{N}$ from \eqref{def:goodevent} (for $i=2$).

\begin{proposition}[Bootstrap; $h_1  < h_2 < h_3, \,   \eqref{eq:ubsuper2}, \, \rho \in (0,1), K \ge 100, i=1,2$] 
	\label{lem:inclusion_fxn_goodevent} 
With $H_{N,L}^{\,i} = H_{N,L}^{\,i} ( a,  h_1, h_2, h_3, \varepsilon, M, \rho)$, whenever $\tfrac{\rho N}{u(KL)} \geq C$, one has the inclusion
			\begin{equation}
			\label{eq:inclusion_goodevent.1}
		\big( H_{N,L}^{\,i} \cap \Omega_N^i \big)\subset 
		\begin{cases}
\mathcal{G}_0 (\tilde{\varphi}, N, a', h_1 - \varepsilon, h_2 + \varepsilon , h_3- \varepsilon, M+ \varepsilon), & \text{ if } i=1, \text{ for $\varepsilon \in (0, \varepsilon_0)$,}\\
G_{N}(a, b,  h_1 -\varepsilon, M+ \varepsilon),& \text{ if } i=2, \text{ for all $\varepsilon >0$},
\end{cases}
		\end{equation}
		where   $ \Omega_N^1= \{\lr{}{\varphi \ge h_3 - \varepsilon}{C_{0,N}}{\partial D_{0,N}}\}$, $ \Omega_N^2=\R^{\tilde{\Z}^d}$ and 
\begin{equation}
			\label{eq:inclusion_goodevent.2}
b\coloneqq   \lfloor \tfrac{\Cr{c:nLB}}{10} \,\tfrac{\rho N}{u(KL)} \rfloor, \quad a' \coloneqq 
 b a 
\end{equation}
(here $\Cr{c:nLB}$ is as in \eqref{def:admissible1}).
\end{proposition}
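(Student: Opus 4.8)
\textbf{Proof strategy for Proposition~\ref{lem:inclusion_fxn_goodevent}.} The plan is to unfold the definition of $H_{N,L}^{\,i}$ along an \emph{arbitrary} $*$-path $\gamma$ crossing the relevant annulus ($\tilde D_{0,N}\setminus\tilde C_{0,N}$ when $i=1$, $\Lambda_N$ when $i=2$), use the admissibility of $\mathcal A^i$ to locate a collection $\mathcal C$ whose boxes $\gamma$ crosses, then discard the (at most $(1-\rho)|\mathcal C|$) ``bad'' boxes and chain together the good boxes in $\tilde{\mathcal C}$ via the gluing lemma. Concretely, fix $\gamma$ as above; by property \eqref{def:admissible2} of the $\Gamma$-admissible collection there is $\mathcal C\in\mathcal A^i$ with $\gamma$ crossing $\tilde D_z\setminus C_z$ for every $z\in\mathcal C$; on $H_{N,L}^{\,i}$ there is then a sub-collection $\tilde{\mathcal C}\subset\mathcal C$ with $|\tilde{\mathcal C}|\ge\rho|\mathcal C|$ on which $\bigcap_{z\in\tilde{\mathcal C}}\mathcal H_z^i$ holds. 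Since $|\mathcal C|\ge \Cr{c:nLB}N/u(KL)$ by \eqref{def:admissible1}, the number of boxes in $\tilde{\mathcal C}$ touched by $\gamma$ that survive is at least $\rho\Cr{c:nLB}N/u(KL)$, which is $\ge C$ by hypothesis.

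First I would handle the case $i=2$, which yields $G_N$ directly. The $b$ disjoint clusters $\mathscr C_1,\dots,\mathscr C_b$ required in \eqref{def:goodevent} are produced by partitioning $\tilde{\mathcal C}$ into $b$ ``chains'' of $L$-boxes, where consecutive boxes in a chain are at $\ell^\infty$-distance exactly $L$ (so that \eqref{property:gluing_neighbor} of Lemma~\ref{label:gluing} applies to glue them): the well-separation \eqref{eq:C:cond1} of distinct elements of $\mathcal C$ makes the chains mutually far apart, hence the resulting clusters disjoint, and the cardinality bound \eqref{eq:inclusion_goodevent.2} with $b=\lfloor\frac{\Cr{c:nLB}}{10}\frac{\rho N}{u(KL)}\rfloor$ leaves enough boxes so that each chain has $\ge a$ surviving good boxes — wait, more precisely, each chain is long enough that $\gamma$ meets it in at least one box, and within that box \eqref{def:goodevent2.2} (inherited through \eqref{property:gluing_neighbor2}) furnishes $a$ contact points $x_{i,j}$ with $B_{L_0}(x_{i,j})\cap\mathscr C_i\cap\mathcal M_{M+\varepsilon}(\tilde\varphi)\ne\emptyset$. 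Since $\mathscr C_{z,z'}\subset\{\varphi\ge h_1-\varepsilon\}$ by \eqref{property:gluing_neighbor} and the gluing is along the whole chain, we get $\mathscr C_i\subset\Lambda_N\cap\{\varphi\ge h_1-\varepsilon\}$, and $G_N(a,b,h_1-\varepsilon,M+\varepsilon)$ occurs. Here the event $\Omega_N^2$ is vacuous.

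For $i=1$ the target is $\mathcal G_0(\tilde\varphi,N,a',h_1-\varepsilon,h_2+\varepsilon,h_3-\varepsilon,M+\varepsilon)$, so I would verify the two defining clauses \eqref{def:goodevent2.1}, \eqref{def:goodevent2.2} at scale $N$. The connectivity $C_{0,N}\to\partial D_{0,N}$ in $\{\varphi\ge h_3-\varepsilon\}$ is exactly the added event $\Omega_N^1$, so nothing to prove there; the ``all crossing clusters are connected'' part of \eqref{def:goodevent2.1} at scale $N$ follows because any cluster of $\{\chi^{z'}\ge h_2+\varepsilon\}$ crossing $\tilde D_{0,N}\setminus\tilde C_{0,N}$ induces (via the coarse-graining, applied to a $*$-path inside it) a crossing of sufficiently many $L$-boxes of some $\mathcal C$, and on $H_{N,L}^{\,1}$ the good sub-collection, chained by \eqref{property:gluing_neighbor}, puts all of them in one component of $\{\varphi\ge h_1-\varepsilon\}\cap D_{0,N}$ — one must check the separated pieces of $\mathscr C_{z,z'}$ across different chains are still joined, which is where the crossing cluster of $\{\varphi\ge h_3-\varepsilon\}$ from $\Omega_N^1$ serves as a ``spine'' linking everything. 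The contact-point clause \eqref{def:goodevent2.2} at scale $N$ with the improved count $a'=ba$ is the crux: an arbitrary $*$-path $\gamma$ crossing $\tilde D_{0,N}\setminus\tilde C_{0,N}$ meets, by \eqref{def:admissible2}, every box of some $\mathcal C$; on the good sub-collection $\tilde{\mathcal C}$ (size $\ge\rho|\mathcal C|$), in each surviving box $\gamma$ picks up $\ge a$ points with $B_{L_0}\cap\mathscr S_z\ne\emptyset$, and since $\mathscr S_z\subset\mathscr C_z\subset\mathscr C_{0}$ (the ambient cluster just built) and $\mathcal M_M(\tilde\psi^z)\cap\tilde D_z\subset\mathcal M_{M+\varepsilon}(\tilde\varphi)\cap\tilde D_z$ by the last paragraph of the proof of Lemma~\ref{label:gluing}, these are genuine contact points; summing over the $\ge\rho|\mathcal C|\ge b$ boxes (well-separated, hence the $\ge 2L_0$-separation of points across boxes is automatic) gives $\ge ba=a'$ of them. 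The main obstacle I anticipate is purely bookkeeping: making the partition of $\tilde{\mathcal C}$ into $b$ chains compatible both with the $|\cdot|_\infty=L$ adjacency needed for Lemma~\ref{label:gluing} and with the crossing structure guaranteed by Proposition~\ref{prop:coarse_paths} (whose boxes are a priori only known to be crossed, not to form chains), and ensuring the constant $\Cr{c:nLB}/10$ in \eqref{eq:inclusion_goodevent.2} genuinely accommodates the loss from discarding $(1-\rho)$-fraction of boxes plus the chain-overhead — this is where the hypothesis $\rho N/u(KL)\ge C$ is consumed. I would also need to track carefully that the parameter shifts ($h_j\mapsto h_j\mp\varepsilon$, $M\mapsto M+\varepsilon$) propagate consistently through every invocation of Lemma~\ref{label:gluing}, but this is routine given \eqref{eq:locuniqdecomp}–\eqref{property:gluing_neighbor2}.
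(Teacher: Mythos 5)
There is a genuine gap, and it sits exactly at the point you dismiss as ``bookkeeping''. Your plan builds the ambient clusters by partitioning the good sub-collection $\tilde{\mathcal C}\subset\mathcal C$ into chains of $L$-boxes with consecutive boxes at $\ell^\infty$-distance exactly $L$, so that Lemma~\ref{label:gluing} (via \eqref{property:gluing_neighbor}) can glue them. But by \eqref{eq:C:cond1}, which is built into admissibility \eqref{def:admissible1}, any two distinct points of $\mathcal C$ — hence of $\tilde{\mathcal C}$ — are at mutual $\ell^\infty$-distance at least $2KL+L$. No two boxes of $\tilde{\mathcal C}$ are ever $L$-adjacent, so the chains you need cannot be formed from $\tilde{\mathcal C}$ at all; this is not an overhead to be absorbed into the constant $\Cr{c:nLB}/10$, it is an incompatibility. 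A second, related structural problem: the collection $\mathcal C$ you work with is the one detected by a \emph{particular} crossing path $\gamma$, whereas the target events $\mathcal G_0$ at scale $N$ and $G_N$ require fixed connected sets ($\mathscr C_0$, resp.\ $\mathscr C_1,\dots,\mathscr C_b$) such that \emph{every} crossing $*$-path has $\ge a'$ (resp.\ $a$ per interface) contact points to them. A different crossing path $\gamma'$ may be coarse-grained by an entirely different $\mathcal C'\in\mathcal A^i$ with a different good sub-collection, and nothing in your construction forces $\gamma'$ to come near the clusters built from $\gamma$'s boxes.

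The paper resolves both issues with one idea your proposal never invokes: Lemma~\ref{lem:blocking_layers} applied on the renormalized lattice $\mathbb L=\mathbb L(L)$. One first defines $\Sigma\subset\mathbb L$ as the set of \emph{all} $z$ for which $\mathcal H_z^i$ occurs (not just those in one collection), and uses the coarse-graining plus the definition \eqref{def:good_box_event2} of $H_{N,L}^{\,i}$ to show that every renormalized $*$-path crossing $V\setminus U$ meets $\Sigma$ in at least $k\approx \Cr{c:nLB}\rho N/u(KL)$ points. Lemma~\ref{lem:blocking_layers} then yields $k$ nested \emph{$*$-connected surrounding interfaces} $O_1\preceq\cdots\preceq O_k\subset\Sigma$, from which one extracts $k'\ge k/8$ well-separated ones. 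The $*$-connectedness of each $O_j'$ supplies exactly the $|z-z'|_\infty=L$ adjacencies needed for the gluing statements \eqref{property:gluing_neighbor}--\eqref{property:gluing_neighbor2}, producing disjoint connected sets $\mathscr C_j\subset\{\varphi\ge h_1-\varepsilon\}$; and the surrounding property guarantees (via the trace argument and \cite[Lemma~2.1]{DeuschelPisztora96}) that \emph{every} crossing $*$-path induces a crossing of $\tilde D_{y_j,L}\setminus\tilde C_{y_j,L}$ for some $y_j\in O_j'$ for each $j$, hence collects $\ge a$ contact points per interface, uniformly over paths. For $i=1$ the event $\Omega_N^1$ then acts as the spine (as you guessed) identifying all $\mathscr C_j$ inside the cluster $\mathscr C_0$ of $C_{0,N}$, giving $a'=ba$. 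Without the blocking-interface step, your argument has no mechanism to produce $L$-adjacent good boxes nor to make the contact-point count hold for all paths simultaneously, so the proof as proposed does not go through.
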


\begin{remark}\label{R:onestepsupercrit}
\begin{enumerate}[label={\arabic*)}]
\item In \eqref{eq:inclusion_goodevent.1}, one also has the inclusion $H_{N,L}^{\,1} \subset  G_{N}( a, b,  h_1 -\varepsilon) $ since $H_{N,L}^{\,1} \subset H_{N,L}^{\,2}$. The weaker condition on the harmonic field entering the definition of $\mathcal{H}_z^2$, cf.~\eqref{eq:tildeGdef} and \eqref{eq:starGdef}, will play a role in obtaining the sharp bound \eqref{eq:main_ubsupercrit_d=3} for $d =3$. The asserted result \eqref{eq:main_ubsupercrit_d>4} for $d\geq 4$ could be obtained by means of 
$\mathcal{H}_z^1$ (and $H_{N,L}^{\,1}$) alone. 
\item (The role of $\rho$). In view of \eqref{eq:truncated}, \eqref{eq:locauniq} and 
\eqref{eq:inclusion_goodevent.1}, our goal in bounding $\P[G_N^c]$ becomes to control the probability of $(H_{N,L}^i)^c$, which by definition, see \eqref{def:good_box_event2}, entails the existence of 
at least one collection $\mathcal C \in \mathcal{A}^i$ containing a large fraction $(1-\rho)$ 
of (bad) points $z$ for which $ \mathcal{H}_z^i$ does not occur. The sacrifice of a small 
fraction $\rho>0$ of (good) points inherent in \eqref{def:good_box_event2} is utilized in the 
proof below in order to create $b$ interfaces (growing linearly with $\rho$) with certain good 
properties, implied locally by the occurrence of $\mathcal{H}_z^i$. This eventually leads to 
the desired improvement \eqref{eq:inclusion_goodevent.2} over $a$. 
\end{enumerate}
\end{remark}

Let us outline the main lines of the proof of Proposition~\ref{lem:inclusion_fxn_goodevent} 
for, say, $i=1$, which is the more complicated case, because this is where we witness the improvement $a \to a'$ in the number of contact points. This will also shed some light on 
\eqref{eq:inclusion_goodevent.1} and \eqref{eq:inclusion_goodevent.2}. A key feature of the 
event $H_{N,L}^{\,1}$ defined in \eqref{def:good_box_event2} is the following: in combination 
with Lemma~\ref{lem:blocking_layers} applied on the coarse-grained lattice $\mathbb{L}$, the 
definition of $H_{N,L}^{\,1}$ will be seen to imply the existence of $b$ interfaces made of 
$L$-boxes, with $b$ as in \eqref{eq:inclusion_goodevent.2} that must \textit{all} be traversed 
by any macroscopic `radial' path $\gamma$, and having roughly the following two properties. 
First, each interface has an associated cluster which is `present in all of its boxes'. 
Second, whenever the path $\gamma$ traverses one interface, it picks up $a$ contact points 
with the corresponding cluster. Now, whenever the event $ \Omega_N^1$ occurs, all of these 
clusters actually communicate (much in spirit as in the proof of Lemma~\ref{lem:truncated}; 
see also the discussion following \eqref{eq:locauniq}). Actually this happens at sprinkled 
levels but let us forego these details. All in all, this mechanism generates a macroscopic 
cluster at scale $N$, now with $a\cdot b = a'$ contact points for any crossing path, whence 
\eqref{eq:inclusion_goodevent.2}.

\begin{proof}
For any $A \subset \Z^d$, we use $\mathbb L_A$ to denote the set of all $z \in 
\mathbb L= \mathbb L(L)$ such that $A \cap C_{z,L} \neq \emptyset$ (see \eqref{eq:LL} and \eqref{eq:boxes} for notation).
In what follows, let $U = \mathbb{L}_{\tilde C_{0,N}}$ if $i=1$ and $U = \mathbb{L}_{B_{\sigma N}}$ (resp.~$U = \mathbb{L}_{B_N}$) if $i=2$ and $\Lambda_N = B_N \setminus B_{\sigma N}$ (resp.~$\Lambda_N = B_{2N} \setminus B_{ N}$). In a similar vein, let $V =\mathbb{L}_{\tilde{D}_{0,N}}$ if $i=1$ and $V =\mathbb{L}_{B_N}$ or $\mathbb{L}_{B_{2N}}$ if $i=2$.

 Now let $ \Sigma \subset \mathbb L$ denote the collection of all points $z \in \mathbb L \setminus U$ such that 
$\mathcal{H}_z^i$ occurs and $\tilde D_{z, L} \subset \tilde D_{0, N}$ if $i=1$, resp.~$\tilde D_{z, L} \subset B_{N}$ or $B_{2N}$ if $i=2$. As we now explain, on $H_{N,L}^{\,i}$, 
\begin{equation}
\label{eq:L2.1applied}
\text{$\Sigma \subset \mathbb L$ satisfies the hypotheses of Lemma~\ref{lem:blocking_layers} with $U, V$ as above and $k= \lfloor \tfrac{\Cr{c:nLB}\rho N}{u(KL)} \rfloor -1$}
\end{equation}
(it is understood that Lemma~\ref{lem:blocking_layers} is applied here to the lattice $\mathbb L= \mathbb L(L)$ rather than $\Z^d$).
To see \eqref{eq:L2.1applied}, consider any $*$-path $\gamma_{\mathbb{L}}$ in $\mathbb{L}$ crossing $V\setminus U$. By suitably interpolating between successive vertices of $\gamma_{\mathbb{L}}$, one creates a $*$-path $\gamma$ in $\Z^d$ such that $\gamma |_{\mathbb{L}}= \gamma_{\mathbb{L}}$. Let $\mathcal{C} \in \mathcal{A}_i$ be the admissible collection corresponding to $\gamma$, i.e.~such that \eqref{def:admissible2} holds. Then by definition, see \eqref{def:good_box_event2}, if $H_{N,L}^{\,i}$ occurs, there exists $\tilde{\mathcal{C}}\subset \mathcal{C}$ such that 
$\mathcal{H}_z^i$ occurs for all $z \in\tilde{\mathcal{C}} $, hence $\tilde{\mathcal{C}} \subset \Sigma$, and \eqref{eq:L2.1applied} follows since $|\tilde{\mathcal{C}}| \geq k$ on account of \eqref{def:good_box_event2} and \eqref{def:admissible1}.

With \eqref{eq:L2.1applied} in force, applying Lemma~\ref{lem:blocking_layers} we deduce the existence of disjoint $*$-connected subsets $O_1 \preceq \ldots \preceq O_k$ (each part of $\mathbb{L}$) of $\Sigma$ all of which surround $U$. By definition of $ \preceq$, it follows that $(O_i + D_{0, L}) \cap (O_j 
+ D_{0, L}) = \emptyset$ as soon as $|i - j| \ge 7$. Consequently we can extract from 
$\{O_i: 1 \le i \le k\}$ a subcollection $\{O_i': 1 \le i \le k'\}$ with $k' \geq k/8$ such 
that $(O_i' + D_{0, L})$ are pairwise disjoint subsets of $A$ with $V = L_A$. Now for each $j \in \{1, \ldots, k'\}$, by $*$-connectedness of $O_j'$, the fact that $\mathcal{H}_z^i$ occurs for every $z \in O_j' (\subset \Sigma)$ and using property~\eqref{property:gluing_neighbor}, one finds 
that the connected sets $\mathscr{C}_{z,z'}$, for $z, z' \in O_j'$ with 
$|z-z'|_{\infty}=L$, are contained in a single connected subset $\mathscr C_j$ of $\{\varphi \ge 
h_1 - \varepsilon\} \cap (O_j' + D_{0, L})$. The sets $\mathscr C_j$, $1\leq j \leq k'$, are 
disjoint by construction. Moreover by \eqref{property:gluing_neighbor2},
\begin{equation}
\label{property:gluing_neighbor3}
\begin{split}
&\text{for any $*$-path $\gamma$ crossing $\tilde{D}_{z, L} \setminus \tilde C_{z, L}$, for some $z \in O_j'$, there exists a set $S \subset \gamma$}\\
&\text{with $|S| \ge a$ such that $B_{L_0}(x) \cap  \tilde D_{z, L} \cap \mathscr{C}_j\cap\mathcal{M}_{M+ \varepsilon}\neq \emptyset$, for all $x \in S$.}
\end{split}
\end{equation}
In addition, if $i=1$, \eqref{property:gluing_neighbor} yields that
\begin{equation}
\begin{split}
&\text{all conn.~subsets of $\{\varphi \ge h_2 + \varepsilon\}$ crossing $\tilde{D}_{z, L} \setminus \tilde C_{z, L}$ for some $z \in O_j'$ intersect $\mathscr C_j$.}
\end{split}
\label{property:gluing_neighbor4}
\end{equation}

We now explain how the inclusions \eqref{eq:inclusion_goodevent.1} follow from this, and first consider the case $i=1$.
In view of Definition~\ref{def:goodevent2}, this amounts to verifying \eqref{def:goodevent2.1} for $\chi^z, \chi^{z'}=\varphi$, with $N$ in place of $L$ and at the heights given by \eqref{eq:inclusion_goodevent.1}, as well as \eqref{def:goodevent2.2} with $a'$ in place of $a$. First, the connection required in \eqref{def:goodevent2.1} is ensured by $\Omega_N^1$. 

To proceed further, we need the following observation. For a $*$-path $\gamma$ on $\Z^d$, define the trace $ \gamma^{\mathbb{L}}$ of $\gamma$ on $\mathbb{L}$ as follows: $ \gamma^{\mathbb{L}}(0)$ is the unique point in $\mathbb{L}$ such that $\gamma(0)\in C_{\gamma^{\mathbb{L}}(0),L}$ (recall that these boxes partition $\Z^d$, see \eqref{eq:boxes}). Set $n_0 =0$. Given $\gamma^{\mathbb{L}}(0),\dots, \gamma^{\mathbb{L}}(k-1)$ and $n_0,\dots n_{k-1}$ for some $k \geq 1$, set $n_k= \inf\{n > n_{k-1}: \gamma(n) \notin C_{\gamma^{\mathbb{L}}(k-1),L}\}$ and $\gamma^{\mathbb{L}}(k) \in \mathbb{L}$ is such that $\gamma(n_k) \in C_{\gamma^{\mathbb{L}}(k),L}$.
By construction $\gamma^{\mathbb{L}}$ is a $*$-path on $\mathbb{L}$. Moreover, if $\gamma$ crosses $\tilde{D}_{0,N}\setminus \tilde{C}_{0,N}$, then $\gamma^{\mathbb{L}}$ crosses $V\setminus U$. As the sets $\mathscr C_j$, $1\leq j \leq k'$, each surround $U$, it follows that for each $*$-path $\gamma$ crossing $\tilde{D}_{0,N}\setminus \tilde{C}_{0,N}$, \begin{equation}
\label{property:gluing_neighbor5}
\text{there exists $\{z_j: 1 \leq j \leq k'\} \subset \gamma^{\mathbb{L}}$ such that $\text{dist}_{\ell^{\infty}(\mathbb{L})}(z_j, O_j') \leq 1$.}
\end{equation}
Indeed, \eqref{property:gluing_neighbor5} follows for instance by extending $\gamma^{\mathbb{L}}$ to a nearest-neighbor path $\bar\gamma^{\mathbb{L}}$ on $\mathbb{L}$, which only requires adding vertices at unit $\ell^{\infty}(\mathbb{L})$-distance from $\gamma^{\mathbb{L}}$. The path $\bar\gamma^{\mathbb{L}}$ crossing $V\setminus U$ in turn intersects $O_j'$ for all $1 \leq j \leq k'$ by \cite[Lemma~2.1]{DeuschelPisztora96} and the surrounding property of each $O_j'$.

Now, returning to the verification of \eqref{def:goodevent2.1}, consider a cluster of $\{\varphi \ge h_2 + \varepsilon \}$ crossing $\tilde{D}_{0,N}\setminus \tilde{C}_{0,N}$. Extracting a crossing path $\gamma$ from this cluster, it follows by definition of $\gamma^{\mathbb{L}}$ that $\gamma$ induces a crossing in $D_{z_1,L} \setminus C_{z_1,L}$, with $z_1$ as in \eqref{property:gluing_neighbor5} (recall that $O_1' + D_{0, L} \subset \tilde D_{0, N}$). Hence, $\gamma$ induces a crossing in $\tilde{D}_{y_1,L} \setminus \tilde{C}_{y_1,L}$ for some $y_1 \in O_1'$ with $|z_1-y_1|_{\ell^{\infty}(\mathbb{L})} \leq 1$. Thus, $\gamma$ intersects $\mathscr{C}_1$ by \eqref{property:gluing_neighbor4}. All in all, each cluster of $\{\varphi \ge h_2 + \varepsilon \}$ crossing $\tilde{D}_{0,N}\setminus \tilde{C}_{0,N}$ intersects $\mathscr{C}_1 \subset \{\varphi \ge h_1-\varepsilon\}$. Since $\mathscr{C}_1$ is connected, the second part of \eqref{def:goodevent2.1} follows.

To deduce \eqref{def:goodevent2.2} relative to the event $\mathcal{G}_0 (\tilde{\varphi}, N, a', h_1 - \varepsilon, h_2 + \varepsilon , h_3- \varepsilon, M+ \varepsilon)$ in \eqref{eq:inclusion_goodevent.1}, one proceeds as follows. Repeating the above argument for all $j \in \{ 1,\dots,k'\}$ using \eqref{property:gluing_neighbor4} and \eqref{property:gluing_neighbor5}, one first observes that the cluster of $\{\varphi \ge h_3 - \varepsilon\}$ crossing $D_{0,N} \setminus C_{0,N}$ stipulated by $\Omega_N^1$ intersects each $\mathscr{C}_j$, and therefore
\begin{equation}
\label{property:gluing_neighbor6}
\text{$\mathscr C_0$, the cluster of $C_{0,N}$ in $\{\varphi \ge h_1-\varepsilon\}$, contains $\mathscr{C}_j$ for all $1 \leq j \leq k'$.}
\end{equation}
Now, still by the same argument, every $*$-path $\gamma$ crossing $\tilde{D}_{0,N}\setminus \tilde{C}_{0,N}$ induces a crossing in $\tilde{D}_{y_j,L} \setminus \tilde{C}_{y_j,L}$ for some $y_j \in O_j'$ and all $1\leq j \leq k'$ (in fact $\gamma$ is also connected to every $\mathscr{C}_j$ but we won't use this). By \eqref{property:gluing_neighbor3}, there exist sets $S_j \subset \gamma$ for all $1\leq j \leq k'$, each of cardinality at least $a$, such that $B_{L_0}(x) \cap  \tilde D_{y_j} \cap \mathscr{C}_j\cap\mathcal{M}_{M+\varepsilon}\neq \emptyset$ for all $x \in S_j$ and one can replace $\mathscr{C}_{j}$ by 
$\mathscr C_0$ in the previous intersection due to \eqref{property:gluing_neighbor6}.
By construction the sets $S_j$ are disjoint, Thus letting $S\coloneqq \bigcup_j S_j$, one obtains by \eqref{property:gluing_neighbor3} that $|S| \geq k' a \geq (k/8) a$, whence $|S| \geq a'$ on account of \eqref{eq:L2.1applied} and \eqref{eq:inclusion_goodevent.2} whenever $\tfrac{\rho N}{u(KL)} \geq C$. All in all, $S$ has all the properties required by \eqref{def:goodevent2.2}. 

We now verify \eqref{eq:inclusion_goodevent.1} in case $i=2$. Consider a $*$-path $\gamma$ crossing $\Lambda_N$. By \eqref{property:gluing_neighbor5}, $\gamma$ induces crossings in $\tilde{D}_{y_j,L} \setminus \tilde{C}_{y_j,L}$ for suitable $y_j = y_j(\gamma) \in O_j'$ and all $1\leq j \leq k'$. Applying \eqref{property:gluing_neighbor3}, we obtain for every $j \in \{ 1,\dots,k'\}$ a set of points $\{ x_{j,k}= x_{j,k} (\gamma): 1 \leq k \leq  a\}$ such that $B_{L_0}(x_{j,k}) \cap  \tilde D_{y_j} \cap \mathscr{C}_j\cap\mathcal{M}_{M+\varepsilon}\neq \emptyset$. In view of \eqref{def:goodevent}, the inclusion \eqref{eq:inclusion_goodevent.1} for $i=2$ follows since the sets $\mathscr{C}_{j}$ are connected subsets of $\Lambda_N \cap \{ \varphi \geq h_1-\varepsilon \}$ and $k' \geq b$. 
\end{proof}

Our last missing ingredient needed prior to proceeding to the proofs of \eqref{eq:main_ubsupercrit_d=3} and \eqref{eq:main_ubsupercrit_d>4} is an a-priori estimate for the event $\mathcal{G}_z$ from Definition~\ref{def:goodevent2} at levels below $h_*$, which is available by current methods and which we supply next. This a-priori bound will play a role akin to \eqref{eq:bootstrapinput} in the subcritical case and enable us to initiate the bootstrap argument in Proposition~\ref{lem:inclusion_fxn_goodevent}.

\begin{lemma}[A-priori estimate]
	\label{lem:loc_amb_cluster1} $h_1  < h_2 < h_3 < h_*, \,  \varepsilon \in (0, \varepsilon_0 \wedge \frac{1}{3}(h_*-h_3)).$
	There exist $L_0 \geq 1$, $M>1$ and $\Cl[c]{c:supcrit_apriori} > 0$, each depending on $\underbar{h} =(h_1,h_2,h_3)$ and $\varepsilon$ only, such that for ${\mathcal G}_0= {\mathcal G}_0(\tilde\chi, L_0, L_n, a=1, \underbar{h},M)$ with $\tilde{\chi} \in \{ \tilde \varphi, \tilde{\psi}\}$ and $\frac{L_{n}}{L_{n+1}}=c(d)$, one has
	\begin{equation}
	\label{eq:good_event_typical1}
\lim_{n \to \infty} \frac{1}{L_n^{\Cr{c:supcrit_apriori}}}	\log \P[\mathcal G_0^c] < 0.
	\end{equation}
\end{lemma}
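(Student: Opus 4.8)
\textbf{Proof proposal for Lemma~\ref{lem:loc_amb_cluster1}.}

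The plan is to derive \eqref{eq:good_event_typical1} from the sharpness results \eqref{eq:def_hbar}, \eqref{eq:EXISTUNIQUE} together with a quantitative statement guaranteeing many ``contact points'' of any large path with an ambient infinite cluster, the latter being essentially a reformulation of the fact that $h_3 < h_*$. The event $\mathcal{G}_0$ consists of three requirements (recall Definition~\ref{def:goodevent2}): a connection of $C_{0,L_n}$ to $\partial D_{0,L_n}$ in $\{\chi^{0} \geq h_3\}$, a local uniqueness statement at the heights $h_1 < h_2$ for all nearby boxes, and the contact-point requirement \eqref{def:goodevent2.2} with $a=1$. Since $\tilde\chi^0 \in \{\tilde\varphi, \tilde\psi^0\}$ and $\psi^0$ has law $\P_{U_0}$ (see \eqref{eq:xi_tilde_z} and below \eqref{eq:decomp}), with $U_0 \supset D_0$, the law of $\chi^0$ restricted to $D_0$ differs from that of $\varphi$ only through the (nonnegative-definite) harmonic correction; all three events are increasing in the field, so it suffices to obtain a lower bound on $\P[\mathcal{G}_0(\varphi)]$ and then transfer it to $\psi^0$ exactly as in the argument around \eqref{eq:inputbnd}--\eqref{eq:psibad} (i.e.\ absorb the loss into an event $\{\inf_{D_0}\xi^0 \geq -\varepsilon'\}$ controlled by Lemma~\ref{lem:BTIS_Szn} with the capacity bound \eqref{eq:capball} for a box, which only costs $e^{-cL_n}$, far smaller than the stretched-exponential error we are targeting).

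First I would handle the first two items. The connection $\{\lr{}{\varphi\geq h_3}{C_{0,L_n}}{\partial D_{0,L_n}}\}$ has complement of probability at most $e^{-cL_n^{\Cr{c:strexp}}}$ by the ``existence'' estimate in \eqref{eq:EXISTUNIQUE} (in the form: $\{\varphi \geq h_3\} \cap D_0$ contains a cluster of diameter $\gtrsim L_n$, hence one touching both $C_{0,L_n}$ and $\partial D_{0,L_n}$ after passing to a suitable sub-box) applied at height $h_3 < h_*= \bar h$; alternatively one invokes \eqref{eq:def_hbar} directly after an elementary gluing across $O(1)$ concentric annuli at scale $\propto L_n$. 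The local uniqueness requirement in \eqref{def:goodevent2.1}, asking that all clusters of $\{\chi^{z'}\geq h_2\}$ crossing $\tilde D_0 \setminus \tilde C_0$ merge inside $D_0 \cap \{\chi^0 \geq h_1\}$ for the $O(1)$ many $z'$ with $|z-z'|_\infty \leq L$, follows from the ``uniqueness'' estimate in \eqref{eq:EXISTUNIQUE} (and $h_1 < h_2 < h_*$) combined again with monotonicity in the field to pass from $\chi^{z'}$ at level $h_2$ down to $\chi^0$ at level $h_1$ using the two-sided control of the discrepancy $\chi^{z'}-\chi^0$ (for $\tilde\chi = \tilde\psi$, note $\psi^{z'}-\psi^0$ is a harmonic-difference field, bounded on $D_0$ by $e^{-cL_n}$-probable events via Lemma~\ref{lem:BTIS_Szn}); this contributes another $e^{-cL_n^{\Cr{c:strexp}}}$ to the error. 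Taking $\Cr{c:supcrit_apriori}$ slightly smaller than $\Cr{c:strexp}$ and a union bound over the $O(1)$ many sub-events already gives \eqref{eq:good_event_typical1} for the portion of $\mathcal{G}_0$ coming from \eqref{def:goodevent2.1}.

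The main obstacle is the contact-point condition \eqref{def:goodevent2.2} with $a=1$: for \emph{every} $*$-path $\gamma$ crossing $\tilde D_0 \setminus \tilde C_0$, there must exist at least one $x \in \gamma$ with $B_{L_0}(x)$ meeting $\mathscr S_0 = \tilde D_0 \cap \mathscr C_0 \cap \mathcal{M}(\tilde\chi^0)$, where $\mathscr C_0$ is the cluster of $C_0$ in $\{\chi^0 \geq h_1\}$. To control this I would argue as follows. On the uniqueness event just discussed, $\mathscr C_0$ is (up to the interface structure at scale $L_n$) the ``ambient'' macroscopic cluster in the window $D_0$; by the disconnection-type estimate (e.g.\ the bound behind \eqref{eq:disconnect_infty}, or more simply the observation that $h_1 < h_*$ forces the complement $\{\chi^0 < h_1\}$ to be strongly subcritical, so a $*$-path $\gamma$ of length $\gtrsim L_n$ cannot be surrounded by a ``dual'' interface of $\{\chi^0 < h_1\}$ that also avoids $B_{L_0}(\mathscr C_0)$) one shows that the event ``some crossing $*$-path of $\tilde D_0 \setminus \tilde C_0$ stays at $\ell^\infty$-distance $> L_0$ from $\mathscr C_0$'' has probability $\leq e^{-cL_n^{\Cr{c:strexp}}}$, uniformly over the choice of $L_0 \geq$ some threshold. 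Finally one must ensure the contact point lands in $\mathcal{M}(\tilde\chi^0)$ as well: since $\mathcal{M}$ is the set where the local average $A\tilde\chi^0$ exceeds $-M$ on a $B_{2L_0}$-neighbourhood, and $A\tilde\chi^0$ is a Gaussian field with bounded variance, a Borel--TIS / union-bound argument (in the spirit of Lemma~\ref{lem:BTIS_Szn}, or directly a Gaussian tail estimate over the $\lesssim L_n^d$ points of $D_0$) shows that for $M = M(\varepsilon, \underbar h)$ large enough, $\P[D_0 \not\subset B_{2L_0}(\mathcal{M})] \leq e^{-cL_n^2}$; intersecting $\mathscr C_0$ with $\mathcal{M}$ then removes only a negligible part. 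I would fix $L_0$ first (large enough that the interface-absence estimate above holds and that $|B_{L_0}|$-sized insertion windows behave well), then $M$, then conclude by a union bound over the (at most $e^{-c L_n^{\Cr{c:strexp}}}$-probable) bad events, giving \eqref{eq:good_event_typical1} with any $\Cr{c:supcrit_apriori} < \Cr{c:strexp}$ and $c(d) = L_n/L_{n+1}$ any fixed ratio as in \eqref{eq:coarsegrain_r_n}.
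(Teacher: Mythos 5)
Your treatment of the first two requirements of $\mathcal G_0$ (the crossing at level $h_3$ and the local uniqueness at levels $h_2/h_1$) is essentially fine, modulo one inaccuracy: the uniqueness part of \eqref{def:goodevent2.1} is \emph{not} increasing in the field (raising the field creates new crossing clusters at level $h_2$ that must then be merged at level $h_1$), so your one-sided transfer via $\{\inf_{D_0}\xi^0\ge-\varepsilon'\}$ does not suffice; one needs two-sided control of $\tilde\xi$ and sprinkling in opposite directions, which is exactly what \eqref{eq:locuniqdecomp}--\eqref{eq:ellinfapproxsupcrit} provide and why the paper needs an extension of the decoupling inequality of \cite{DPR18} "for two opposite directions of monotonicity". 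This is fixable. The genuine gap is in your handling of the contact-point condition \eqref{def:goodevent2.2}, which is the whole point of the lemma. Your key claim rests on the "observation that $h_1<h_*$ forces $\{\chi^0<h_1\}$ to be strongly subcritical": this is false. For $0<h_1<h_*$ the set $\{\varphi<h_1\}\supset\{\varphi\le 0\}$ percolates (since $h_*>0$), so both phases coexist and no dual-interface/subcriticality argument is available. The event you must exclude — an arbitrary $*$-path of diameter $\asymp L_n$ staying at $\ell^\infty$-distance $>L_0$ from the specific cluster $\mathscr C_0$ of $C_0$, with the nearby cluster piece moreover \emph{connected to} $C_0$ and lying in $\mathcal M$ — concerns the geometry of the supercritical cluster relative to exponentially many candidate paths, and with the field's long-range correlations this cannot be obtained from \eqref{eq:def_hbar}/\eqref{eq:EXISTUNIQUE} at a single scale plus a union bound; it requires a multi-scale renormalization with sprinkling. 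Invoking \eqref{eq:disconnect_infty} here is also circular: that bound is derived in Section~\ref{sec:upper_sup} using Theorem~\ref{P:ub}, whose proof relies on the present lemma.

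A second quantitative error concerns $\mathcal M$: you assert $\P[D_0\not\subset B_{2L_0}(\mathcal M)]\le e^{-cL_n^2}$ for an $(n$-independent$)$ $M=M(\underbar{h},\varepsilon)$. Since the local averages at $\asymp L_n^d$ well-separated points are essentially independent Gaussians of bounded variance, for fixed $M$ the macroscopic box $D_{0,L_n}$ contains, with probability tending to $1$, points where $A\tilde\chi<-M$; so $\mathcal M$ cannot cover $D_0$ and your bound fails (it would force $M\gtrsim\sqrt{\log L_n}$, which is not allowed because $M$ feeds into the insertion-tolerance constant of Lemma~\ref{lem:truncated}). The paper avoids both problems by working only at the seed scale $L_0$: it introduces the three events \eqref{eq:RG1}--\eqref{eq:RG3} (crossing, two-height local uniqueness, and $\mathcal M\supset D_{x,L_0}$ with $M=(\log L_0)^2$), checks via \eqref{eq:EXISTUNIQUE} and a Gaussian bound that each has probability close to $1$ for large $L_0$, and then runs the renormalization scheme of \cite{DPR18} (Proposition~7.1, with the non-monotone extension for the uniqueness event) to get doubly exponential decay $2^{-2^n}$ for the sprinkled coarse events; Lemma~8.6 of \cite{DPR18} then produces, on the good event, paths of good $L_0$-boxes joining any two macroscopic connected sets, which simultaneously yields the uniqueness statement, the connectivity of the nearby cluster pieces to $\mathscr C_0$, and contact points lying in $\mathcal M$ — i.e.\ all of $\mathcal G_0$ — with the constants $L_0$, $M$, $\Cr{c:supcrit_apriori}$ emerging from that scheme. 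Your proposal is missing this renormalization step, and the shortcuts offered in its place do not work.
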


\begin{proof}
It suffices to consider the case $\tilde{\chi} =  \tilde \varphi$. The case $\tilde{\chi} =   \tilde{\psi}$ then follows by applying
\eqref{eq:ellinfapproxsupcrit}. The bound \eqref{eq:good_event_typical1} (with $\tilde{\chi}=\tilde{\varphi}$) will follow by applying results of \cite{DPR18} to the graph $\tilde{\Z}^d$ (with unit weights). For $x \in \Z^d$, we consider the events (at scale $L_0$, see \eqref{eq:boxes} for notation)
\begin{align}
&A_x^1 \coloneqq \{\lr{}{\varphi \ge h_3 + \varepsilon}{C_{x,L_0}}{\partial D_{x,L_0}}\} \label{eq:RG1}\\
&A_x^2 \coloneqq \left\{
\begin{array}{c} \text{all clusters of $\{\varphi \ge h_2-\varepsilon \}$ crossing $\tilde D_{x,L_0} \setminus \tilde C_{x,L_0}$} \\ 
\text{are connected inside $D_{x,L_0} \cap \{ \varphi \geq h_1+\varepsilon \}$} \end{array} \right\}\label{eq:RG2}\\
&A_x^3 \coloneqq \{ \mathcal{M} \supset D_{x,L_0}\} \label{eq:RG3}
\end{align}
with $\mathcal{M}=  \mathcal{M}(\tilde{\varphi})$ as defined in \eqref{eq:ubsuper1} and $M \coloneqq (\log L_0)^2$. For $\lambda=\lambda(d) \, (\geq 100)$ sufficiently large --the choice of $\lambda$ corresponds to the constant $20c_{18}C_{10}$ appearing e.g.~in (8.3) of \cite{DPR18}, and in the present case $C_{10}=1$ and $c_{18}$ is determined by the isoperimetric constant on $\Z^d$) -- one then sets, for $x \in \Z^d$, with $\bar{\lambda}=1.1\lambda$, $\ell_0=3^d \vee 12 \bar{\lambda}$ and $L_n=\ell_0^n L_0$, for $1\leq k \leq3$,
\begin{equation}
 \label{eq:RG4}
\begin{split}
&\widetilde{A}_{x,0}^k= \bigcap_{y\in B_{\lambda L_0}(x)}A_y^k\\
&\widetilde{A}_{x,n}^k=  \bigcap_{y,z\in (L_{n-1}\Z^d\cap B_{\bar{\lambda} L_n}(x)): \, d(y,z)\geq L_n} \big(\widetilde{A}_{y,n-1}^k \cup \widetilde{A}_{z,n-1}^k\big),  \quad \text{for $n \geq 1$}.
\end{split}
\end{equation}
Since $h_3 + \varepsilon < h_*$, combining the bounds in \eqref{eq:EXISTUNIQUE} and applying a second union bound over $B_{\lambda L_0}$, one infers that that $ \lim_{L_0}\P[\widetilde{A}_{x,0}^k] =1$ for $k=1,2$. Similarly, one shows that 
$ \lim_{L_0}\P[\widetilde{A}_{x,0}^3] =1$ using a standard Gaussian tail estimate (note that $\text{var}((A\tilde{\varphi})_0) \geq c$) and applying a union bound over $D_{x,L_0}$ and $B_{\lambda L_0}$. All in all, one obtains that $ \lim_{L_0}\P[\widetilde{A}_{x,0}^k] =1$ for all $1 \leq k\leq 3$. In particular, by choosing $L_0=L_0(\underbar{\textit{h}},\varepsilon)$ sufficiently large, one can ensure that the conditions (7.5) and (7.6) in \cite{DPR18} are satisfied, whence Proposition 7.1 therein applies and yields that
\begin{equation}
 \label{eq:RG5}
\P[(\widetilde{A}_{x,n}^{k \prime})^c] \leq 2^{-2^n}, \text{ for all $1\leq k \leq 3$ and $n \geq 0$},
\end{equation} 
where the primed events $\widetilde{A}_{x,n}^{k \prime}$ refer to those defined in \eqref{eq:RG4}, but with sprinkled parameters $(h_1,h_2, h_3)$ in place of $(h_1+\varepsilon , h_2-\varepsilon , h_3+\varepsilon )$ in \eqref{eq:RG1}-\eqref{eq:RG3}. Note to this effect that the event $\widetilde{A}_{x,0}^k$ is measurable with respect to the restriction of $\tilde{\varphi}$ to $\tilde{\Z}^d \cap B_{\bar{\lambda} L_0}(x)$, as required for Proposition~7.1 in \cite{DPR18} to apply, and that a slight extension (of the underlying decoupling inequality (2.20)) is required when $k=2$, cf.~\eqref{eq:RG2}, in order to take care of the two opposite directions of monotonicity. To conclude, one applies Lemma~8.6 in \cite{DPR18}, which implies that whenever $\bigcap_{k} \widetilde{A}_{x,n}^{k \prime}$ occurs, any two connected sets in $B_{\lambda L_n}$ of diameter at least $(\lambda/20) L_n$ each, are connected by a path $\gamma \subset B_{2\lambda L_n}$ such that $ \bigcap_{k} \widetilde{A}_{x,0}^{k \prime}$ occurs for all $x \in \gamma$. Due to the (primed versions of the) choices \eqref{eq:RG1}-\eqref{eq:RG3}, this event is readily seen to imply $\mathcal G_0$ with $L= \lfloor (\lambda/10) L_n \rfloor$. The bound \eqref{eq:good_event_typical1} then follows from \eqref{eq:RG5}.
\end{proof}

We are now ready to assemble the pieces and prove \eqref{eq:main_ubsupercrit_d=3} and \eqref{eq:main_ubsupercrit_d>4}. In view of \eqref{eq:inclusion_goodevent.1}, this entails probing into the complements of the events $H_{N,L}^{\,i}$, $i=1,2$, from \eqref{def:good_box_event2}. In the spirit of \eqref{eq:psibadsub}, \eqref{eq:xibadsub}, for $z \in \mathbb L= \mathbb L(L)$ we say that
\begin{align}
	\{\text{$z$ is $\psi$-bad}\} &\coloneqq \mathcal G_z^c(\tilde{\psi}), \mbox{ and} \label{eq:psibadsup}\\
	\{\text{$z$ is $(\xi,i)$-bad}\} &\coloneqq 
	\begin{cases}
	\{ \sup_{D_z}|\tilde{\xi}_z| > \varepsilon \},& \text{ if }i=1\\
	\{ \inf_{D_z}\tilde{\xi}_z < -\varepsilon \},& \text{ if }i=2,
	\end{cases}
	 \label{eq:xibadsup}
\end{align}
whence $z$ is either $\psi$-bad or $(\xi,i)$-bad whenever $\mathcal{H}_z^i$ occurs, cf.~\eqref{eq:tildeGdef}, \eqref{eq:starGdef}. By \eqref{def:good_box_event2}, it then follows that for any $\rho' \in (0, 1 - \rho)$ and $i \in \{1,2\}$,
\begin{equation}
\label{eq:EF_decomp1}
\big(H_{N,L}^{\,i}\big)^c \subset E_{N, L}^{i} \cup F_{N, L}^{i},
\end{equation}
where
\begin{align*}
&E_{N, L}^i
\coloneqq \left\{
\begin{array}{c}
\text{$\exists\,\mathcal{C}\in \mathcal A^i$ and $\tilde{\mathcal{C}}\subset \mathcal{C}$ with  $|\tilde{\mathcal{C}}| = \lceil \rho'|\mathcal{C}|\rceil$} \\ 
\text{such that all the sites in $\tilde{\mathcal{C}}$ are $\psi$-bad}
\end{array}
\right\}, \\
&F_{N, L}^i 
\coloneqq \left\{
\begin{array}{c}
\text{$\exists\,\mathcal{C}\in \mathcal{A}^i$ and $\tilde{\mathcal{C}}\subset \mathcal{C}$ with  $|\tilde{\mathcal{C}}|= |\mathcal{C}|-\lceil \rho|\mathcal{C}|\rceil- \lceil \rho'|\mathcal{C}|\rceil$} \\ 
\text{such that all the sites in $\tilde{\mathcal{C}}$ are $(\xi,i)$-bad}
\end{array}
\right\}.
\end{align*}

\medskip

At this point we consider the cases $d = 3$ and $d \ge 4$ separately.

\medskip

\noindent {\bf Upper bound for $d = 3$.} Combining Proposition~\ref{lem:inclusion_fxn_goodevent}, Lemmas~\ref{lem:loc_amb_cluster1} and \ref{lem:truncated} with a bootstrap argument (similar in spirit to the one leading to the corresponding subcritical upper bound), we proceed to give the

\begin{proof}[Proof of \eqref{eq:main_ubsupercrit_d=3}]

Let $h< h_*$. We assume in the sequel that $\varepsilon\in (0, \tfrac{h_* - h}{16})$ and set $h_1 = h_* - 12\varepsilon$, $h_2 = h_* - 8 \varepsilon$ and $h_3 = h_* - 4 \varepsilon$, whence $ \varepsilon \in (0, \varepsilon_0 \wedge \frac{1}{3}(h_*-h_3))$ (cf.~below \eqref{eq:starGdef} regarding $\varepsilon_0$).

In the first step, we take the bound given by Lemma~\ref{lem:loc_amb_cluster1} as our input and improve it (along with the parameter $a$) via Proposition~\ref{lem:inclusion_fxn_goodevent} applied with $i=1$. To this end, we first choose $L_0$ and $M$, both depending on $ h, \varepsilon$ only, such that Lemma~\ref{lem:loc_amb_cluster1} is in force. Then, applying Proposition~\ref{lem:inclusion_fxn_goodevent} with these choices for $L_0$ and $M$, as well as $K=100$, $\rho=1/2$ (see \eqref{def:good_box_event2}) and $a=1$, the inclusion \eqref{eq:inclusion_goodevent.1} and \eqref{eq:inclusion_goodevent.2} yield that 
\begin{align*}
\P[(\mathcal{G}_{0,N})^c] \le \P[(H_{N,L}^{\,1})^c ] + \P[\nlr{}{\varphi \ge h_3 - \varepsilon}{C_{0,N}}{\partial D_{0,N}}].
\end{align*}
for all $L > 2L_0$ and $N \geq CL$, where $ \mathcal{G}_{0,N} = \mathcal{G}_0 (\tilde{\varphi}, L_0, N, a= \lfloor \frac{cN}{L} \rfloor, h_1 - \varepsilon, h_2 + \varepsilon , h_3- \varepsilon, M+ \varepsilon)$ and $H_{N,L}^{\,1}= H_{N,L}^{\,1} (L_0,a= 1,  h_1, h_2, h_3, \varepsilon, M, \rho=\frac12) $. Incorporating \eqref{eq:EF_decomp1} with the choice $\rho'=\frac12$ and the upper bound on disconnection probability given 
by Theorem~5.5 in \cite{Sz15}, we obtain from the previous display, under the same assumptions on $L$ and $N$ that
\begin{align}
	\label{eq:prob_decomp_sup_3d_1}
	\P[(\mathcal{G}_{0,N})^c] \le  \P[E_{N, L}^1] + \P[F_{N, L}^1] + e^{-c(h, \varepsilon) N}.
\end{align}
In view of \eqref{eq:xibadsup}, adapting the argument used in the proof of Lemma~\ref{lem:bnd_xibad} to the present case where $\mathcal{A}^1 = \mathcal A_{N, L}^{100}(\tilde D_{0,N} \setminus \tilde C_{0,N})$, using symmetry and applying a union bound (costing an inconsequential factor $n\choose \lceil  n/2 \rceil$ where $n =|\mathcal{C}|$, $\mathcal{C} \in \mathcal{A}^1$) to get rid of absolute values in \eqref{eq:xibadsup} and $F_{N, L}^1$, we deduce that 
\begin{equation*}
	\P[F_{N, L}^1] \le e^{-c (h,\varepsilon) \frac{N}{\log N}}, \text{ for all $N \ge 
C( h, \varepsilon)$ and $ (\log N)^3 \leq L \leq L_1(N)$}
\end{equation*} 
(with $L_1(N)$ as fixed above Lemma~\ref{lem:bnd_xibad}). On the other hand, in view of \eqref{eq:psibadsup}, retracing the steps that led to the proof of Lemma~\ref{lem:bnd_psibadstep1} and \eqref{eq:bootstrapENL}, replacing the input bound \eqref{eq:inputbnd} by \eqref{eq:good_event_typical1} for $\tilde{\chi} = \tilde{\psi}$ (whence $f(L)=c(h,\varepsilon)L^{\Cr{c:supcrit_apriori}}$), one finds that for all $L=L_n$ as appearing in  Lemma~\ref{lem:loc_amb_cluster1} satisfying $L \in [C(h,\varepsilon)(\log N)^{2/\Cr{c:supcrit_apriori}}, cN]$ and $N \ge C(h, \varepsilon)$,
\begin{equation*}
	\P[E_{N, L}^1] \le e^{-c(h,\varepsilon)\frac{N}{L^{1 - \Cr{c:supcrit_apriori}}}}.
	\end{equation*}
Substituting the estimates for $\P[F_{N, L}^1]$ and $\P[E_{N, L}^1]$ into \eqref{eq:prob_decomp_sup_3d_1} and choosing 
$L=L_{n_0}$ with $n_0 =n_0(N, h,\varepsilon) \coloneqq \inf\{ n \geq 0 : L_n \geq C(h, \varepsilon)(\log N)^{\max(3,\, 2/ \Cr{c:supcrit_apriori})}\}$, we obtain for all $N \geq C(h,\varepsilon)$, 
\begin{equation}
\label{eq:mathcalG_bnd1}
\P[\mathcal{G}_0 (\tilde{\varphi}, L_0, N, a_N, h_1 - \varepsilon, h_2 + \varepsilon , h_3- \varepsilon, M+ \varepsilon)^c] \le e^{- f'(N)},
\end{equation}
where $a_N\coloneqq \lfloor \tfrac{N}{(\log N)^{\Cl[c]{c:supcrit_apriori2}}} \rfloor$ and $f'(N)\coloneqq  \frac{c(h, \varepsilon)N}{(\log N)^{C(h, \varepsilon)}}$.
This yields the desired improvement, both in terms of $a$ and the probabilistic bound, over the a-priori estimate from Lemma \ref{lem:loc_amb_cluster1}.

%

\medskip

In the second step, we start with the improved bound \eqref{eq:mathcalG_bnd1} 
and feed it to Proposition~\ref{lem:inclusion_fxn_goodevent} (in case $i=2$) to derive an estimate for the event $G_N$ in \eqref{def:goodevent}, for a suitable choice of the parameters. Thus applying \eqref{eq:inclusion_goodevent.1} with the  height parameters from \eqref{eq:mathcalG_bnd1} in place of $(h_1,h_2,h_3)$, $\varepsilon' \coloneqq h_1 - h - 2\varepsilon (>0)$ in place of $\varepsilon$ and $L_0=L_0(h,\varepsilon)$ as in the previous step, we obtain, for all $L \geq C(h,\varepsilon)$, $K \geq 100$, $\rho \in (0,1)$ and $N$ such that $\tfrac{\rho N}{u(KL)} \geq C$, using the decomposition \eqref{eq:EF_decomp1},
\begin{align}
		\label{eq:prob_decomp_sup_3d_2}
	\P[G_{N}(a_L, b
	,  h +\varepsilon, M')^c]  \le \P[E_{N, L}^{2}] + \P[F_{N, L}^{2}], 
\end{align}
with $b $ as given by \eqref{eq:inclusion_goodevent.2}, $M'=M + h_1 - h - \varepsilon$ and where the events $E_{N, L}^{2}$, $F_{N, L}^{2}$ inherit the parameters from $H_{N,L}^{\,2} = H_{N,L}^{\,2} ( a_L,  h_1 - \varepsilon, h_2 + \varepsilon , h_3- \varepsilon, \varepsilon', M+\varepsilon, \rho)$, and depend on an additional $\rho' \in (0,1-\rho)$.
Now, mimicking the arguments of the previous step to bound the probabilities on the right-hand side in \eqref{eq:prob_decomp_sup_3d_2}, but this time using $f'$ instead $f$ (as implied by \eqref{eq:mathcalG_bnd1}) when estimating $\P[E_{N, L}^{2}]$, and with a view to \eqref{eq:xibadsup} (compare with the definition of $F_{N,L}$ in \eqref{eq:EandF} and the proof of \eqref{eq:bnd_xibad}) when dealing with $\P[F_{N, L}^{2}]$, one obtains the following for the choice $L\coloneqq L_{n_1}$ where $n_1(N, h,\varepsilon) \coloneqq \inf\{ n \geq 0 : L_n \geq  (\log N)^3\}$, whence $(\log N)^3 \leq L_{n_1} \leq C(h,\varepsilon) (\log N)^3$: For $K$ large enough, $\sigma$ (in the case $\Lambda_N = B_N \setminus B_{\sigma N}$) and $\rho $ close enough to 0, all depending on $h$ and $\varepsilon$ and $\rho'=1-2\rho$, 
\begin{equation}
\label{eq:mathcalG_bnd2}
\P[G_{N}(a_N', b'_N,  h +\varepsilon, M')^c]  \le e^{- \frac{\pi}{6} (h_1 - h - 2\varepsilon)^2 \frac{N}{\log N}}, \text{ for all $N \ge C(h,\varepsilon)$},
\end{equation}
where $a'_N= a_{L_{n_1}}$ and $b'_N= \lfloor \tfrac{c(h,\varepsilon) N}{L_{n_1}} \rfloor$. 

\medskip

With \eqref{eq:mathcalG_bnd2} at hand, we now deduce  \eqref{eq:main_ubsupercrit_d=3} and first consider the finite-volume event ${\rm LocUniq}(N, h)$. Plugging \eqref{eq:mathcalG_bnd2} (with $\Lambda_N = B_{N} \setminus B_{\sigma N}$) into \eqref{eq:locauniq} with $h ' = h + \varepsilon$, we obtain for $N \ge C(h, \varepsilon)$,
\begin{equation}
\label{eq:ub_locuiq_d=3}
\begin{split}
\P[{\rm LocUniq}(N, h)^c] &\le e^{- \frac{\pi}{6} (h_* - h - C\varepsilon)^2 \frac{N}{\log N}} + \P[\nlr{}{\varphi \ge h'}{B_{N}}{\partial B_{2N}}] + e^{-cb'_N a'_N} \\
&\le 2e^{- \frac{\pi}{6} (h_* - h - C\varepsilon)^2 \frac{N}{\log N}} + e^{-c(h')N},
\end{split}
\end{equation}
where the second line follows by Theorem 5.5 in \cite{Sz15} and since $b'_N a'_N \geq 
c(h,\varepsilon)\frac N{ (\log \log N)^{C(h,\varepsilon)}}$. The claim readily follows from 
\eqref{eq:ub_locuiq_d=3} by taking logarithms, multiplying by $\frac{\log N}{N}$ on both 
sides, letting $N \to \infty$ and then $\varepsilon \downarrow 0$. The same upper bound then automatically holds 
 for the event $\text{2-arms}(N,h) ( \subset {\rm LocUniq}(N, h)^c)$.

The upper bound in \eqref{eq:main_ubsupercrit_d=3} for the truncated one-arm event will follow similarly from  
\eqref{eq:truncated} and \eqref{eq:mathcalG_bnd2} upon supplying a suitable upper bound for the probability of disconnecting $B_{\sigma n}$ from infinity above level $h' =  h + \varepsilon$ (here $\sigma=\sigma(h,\varepsilon)> 0$ refers to the choice that leads to \eqref{eq:mathcalG_bnd2}), which is not readily available for us to use. 
To circumvent this issue, we combine the upper bound \eqref{eq:main_ubsupercrit_d=3} for the local uniqueness event derived above and the disconnection upper bound from \cite{Sz15} as follows: consider the sequence of events, for some integer $\overline{M} > 1$,
\begin{align*}
	&\mathcal E_1 \coloneqq \{\lr{}{\varphi \ge h'}{B_{ N}}{\partial B_{4\overline{M} N}}\}, 
	\\  
	&\mathcal E_{2k} \coloneqq {\rm LocUniq}(h', 2^{k}\overline{M} N) \mbox{ and }
	\mathcal E_{2k+1} \coloneqq \{\lr{}{\varphi \ge h'}{B_{2^{k} \overline{M} N}}{\partial B_{6\cdot2^k\overline{M} N}}\}, \text{ for $k \geq1$}.
\end{align*}
It readily follows from the definition of {\rm LocUniq}, see \eqref{eq:def_locuniq}, that $\{\lr{}{\varphi \ge h'}{B_{ N}}{\infty}\} \subset \bigcap_{k \ge 1}\mathcal E_k$. Therefore, applying a union bound and subsequently using the bounds from \cite{Sz15} and \eqref{eq:ub_locuiq_d=3} for the respective probabilities, we get for any $\overline{M} > 1$ and $N \ge C(h, \varepsilon, \overline{M})$,
\begin{equation}
\label{eq:disconnect_infty0}
\begin{split}
&\P[\nlr{}{\varphi \ge h'}{B_{ N}}{\infty}] \\
&\qquad \quad \le \sum_{k \ge 1} \P[\mathcal E_k^c] \le \sum_{k \ge 0}e^{-c(h') 2^k\overline{M}  N} + \sum_{k \ge 1}e^{-c(h_* - h')^2\frac{2^k \overline{M} N}{\log (2^k \overline{M} N)}} \le e^{-c(h_* - h')^2\frac{\overline{M} N}{\log(N)}}.
\end{split}
\end{equation}
Since $\overline{M}$ is arbitrary, \eqref{eq:disconnect_infty0} implies that
\begin{equation}
\label{eq:disconnect_infty}
\lim_{N\to\infty}\, \frac{\log N}{N} \log \P[\nlr{}{\varphi\geq h}{B_N}{\infty } ] = -\infty.
\end{equation}
for all $h < h_*$. Similarly as with \eqref{eq:ub_locuiq_d=3}, \eqref{eq:main_ubsupercrit_d=3} readily follows from \eqref{eq:disconnect_infty}, \eqref{eq:mathcalG_bnd2} and \eqref{eq:truncated}.
\end{proof}

\noindent {\bf Upper bound for $d \ge 4$.} Similarly to the subcritical phase, the proof of upper bounds simplifies in higher dimensions. 

\begin{proof}[Proof of \eqref{eq:main_ubsupercrit_d>4}]
Let $h<h_*$. We choose $h_1$, $h_2$, $h_3$ as in the beginning of the proof of \eqref{eq:main_ubsupercrit_d=3}
and simply fix $\varepsilon \coloneqq (h_* - h) / 20$. Following the same line of reasoning that led to the bound 
\eqref{eq:mathcalG_bnd1}, except for applying Proposition~\ref{lem:inclusion_fxn_goodevent} with $i=2$ directly (instead of $i=1$) with $\sigma= \frac14$ in case $\Lambda_N = B_N \setminus B_{\sigma_N}$, and using analogues of Lemmas~\ref{lem:bnd_psibadd4} and~\ref{lem:bnd_xibad4}
in place of Lemmas~\ref{lem:bnd_psibadstep1} and~\ref{lem:bnd_xibad}, respectively, to bound $\P[E_{N, L}^1]$ and $\P[F_{N, L}^1]$, thereby choosing $L=C(h)$ large enough, one finds that for all $N \geq C(h)$,
\begin{equation}
\label{eq:mathcalG_bnddgeq4}
\P[G_{N}(a=1, b= \lfloor c(h)N\rfloor,  h_1 -\varepsilon, M+ \varepsilon)]
\le e^{- c'(h)N}.
\end{equation}
Here $M=M(h)$ and $L_0=L_0(h)$, implicit in the definition $G_{N}$, cf.~\eqref{eq:ubsuper1} and \eqref{def:goodevent}, are chosen as in the proof of \eqref{eq:main_ubsupercrit_d=3} when applying Lemma~\ref{lem:loc_amb_cluster1}.
Plugging \eqref{eq:mathcalG_bnddgeq4} for the choice $\Lambda_N=B_{2N}\setminus B_{N}$ into \eqref{eq:locauniq} with $h'=h_1=\varepsilon (> h)$ and using Theorem 5.5 in \cite{Sz15} in order to bound the disconnection probability, we get
\begin{equation}
\label{eq:ub_locuiq_dgeq4}
\limsup_{N\to\infty}\, \frac{1}{N} \log \P[{\rm LocUniq}(N, h)^c] < 0,
\end{equation}
along with a similar bound for $\text{2-arms}(N,h)$.
Regarding \eqref{eq:main_ubsupercrit_d>4} for the truncated one-arm event, proceeding similarly as in the case $d=3$,
we first derive from \eqref{eq:ub_locuiq_dgeq4} and the disconnection upper bound from \cite{Sz15} that $ \frac{1}{N} \log\displaystyle \P[\nlr{}{\varphi\geq h}{B_N}{\infty } ]$ tends to $-\infty$ as $N\to\infty$, from which \eqref{eq:main_ubsupercrit_d>4} follows upon applying \eqref{eq:truncated} and using \eqref{eq:mathcalG_bnddgeq4} with $\Lambda_N=B_{N}\setminus B_{N/4}$.
\end{proof}

We conclude with a few comments.

\begin{remark} \label{R:final}
\begin{enumerate}[label={\arabic*)}]
\item Analogues of Remarks \ref{remark:stronger_bootstrap} and \ref{Rk:onionsd=4} also hold in the supercritical regime. Namely, at the expense of iterating a few more times, initializing the above scheme does not require the full strength of the stretched exponential a-priori bound provided by Lemma~\ref{lem:loc_amb_cluster1}. Moreover, when $d \geq 4$, replacing the admissible collection $\mathcal{A}^i$ inherent to the definition of $H_{N,L}^i$ in \eqref{def:good_box_event2} by the corresponding collection $(\mathcal{A}')^i$ (cf.~Remark~\ref{R:cg3to4}), which employs the coarse-graining strategy used for $d=3$, one can derive analogues of \eqref{eq:subcritconcl3} for ${\rm LocUniq}(N, h)^c$ or the event in \eqref{eq:main_ubsupercrit_d>4} when $h<h_*$. The proof essentially follows the line of argument of Remark~\ref{Rk:onionsd=4}, applying Proposition~\ref{lem:inclusion_fxn_goodevent} with $i=1$ for all but the $k$-th step (where $k \geq 1$ refers to any target number of iterated logarithms, cf.~\eqref{eq:subcritconcl3}). The important thing to notice is that the number $a$ of contact points, albeit always growing  sublinearly in the macroscopic scale due to the choice of $L$, improves suitably along with the bound on $\mathcal{G}_0$ through intermediate steps of the iteration.

\item (Two-point functions). We now briefly discuss the amendments to our methods that would be required to obtain the asymptotics~\eqref{eq:intro2point} for the truncated two-point function. Regarding lower bounds, in the arguments leading to \eqref{eq:main_lbsubcrit_d=3} and \eqref{eq:main_lbsupercrit_d=3}, one would need to `tilt' the construction, thus following the $\ell^2$-geodesic between $x$ and $y$) rather than a horizontal line, in order to force a connection between $x$ and $y$. An asymptotic capacity estimate for such a discretized $\ell^2$-geodesic similar to \eqref{eq:cap_line_asymp} (when $\lfloor |x-y| \rfloor =N$) is given by Remark~\ref{remark:cap_tube}. Corresponding analogues of \eqref{eq:cap_tube_d=3} and \eqref{eq:cap_tube_d=3bis} would also be required. The former relies on the visibility Lemma~\ref{lem:visibility}, which is robust with respect to `tilting' of the above kind. To adapt the proof of \eqref{eq:cap_tube_d=3bis}, one could compare to a random walk on $\R^d$ with Gaussian increments, whose law inherits the symmetries of $\R^d$ and to which the arguments leading to the lower bound in \eqref{eq:scape_line} can be extended, and then rely on the results of~\cite{PS_1998__2__41_0} for comparison with $X$. The relevant upper bound for \eqref{eq:intro2point} would naturally follow by adapting our coarse-graining and bootstrapping scheme to a framework with Euclidean balls replacing $\ell^{\infty}$-ones (although, since $L \ll N$ in practice, one may in fact get away by coarse-graining using $\ell^{\infty}$-boxes at scale $L$).
\end{enumerate}
\end{remark}

\medskip

\noindent \textbf{Acknowledgements.} 
 Part of this research was supported by the ERC Grant CriBLaM and an IDEX grant from 
 Paris-Saclay. S.G.'s research was supported by the SERB grant SRG/2021/000032 and in part by a grant from the Infosys Foundation as a member of the Infosys-Chandrasekharan virtual center for Random Geometry. F.S.'s work was partially supported by the Swiss FNS. We thank Jian Ding, 
 Alexis Pr\'evost and Mateo Wirth for discussions at various stages of this project. We are grateful to an anonymous referee for her/his numerous and valuable suggestions on a previous version 
 of this manuscript.

\bibliography{rodriguez}

\begin{thebibliography}{10}

\bibitem{AB87}
M.~Aizenman and D.~J. Barsky.
\newblock Sharpness of the phase transition in percolation models.
\newblock {\em Comm. Math. Phys.}, 108(3):489--526, 1987.

\bibitem{BDZ95}
E.~Bolthausen, J.-D. Deuschel, and O.~Zeitouni.
\newblock Entropic repulsion of the lattice free field.
\newblock {\em Comm. Math. Phys.}, 170(2):417--443, 1995.

\bibitem{BLM87}
J.~Bricmont, J.~L. Lebowitz, and C.~Maes.
\newblock Percolation in strongly correlated systems: the massless {G}aussian
  field.
\newblock {\em J. Statist. Phys.}, 48(5-6):1249--1268, 1987.

\bibitem{zbMATH04131412}
M.~{Campanino}, J.~T. {Chayes}, and L.~{Chayes}.
\newblock {Gaussian fluctuations of connectivities in the subcritical regime of
  percolation}.
\newblock {\em {Probab. Theory Relat. Fields}}, 88(3):269--341, 1991.

\bibitem{zbMATH05996809}
M.~{Campanino} and M.~{Gianfelice}.
\newblock {On the Ornstein-Zernike behaviour for the Bernoulli bond percolation
  on \({\mathbb{Z}}^{d}, d\geq3\), in the supercritical regime}.
\newblock {\em {J. Stat. Phys.}}, 145(6):1407--1422, 2011.

\bibitem{10.1214/aop/1023481005}
M.~Campanino and D.~Ioffe.
\newblock {Ornstein-Zernike theory for the Bernoulli bond percolation on
  $\mathbb{Z}^d$}.
\newblock {\em The Annals of Probability}, 30(2):652 -- 682, 2002.

\bibitem{10.2307/2244436}
J.~T. Chayes, L.~Chayes, G.~R. Grimmett, H.~Kesten, and R.~H. Schonmann.
\newblock The correlation length for the high-density phase of bernoulli
  percolation.
\newblock {\em The Annals of Probability}, 17(4):1277--1302, 1989.

\bibitem{chiarini2020disconnection}
A.~Chiarini and M.~Nitzschner.
\newblock Disconnection and entropic repulsion for the harmonic crystal with
  random conductances.
\newblock {\em Preprint, available at arXiv:2012.05230}, 2020.

\bibitem{chiarini2018entropic}
A.~Chiarini and M.~Nitzschner.
\newblock Entropic repulsion for the gaussian free field conditioned on
  disconnection by level-sets.
\newblock {\em Probab. Theory Related Fields}, 177(1):525--575, 2020.

\bibitem{deuschel1999entropic}
J.-D. Deuschel and G.~Giacomin.
\newblock Entropic repulsion for the free field: pathwise characterization in
  $d \geq 3$.
\newblock {\em Comm. Math. Phys.}, 206(2):447--462, 1999.

\bibitem{DeuschelPisztora96}
J.-D. Deuschel and A.~Pisztora.
\newblock Surface order large deviations for high-density percolation.
\newblock {\em Probab. Theory Related Fields}, 104(4):467--482, 1996.

\bibitem{DiWi}
J.~Ding and M.~Wirth.
\newblock Percolation for level-sets of gaussian free fields on metric graphs.
\newblock {\em Ann. Probab.}, 48(3):1411--1435, 2020.

\bibitem{DPR18}
A.~Drewitz, A.~Pr{\'e}vost, and P.-F. Rodriguez.
\newblock Geometry of {G}aussian free field sign clusters and random
  interlacements.
\newblock {\em Preprint, arXiv:1811.05970}, 2018.

\bibitem{DrePreRod}
A.~Drewitz, A.~Pr\'{e}vost, and P.-F. Rodriguez.
\newblock The sign clusters of the massless {G}aussian free field percolate on
  {$\Bbb{Z}^d, d \geqslant 3$} (and more).
\newblock {\em Comm. Math. Phys.}, 362(2):513--546, 2018.

\bibitem{drewitz2021critical}
A.~Drewitz, A.~Pr\'evost, and P.-F. Rodriguez.
\newblock Critical exponents for a percolation model on transient graphs.
\newblock {\em Preprint, arXiv:2101.05801}, 2021.

\bibitem{drewitz2021cluster}
A.~Drewitz, A.~Pr{\'e}vost, and P.-F. Rodriguez.
\newblock Cluster capacity functionals and isomorphism theorems for {Gaussian}
  free fields.
\newblock {\em Probab. Theory Relat. Fields}, 183(1-2):255--313, 2022.

\bibitem{DrRaSap14}
A.~Drewitz, B.~R\'{a}th, and A.~Sapozhnikov.
\newblock {\em An introduction to random interlacements}.
\newblock SpringerBriefs in Mathematics. Springer, Cham, 2014.

\bibitem{zbMATH07322630}
H.~{Duminil-Copin}, S.~{Goswami}, A.~{Raoufi}, F.~{Severo}, and A.~{Yadin}.
\newblock {Existence of phase transition for percolation using the Gaussian
  free field}.
\newblock {\em {Duke Math. J.}}, 169(18):3539--3563, 2020.

\bibitem{DCGRS20}
H.~Duminil-Copin, S.~Goswami, P.-F. Rodriguez, and F.~Severo.
\newblock Equality of critical parameters for percolation of {G}aussian free
  field level-sets.
\newblock {\em Preprint, arXiv:2002.07735}, 2020.

\bibitem{zbMATH07395544}
H.~{Duminil-Copin}, G.~{Kozma}, and V.~{Tassion}.
\newblock {Upper bounds on the percolation correlation length}.
\newblock In {\em In and out of equilibrium 3: celebrating Vladas
  Sidoravicius}, pages 347--369. Cham: Birkh\"auser, 2021.

\bibitem{eisenbaum2014}
N.~Eisenbaum.
\newblock Characterization of positively correlated squared {G}aussian
  processes.
\newblock {\em Ann. Probab.}, 42(2):559--575, 03 2014.

\bibitem{Gr99}
G.~Grimmett.
\newblock {\em Percolation}, volume 321 of {\em Grundlehren der Mathematischen
  Wissenschaften}.
\newblock Springer-Verlag, Berlin, second edition, 1999.

\bibitem{GM90}
G.~R. Grimmett and J.~M. Marstrand.
\newblock The supercritical phase of percolation is well behaved.
\newblock {\em Proc. Roy. Soc. London Ser. A}, 430(1879):439--457, 1990.

\bibitem{zbMATH04123029}
T.~{Hara}.
\newblock {Mean-field critical behaviour for correlation length for percolation
  in high dimensions}.
\newblock {\em {Probab. Theory Relat. Fields}}, 86(3):337--385, 1990.

\bibitem{zbMATH04145109}
T.~{Hara} and G.~{Slade}.
\newblock {Mean-field critical behaviour for percolation in high dimensions}.
\newblock {\em {Commun. Math. Phys.}}, 128(2):333--391, 1990.

\bibitem{hutchcroft2021critical}
T.~Hutchcroft.
\newblock The critical two-point function for long-range percolation on the
  hierarchical lattice.
\newblock {\em Preprint, arXiv:2103.17013}, 2021.

\bibitem{janson_1997}
S.~Janson.
\newblock {\em Gaussian Hilbert Spaces}.
\newblock Cambridge Tracts in Mathematics. Cambridge University Press, 1997.

\bibitem{La91}
G.~F. Lawler.
\newblock {\em Intersections of random walks}.
\newblock Probability and its Applications. Birkh\"auser Boston, Inc., Boston,
  MA, 1991.

\bibitem{zbMATH06603570}
T.~{Lupu}.
\newblock {From loop clusters and random interlacements to the free field}.
\newblock {\em {Ann. Probab.}}, 44(3):2117--2146, 2016.

\bibitem{zbMATH06583846}
T.~{Lupu} and W.~{Werner}.
\newblock {A note on Ising random currents, Ising-FK, loop-soups and the
  Gaussian free field}.
\newblock {\em {Electron. Commun. Probab.}}, 21:7, 2016.
\newblock Id/No 13.

\bibitem{McCoyWu+2013}
B.~M. McCoy and T.~T. Wu.
\newblock {\em The Two-Dimensional Ising Model}.
\newblock Harvard University Press, 2013.

\bibitem{Me86}
M.~V. Menshikov.
\newblock Coincidence of critical points in percolation problems.
\newblock {\em Dokl. Akad. Nauk SSSR}, 288(6):1308--1311, 1986.

\bibitem{nitzschner2018}
M.~Nitzschner.
\newblock Disconnection by level sets of the discrete {G}aussian free field and
  entropic repulsion.
\newblock {\em Electron. J. Probab.}, 23:1--21, 2018.

\bibitem{nitzschner2017solidification}
M.~Nitzschner and A.-S. Sznitman.
\newblock Solidification of porous interfaces and disconnection.
\newblock {\em To appear in J. Eur. Math. Soc, DOI:10.4171/JEMS/973}, 2017.

\bibitem{ott2020asymptotics}
S.~Ott and Y.~Velenik.
\newblock Asymptotics of correlations in the ising model: a brief survey.
\newblock {\em Preprint, arXiv:1905.06207}, 2020.

\bibitem{panagiotis2021analyticity}
C.~Panagiotis and F.~Severo.
\newblock Analyticity of gaussian free field percolation observables.
\newblock {\em Preprint, arXiv:2108.05294}, 2021.

\bibitem{PR15}
S.~Popov and B.~R{\'a}th.
\newblock On decoupling inequalities and percolation of excursion sets of the
  {G}aussian free field.
\newblock {\em J. Stat. Phys.}, 159(2):312--320, 2015.

\bibitem{PT12}
S.~Popov and A.~Teixeira.
\newblock Soft local times and decoupling of random interlacements.
\newblock {\em J. Eur. Math. Soc.}, 17(10):2545--2593, 2015.

\bibitem{RoS13}
P.-F. Rodriguez and A.-S. Sznitman.
\newblock Phase transition and level-set percolation for the {G}aussian free
  field.
\newblock {\em Comm. Math. Phys.}, 320(2):571--601, 2013.

\bibitem{zbMATH01744230}
S.~{Smirnov} and W.~{Werner}.
\newblock {Critical exponents for two-dimensional percolation}.
\newblock {\em {Math. Res. Lett.}}, 8(5-6):729--744, 2001.

\bibitem{Sz12b}
A.-S. Sznitman.
\newblock {\em Topics in occupation times and {G}aussian free fields}.
\newblock Zurich Lectures in Advanced Mathematics. European Mathematical
  Society (EMS), Z\"urich, 2012.

\bibitem{Sz15}
A.-S. Sznitman.
\newblock Disconnection and level-set percolation for the {G}aussian free
  field.
\newblock {\em J. Math. Soc. Japan}, 67(4):1801--1843, 2015.

\bibitem{sznitman2018macroscopic}
A.-S. Sznitman.
\newblock On macroscopic holes in some supercritical strongly dependent
  percolation models.
\newblock {\em Ann. Probab.}, 47(4):2459--2493, 2019.

\bibitem{PS_1998__2__41_0}
A.~Y. Zaitsev.
\newblock Multidimensional version of the results of {K}oml\'os, {M}ajor and
  {T}usn\'ady for vectors with finite exponential moments.
\newblock {\em ESAIM: Probability and Statistics}, 2:41--108, 1998.

\end{thebibliography}
\bibliographystyle{abbrv}

\end{document}